\documentclass{article}
\usepackage[utf8]{inputenc}
\usepackage[british]{babel}
\usepackage{amsmath, amsfonts, mathtools, amsthm, amssymb}
\usepackage{graphicx}
\usepackage[dvipsnames]{xcolor}
\usepackage{tikz}
\usepackage{geometry}
\usepackage[hidelinks]{hyperref}
\usepackage{aliascnt}
\usepackage{enumitem}
\usepackage{quiver}
\usepackage{yhmath}
\usepackage{bm, bbm}
\usepackage{subcaption}
\usepackage{lscape}
\usepackage{rotating}
\usepackage{etoolbox}

\usepackage{tocloft}

\theoremstyle{plain}
\newtheorem{thm}{Theorem}[section]

\newaliascnt{thmdef}{thm}
\newtheorem{thmdef}[thmdef]{Theorem/Definition} 
\aliascntresetthe{thmdef}

\newaliascnt{lem}{thm}
\newtheorem{lem}[lem]{Lemma} 
\aliascntresetthe{lem}

\newaliascnt{klem}{thm}
\newtheorem{klem}[klem]{Key Lemma} 
\aliascntresetthe{klem}

\newaliascnt{prop}{thm}
\newtheorem{prop}[prop]{Proposition} 
\aliascntresetthe{prop}

\newaliascnt{conj}{thm}
\aliascntresetthe{conj}

\newaliascnt{cor}{thm}
\newtheorem{cor}[cor]{Corollary} 
\aliascntresetthe{cor}

\theoremstyle{definition}
\newaliascnt{defn}{thm}
\newtheorem{defn}[defn]{Definition} 
\aliascntresetthe{defn}

\theoremstyle{remark}
\newaliascnt{ex}{thm}
\newtheorem{ex}[ex]{Example} 
\aliascntresetthe{ex}

\newaliascnt{rem}{thm}
\newtheorem{rem}[rem]{Remark} 
\aliascntresetthe{rem}

\newaliascnt{note}{thm}
\aliascntresetthe{note}

\DeclareMathOperator{\im}{im}
\newcommand{\filt}{\operatorname{Filt}_K}
\newcommand{\modspace}[1]{\mathbb{M}_{#1}}
\newcommand{\scpx}{\operatorname{\textbf{SCpx}}}
\newcommand{\vect}{\operatorname{\textbf{vec}}}

\newcommand{\en}{\operatorname{\text{End}}}
\newcommand{\aut}{\operatorname{\text{Aut}}}
\newcommand{\epi}{\operatorname{\text{Epi}}}
\newcommand{\odim}{\operatorname{odim}}
\newcommand{\inc}{\operatorname{inc}}
\newcommand{\id}{\operatorname{id}}
\newcommand{\preim}[2]{MPH_{|\mathcal{#2}}^{-1} ([\bm{#1}])}
\newcommand{\preimov}[2]{\overline{MPH_{|\mathcal{#2}}^{-1} ([\bm{#1}])}}

\usepackage[
  natbib=true,
  backend=biber,
  style=numeric-comp,
  sorting=nyt,
  maxbibnames=99]{biblatex}
\usepackage{csquotes}
\title{\vspace{-20pt}The fiber of multiparameter persistent homology for simplicial complexes}
\author{Heather A. Harrington\textsuperscript{1,2,3,4,5}, Ulrike Tillmann\textsuperscript{1,6} and Maria Torras-Pérez\textsuperscript{1}}
\date{}

\begin{document}

\maketitle

\noindent\textbf{1} Mathematical Institute, University of Oxford, Oxford, UK. \\
\textbf{2} Max Planck Institute of Molecular Cell Biology and Genetics, Dresden, Germany. \\
\textbf{3} Centre for Systems Biology Dresden, Dresden, Germany. \\
\textbf{4} Faculty of Mathematics, Technische Universität Dresden, Dresden, Germany. \\
\textbf{5} Physics of Life, Technische Universität Dresden, Dresden, Germany. \\
\textbf{6} Isaac Newton Institute for Mathematical Sciences, University of Cambridge, Cambridge, UK.

\vspace{12pt}
\begin{abstract}
Vector-valued functions on finite simplicial complexes give rise to multiparameter sublevel set filtrations and corresponding persistent homology.  
We study the associated inverse problem for multiparameter persistent homology (MPH) of $n$-filters on a fixed finite simplicial complex.
We endow both the space of filters and the moduli space of essentially finite persistence modules with stratifications, where strata are given by orbits of natural actions of order-isomorphisms of the unit $n$-cube. 
The MPH map is then shown to be equivariant and strongly stratified.
Over each stratum in the image, the MPH map restricts to a trivial fiber bundle whose fiber is a polyhedral complex.
We provide an upper bound on the dimension of the fibers in terms of multigraded Betti numbers, recovering as a special case the known one-parameter bound obtained by Leygonie and Tillmann in 2022 \cite{LeygonieTillmann2022}.
\end{abstract}

\tableofcontents

\newpage

\renewcommand{\baselinestretch}{1.1}

\section*{Introduction}

Persistent homology (PH) is a fundamental concept in topological data analysis. Starting from a filtered space derived from data, PH records how homology classes evolve throughout the filtration, and produces an algebraic descriptor known as the barcode.
Its stability and computability have made PH a useful tool in a wide range of applications (e.g. \cite{Nardini2021,Gardner2022,Pritchard2023,Hickok2024,Maksymiuk2026}).

In many situations, however, a single filtration parameter is not sufficient to encode the relevant structure of the data.
This limitation motivated the introduction of multiparameter persistent homology \cite{Carlsson2009}, where spaces are filtered simultaneously by several parameters (e.g. scale and density).
Multiparameter persistent homology (MPH) has been a rapidly developing area, with multiple review articles having already appeared \cite{Botnan2023, BauerBrustleScoccola2026}.
Recent work has advanced the structure theory and invariants of multiparameter persistence modules \cite{Bjerkevik2025,BauerScoccola2025,Bauer2026,FersztandJacquard2024,Botnan2024,Brustle2025}, established new stability results \cite{Blumberg2022,Botnan2024,Oudot2024,Fersztand2024}, and developed computational methods and learning pipelines \cite{Dey2025,FersztandJendrysiak2026,Scoccola2024,Kerber2024}. At the same time, MPH has begun to appear in applications, especially to biomedical data \cite{Keller2018,Vipond2021,Benjamin2024, Chung2022}.

A foundational question for any data descriptor is how much information is lost when passing from the original data to the descriptor. This question belongs to the broader class of inverse problems, which include questions of injectivity, such as describing the preimages of a descriptor value, and questions of realisability, such as determining which descriptor values can arise from a prescribed class of data. Inverse problems have been a recurrent theme in persistent homology.

In this paper, we study the inverse problem for multiparameter persistent homology.
On the side of realisability, the question was settled positively for homology of degree at least $1$ when MPH was introduced in \cite{Carlsson2009}.
The remaining case of degree 0 has recently been explored in \cite{BauerBotnan2026}, where the authors showed that 0th homology is far from surjective.

We focus on the complementary question of injectivity. To the best of our knowledge, this is the first such study for MPH.
Specifically, the data in our setting consist of vector-valued filters on a fixed finite simplicial complex,
which determine multiparameter sublevel set filtrations and generalise the study of the fiber problem by Leygonie and the second author in \cite{LeygonieTillmann2022} from one parameter to several parameters.

As to be expected, there are several subtleties that need to be addressed and taken into account when moving from one-parameter to the multiparameter case.
In the one-parameter case, the persistent homology of a filter is uniquely described by barcodes.
Passing to several parameters requires us to work directly with the more complex moduli space of isomorphism classes of persistence modules for which no discrete complete invariant exists.
In this setting,
the minimal grid of a module plays part of the role of barcode endpoints in one parameter,
the interleaving distance no longer admits a description in terms of matchings of bars, and reparametrisations of the interval have to be replaced by order-preserving reparametrisations of higher dimensional cubes. 
Nevertheless, with this more complicated set-up we are able to describe the geometry of the fibers of MPH and in particular obtain a useful bound on their dimension.

\vspace{6pt}
\noindent
{\bf Content and results.} We give a brief summary of the results.
Given a fixed simplicial complex $K$, let $\filt^n$ denote the space of $n$-filter functions on $K$. These are functions 
\begin{equation*}
    f: K \to I^n, \quad \text{where} \quad I = [0,1],
\end{equation*}
that are monotone with respect to inclusion and the product order on $I^n$. Each $n$-filter function (or $n$-filter for short) gives rise to a sublevel set filtration of $K$: for $t \in \mathbb{R}^n$, let
\begin{equation*}
    K(f)_t = \{\sigma \in K \mid f(\sigma) \leq t\}.
\end{equation*}
Passing to degree $p \geq 0$ homology over a fixed field $\Bbbk$ produces an $n$-parameter persistence module, that is, a functor from the poset $(\mathbb{R}^n, \leq)$ to the category of finite-dimensional $\Bbbk$-vector spaces $\vect$,
\begin{equation*}
    H_p \circ K(f): (\mathbb{R}^n, \leq) \to \vect.
\end{equation*}
The finiteness of $K$ ensures this persistence module is essentially finite, meaning that it is determined by its restriction to a finite grid. Indeed, any such grid containing $\im f\subset I^n$ will serve.
The minimal such grid will be a central object for this work (\autoref{thm:min-grid}).
Our main technical input (\autoref{klem:nearby-grids}) establishes a local persistence property of minimal grids: the minimal grid of a module must be contained, up to a small displacement, in the minimal grid of any sufficiently close module.

We define $\modspace{n}$ to be the moduli space of isomorphism classes of essentially finite persistence modules that admit a grid in $I^n$ equipped with the interleaving metric. For $n=1$, this space is the space of barcodes whose finite endpoints lie in $I$ with the bottleneck distance.
We define the multiparameter persistent homology map by descending to isomorphism classes and combining all homological degrees (\autoref{defn:MPH-map}):
\begin{equation*}
    MPH := (MPH_0, \dots, MPH_{\dim K}): \filt^n \to \modspace{n}^{\dim K +1},
\end{equation*}
where $MPH_p(f) := [H_p \circ K(f)]$ for each $p = 0,\dots, \dim K$.
With this set-up, studying the information loss of multiparameter persistent homology translates to studying the fiber over a point in the image 
\begin{equation*}
    \modspace{n}^K := MPH(\filt^n).
\end{equation*}
To do so, we study more closely the structure of both $\filt^n$ and $\modspace{n}^K$ via actions induced by transformations of the parameter space $I^n$ which in turn leads us to study its group of automorphisms first.

The group of order isomorphisms of $I^n$ contains one connected component for each permutation of the $n$ axes of the cube, each of which is isomorphic to $\aut (I, \leq)^n$. Indeed, as a group,
\begin{equation*}
    \aut (I^n, \leq) \cong \aut (I, \leq)^n \rtimes \Sigma_n.
\end{equation*}
Since the parameters by which we filter a simplicial complex carry some meaning (e.g. scale and density) and are generally not interchangeable, we ignore the permutations and restrict ourselves to the identity component
$\aut_0 (I^n, \leq) \cong \aut (I, \leq)^n$.
We show that its closure in $\en(I^n, \leq)$ is 
\begin{equation*}
    \overline{\aut_{0}(I^n, \leq)} \cong \epi(I, \leq)^n,
\end{equation*}
where $\epi(I,\leq)$ is the set of surjective order-preserving maps of the interval.

Both $\aut_0(I^n,\leq)$ and its closure act naturally on $\filt^n$ by post-composition and on $\modspace{n}$ by moving, and in the closure possibly collapsing, the minimal grid of a module.
Crucially, the $MPH$ map is shown to be equivariant with respect to these actions (\autoref{thm:equivariance}).

An $n$-filter induces $n$ preorders on the simplices of $K$, which remain fixed within a $\aut_0(I^n,\leq)$-orbit. Thus, an orbit in $\filt^n$ is parametrised by the ordered sequences of non-boundary values that filters take in each coordinate. Similarly, an $\aut_0(I^n,\leq)$-orbit in $\modspace{n}$ is parametrised by the interior points of minimal grids. \autoref{klem:nearby-grids} is then used to prove that this parametrisation is continuous. In both settings, the resulting coordinate map identifies each orbit homeomorphically with a product of $n$ open standard simplices of possibly different dimensions.

From \autoref{klem:nearby-grids}, we can also determine sufficient conditions for when a given module is in the $\aut_0(I^n,\leq)$- or $\overline{\aut_0(I^n,\leq)}$-orbit of a sufficiently close module (\autoref{lem:closer-than-granularity}, \autoref{lem:squeeze-grid}), which allows us to understand how different orbits glue together.
We then show $\filt^n$ and $\modspace{n}^K$ are naturally stratified with the strata given by the orbits of the respective action of $\aut_0(I^n,\leq)$, and that the closure of each orbit is the orbit of the closure of $\aut_0(I^n,\leq)$. As a consequence of its equivariance, we deduce $MPH$ is a strongly stratified map, sending each stratum in $\filt^n$ surjectively onto a stratum of $\modspace{n}^K$ (\autoref{thm:MPH-stratified-map}).

Under the identification of strata in $\filt^n$ and $\modspace{n}^K$ with products of simplices, 
the $MPH$ map restricted to the closure of a filter stratum corresponds to a projection recording precisely those filter values that belong to the minimal grid of the resulting persistence module. 
Consequently, we find that the $MPH$ map restricted to a stratum in $\modspace{n}^K$ is a trivial fiber bundle with fiber a polyhedral complex (\autoref{thm:topology-of-fiber}).

Building on recent results by Guidolin and Landi on homological Morse numbers \cite{GuidolinLandi2023}, we obtain an upper bound on the dimension of the fibers (\autoref{prop:bound-dim-fiber}): for $[\bm{M}] \in \modspace{n}^K$,
\begin{equation*}
    \dim MPH^{-1}([\bm{M}]) \leq \frac{n}{2} (|K|-\mathcal{L}(\bm{M})),
\end{equation*}
where $|K|$ is the number of simplices in $K$ and $\mathcal{L}(\bm{M})$ is a non-negative integer that solely depends on the Betti tables of $\bm{M}$. For $n=1$, $\mathcal{L}(\bm{M})$ is the number of endpoints in the total barcode, thus recovering the bound from \cite{LeygonieTillmann2022}. We also show that for an $n$-filter $f$ that is injective in each coordinate, the bound on the dimension of the fiber of $MPH(f)$ is 0.

In the final section, we study the example of 2-filters on the 1-simplex, and explicitly describe the strata in the image of $MPH$, as well as the fibers over each stratum. In this case, we show that the bound on the dimension of the fiber is attained for every module stratum. For an arbitrary $K$, we do not know if the bound is sharp for all module strata.

\vspace{6pt}
\noindent
{\bf Related work and outlook.}
As in \cite{LeygonieTillmann2022}, we work in a setting where the simplicial complex $K$ is fixed and consider sublevel set filtrations on $K$. That work was followed by the design and implementation of an algorithm for recovering the polyhedra in the fiber of persistent homology \cite{Leygonie2024}. Injectivity questions have also been studied in several other settings for one-parameter persistence. We briefly review recent advances in these directions, which we expect to admit suitable multiparameter analogues.

A continuous version of the interval problem was studied in \cite{Curry2018}, where a bound was given on the number of connected components of the fiber of persistent homology for continuous functions on the interval. A related version for certain families of functions on the sphere was studied in \cite{Catanzaro2020}. The fibers of persistent homology for Morse functions on compact manifolds with boundary were studied in \cite{Leygonie2022}, where each path component was shown to be an orbit under isotopies of the manifold. The same authors later studied the problem when the space is not a manifold but an arbitrary geometric tree \cite{Beers2025}.

For point cloud data, \cite{Gameiro2016} introduced a continuation method for inverse problems from persistence diagrams to point clouds. In \cite{Smith2024}, the authors described generic families of point clouds with identical, or even trivial, one-dimensional persistence. Most recently, \cite{Beers2024} studied spaces of point clouds with a fixed barcode using tools from real algebraic geometry and rigidity theory.

\vspace{6pt}
\noindent
{\bf Acknowledgements.}
The last author would like to thank Francesca Tombari for insightful suggestions, and Jan Jendrysiak and Lukas Waas for helpful discussions and valuable comments on earlier versions of this manuscript.
MT is funded by the EPSRC grant EP/W524311/1. MT, HAH and UT are grateful for the support provided by the UK Centre for Topological Data Analysis, EPSRC grant EP/Z531224/1.
This paper has been proofread with the help of ChatGPT.

\newpage

\section{The space of multiparameter persistence modules}
\label{sec:mpms}

In this section we review the basic notions of multiparameter persistence modules (defined in $\mathbb{R}^n$) and the interleaving distance. We focus on essentially finite modules, that is, those which are completely determined by their restriction to a finite grid. 
Among all such grids there is a unique minimal one, which will play a central role throughout the paper. 
We analyse how interleavings restrict the possible minimal grids of nearby modules.
Finally, we introduce Betti tables, invariants of multiparameter persistent modules which will be key to our analysis of the fiber of $MPH$.

\subsection{Basic definitions}

Throughout this paper, we fix $n\geq 1$ the number of parameters. We write $\leq$ for the product order in $\mathbb{R}^n$, that is, for $x,y \in \mathbb{R}^n$, $x \leq y$ if and only if $x_i \leq y_i$ for all $i=1,\dots, n$.

\begin{defn}[$n$-parameter persistence module]
    Let $\vect$ be the category of finite-dimensional vector spaces and linear maps. For any poset category $P$, a $P$-persistence module is a functor $P \to \vect$. Together with natural transformations between them, $P$-persistence modules form a category which is denoted by $\vect^{P}$. In particular, an $n$-parameter persistence module is an $\mathbb{R}^n$-persistence module ${M: (\mathbb{R}^n, \leq)\to \vect}$. We denote the vector space of $M$ at grade $r \in \mathbb{R}^n$ by $M_r$, and for $s\in \mathbb{R}^n$ with $r \leq s$, we write $M_{r,s}: M_r \to M_s$ for the structure map. We denote the isomorphism class of $M$ by $[M]$.
\end{defn}

\begin{rem}
    Note that we are restricting our definition of persistence modules to require that they be pointwise finite-dimensional, that is, such that $M_r$ is a finite-dimensional vector space for all $r\in \mathbb{R}^n$. All persistence modules obtained as the homology of a filtration of a finite data set satisfy this condition.
\end{rem}

One can endow $n$-parameter persistence modules with an algebraically-defined distance known as the interleaving distance \cite{Chazal2009, Lesnick2015}.

\begin{defn}[$\epsilon$-shift functor]
Let $\vec v \in \mathbb{R}^n$. The $\vec v$-shift functor
    \begin{equation*}
        (-)[\vec v]: \vect^{\mathbb{R}^n} \to \vect^{\mathbb{R}^n}
    \end{equation*}
    is defined as follows. For any $M \in \vect^{\mathbb{R}^n}$, $M[\vec v]: (\mathbb{R}^n, \leq) \to \vect$, assigns to any $r \in \mathbb{R}^n$ the vector space $(M[\vec v])_r = M_{r + \vec v}$, 
    and to comparable grades $r \leq s$ the linear map $(M[\vec v])_{r,s} = M_{r + \vec v, s + \vec v}$.
    For natural transformations $\eta: M \Rightarrow N$, $\eta[\vec v]_r = \eta_{r+\vec v}$ for each $r \in \mathbb{R}^n$.
    Finally, note that the internal maps of $M$ canonically assemble into a natural transformation $S^{M,\vec v}: M \to M[\vec v]$, with $S^{M,\vec v}_r = M_{r, r + \vec v}$.

    Let $\epsilon \geq 0$ and denote $\vec\epsilon = (\epsilon, \epsilon, \dots, \epsilon) \in \mathbb{R}^n$. A shift by $\pm \vec\epsilon$ will be denoted simply as $M[\pm \epsilon]$. 
\end{defn}

\begin{defn}[$\epsilon$-interleaving]
    For any $\epsilon \geq 0$, an $\epsilon$-interleaving between functors $M, N: (\mathbb{R}^n, \leq) \to \vect$ consists of a pair of natural transformations
    \begin{equation*}
        \gamma: M \to N[\epsilon], \quad \kappa: N \to M[\epsilon]
    \end{equation*}
    such that
    \begin{equation*}
        \kappa[\epsilon] \circ \gamma = S^{M, 2\vec\epsilon}, \quad
        \gamma[\epsilon] \circ \kappa = S^{N, 2\vec\epsilon}.
    \end{equation*}
\end{defn}

\begin{rem}
\label{rem:interleaving-iso-class}
    A 0-interleaving is an isomorphism.
\end{rem}

\begin{defn}[interleaving distance]
    The interleaving distance between two $n$-persistence modules $M,N$ is
    \begin{equation*}
        d_I (M,N) = \inf \{ \epsilon \mid \text{there exists an $\epsilon$-interleaving between $M$ and $N$}\}.
    \end{equation*}
\end{defn}

This defines an extended pseudometric on $\vect^{\mathbb{R}^n}$.

\begin{rem}
    If $M, N$ are $\epsilon$-interleaved, and $M' \cong M$, $N' \cong N$, then $M'$ and $N'$ are $\epsilon$-interleaved. Hence, the interleaving distance is well defined on isomorphism classes of $n$-persistence modules.
\end{rem}

\subsection{Essentially finite modules and minimal grids}

For the rest of the paper, we will focus on a simpler type of modules known as essentially finite persistence modules. These are modules that can be described completely by specifying their vector spaces and maps on a finite grid of $\mathbb{R}^n$. First, we need to consider a method to extend filtrations or persistence modules defined on a grid to the whole space.

\begin{defn}[grid]
    A (finite) grid in $\mathbb{R}^n$ is a product $G = G_1 \times G_2 \times \cdots \times G_n$, where each $G_i \subset \mathbb{R}$ is a finite ordered set. 
\end{defn}

\begin{defn}[restriction of a module along a grid]
    Let $M \in \vect^{\mathbb{R}^n}$ and let $G$ be a grid in $\mathbb{R}^n$, with the induced partial order. Denote the inclusion functor by $\inc_G: (G, \leq) \hookrightarrow (\mathbb{R}^n, \leq)$. The restriction of $M$ onto $G$ is the persistence module
    \begin{equation*}
        M|_G := M \circ \inc_G: (G, \leq) \to \vect.
    \end{equation*}
\end{defn}

\begin{defn}[extension along a grid]
    Let $G = G_1 \times G_2 \times \cdots \times G_n$ be a grid in $\mathbb{R}^n$. If $G \neq \varnothing$, then it has a unique minimal element, denoted by $\min G$. Given $r\in \mathbb{R}^n$ with $r\geq \min G$, let
    \begin{equation*}
        \lfloor r \rfloor_G = \max \{g \in G \mid g \leq r\}
    \end{equation*}
    denote the floor of $r$ with respect to $G$.
    Let $N \in \vect^G$. Then, its extension along $G$ is an $n$-parameter persistence module
    \begin{equation}
        \widehat{N}^G: (\mathbb{R}^n, \leq) \to \vect
    \end{equation}
    defined as follows. For all $r,s \in \mathbb{R}^n$ with $r \leq s$,
    \begin{equation*}
        \widehat{N}^G_r =
        \begin{cases}
            N_{\lfloor r \rfloor} & \text{ if } r \geq \min G, \\
            0 & \text{ otherwise,}
        \end{cases}
        \qquad
        \widehat{N}^G_{r,s} =
        \begin{cases}
            N_{\lfloor r \rfloor, \lfloor s \rfloor}  & \text{ if } r \geq \min G,\\
            0 & \text{ otherwise.}
        \end{cases}
    \end{equation*}
    For natural transformations $\gamma: N \Rightarrow N'$ between $N,N'\in \vect^G$,
    \begin{equation*}
        \widehat{\gamma}^G_{r} =
        \begin{cases}
            \gamma_{\lfloor r \rfloor} & \text{ if } r \geq \min G\\
            0 & \text{ otherwise.}
        \end{cases}
    \end{equation*}
    We omit $G$ and write $\lfloor - \rfloor$ or $\widehat{(-)}$ when the choice of grid is clear from context.
\end{defn}

\begin{rem}
Extension along a grid $G$ defines a functor $\widehat{(-)}^G: \vect^{G} \to \vect^{\mathbb{R}^n}$. In categorical terms this is a left Kan extension, often denoted by $\operatorname{\text{Lan}}_G$.
\end{rem}

\begin{defn}[essentially finite]
    An $n$-parameter persistence module $M \in \vect^{\mathbb{R}^n}$ is essentially finite if there exists a grid $G$ such that $M$ is isomorphic to the extension along $G$ of its own restriction onto $G$,
    \begin{equation*}
        M \cong \widehat{M|_G}.
    \end{equation*}
    We say $G$ is a grid for $M$.
\end{defn}

\begin{rem}
The restriction of a module along the empty grid is the unique functor $\varnothing \to \vect$. Its extension along the empty grid is the 0 module. Hence, the empty grid is a grid solely for the 0 module.
\end{rem}

\begin{rem}
    Let $M$ be a one-parameter persistence module whose barcode only contains intervals of the form $[b,d)$ with $b < d$. Then the endpoints of bars in the barcode form a grid for $M$.
\end{rem}

\begin{rem}
\label{rem:grid-iso-class}
    Let $M, N$ be two isomorphic essentially finite modules. Then $G$ is a grid for $M$ if and only if it is a grid for $N$.
    Hence, we can talk about grids for the isomorphism class of an $n$-persistence module.
\end{rem}

The interleaving distance for essentially finite modules is particularly well-behaved, as is captured by the following result.

\begin{thm}[Closure Theorem, Theorem 6.1 in \cite{Lesnick2015}]
\label{thm:closure-thm}
    If two essentially finite $n$-parameter persistence modules are at an interleaving distance $\epsilon$, then they are $\epsilon$-interleaved. Applied to $\epsilon = 0$, this implies that the interleaving distance descends to an extended metric on isomorphism classes of essentially finite $n$-persistence modules.
\end{thm}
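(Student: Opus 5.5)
The plan is to reduce the closure statement to a finite-dimensional compactness argument. The key input is that essentially finite modules are determined by finitely many vector spaces and linear maps on a grid.

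\textbf{Setup.} Let $M,N$ be essentially finite with $d_I(M,N)=\epsilon$. Choose grids $G$ for $M$ and $G'$ for $N$. By definition of the infimum there are $\epsilon_k \downarrow \epsilon$ and $\epsilon_k$-interleavings $(\gamma^k,\kappa^k)$. Enlarging $G$ and $G'$ keeps them grids, since refining a grid of an extended module still gives a grid. So I may assume $G$ and $G'$ are each closed under the finitely many shifts relevant for comparing grades. This will be arranged concretely in the next step.

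\textbf{Reduction to finite data.} Since $M\cong \widehat{M|_G}$, a natural transformation $\gamma: M\to N[\delta]$ is determined by its components $\gamma_g$ for $g\in G$. Indeed, for $r\geq \min G$ we have $\gamma_r = N_{\lfloor r\rfloor+\delta, r+\delta}\circ \gamma_{\lfloor r\rfloor}$ by naturality, because $M_{\lfloor r\rfloor,r}$ is an isomorphism. Conversely, any family $(\gamma_g)_{g\in G}$ that is natural on $G$ extends this way.

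Moreover, $N[\delta]_g = N_{\lfloor g+\delta\rfloor_{G'}}$ depends on $\delta$ only through the floors $\lfloor g+\vec\delta\rfloor_{G'}$. These floors are constant for $\delta$ in a half-open interval $[\epsilon,\epsilon+\eta)$ to the right of $\epsilon$, because floors are right-continuous. The same holds for $\kappa$ and for the $2\delta$-shifts appearing in the interleaving equations.

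Hence, for all large $k$, the data $(\gamma^k_g,\kappa^k_{g'})$ lie in a single fixed finite-dimensional space of linear maps. Naturality and the interleaving identities become the same set of polynomial equations for all such $k$, and also for $\delta=\epsilon$ itself. The structure maps $N_{g+\epsilon,g+\epsilon_k}$ are isomorphisms in this range.

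\textbf{Conclusion.} I therefore expect the $\epsilon_k$-interleaving for a single large $k$ to transport directly to an $\epsilon$-interleaving. To do this, I would compose with the inverse isomorphisms of the structure maps, defining $\gamma_r := (N_{r+\epsilon,r+\epsilon_k})^{-1}\gamma^k_r$ and similarly for $\kappa$. I would then check naturality and the two interleaving identities on grid points. The identities $\kappa[\epsilon]\gamma = S^{M,2\epsilon}$ then follow from the $\epsilon_k$ versions, because $S^{M,2\epsilon}$ and $S^{M,2\epsilon_k}$ differ by an isomorphism in this range. If that bookkeeping fails, the fallback is compactness: the solution set of the fixed polynomial system is a closed algebraic set that is nonempty for each $k$. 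That approach needs care over general fields, so the transport argument is preferable.

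\textbf{Consequence.} The final sentence of the theorem follows from the case $\epsilon=0$. If $d_I(M,N)=0$, then $M$ and $N$ are $0$-interleaved, hence isomorphic by \autoref{rem:interleaving-iso-class}. The remaining metric axioms descend from the pseudometric.

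\textbf{Main obstacle.} The hardest step is the careful check that the floors relative to $G'$ are locally constant to the right of $\epsilon$ simultaneously for all the finitely many relevant points. This includes the boundary case where $r+\vec\epsilon$ drops below $\min G'$.
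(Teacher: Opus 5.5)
The paper does not prove this statement; it imports it from Lesnick (Theorem 6.1 there, stated for finitely presented modules, which are exactly the essentially finite ones by the Lesnick--Wright lemma quoted later). Your transport argument is therefore an independent, self-contained proof, and it is correct. Citing keeps the paper short. Your route buys self-containment: it uses only the floor/grid machinery the paper develops anyway, and it shows that only finitely many right-continuity conditions on the shift parameter are needed.

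Two points need tightening.

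\textbf{Where the formula is used, and which floor conditions are needed.} Use the formula $\gamma_r := (N_{r+\vec\epsilon,r+\vec\epsilon_k})^{-1}\gamma^k_r$ only at $r=g\in G$. Then extend by $\gamma_r = N_{\lfloor r\rfloor+\vec\epsilon,\,r+\vec\epsilon}\,\gamma_{\lfloor r\rfloor}\,(M_{\lfloor r\rfloor,r})^{-1}$. For any fixed $\epsilon_k>\epsilon$ there are grades $r$ with $r+\vec\epsilon$ just below a hyperplane of $G'$, and there that structure map of $N$ is not invertible. The preliminary enlargement of the grids to be \emph{closed under shifts} is impossible for finite grids, and it is also unnecessary. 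What you need is $\delta=\epsilon_k$ close enough to $\epsilon$ that, for all $g\in G$ and $h\in G'$,
\[ \lfloor g+\vec\epsilon\rfloor_{G'}=\lfloor g+\vec\delta\rfloor_{G'}, \quad \lfloor g+2\vec\epsilon\rfloor_{G}=\lfloor g+2\vec\delta\rfloor_{G}, \]
and the same with the roles of $G$ and $G'$ swapped. Each condition includes the case of lying below the minimum. Right-continuity of floors gives all of them at once.

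\textbf{Checking the interleaving identity.} At $g\in G$, the identity $\kappa[\epsilon]\circ\gamma=S^{M,2\vec\epsilon}$ is not purely a statement on grid points, because $\kappa_{g+\vec\epsilon}$ is evaluated off $G'$. Put $h=\lfloor g+\vec\epsilon\rfloor_{G'}$. Then
\[ \kappa_{g+\vec\epsilon}\gamma_g = M_{h+\vec\epsilon,\,g+2\vec\epsilon}\,(M_{h+\vec\epsilon,\,h+\vec\delta})^{-1}\,\kappa^\delta_h\,(N_{h,\,g+\vec\delta})^{-1}\,\gamma^\delta_g . \]
Post-compose with $M_{g+2\vec\epsilon,\,g+2\vec\delta}$ and use naturality of $\kappa^\delta$ between $h$ and $g+\vec\delta$. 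This yields $\kappa^\delta_{g+\vec\delta}\gamma^\delta_g=M_{g,\,g+2\vec\delta}=M_{g+2\vec\epsilon,\,g+2\vec\delta}\,M_{g,\,g+2\vec\epsilon}$. Since $M_{g+2\vec\epsilon,\,g+2\vec\delta}$ is an isomorphism, it cancels.

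If instead $g+\vec\epsilon\not\geq\min G'$, then $\gamma_g=0$ and $\gamma^\delta_g=0$. The same cancellation gives $M_{g,\,g+2\vec\epsilon}=0$.

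Off-grid grades $r$ then follow from the case $\lfloor r\rfloor$, because $M_{\lfloor r\rfloor,r}$ is invertible. Naturality on $G$ follows in the same way from invertibility of $N_{g'+\vec\epsilon,\,g'+\vec\delta}$. With this bookkeeping the compactness fallback is not needed, and your deduction of the metric statement from the case $\epsilon=0$ is fine.
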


The following is an alternative characterisation of essentially finite modules which will be useful.

\begin{lem}[characterisation of essentially finite persistence modules]
\label{lem:ess-fin-characterisation}
Let $M \in \vect^{\mathbb{R}^n}$. $M$ is an essentially finite persistence module with grid $G$ if and only if
    \begin{enumerate}[label=(\roman*)]
        \item for every $r,s \in \mathbb{R}^n$ with $\min G \leq r \leq s$ such that $\lfloor r \rfloor = \lfloor s \rfloor$, the structure map $M_{r,s}$ is an isomorphism.
        \item for $r \in \mathbb{R}^n$ such that $r \not\geq \min G$, $M_r = 0$,
    \end{enumerate}
\end{lem}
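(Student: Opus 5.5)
We prove the two implications separately. Throughout, $G$ is a nonempty grid; if $G=\varnothing$ then $M$ is a module for the grid $\varnothing$ exactly when $M=0$, and condition (ii) then says $M_r=0$ for all $r$, since no $r$ satisfies $r\geq\min G$. We therefore assume $G\neq\varnothing$.

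\emph{($\Rightarrow$).} Suppose $\phi: M \to \widehat{M|_G}$ is an isomorphism. Take $r \not\geq \min G$. Then $\widehat{M|_G}_r=0$ by definition of the extension, and hence $M_r\cong 0$, which gives (ii). Now take $\min G\leq r\leq s$ with $\lfloor r\rfloor=\lfloor s\rfloor=g$. The structure map of the extension is $\widehat{M|_G}_{r,s}=M_{g,g}=\id$. Naturality of $\phi$ gives
\begin{equation*}
M_{r,s}=\phi_s^{-1}\circ \id\circ\phi_r ,
\end{equation*}
which is an isomorphism, giving (i).

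\emph{($\Leftarrow$).} Here we construct an explicit isomorphism $\psi:\widehat{M|_G}\to M$. For $r\geq\min G$, set $\psi_r := M_{\lfloor r\rfloor, r}: M_{\lfloor r\rfloor}\to M_r$. For $r\not\geq\min G$, set $\psi_r:=0$, which is a map $0\to 0$ by (ii).

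The first step is that each $\psi_r$ is an isomorphism. Since $\lfloor\lfloor r\rfloor\rfloor=\lfloor r\rfloor$ and $\lfloor r\rfloor\leq r$, condition (i) applies to the pair $(\lfloor r\rfloor, r)$ and shows that $M_{\lfloor r\rfloor,r}$ is an isomorphism.

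The second step is naturality: for $r\leq s$ we need $M_{r,s}\circ\psi_r=\psi_s\circ \widehat{M|_G}_{r,s}$.
\begin{itemize}
\item If $r\geq\min G$, then $s\geq\min G$ and monotonicity of the floor gives $\lfloor r\rfloor\leq\lfloor s\rfloor$. Both sides equal $M_{\lfloor r\rfloor,s}$ by functoriality of $M$.
\item If $r\not\geq\min G$, the source $\widehat{M|_G}_r$ is $0$, so both sides vanish.
\end{itemize}
A natural transformation that is an isomorphism at every point is a natural isomorphism, so $M\cong\widehat{M|_G}$.

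The only potential subtlety is the well-definedness of the floor. For $r\geq\min G$, the set $\{g\in G\mid g\leq r\}$ has a maximum, namely the coordinatewise maximum $(\max\{g_i\in G_i \mid g_i\leq r_i\})_i$; this uses that $G$ is a product. Monotonicity $r\leq s\Rightarrow\lfloor r\rfloor\leq\lfloor s\rfloor$ follows from the same coordinatewise description. Everything else is routine.
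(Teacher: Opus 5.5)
Your proof is correct and follows the paper's argument. The reverse direction uses the paper's transformation $\eta_r = M_{\lfloor r\rfloor, r}$, taken to be zero when $r\not\geq\min G$, and your forward direction writes out the naturality computation that the paper compresses into ``follows directly from the definition of extension''.
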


\begin{proof}
The forward implication follows directly from the definition of extension.
Conversely, assume (i) and (ii) and let us build $\eta: \widehat{M|_G}\Rightarrow M$. For $r\geq\min G$, define
$\eta_r=M_{\lfloor r\rfloor_G,r}$, and for $r\not\geq\min G$ let
$\eta_r$ be the 0 map. These form a natural transformation, and by (i) and (ii), every $\eta_r$ is an isomorphism, hence $\eta$ is a natural
isomorphism.
\end{proof}

The rest of this section will be dedicated to showing that there exists a unique minimal grid for an essentially finite module.

\begin{lem}
\label{lem:non-minimal-grid}
    Let $M$ be an essentially finite module with grid $G = G_1 \times \dots \times G_n$. Let $e_j$ denote the vector that has a 1 as the $j$-th coordinate and 0 otherwise. Assume that for some $j$ there exist $x\in G_j$ and a $\delta > 0$ such that there are no points of $G_j$ in $(x-\delta,x)$ and for all $g\in G$ with $j$-th coordinate $g_j = x$, the map $M_{g-\delta e_j, g}$ is an isomorphism. Then the grid that results from deleting $x$, 
    \begin{equation*}
    G' = G_1 \times \dots\times (G_j \setminus \{x\}) \times\dots\times G_n
    \end{equation*}
    is a grid for $M$.
\end{lem}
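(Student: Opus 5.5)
The plan is to verify the two conditions of \autoref{lem:ess-fin-characterisation} for $G'$, using that they already hold for $G$. Write $x^+$ for the successor of $x$ in $G_j$ (or $+\infty$ if there is none) and $x^-$ for its predecessor when it exists. The hypothesis that $(x-\delta,x)\cap G_j=\varnothing$ gives $x^-\le x-\delta$. I will split into two cases: $x=\min G_j$, and $x$ not minimal.

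\emph{Case $x=\min G_j$.} Take any $g\in G$ with $g_j=x$. Then $g-\delta e_j\not\geq\min G$, so $M_{g-\delta e_j}=0$ by (ii) for $G$. Since $M_{g-\delta e_j,g}$ is an isomorphism, this forces $M_g=0$. Now let $r\geq \min G$ with $x\le r_j<x^+$. Its $G$-floor has $j$-th coordinate $x$, so by (i) for $G$ we get $M_r\cong M_{\lfloor r\rfloor}=0$. Hence $M$ vanishes on every $r\not\geq\min G'$, which is (ii) for $G'$. If $G_j=\{x\}$, then $G'$ is empty and this argument shows $M=0$, as required.

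Condition (i) for $G'$ in this case: take $\min G'\le r\le s$ with equal $G'$-floors. Then $r_j\ge x^+$, and the $G$-floors coincide with the $G'$-floors. So (i) for $G'$ is immediate from (i) for $G$.

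\emph{Case $x^-$ exists.} Here $\min G'=\min G$, so (ii) for $G'$ is inherited from (ii) for $G$. For (i), the key local claim is the following. Let $r\ge\min G$ with $x\le r_j<x^+$, and let $r'$ be $r$ with its $j$-th coordinate replaced by any value in $[x^-,x)$. Then $M_{r',r}$ is an isomorphism.

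To prove the claim, set $g=\lfloor r\rfloor_G$, so that $g_j=x$. Then $r'$ and $g-\delta e_j$ have the same $G$-floor, namely $g$ with $x$ replaced by $x^-$. By (i) for $G$, applied with comparable points, we get a commutative square through $g-\delta e_j$, $g$, $r'$ and $r$. In this square $M_{g-\delta e_j,g}$ is an isomorphism by hypothesis, $M_{g,r}$ is an isomorphism by (i), and the map relating $r'$ and $g-\delta e_j$ is an isomorphism by (i), passing through their common floor if they are not comparable. A 2-out-of-3 argument then gives that $M_{r',r}$ is an isomorphism.

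For general $\min G\le r\le s$ with $\lfloor r\rfloor_{G'}=\lfloor s\rfloor_{G'}$, define $\hat r$ to be $r$ with $j$-th coordinate $\min(r_j,x-\delta)$, and define $\hat s$ from $s$ in the same way. Then $\hat r\le\hat s$, and these two points have the same $G$-floor, so $M_{\hat r,\hat s}$ is an isomorphism. Each of $M_{\hat r,r}$ and $M_{\hat s,s}$ is either an identity or an instance of the claim. Naturality gives $M_{r,s}\circ M_{\hat r,r}=M_{\hat s,s}\circ M_{\hat r,\hat s}$, and since three of these four maps are isomorphisms, $M_{r,s}$ is one too.

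The main obstacle is the bookkeeping in the local claim. Care is needed to route through $g-\delta e_j$ using only comparable pairs, or else to pass through common floors. It is also important to use that $x^-\le x-\delta$, so that the lowered points keep the same $G$-floor.
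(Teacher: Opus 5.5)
\textbf{There is a gap in your treatment of condition (i) when $x^-$ exists.} You replace $r_j$ by $\min(r_j,x-\delta)$ for \emph{every} pair $\min G\le r\le s$ with equal $G'$-floors. You then assert that $M_{\hat r,r}$ and $M_{\hat s,s}$ are identities or instances of your local claim. This fails when the common $G'_j$-floor is at least $x^+$.

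In that case $r_j,s_j\ge x^+$, so $\hat r_j=\hat s_j=x-\delta$. The map $M_{\hat r,r}$ then crosses the genuine grid point $x^+$. Your claim does not cover it, since the claim requires $r_j<x^+$, and the map need not be an isomorphism. For example, take $n=1$, $G=\{x^-,x,x^+\}$, and $M$ the interval module on $[x^+,\infty)$. The hypothesis holds, because $M_{x-\delta,x}$ is $0\to 0$. But for $r=s=x^+$ the map $M_{\hat r,r}=M_{x-\delta,x^+}\colon 0\to\Bbbk$ is not an isomorphism. So the step ``three of these four maps are isomorphisms'' is unjustified there.

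There is a smaller omission of the same kind. If $x-\delta<r_j<x$, then $M_{\hat r,r}$ is neither an identity nor an instance of the claim. It is, however, an isomorphism directly by (i) for $G$.

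\textbf{The repair is short and matches the paper's case split.} Suppose the common $G'_j$-floor $c$ differs from $x^-$. Then either $r_j,s_j$ lie below the $G_j$-successor of $c$, which is at most $x^-$, or $r_j,s_j\ge x^+$. In both cases the $G$-floors of $r$ and $s$ coincide with their $G'$-floors, and (i) for $G$ applies directly, exactly as in your case $x=\min G_j$. Only when $c=x^-$, that is $x^-\le r_j\le s_j<x^+$, do you need the lowering to $x-\delta$. There your local claim and the naturality square work as written.

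With this fix your argument is essentially the paper's. Both proofs verify the characterisation lemma for $G'$ and isolate the case where the $G$-floors straddle $x$. Both then route through $j$-th coordinate $x-\delta$, where the hypothesis supplies the isomorphism. Your local claim passes through $\lfloor r\rfloor_G$ and the common floor $g^-$. This replaces the paper's preliminary step extending the hypothesis from grid points to all $r$ with $r_j=x$.
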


\begin{proof}
    We can assume without loss of generality that $j = 1$, $x \in G_1$.
    First, we show that if for all $g\in G$ with first coordinate $g_1 = x$, the map $M_{g-\delta e_1, g}$ is an isomorphism, then for all $r \in \mathbb{R}^n$ with $r_1 = x$, $M_{r-\delta e_1, r}$ is an isomorphism. If $r\not\geq \min G$, necessarily $r_k < \min G_k$ for some $k \neq 1$, so $r-\delta e_1 \not\geq \min G$. At both grades $M$ is 0 and $M_{r-\delta e_1, r} = 0$ is an isomorphism.
    
    Assume $r\geq \min G$. Since there are no points of $G_1$ in $(x-\delta, x)$,
    $\lfloor \lfloor r \rfloor_G - \delta e_1 \rfloor_G = \lfloor r - \delta e_1 \rfloor_G$ and hence
    \begin{equation*}
        M_{r-\delta e_1, r} = M_{\lfloor r \rfloor_G,r} \circ M_{\lfloor r \rfloor_G - \delta e_1, \lfloor r \rfloor_G} \circ (M_{\lfloor r \rfloor_G - \delta e_1,r-\delta e_1})^{-1}
    \end{equation*}
    is an isomorphism.
    
    To prove $G'$ is a grid for $M$, we use the characterisation from \autoref{lem:ess-fin-characterisation}. We assume $G' \neq \varnothing$ for now, or equivalently $|G_1| \geq 2$.
    If they exist, we denote the points in $G_1$ immediately preceding or following $x$ as $x^-$ and $x^+$, and otherwise we set $x^- = -\infty$, $x^+ = +\infty$.

    To prove (i), let $r,s \in \mathbb{R}^n$ satisfy $\min G' \leq r \leq s$ and $\lfloor r \rfloor_{G'} = \lfloor s \rfloor_{G'}$. 
    Write $\lfloor r \rfloor_{G'_1}$ for the first coordinate of $\lfloor r \rfloor_{G'}$.  Since $\min G \leq r \leq s$ and for $j>1$ we have $\lfloor r \rfloor_{G_j} = \lfloor s \rfloor_{G_j}$, the only nontrivial case is when $\lfloor r \rfloor_{G'_1} = \lfloor s \rfloor_{G'_1}$ but $(\lfloor r \rfloor_{G})_1 \ne (\lfloor s \rfloor_{G})_1$, which occurs exactly when
    \begin{equation*}
        x^- \leq r_1 < x \leq s_1 < x^+.
    \end{equation*}

    Since there are no points of $G_1$ in $(x-\delta,x)$, $\lfloor r \rfloor_G = \lfloor (x-\delta, r_2, \dots, r_n) \rfloor_G$, and also $\lfloor s \rfloor_G = \lfloor (x, r_2, \dots, r_n) \rfloor_G$. The map $M_{(x-\delta, r_2, \dots, r_n), (x, r_2, \dots, r_n)}$ is an isomorphism as discussed above. If $r_1 \leq x-\delta$, then
    \begin{equation*}
        M_{r,s} =
        M_{(x, r_2, \dots, r_n), s}
        \circ
        M_{(x-\delta, r_2, \dots, r_n), (x, r_2, \dots, r_n)}
        \circ
        M_{r, (x-\delta, r_2, \dots, r_n)}.
    \end{equation*}
    If $r_1 > x-\delta$, the last morphism is substituted by $M_{(x-\delta, r_2, \dots, r_n),r}^{-1}$. This proves $M_{r,s}$ is an isomorphism.

    To prove (ii), let $r \not\geq \min G'$. Then for some $k=1,\dots,n$, $r_k < \min G'_k$. If $k \geq 2$, then $G'_k = G_k$ and hence $\min G'_k = \min G_k$, so $r \not\geq \min G$ and $M_r = 0$ because $G$ is a grid for $M$.

    Assume $k=1$ and $r_1 < \min G'_1$. 
    If $x > \min G_1$, then again $\min G'_1 = \min G_1$ and $M_r = 0$. 
    If $x = \min G_1$ and $r_1 < x$, we likewise have $r \not\geq \min G$ and $M_r = 0$.
    
    It remains to consider $x \leq r_1 < \min G'_1 = x^+$. 
    Then $\lfloor r \rfloor_G = \lfloor (x, r_2,\dots,r_n) \rfloor_G$, and by hypothesis 
    $M_{(x-\delta, r_2,\dots,r_n), (x, r_2,\dots,r_n)}$ is an isomorphism. Here $x=\min G_1$, so $x-\delta<\min G_1$, and $(x-\delta, r_2,\dots,r_n) \not\geq \min G$. Therefore, 
    \begin{equation}
    \label{eq:travel-to-0}
        0 = M_{(x - \delta, r_2, \dots, r_n)} \cong M_{(x, r_2, \dots, r_n)} \cong M_r.
    \end{equation}

    Finally, let us consider the case $G'=\varnothing$, i.e.\ $G_1=\{x\}$. Since the extension along an empty grid is the zero module, showing that $G'$ is a grid for $M$ amounts to proving $M=0$. 
    For all $r \not\geq \min G$, $M_r=0$, and if $r \geq \min G$, the same reasoning as above yields \eqref{eq:travel-to-0}. 
    Thus $M=0$, completing the proof.
\end{proof}

\begin{thmdef}
    \label{thm:min-grid}
    Let $M$ be an essentially finite module. The set of grids for $M$ with the partial order induced by inclusion contains a unique minimal element, which we will refer to as the minimal grid for $M$.
\end{thmdef}

\begin{rem}
\label{rem:0modulegrid}
    As a convention, we will fix that the minimal grid of the 0 module is $\varnothing^n$, so every coordinate set is empty.
\end{rem}

\begin{proof}
    Since $M$ is essentially finite there exists at least one grid for $M$.
    If some grid for $M$ is empty then it is the unique minimal grid, so we are done.
    Let $G = G_1 \times \dots \times G_n$ and $H = H_1 \times \dots \times H_n$ be two non-empty grids for $M$, and assume they are both minimal with respect to inclusion.

    Assume $G \neq H$ to reach contradiction. Without loss of generality, assume there exists $x \in G_1$ such that $x \notin H_1$. Let 
    \begin{equation*}
        \delta = \min_{y \in (G_1 \setminus\{x\}) \cup H_1} | y-x | / 2.
    \end{equation*}
    Since $x \notin H_1$ and $H_1 \neq \varnothing$, the minimum is well-defined and $\delta > 0$, and it ensures there are no points of $G_1$ in $(x-\delta,x)$.

    For each $k \geq 2$, choose $g_k\in G_k$ (so $(x,g_2,\dots,g_n)$ ranges over all $g\in G$ with first coordinate $x$).

    By choice of $\delta$, the nearest $H_1$-point to $x$ has distance at least $2\delta$, hence $(x,g_2, \dots, g_n) \not\geq \min H$ if and only if $(x - \delta,g_2, \dots, g_n) \not\geq \min H$. In that case $M_{(x - \delta,g_2, \dots, g_n)} = 0$ and $M_{(x,g_2, \dots, g_n)} = 0$ and the structure map between them is an isomorphism. If $(x,g_2, \dots, g_n) \geq \min H$, then also $(x - \delta,g_2, \dots, g_n) \geq \min H$ and again by the choice of $\delta$ we have
    \begin{equation*}
        \lfloor (x-\delta, g_2, \dots, g_n) \rfloor_H = \lfloor(x, g_2, \dots, g_n) \rfloor_H
    \end{equation*}
    so $M_{(x-\delta, g_2, \dots, g_n), (x, g_2, \dots, g_n)}$ is an isomorphism.

    Hence, by \autoref{lem:non-minimal-grid}, removing $x$ from $G_1$ yields a strictly smaller grid for $M$, contradicting minimality of $G$. Therefore $G=H$, and minimal grids are unique.

    Finally, every grid contains an inclusion-minimal subgrid which is a grid for $M$. By uniqueness, this subgrid is the grid constructed above. Consequently, it is contained in every grid for $M$.
\end{proof}

\begin{rem}
    The minimal grid of an isomorphism class of $n$-persistence modules is also well defined.
    The minimal grid has appeared previously in the literature without a dedicated name (e.g. as the finite aligned subgrid of \cite[Definitions 7.1 and 7.3]{Blanchette2025}) and has also recently been called the induced grid \cite[Definition 2.5]{FersztandJendrysiak2026}. 
\end{rem}

To finish this section, we relate the intrinsic notion of minimal grid to the notion of minimal presentation of a module (more on this perspective on \autoref{subsec:betti}).

\begin{lem}[from Proposition 2.9 of \cite{LesnickWright2015}]
    An $n$-parameter persistence module is essentially finite if and only if it is finitely presentable.
\end{lem}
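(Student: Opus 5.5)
We prove the two implications separately, working over $\mathbb{R}^n$ with the definitions of this section. Throughout we use \autoref{lem:ess-fin-characterisation} and the fact that free modules generated at a grade $g$ are $\Bbbk[\uparrow g]$, the module that is $\Bbbk$ on the upset $\{r \geq g\}$ and has identity structure maps there.

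\emph{Essentially finite $\Rightarrow$ finitely presentable.}
Suppose $M \cong \widehat{M|_G}$ for a finite grid $G$.

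First, choose generators. For each $g \in G$, pick a basis of a complement of the image of $\bigoplus_{g' < g} M_{g'} \to M_g$. The set of all such basis vectors, over all $g\in G$, is finite because $G$ is finite and each $M_g$ is finite-dimensional.

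Second, form the free module $F = \bigoplus \Bbbk[\uparrow g]$ over these generators, together with the natural surjection $\pi: F \to M$. Surjectivity holds at grid points by induction along the finite poset $G$. At an arbitrary $r \geq \min G$ it then follows from $M_{\lfloor r\rfloor, r}$ being an isomorphism. For $r \not\geq \min G$ both $F_r$ and $M_r$ vanish, since every generator grade lies in $G$.

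Third, show the kernel is finitely generated. Both $F$ and $M$ have $G$ as a grid, so the kernel also has $G$ as a grid: kernels of maps between extensions along $G$ are extensions along $G$, because extension is exact, being computed pointwise via $\lfloor - \rfloor$. Applying the same generator argument to the kernel gives finitely many relations, all at grid points.

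\emph{Finitely presentable $\Rightarrow$ essentially finite.}
Let $F_1 \xrightarrow{\partial} F_0 \to M \to 0$ be a presentation with finitely many generators and relations, at grades $g^1,\dots,g^m$. Take $G_i$ to be the set of $i$-th coordinates of all these grades; this gives a finite grid $G$. Each $\Bbbk[\uparrow g^k]$ satisfies conditions (i) and (ii) of \autoref{lem:ess-fin-characterisation} with respect to $G$. Indeed, $r \geq g^k$ if and only if $\lfloor r\rfloor_G \geq g^k$, since each coordinate of $g^k$ lies in $G_i$; hence every free summand has $G$ as a grid.

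The cokernel of $\partial$ then inherits conditions (i) and (ii). If $\lfloor r\rfloor = \lfloor s\rfloor$, the structure maps $F_{1,r,s}$ and $F_{0,r,s}$ are isomorphisms, so the induced map on cokernels is an isomorphism. If $r \not\geq \min G$, then $F_{0,r}=0$ and therefore $M_r=0$. By \autoref{lem:ess-fin-characterisation}, $G$ is a grid for $M$.

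The main obstacle is the exactness claim in the first direction: that the kernel of a map between modules with grid $G$ again has grid $G$. We handle it pointwise. Given $\lfloor r\rfloor=\lfloor s\rfloor$, the isomorphisms $F_{r,s}$ and $M_{r,s}$ commute with $\pi$, so they restrict to an isomorphism of kernels; the vanishing condition (ii) is immediate. This same pointwise argument also covers the cokernel step in the second direction.
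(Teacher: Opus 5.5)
Your proof is correct, and it differs from the paper mainly because the paper gives no proof of its own. It defers to Proposition 2.9 of \cite{LesnickWright2015} and only records two consequences of that argument, which then yield \autoref{lem:min-grid-min-presentation}: every grid of $M$ contains the grades of a minimal presentation, and a grid containing the grades of some presentation is a grid for $M$.

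Your argument is a self-contained replacement that uses nothing beyond \autoref{lem:ess-fin-characterisation}, and it proves both consequences directly. Choosing, at each $g\in G$, a basis of a complement of $\sum_{g'<g}\im M_{g',g}$ gives a minimal generating set. Points $r<g$ off the grid add nothing, since $M_r$ is either $0$ or isomorphic to $M_{\lfloor r\rfloor}$ with $\lfloor r\rfloor<g$. You then correctly show that the kernel again has $G$ as a grid, and doing the same there gives a minimal set of relations. Hence every grid contains the grades of a minimal presentation. Your second direction is exactly the converse statement.

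A nice feature of your route is that finite generation of the kernel needs no appeal to Noetherianity of the polynomial ring. That would be the natural tool on the algebraic route of regrading to $\mathbb{Z}^n$-graded modules used in \autoref{subsec:betti}. Instead it follows because the kernel is pointwise finite-dimensional and controlled by the finite grid $G$.

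The only thing you gloss over is the degenerate case $G=\varnothing$ (equivalently, a presentation with no generators), where $\min G$ is undefined. There $M=0$ and both directions are trivial.
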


From the proof of this lemma, it follows that any grid of a module contains the grades of all generators and relations in a minimal presentation; and conversely, that a grid containing the generators and relations of a presentation is a grid of the module. As a consequence, we find that the minimal grid of a module can be read off from a minimal presentation.

\begin{lem}
\label{lem:min-grid-min-presentation}
    The minimal grid of a module is the smallest grid containing the grades of all generators and relations in a minimal presentation of the module.
\end{lem}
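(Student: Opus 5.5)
Let $P$ be a minimal presentation of the essentially finite module $M$. Write $\mathcal{G}(P)$ for the smallest grid containing the grades of all generators and relations of $P$: the product over $i$ of the sets of $i$-th coordinates of these grades. The goal is to show that $\mathcal{G}(P)$ equals the minimal grid $G^{\min}$ of \autoref{thm:min-grid}. I would prove both inclusions, using the two facts extracted from the proof of Proposition 2.9 of \cite{LesnickWright2015}:
\begin{enumerate}[label=(\alph*)]
\item every grid for $M$ contains the grades of all generators and relations of a minimal presentation;
\item any grid containing the grades of the generators and relations of some presentation is a grid for $M$.
\end{enumerate}
The degenerate case $M=0$ is immediate. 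The minimal presentation is then empty, so $\mathcal{G}(P)=\varnothing^n$, which agrees with the convention of \autoref{rem:0modulegrid}.

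For $\mathcal{G}(P)\subseteq G^{\min}$, apply (a) to the grid $G^{\min}$. It contains every grade of a generator or relation of $P$. Since $G^{\min}$ is a product of coordinate sets, it then contains every coordinate of these grades, and hence all of $\mathcal{G}(P)$.

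For $G^{\min}\subseteq \mathcal{G}(P)$, apply (b) to $\mathcal{G}(P)$, which by construction contains all generator and relation grades of $P$. So $\mathcal{G}(P)$ is a grid for $M$, and by \autoref{thm:min-grid} the minimal grid is contained in every grid. This gives $G^{\min}=\mathcal{G}(P)$. Since minimal presentations are unique up to isomorphism, their multisets of grades agree (they are the multigraded Betti numbers $\beta_0,\beta_1$), so the statement does not depend on the choice of minimal presentation.

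The main obstacle is fact (a), which is only implicit in \cite{LesnickWright2015}. If I had to justify it directly, I would argue through \autoref{lem:ess-fin-characterisation}. Suppose $G$ is a grid and some generator or relation grade $r$ of a minimal presentation lies outside $G$; then $r\neq\lfloor r\rfloor_G$ (or $r\not\geq\min G$). Consider the structure maps $M_{s,r}$ for $s<r$ with $\lfloor s\rfloor_G=\lfloor r\rfloor_G$. These are isomorphisms, and so is their colimit over all $s<r$, so $M_r$ has no indecomposables at $r$: the generator count $\beta_0(r)=\dim\bigl(M_r/\sum_{s<r}\im M_{s,r}\bigr)$ vanishes. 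The analogous statement for relations should follow by applying the same reasoning to the kernel of the free cover, or by the Koszul-complex description of $\beta_1$. I would mainly cite (a) and (b), adding this sketch for completeness.
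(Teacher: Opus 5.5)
Your proposal is correct and takes essentially the same route as the paper. The paper likewise extracts your facts (a) and (b) from the proof of Proposition 2.9 of \cite{LesnickWright2015} and reads the minimal grid off from them; your two inclusions, the $M=0$ case and the independence of the chosen minimal presentation simply spell out that one-line argument, and your sketch of (a) for relations can be completed as you suggest, because once the generators lie in $G$, $G$ is also a grid for the kernel of the minimal free cover.
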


\subsection{Interleaving distance and grids}

In what follows, we introduce \autoref{klem:nearby-grids}, which describes necessary conditions on minimal grids of modules that are sufficiently close to a given module.

\begin{defn}[granularity of a grid]
\label{defn:ell_G}
    Let $G$ be a grid. We define the granularity of $G$ as
    \begin{equation*}
        \ell_G = \min_{\substack{g,g'\in G\\g\neq g'}} \|g-g'\|_\infty,
    \end{equation*}
    for $|G|\geq 2$ and set $\ell_G = \infty$ otherwise. Note that for $|G| \geq 2$, $\ell_G=
    \min_{\substack{1\leq j\leq n\\|G_j|\geq2}}
    \min_{\substack{a,b\in G_j\\a\neq b}}|a-b|$.
\end{defn}

\begin{defn}[granularity of a module]
\label{defn:ell_M}
    Let $M: (\mathbb{R}^n, \leq) \to \vect$ be an essentially finite module and $G$ its minimal grid. Then we define $\ell_M = \ell_G$. Since the minimal grid is invariant under isomorphism, granularity is well defined for isomorphism classes of essentially finite modules.
\end{defn}

\begin{klem}
\label{klem:nearby-grids}
    Let $M: (\mathbb{R}^n, \leq) \to \vect$ be essentially finite with minimal grid $G$.
    Fix $\epsilon < \ell_M/2$. Let $N: (\mathbb{R}^n, \leq) \to \vect$ be essentially finite with minimal grid $H$. Assume $d_I(M,N) \leq \epsilon$, with $\gamma: M \to N[\epsilon]$, $\kappa: N \to M[\epsilon]$ an $\epsilon$-interleaving. Then
    \begin{enumerate}[label=(\roman*)]
        \item for all $j = 1, \dots, n$ and all $x \in G_j$, there exists $y \in H_j$ such that
        \begin{equation*}
            |x-y| \leq \epsilon.
        \end{equation*}
        \item if $H\neq\varnothing$, for any $r \in \mathbb{R}^n$ such that $r-\vec\epsilon \geq \min H$ and $\lfloor r - \vec\epsilon \rfloor_H = \lfloor r + \vec\epsilon \rfloor_H$, the map
        \begin{equation*}
            \gamma_r: M_r \to N_{r + \vec\epsilon},
        \end{equation*}
        is an isomorphism. In particular, composed with $(N_{r,r+\vec\epsilon})^{-1}$, it gives an isomorphism between $N_r$ and $M_r$ which is natural in $r$.
    \end{enumerate}
\end{klem}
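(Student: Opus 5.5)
I would prove (ii) first and then derive (i) from it by contradiction, using \autoref{lem:non-minimal-grid} to show that a point of $G_j$ with no nearby point of $H_j$ could be deleted from the minimal grid $G$.

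\textbf{Part (ii).} Let $r$ satisfy $r-\vec\epsilon\geq\min H$ and $\lfloor r-\vec\epsilon\rfloor_H=\lfloor r+\vec\epsilon\rfloor_H$. By \autoref{lem:ess-fin-characterisation}(i), every structure map of $N$ between grades in $[r-\vec\epsilon,r+\vec\epsilon]$ is an isomorphism. Consider the composites
\begin{equation*}
N_{r-\vec\epsilon}\xrightarrow{\kappa_{r-\vec\epsilon}}M_r\xrightarrow{\gamma_r}N_{r+\vec\epsilon}
\quad\text{and}\quad
M_r\xrightarrow{\gamma_r}N_{r+\vec\epsilon}\xrightarrow{\kappa_{r+\vec\epsilon}}M_{r+2\vec\epsilon}.
\end{equation*}
The first composite equals $N_{r-\vec\epsilon,r+\vec\epsilon}$, which is an isomorphism, so $\gamma_r$ is surjective. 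For injectivity, naturality of $\gamma$ and the interleaving identities give
\begin{equation*}
\gamma_r\circ\kappa_{r-\vec\epsilon}\circ N_{r-2\vec\epsilon,\,r-\vec\epsilon}=N_{r-2\vec\epsilon,\,r+\vec\epsilon},
\end{equation*}
which need not be invertible. A cleaner route is a dimension count. From the first composite, $\kappa_{r-\vec\epsilon}$ is injective, so $\dim M_r\ge\dim N_{r-\vec\epsilon}$. Likewise $\gamma_{r-2\vec\epsilon}\circ\ldots$ cannot be used directly, since we lack control there. Instead use that $\gamma_r$ surjective gives $\dim M_r\ge\dim N_{r+\vec\epsilon}=\dim N_{r-\vec\epsilon}$, and we need the reverse inequality.

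For that, note that $\kappa_{r-\vec\epsilon}\circ N_{r-\vec\epsilon,r}^{-1}\cdots$ does not land in $M_r$ in a helpful way. So instead I would exploit $\epsilon<\ell_M/2$ on the $M$ side. The box $[r-\vec\epsilon,r+\vec\epsilon]$ has side $2\epsilon<\ell_G$, so along each axis it contains at most one point of $G_j$. I expect this to be the main obstacle: controlling $\ker\gamma_r$ requires showing that $M_{r,r+2\vec\epsilon}$ is injective on the relevant part, which $N$'s flatness alone does not give.

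My plan is to show $\ker\gamma_r\subseteq\ker M_{r,r+2\vec\epsilon}$, which follows from $\kappa_{r+\vec\epsilon}\gamma_r=M_{r,r+2\vec\epsilon}$, and then to show that kernel vanishes. Take $v\in\ker\gamma_r$. Its image in $M_{r+\vec\epsilon}$ equals $\kappa_r(\gamma_{r-\vec\epsilon}(u))$ only if $v$ lifts, which is where I would use $N$'s isomorphisms to move back and forth. Concretely, $\gamma_{r}$ restricted along $N$'s isomorphisms identifies $M_{r'}$ for all $r'$ in the box up to the factorization $M_{r-\vec\epsilon}\to N_r\to M_{r+\vec\epsilon}$. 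This forces $M_{r-\vec\epsilon,r+\vec\epsilon}$ to have rank $\dim N_r$ across the whole box. A rank argument at the central point then yields $\dim M_r=\dim N_r$ and bijectivity. Naturality of $(N_{r,r+\vec\epsilon})^{-1}\gamma_r$ follows from naturality of $\gamma$ together with functoriality of the structure maps of $N$.

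\textbf{Part (i).} Suppose $x\in G_j$ has no $y\in H_j$ with $|x-y|\le\epsilon$. Choose $\delta>0$ small, with no points of $G_j$ in $(x-\delta,x)$; this is possible since $\ell_G>2\epsilon$. For each $g\in G$ with $g_j=x$, I would show that $M_{g-\delta e_j,g}$ is an isomorphism, and then \autoref{lem:non-minimal-grid} contradicts the minimality of $G$.

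Since $H_j$ avoids $[x-\epsilon,x+\epsilon]$, moving in the $j$-th coordinate through the slab $[x-\epsilon-\delta,x+\epsilon]$ does not change the $j$-th floor in $H$. The other coordinates, however, may cross $H$-points within distance $\epsilon$, so part (ii) does not apply verbatim. I would therefore use a shift only in direction $e_j$, or perturb $g$ in the other coordinates within the constancy regions of $G$ (each of width $\ge\ell_G>2\epsilon$) to reach a point where (ii) does apply. The isomorphisms $M\cong N$ there then transport the isomorphism $N_{\cdot-\delta e_j,\cdot}$ back to $M_{g-\delta e_j,g}$. The case $g\not\geq\min H$ is handled by vanishing, as in the proof of \autoref{thm:min-grid}.
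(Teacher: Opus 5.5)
\paragraph{Part (ii).} Your surjectivity argument for $\gamma_r$ is correct, but injectivity is never established, and the route you settle on cannot work: under the hypotheses, $\ker M_{r,r+2\vec\epsilon}$ need not vanish. Take $n=1$, $M=N$ the interval module on $[0,1)$, $\gamma=\kappa$ the shift by $\epsilon=0.4<\ell_M/2$, and $r=0.5$. The hypotheses of (ii) hold and $\gamma_r=M_{0.5,0.9}$ is an isomorphism, yet $M_{0.5,1.3}=0$. Even granting your closing ``rank argument'', it only bounds $\dim M_r$ from below, which surjectivity already gave you.

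The missing idea is to use the granularity of $M$ at an auxiliary point below $r$. Choose $s\le r$ with $r\in[s,s+2\vec\epsilon]$ and $(s_j,s_j+2\epsilon]\cap G_j=\varnothing$ for all $j$. To do this, take $s_j$ to be the unique point of $G_j$ in $(r_j-2\epsilon,r_j]$ if there is one, and $s_j=r_j-2\epsilon$ otherwise. Then:
\begin{itemize}
\item $\kappa_{s+\vec\epsilon}\circ\gamma_s=M_{s,s+2\vec\epsilon}$ is an isomorphism, so $\gamma_s$ is injective;
\item $M_{s,r}$ is an isomorphism;
\item $N_{s+\vec\epsilon,r+\vec\epsilon}$ is an isomorphism because $r-\vec\epsilon\le s+\vec\epsilon\le r+\vec\epsilon$.
\end{itemize}
Naturality, $\gamma_r\circ M_{s,r}=N_{s+\vec\epsilon,r+\vec\epsilon}\circ\gamma_s$, then forces $\gamma_r$ to be injective.

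\paragraph{Part (i).} Deriving (i) from (ii) also fails. To apply (ii) near $g$ you need an $H_k$-free window $(r_k-\epsilon,r_k+\epsilon]$ in \emph{every} coordinate $k\neq j$, and the hypotheses give no control over $H_k$ for $k\neq j$. At $r_k=g_k$, statement (i) itself predicts a point of $H_k$ within $\epsilon$ of $g_k$. Perturbing inside the $G$-cell $[g_k,g_k^+)$ does not help either. Adding to $N$ direct summands that are $\epsilon$-interleaved with $0$ (e.g.\ interval modules on boxes of side less than $2\epsilon$) keeps $d_I(M,N)\le\epsilon$ while filling that cell with points of $H_k$ at spacing below $2\epsilon$.

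Your alternative of working only in direction $e_j$ is the right idea. However, it has to be executed through the diagonal interleaving, and then a small step does not suffice. When comparing $M_{g-\delta e_j,g}$ with $N_{g+\vec\epsilon-\delta e_j,g+\vec\epsilon}$, injectivity of $\gamma_{g-\delta e_j}$ would have to come from $M_{g-\delta e_j,g-\delta e_j+2\vec\epsilon}$. That map crosses $x$ in coordinate $j$ when $\delta<2\epsilon$, so it need not be an isomorphism.

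The paper instead uses the step $2\epsilon+\delta$ with $0<\delta<\min\{\ell_M-2\epsilon,\,d(x,H_j)-\epsilon\}$, and the argument goes as follows.
\begin{itemize}
\item Both composites $\kappa\circ\gamma$, namely $M_{g,g+2\vec\epsilon}$ and $M_{g-(2\epsilon+\delta)e_j,\,g+2\vec\epsilon-(2\epsilon+\delta)e_j}$, stay within single $G$-cells (using $g_k\in G_k$ and $2\epsilon<\ell_M$). Hence both $\gamma$'s are injective and both $\kappa$'s are surjective.
\item The middle map $N_{g+\vec\epsilon-(2\epsilon+\delta)e_j,\,g+\vec\epsilon}$ moves only coordinate $j$, across the $H_j$-free interval $(x-\epsilon-\delta,x+\epsilon]$. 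So it is an isomorphism no matter how fine $H_k$ is for $k\neq j$.
\item A diagram chase (surjectivity through the upper square, injectivity through the lower one) shows that $M_{g-(2\epsilon+\delta)e_j,g}$ is an isomorphism.
\item \autoref{lem:non-minimal-grid} then applies, since $G_j\cap(x-2\epsilon-\delta,x)=\varnothing$.
\end{itemize}
In particular, (i) needs nothing from (ii).
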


Before proceeding to the proof of this result, let us clarify its contents. Note, firstly, that the lemma is not symmetric with respect to the modules $M$ and $N$. We start by fixing a module and considering its granularity $\ell_M$. Then we take a second module which is closer to $M$ than half its granularity. The symmetrised version of this result will be used later in \autoref{lem:closer-than-granularity}.

The first half of the statement implies that the grid of $N$ must contain a subgrid that is $\epsilon$-close to $G$. In the $n=1$ case, this would correspond to the fact that all endpoints of $M$ are $\epsilon$-matched to endpoints of $N$.
However, the grid of $N$ might be much bigger. In $n=1$, $N$ might contain many other unmatched bars.

The second half means that if the grid $H$ has gaps bigger than $2\epsilon$, then the modules $M$ and $N$ at the centre of the gaps are isomorphic. Intuitively, if something in $N$ ``happens for longer than $2\epsilon$", then ``it must coincide with $M$".

The proof of this lemma will take up the remainder of this subsection.

\begin{proof}[Proof of \autoref{klem:nearby-grids}]
We prove (i) by the contrapositive. Without loss of generality, assume that there exists $x \in G_1$ such that for all $y \in H_1$,
\begin{equation*}
    |x - y| > \epsilon.
\end{equation*}
We will prove that in this case, $G$ is not the minimal grid of $M$.
If $H \neq \varnothing$, let $d(x,H_1) = \min_{y \in H_1} |x-y|$ and set $d(x,H_1) = \infty$ otherwise. Choose a $\delta > 0$ such that 
\begin{equation*}
    \delta < \min \{ \ell_M- 2\epsilon, d(x,H_1) - \epsilon \}.
\end{equation*}

This choice ensures that there are no points of $G_1$ other than $x$ in the interval $(x-(2\epsilon+\delta), x + (2\epsilon + \delta))$, and that there are no points of $H_1$ in the interval $(x-(\epsilon + \delta), x + (\epsilon + \delta))$.

Now let $g\in G$ with first coordinate $g_1 = x$. We can fit the structure map $M_{g - (2\epsilon + \delta)e_1, g}$ as the bottom row of the following commutative diagram with the interleaving maps: 
$$
\begin{tikzcd} [column sep=0.3cm, cells={nodes={text width=2.1cm, align=center}}]
	&& {M_{g + 2\vec\epsilon- (2\epsilon + \delta)e_1}}
    &&& {M_{g + 2\vec\epsilon}} \\
	& {N_{g + \vec\epsilon- (2\epsilon + \delta)e_1}}
    &&& {N_{g + \vec\epsilon}} \\
	{M_{g - (2\epsilon + \delta)e_1}}
    &&& {M_g}
	\arrow[from=1-3, to=1-6]
	\arrow["{^{(3)}\kappa_{g + \vec\epsilon- (2\epsilon + \delta)e_1}}"'{pos=0.5}, two heads, from=2-2, to=1-3]
	\arrow["\cong^{(4)}"{pos = 0.5},from=2-2, to=2-5]
	\arrow["^{(3)}{\kappa_{g + \vec\epsilon}}"{pos=0.5}, two heads, from=2-5, to=1-6]
	\arrow["\cong^{(2)}"{pos = 0.5}, curve={height=-30pt}, from=3-1, to=1-3]
	\arrow["^{(3)}{\gamma_{g - (2\epsilon + \delta)e_1}}"'{pos=0.5}, hook, from=3-1, to=2-2]
	\arrow[from=3-1, to=3-4]
	\arrow["\cong^{(1)}"'{pos = 0.5},curve={height=30pt}, from=3-4, to=1-6]
	\arrow["^{(3)}{\gamma_g}"{pos=0.5}, hook, from=3-4, to=2-5]
\end{tikzcd}
$$

We start by showing that the right (1) and left (2) diagonal composites are isomorphisms, and hence the two $\gamma$ maps are injective and the two $\kappa$ maps are surjective (3); and that the structure map of $N$ in the centre (4) is also an isomorphism.
\begin{enumerate}[label = (\arabic*)]
    \item By the definition of an interleaving, the composition of the two diagonal maps on the right coincides with the structure map: $\kappa_{g + \vec\epsilon} \circ \gamma_g = M_{g, g+2\vec\epsilon}$. Since $2\epsilon <\ell_M$, $g = \lfloor g \rfloor_G = \lfloor g+2\vec\epsilon \rfloor_G$ and the map is an isomorphism.
    
    \item Similarly, the composition of the two diagonal maps on the left,
    \begin{equation*}
        \kappa_{g + \vec\epsilon - (2\epsilon + \delta)e_1} \circ \gamma_{g - (2\epsilon + \delta)e_1} = M_{g - (2\epsilon + \delta)e_1, g+2\vec\epsilon - (2\epsilon + \delta)e_1},
    \end{equation*}
    is an isomorphism. 
    First, note that if $g+2\vec\epsilon-(2\epsilon+\delta)e_1\not\geq\min G$, then also $g-(2\epsilon+\delta)e_1\not\geq\min G$, and the map is the isomorphism $0\to0$. Otherwise, both grades lie above $\min G$. Then, for $k \geq 2$, $(\lfloor g - (2\epsilon + \delta)e_1 \rfloor_{G})_k = (\lfloor g \rfloor_{G})_k$, and $(\lfloor g + 2\vec\epsilon- (2\epsilon + \delta)e_1 \rfloor_{G})_k = (\lfloor g + 2\vec\epsilon \rfloor_{G})_k$, so again since $2\epsilon <\ell_M$,
    \begin{equation*}
        (\lfloor g - (2\epsilon + \delta)e_1 \rfloor_{G})_k = (\lfloor g + 2\vec\epsilon- (2\epsilon + \delta)e_1 \rfloor_{G})_k
    \end{equation*}
    For $k=1$, the two coordinates are $x-\delta$ and $x-(2\epsilon +\delta)$. By the choice of $\epsilon$, $\delta$, there is no element of $G_1$ between them, so also
    \begin{equation*}
        (\lfloor g - (2\epsilon + \delta)e_1 \rfloor_{G})_1 = (\lfloor g + 2\vec\epsilon- (2\epsilon + \delta)e_1 \rfloor_{G})_1
    \end{equation*}
    and the map is an isomorphism.
    
    \item The fact that the two compositions in the diagonals are isomorphisms (1,2) allows us to deduce injectivity of the maps $\gamma_{g - (2\epsilon + \delta)e_1}$, $\gamma_g$ and surjectivity of the maps $\kappa_{g + \vec\epsilon- (2\epsilon + \delta)e_1}$, $\kappa_{g + \vec\epsilon}$.

    \item Note that if $g+\vec\epsilon\not\geq\min H$, then also $g+\vec\epsilon-(2\epsilon+\delta)e_1\not\geq\min H$, and the map is the isomorphism $0\to0$. Otherwise, both grades lie above $\min H$. Then, since there are no points of $H_1$ in the interval $(x-(\epsilon + \delta), x + (\epsilon + \delta))$, this implies
    \begin{equation*}
        \lfloor g +\vec\epsilon - (2\epsilon + \delta)e_1 \rfloor_H
        = \lfloor g + \vec\epsilon \rfloor_H,
    \end{equation*}
    and hence the central horizontal map $N_{g +\vec\epsilon - (2\epsilon + \delta)e_1, g +\vec\epsilon}$ is also an isomorphism.
\end{enumerate}

We are ready to prove from the diagram that the map $M_{g - (2\epsilon + \delta)e_1, g}$ is an isomorphism. Consider the commutativity of the upper square, 
\begin{equation*}
    \kappa_{g +\vec\epsilon} \circ N_{g +\vec\epsilon - (2\epsilon + \delta)e_1, g +\vec\epsilon} = M_{g +2\vec\epsilon - (2\epsilon + \delta)e_1, g +2\vec\epsilon} \circ \kappa_{g +\vec\epsilon - (2\epsilon + \delta)e_1}.
\end{equation*}
By (3) and (4), the left hand side is surjective, so $M_{g + 2\vec\epsilon- (2\epsilon + \delta)e_1, g + 2\vec\epsilon}$ is surjective. Now by the commutativity of the outer square, and (1) and (2), surjectivity of $M_{g + 2\vec\epsilon- (2\epsilon + \delta)e_1, g + 2\vec\epsilon}$ implies surjectivity of $M_{g - (2\epsilon + \delta)e_1, g}$. Finally, consider commutativity of the lower square,
\begin{equation*}
    N_{g +\vec\epsilon - (2\epsilon + \delta)e_1, g +\vec\epsilon} \circ \gamma_{g - (2\epsilon + \delta)e_1}= \gamma_{g} \circ M_{g - (2\epsilon + \delta)e_1, g}.
\end{equation*}
By (3) and (4), the left hand side is injective, which implies $M_{g - (2\epsilon + \delta)e_1, g}$ is injective and hence an isomorphism.

Hence, by \autoref{lem:non-minimal-grid}, removing $x$ from $G_1$ yields a strictly smaller grid for $M$, contradicting minimality of $G$ and proving (i).

To prove (ii), let $r \in \mathbb{R}^n$, $r = (r_1,\dots, r_n)$ be such that $\lfloor r - \vec\epsilon \rfloor_H = \lfloor r + \vec\epsilon \rfloor_H$. Since $\epsilon < \ell_M/2$, we can find $s = (s_1, \dots, s_n) \in \mathbb{R}^n$ such that for all $j = 1, \dots, n$, $r_j \in [s_j, s_j + 2\epsilon]$ and $(s_j, s_j + 2\epsilon] \cap G_j = \varnothing$. Note that if $r_j \in G_j$, this is satisfied by $s_j = r_j$.

We can build the following commutative diagram:
$$
\begin{tikzcd}
	&& {N_{r + \vec\epsilon}} \\
	& {M_r} && {M_{s+ 2\vec\epsilon}} \\
	{N_{r - \vec\epsilon}} && {N_{s+ \vec\epsilon}} \\
	& {M_{s}}
	\arrow["{^{(2)}\gamma_r}", two heads, from=2-2, to=1-3]
	\arrow["{\cong^{(1)}}", curve={height=-50pt}, from=3-1, to=1-3]
	\arrow["{\kappa_{r - \vec\epsilon}}", from=3-1, to=2-2]
	\arrow["{\cong^{(6)}}", from=3-3, to=1-3]
	\arrow["{\kappa_{s+ \vec\epsilon}}"', from=3-3, to=2-4]
	\arrow["{\cong^{(5)}}", from=4-2, to=2-2]
	\arrow["{\cong^{(3)}}"', curve={height=50pt}, from=4-2, to=2-4]
	\arrow["{^{(4)}\gamma_s}"', hook, from=4-2, to=3-3]
\end{tikzcd}
$$

We start by showing that the structure maps appearing in this diagram are all isomorphisms. By construction, for each map either both grades lie below the support determined by $G$, in which case the structure map is the isomorphism $0\to0$, or both lie above $\min G$:
\begin{enumerate}[label = (\arabic*)]
    \item Since $\lfloor r - \vec\epsilon \rfloor_H = \lfloor r + \vec\epsilon \rfloor_H$, the structure map of $N$ is an isomorphism.
    \item By the commutativity of the upper diagonal, $\gamma_r \circ \kappa_{r - \vec\epsilon} = N_{r - \vec\epsilon, r + \vec\epsilon}$. Since the map is an isomorphism by (1), $\gamma_r$ is surjective.
    \item By construction, $\lfloor s \rfloor_G = \lfloor s + 2\vec\epsilon \rfloor_G$, so the structure map of $M$ is an isomorphism.
    \item By the commutativity of the lower diagonal, $\kappa_{s+ \vec\epsilon} \circ \gamma_{s} = M_{s, s + 2\vec\epsilon} $. Since the map is an isomorphism by (3), $\gamma_s$ is injective.
    \item By construction, $\lfloor s \rfloor_G = \lfloor r \rfloor_G$, so the structure map of $M$ is an isomorphism.
    \item By construction, $r- \vec\epsilon \leq s + \vec\epsilon \leq r + \vec\epsilon$, and since $\lfloor r - \vec\epsilon \rfloor_H = \lfloor r + \vec\epsilon \rfloor_H$, $\lfloor s + \vec\epsilon \rfloor_H = \lfloor r + \vec\epsilon \rfloor_H$ and the structure map of $N$ is an isomorphism.
    \item By the commutativity of the inner parallelepiped, $\gamma_r = N_{s + \vec\epsilon, r + \vec\epsilon} \circ \gamma_s \circ M_{s,r}^{-1}$. By (4), (5) and (6), the composition is injective, so $\gamma_r$ is injective, and an isomorphism.
\end{enumerate}

Since $\lfloor r \rfloor_H = \lfloor r + \vec\epsilon \rfloor_H$, we find $N_r \cong N_{r + \vec\epsilon} \cong M_r$, which finishes the proof.
\end{proof}

\begin{rem}
    Since the minimal grid of a module is the smallest grid containing the grades of the generators and relations in a minimal presentation (\autoref{lem:min-grid-min-presentation}), a version of the first part of this lemma can also be derived from the stability of multigraded Betti numbers established in \cite[Theorem 1.1]{OudotScoccola2024}. This argument, however, requires the stronger assumption that $d_I(M,N) \leq \epsilon/(n^2-1)$.
\end{rem}

\subsection{Betti tables and the Koszul complex of a persistence module}
\label{subsec:betti}

It is a standard result \cite{Carlsson2009,Corbet2018} that the category of $\mathbb{Z}^n$-persistence modules over $\Bbbk$ is equivalent to the category of $n$-graded modules over the ring of polynomials in $n$ variables over $\Bbbk$. Similarly, the category of $n$-parameter persistence module is equivalent to the category of $n$-graded modules over the ring of polynomials in $n$ variables over $\Bbbk$ with non-negative real-valued exponents \cite[Section 2.1]{Lesnick2015}.

However, since we only consider essentially finite modules, let
$G$ be a grid and let
$\rho_j:G_j\to\mathbb{Z}$ be an injective order-preserving map.
Regrading $M|_G$ along $\rho=\rho_1\times\dots\times \rho_n$ and extending it to
$\mathbb Z^n$ preserves free modules, minimal free resolutions, and the
multiplicities of their graded summands. We therefore define and compute
the Betti tables of $M$ through this regraded $\mathbb Z^n$-module.
All applications of Hilbert's syzygy theorem and of the Koszul complex
below are understood after this regrading.

Free persistence modules are direct sums of modules $F_r$ with a single generator at degree $r$ and no relations. For $t\in \mathbb{R}^n$, $(F_r)_t = \Bbbk$ for $t \geq r$ and 0 otherwise, and all the nonzero structure maps are identities.

\begin{defn}[Betti tables]
    Let $M$ be a finitely presented $n$-parameter persistence module, and let $P_\bullet \to M$ be a minimal free resolution of $M$. Then, for each $i \geq 0$, write $P_i = \bigoplus_{r\in \mathbb{R}^n} F_r^{\xi_i(r)}$,
    where $\xi_i(r) \neq 0$ for finitely many $r \in \mathbb{R}^n$. The function 
    \begin{equation*}
        \xi_i: \mathbb{R}^n \to \mathbb{N}
    \end{equation*}
    is called the $i$-th Betti table of $M$. 
    Since any two minimal free resolution of $M$ are isomorphic, Betti tables are isomorphism invariants of $M$.
    By Hilbert's syzygy theorem, we know the minimal resolution has length at most $n$, that is, $P_{i>n} = 0$, and hence $\xi_{i>n} = 0$.
\end{defn}

\begin{rem}
    For $n=1$, $\xi_0(t)$ and $\xi_1(t)$ indicate the number of bars starting and ending at $t \in \mathbb{R}$, respectively.
\end{rem}

We now introduce a method from commutative algebra that allows us to evaluate Betti tables at a given grade from local information about the module, avoiding the need to compute the resolution \cite{Miller2004}. We first justify the method for $\mathbb{Z}^n$-persistence modules, and then give an explicit description for essentially finite modules.

For a persistence module $V: \mathbb{Z}^n \to \vect$, Betti tables $\xi_i$ at $x \in \mathbb{Z}^n$ can be computed as follows: take a (not necessarily minimal) resolution of $V$, and take the tensor product $-\otimes_{\Bbbk[x_1, \dots, x_n]} \Bbbk$ to obtain a chain complex. The value of $\xi_i(x)$ corresponds to the dimension of the $i$-th homology of this chain complex at $x$, i.e. $\xi_i(x) = \dim_{\Bbbk} (\operatorname{Tor}_i(V,\Bbbk) (x))$.

To show this, note that since the computation of $\operatorname{Tor}$ is independent of the choice of resolution, we may start with a minimal free resolution of $V$. Tensoring with $\Bbbk$ removes all structure maps between different grades. Each free summand $F_r$ therefore becomes a module with a single copy of $\Bbbk$ at grade $r$ and 0 elsewhere.
The only maps that survive in the resulting chain complex are those between generators at the same grade. In a minimal resolution such maps do not occur, so every differential becomes zero after tensoring with $\Bbbk$. Taking homology does not change the chain complex, and the dimension of the $i$-th homology group at grade $x$ is simply the number of generators of grade $x$ appearing in the $i$-th module of the minimal resolution, namely $\xi_i(x)$.

By the symmetry of $\operatorname{Tor}$, this computation is equivalent to taking first a resolution of $\Bbbk$, such as the standard Koszul complex $\mathbb{K_*}$, and then taking the tensor product with the module $V$, before computing homology. At each grade, this amounts to computing homology of a chain complex of vector spaces: $\mathbb{K}_i(V)(x) := (\mathbb{K}_i \otimes_{\Bbbk[x_1, \dots, x_n]} V)_x$.

The fact that essentially finite modules are determined by their restriction to a finite grid allows us to translate this approach directly to our context.
Given an essentially finite persistence module, we can restrict it to its minimal grid, which injects coordinate-wise into $\mathbb{Z}^n$. The restriction of the module can then be extended to a $\mathbb{Z}^n$-module as a left Kan extension, whose Betti tables can be computed as described above. The Betti tables of the original module can be read off the result. This computation is explicitly described as follows.

\begin{defn}[Koszul complex of $M$ at grade $g \in G$]
\label{defn:koszul-cpx}
    Let $M$ be an essentially finite $n$-parameter persistence module with minimal grid $G$. Let $\ell_M$ be the granularity of $M$ (\autoref{defn:ell_M}) and fix $\epsilon < \ell_M$.

    For any subset $\alpha \subset \{1, \dots, n\}$, let $e_\alpha = \sum_{j \in \alpha}e_j$. The Koszul complex of $M$ at grade $g \in G$, $\mathbb{K}_*(M)(g)$, is a chain complex of vector spaces. For each $i \geq 0$,
    \begin{equation*}
        \mathbb{K}_i(M)(g) =
        \bigoplus_{\substack{\alpha\subset \{1, \dots, n\} \\ |\alpha| = i}}
        M_{g - \epsilon e_\alpha}.
    \end{equation*}

    The differentials $d_i : \mathbb{K}_i(M)(g) \to \mathbb{K}_{i-1}(M)(g)$ are defined from the internal maps of $M$ as follows. For each summand $M_{g - \epsilon e_\alpha}$ with $\alpha = \{j_1 < j_2 < \dots < j_i\}$,
    \begin{equation*}
        d_i|_{M_{g - \epsilon e_\alpha}} = \sum_{k = 1}^{i} (-1)^{i-k} M_{g - \epsilon e_\alpha, g - \epsilon e_{\alpha\setminus\{j_k\}}}
    \end{equation*}
\end{defn}

\begin{lem}[adapted from Lemma 1.32, \cite{Miller2004}]
\label{lem:koszul-cpx}
    The $i$-th Betti table $\xi_i: \mathbb{R}^n \to \mathbb{N}$ of $M$ at a point of the minimal grid $g \in G$ is
    \begin{equation*}
        \xi_i(g) = \dim H_i(\mathbb{K}_*(M)(g)).
    \end{equation*}
\end{lem}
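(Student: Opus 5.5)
The plan is to reduce the statement to the classical $\mathbb{Z}^n$ result \cite[Lemma 1.32]{Miller2004} through the regrading described above. Let $G$ be the minimal grid and fix injective order-preserving maps $\rho_j: G_j \to \mathbb{Z}$. We choose them so that consecutive elements of $G_j$ go to consecutive integers, and so that if $g_j$ is the minimum of $G_j$ then $\rho_j(g_j)-1$ is an integer not hit by $\rho_j$.

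Let $V$ be the left Kan extension of $(M|_G)\circ\rho^{-1}$ to $\mathbb{Z}^n$. This is the analogue of the extension along a grid: at $z\in\mathbb{Z}^n$ the value is $M$ at the floor of $\rho^{-1}(z)$, or $0$ if $z$ is not above $\rho(\min G)$. By the discussion preceding \autoref{defn:koszul-cpx}, the Betti numbers of $M$ at $g\in G$ coincide with $\dim\operatorname{Tor}_i(V,\Bbbk)(\rho(g))$. Miller--Sturmfels then computes this dimension as $\dim H_i$ of the Koszul complex of $V$ at $\rho(g)$, whose degree $i$ term is $\bigoplus_{|\alpha|=i} V_{\rho(g)-e_\alpha}$ and whose differentials are signed structure maps.

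The main step is to identify this $\mathbb{Z}^n$-Koszul complex with $\mathbb{K}_*(M)(g)$, summand by summand and compatibly with the differentials. For each $\alpha$ we need $M_{g-\epsilon e_\alpha}\cong V_{\rho(g)-e_\alpha}$, naturally in $\alpha$ with respect to the inclusions $\alpha\setminus\{j\}\subset\alpha$. We have $V_{\rho(g)-e_\alpha}=M_{h}$, where $h$ has coordinate $g_j^-$ (the predecessor of $g_j$ in $G_j$) for $j\in\alpha$, and coordinate $g_j$ otherwise. If $g_j$ is minimal in $G_j$ for some $j\in\alpha$, then $V_{\rho(g)-e_\alpha}=0$.

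On the other side, since $0<\epsilon<\ell_M$, we have $\lfloor g-\epsilon e_\alpha\rfloor_G = h$. So by \autoref{lem:ess-fin-characterisation}(i) the structure map $M_{h,g-\epsilon e_\alpha}$ is an isomorphism. In the minimal case, $g-\epsilon e_\alpha\not\geq\min G$, so by (ii) $M_{g-\epsilon e_\alpha}=0$. These isomorphisms $M_{h,g-\epsilon e_\alpha}$ are components of the natural transformation $\eta$ from the proof of \autoref{lem:ess-fin-characterisation}. Naturality of $\eta$ then gives commutativity with each structure map $M_{g-\epsilon e_\alpha, g-\epsilon e_{\alpha\setminus\{j_k\}}}$. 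Since the signs $(-1)^{i-k}$ are the same on both sides, the isomorphisms assemble into an isomorphism of chain complexes, and hence of homology.

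The main obstacle is the bookkeeping in the reduction to $\mathbb{Z}^n$. First, the regraded extension must not introduce spurious Betti numbers at integer grades outside $\rho(G)$. This holds because $V$ is constant along unit steps between images of consecutive grid points, so $V$ is the Kan extension from $\rho(G)$. Second, Betti numbers must be invariant under the regrading, so that the Betti tables of $M$, defined in the preceding subsection via this regraded module, are indeed read at $\rho(g)$. I would handle both points by noting that a minimal free resolution of $M|_G$ on the finite poset $G$ extends along $\rho$ (respectively along $\widehat{(-)}^G$) to minimal free resolutions. Extension is exact here, since it is built from floors and zeros, and it sends representables $F_g$ to free modules $F_{\rho(g)}$ (respectively $F_g$). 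The multiplicities of graded summands therefore match grade by grade.
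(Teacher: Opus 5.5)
Your proposal is correct and follows essentially the same route as the paper, which justifies this lemma only through the sketch preceding the definition of $\mathbb{K}_*(M)(g)$. That sketch regrades $M|_G$ along a coordinatewise order embedding of the minimal grid into $\mathbb{Z}^n$, Kan-extends, and applies the Miller--Sturmfels Koszul computation of $\operatorname{Tor}_i(-,\Bbbk)$; your identification of $M_{g-\epsilon e_\alpha}$ with $V_{\rho(g)-e_\alpha}$ via $\lfloor g-\epsilon e_\alpha\rfloor_G = h$ (compatibly with the differentials), and your check that extension along the grid is exact and preserves minimal free resolutions, fill in details the paper leaves implicit.
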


\newpage

\section{Multiparameter persistent homology}
\label{sec:mph}

In this section we introduce the multiparameter persistent homology map $MPH$, which is the main object of study of the paper. For a fixed finite simplicial complex $K$, we first define the space of $n$-filter functions $f: K \to I^n$ and the associated sublevel set filtrations of $K$. 
Then, the homology functor applied to these filtrations yields essentially finite $n$-parameter persistence modules that admit grids contained in $I^n$.
We define the multiparameter persistent homology map $MPH$ as a map from the space of $n$-filters to the moduli space of isomorphism classes of such modules.
By the stability of multiparameter persistent homology, the map $MPH$ is Lipschitz with respect to the supremum norm on filters and the interleaving distance on persistence modules.

\subsection{The space of \texorpdfstring{$n$}{n}-filters}

Let $\scpx$ denote the category of finite (abstract) simplicial complexes and inclusions and let $K$ be a fixed finite simplicial complex. Let $I = [0,1]$ denote the closed unit interval in $\mathbb{R}$, and let $I^n$ be the $n$-cube with the $\| \cdot \|_\infty$ metric.

\begin{defn}[$n$-filter function]
    An $n$-filter function, or $n$-filter for short, is a map $f: K \to I^n$ that is monotone with respect to inclusions, i.e. for all $\tau, \sigma \in K$ such that $\tau \subset \sigma$, $f(\tau) \leq f(\sigma)$. Throughout the paper, we will denote by $f_j$ the projection of an $n$-filter $f$ to the $j$-th coordinate.
    We denote the set of all $n$-filter functions on $K$ as $\filt^n$. We define the sup-norm distance between $f, g \in \filt^n$ as
    \begin{equation*}
        d_\infty(f,g) = \max_{\tau \in K} \|f(\tau) -g(\tau) \|_\infty.
    \end{equation*}
\end{defn}

The space $\filt^n$ can be seen as a subspace of $I^{n|K|}$. An $n$-filter gives rise to an $n$-parameter filtration in the following way.

\begin{defn}[sublevel set filtration]
    Let $f: K \to I^n$ be an $n$-filter. The sublevel set filtration associated to $f$ is the functor $K(f): (\mathbb{R}^n, \leq) \to \textbf{SCpx}$ that sends each $t \in \mathbb{R}^n$ to
    \begin{equation*}
        K(f)_t = f^{-1}(\{x \in \mathbb{R}^n \mid x \leq t\}),
    \end{equation*}
    and each relation $t \leq t'$ to the inclusion map $K(f)_t \hookrightarrow K(f)_{t'}$.
\end{defn}

\begin{rem}[Filtrations vs. filters]
In the one-parameter setting, all filtrations of a finite simplicial complex are ``essentially" (up to boundary issues) sublevel set filtrations for some filter function. The \v{C}ech filtration of a point cloud of size $N$, for example, corresponds to a sublevel set filtration on the standard $(N-1)$-dimensional simplex.

In the $n$-parameter case, however, filtrations can be multicritical, with simplices having multiple ``birth" times. Although looking at filters in this case is more restrictive, there are many filtrations which belong to this category, such as the function-Rips, subdivision-Rips or the interlevel filtrations. Others such as the degree-Rips and multicover filtrations are multicritical \cite{Lesnick-notes}.
\end{rem}

\subsection{The \texorpdfstring{$MPH$}{MPH} map and the moduli space of \texorpdfstring{$n$}{n}-parameter persistence modules}

For each $p \geq 0$, the $p$-th homology is a functor $H_p: \scpx \to \vect$ that sends a finite simplicial complex to its $p$-th homology with coefficients in a fixed field $\Bbbk$. Pre-composed with $K(f)$, it yields the $p$-th persistent homology of $f$:
\begin{equation*}
    H_p \circ K(f): (\mathbb{R}^n, \leq) \to \vect.
\end{equation*}

From the construction of the sublevel set filtration, we see that any grid containing $\im f \subset I^n$ is a grid for $H_p \circ K(f)$, hence it is an essentially finite $n$-parameter persistence module. It is natural to consider persistence modules up to isomorphism.

\begin{defn}[moduli space $\modspace{n}$]
We define $\modspace{n}$ to be the space of isomorphism classes of essentially finite $n$-parameter persistence modules that admit a grid contained in $I^n$, endowed with the interleaving distance, $d_I([M], [N]) = d_I(M,N)$. By \autoref{thm:closure-thm}, $\modspace{n}$ is an extended metric space.
\end{defn}

\begin{rem}
    The space $\modspace{1}$ can be identified with the space of barcodes, that is, finite multisets of intervals, whose finite endpoints are contained in $I$, equipped with the bottleneck distance \cite{CrawleyBoevey2015,Chazal2016, Lesnick2015}. 
    A barcode records the unique decomposition of a one-parameter persistence module into interval modules (the only indecomposable one-parameter modules).
    For $n>1$, persistence modules still decompose into indecomposables \cite{Botnan2020}, but this decomposition does not reduce the algebraic complexity, since multiparameter persistence modules have wild representation type \cite{Botnan2023}. There does not exist a complete barcode-like invariant of multiparameter persistent modules given by a set of `nice' regions of $\mathbb{R}^n$.
\end{rem}

\begin{rem}
    The extended metric space $\modspace{n}$ is not complete. A characterization of its closure is given in \cite{Bauer2026}.
\end{rem}

\begin{defn}[$MPH$ map]
    \label{defn:MPH-map}
    The $p$-th multiparameter persistent homology map is defined as
    \begin{equation*}
    \begin{array}{rcl}
        MPH_p: \filt^n &\to& \modspace{n} \\
        f & \mapsto & [H_p \circ K(f)],
    \end{array}
    \end{equation*}
    and combining all non-trivial homological degrees, we define the multiparameter persistent homology map, central to this work:
    \begin{equation}
    \label{eq:MPH}
        MPH := (MPH_0, \dots, MPH_{\dim K}): \filt^n \to \modspace{n}^{\dim K +1}.
    \end{equation}
\end{defn}

\begin{defn}[image of $MPH$]
    We denote the image of the $MPH$ map as
    \begin{equation}
\label{eq:MPH-image}
    \modspace{n}^K := MPH(\filt^n) \subseteq \modspace{n}^{\dim K +1}.
\end{equation}
\end{defn}

A few more words about the product space $\modspace{n}^{\dim K+1}$. Any $([M^0], \dots, [M^{\dim K}]) \in \modspace{n}^{\dim K +1}$ can be written as the component-wise isomorphism class of the tuple $\bm{M} = (M^0, \dots, M^{\dim K})$, denoted by $[\bm{M}]$. We endow $\modspace{n}^{\dim K +1}$ with the supremum product metric, which we denote by $d_I$ with a slight abuse of notation. Let $[\bm{M}], [\bm{N}] \in \modspace{n}^{\dim K +1}$, then
\begin{equation*}
    d_I([\bm{M}],[\bm{N}]) = \max_{i} d_I(M^i, N^i).
\end{equation*}

The definitions of minimal grid and granularity generalise directly to tuples of essentially finite persistence modules.

\begin{defn}[minimal grid for tuples]
\label{defn:grid_tuple}
Let $\bm{M} = (M^0, \dots, M^d)$ be a tuple of essentially finite persistence modules admitting a grid in $I^n$. A grid for $\bm{M}$ is a grid which is simultaneously a grid for each $M^i$. The minimal grid $G$ of $\bm{M}$ is the smallest grid containing the minimal grid of each $M^i$. The granularity of $\bm{M}$ is defined as the granularity of the minimal grid $\ell_{\bm{M}} = \ell_G$.
\end{defn}

\begin{rem}
    The minimal grid and granularity are well-defined for isomorphism classes of tuples of persistence modules, $\ell_{[\bm{M}]} = \ell_{\bm{M}}$.
\end{rem}

We finish this section with one of the most central results in persistent homology: the Stability Theorem, adapted to our context.

\begin{thm}[Stability Theorem, Theorem 5.3 in \cite{Lesnick2015}]
\label{thm:stability}
The interleaving distance $d_I$ on $n$-parameter persistence modules is stable, i.e. for all $n$-filters $f,g \in \filt^n$,
\begin{equation*}
    d_I(MPH_p(f), MPH_p(g)) \leq d_\infty(f, g).
\end{equation*}
Equivalently, the map $MPH_p$ is Lipschitz continuous, and thus so is $MPH$.
\end{thm}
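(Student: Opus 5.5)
We will follow the standard proof of algebraic stability for sublevel set filtrations, adapted to the present notation. The plan is to build an explicit $\epsilon$-interleaving with $\epsilon = d_\infty(f,g)$ between $H_p \circ K(f)$ and $H_p \circ K(g)$. Then $d_I \leq \epsilon$ follows straight from the definition of the interleaving distance. Since $d_I$ on $\modspace{n}$ is computed on representatives, this bound descends to isomorphism classes.

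\textbf{Step 1: an interleaving of filtrations.} Set $\epsilon = d_\infty(f,g)$. Let $\sigma \in K(f)_t$, so $f(\sigma) \leq t$. Since $\|f(\sigma)-g(\sigma)\|_\infty \leq \epsilon$, we get $g(\sigma) \leq f(\sigma) + \vec\epsilon \leq t + \vec\epsilon$. Hence $K(f)_t \subseteq K(g)_{t+\vec\epsilon}$, and symmetrically $K(g)_t \subseteq K(f)_{t+\vec\epsilon}$. These are inclusions of subcomplexes of $K$: each $K(f)_t$ is a subcomplex because $f$ is monotone under inclusion. The inclusions assemble into natural transformations $K(f) \Rightarrow K(g)[\epsilon]$ and $K(g) \Rightarrow K(f)[\epsilon]$ of functors into $\scpx$, where the shift is defined exactly as for modules. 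Composing the two gives the inclusion $K(f)_t \subseteq K(f)_{t+2\vec\epsilon}$, which is the internal structure map of $K(f)$. The same holds in the other order.

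\textbf{Step 2: apply homology.} Applying the functor $H_p$ gives natural transformations $\gamma: H_p\circ K(f) \to (H_p \circ K(g))[\epsilon]$ and $\kappa: H_p\circ K(g) \to (H_p \circ K(f))[\epsilon]$. Functoriality of $H_p$ turns the composition identities of Step 1 into $\kappa[\epsilon]\circ\gamma = S^{H_p\circ K(f),2\vec\epsilon}$, together with the symmetric identity. So $\gamma,\kappa$ form an $\epsilon$-interleaving, and $d_I(MPH_p(f),MPH_p(g)) \leq \epsilon$.

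\textbf{Step 3: Lipschitz continuity of $MPH$.} Bounding each coordinate and taking the maximum over $p$ bounds the supremum product metric, so $d_I(MPH(f),MPH(g)) \leq d_\infty(f,g)$. Hence $MPH$ is $1$-Lipschitz.

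There is no real obstacle. The only point needing care is Step 1: the filtration-level inclusions must be natural in $t$ and must compose exactly to the internal inclusions. Both hold automatically, because every map in sight is an inclusion of subcomplexes of the single complex $K$. Alternatively, we may simply cite Theorem 5.3 of \cite{Lesnick2015}.
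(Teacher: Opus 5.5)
Your proposal is correct and is essentially the same approach as the paper's. The paper gives no proof of its own and simply cites Theorem 5.3 of \cite{Lesnick2015}, whose argument is exactly yours: the sup-norm bound gives an $\epsilon$-interleaving of the sublevel set filtrations by inclusions of subcomplexes, and functoriality of $H_p$ turns it into an $\epsilon$-interleaving of persistence modules, from which the bound on the product metric for $MPH$ follows.
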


\begin{rem}
When the coefficient field $\Bbbk$ is prime, the interleaving distance is universal among stable pseudometrics on multidimensional persistence modules: any pseudometric satisfying the same stability result is bounded above by $d_I$ \cite[Corollary~5.6]{Lesnick2015}. In this precise sense, $d_I$ is the most discriminating stable distance on multiparameter persistence modules. The author of \cite{Lesnick2015} also conjectured this to be true for an arbitrary field.
\end{rem}

\section{Equivariance of the \texorpdfstring{$MPH$}{MPH} map}
\label{sec:actions}

In this section we introduce natural actions of order-preserving reparametrisations of the $n$-cube on the spaces of $n$-filters and $n$-parameter persistence modules. These actions will underlie the stratifications considered later.

We begin by analysing the structure of the group of order isomorphisms of $I^n$ and the closure of its identity component in the space of order-preserving maps of $I^n$. We then define actions on $\filt^n$ and on $n$-parameter persistence modules, and study their effect on essentially finite modules and their minimal grids. In particular, we show that, up to isomorphism, the action on an essentially finite module is determined by the action on its minimal grid.

These actions descend to the moduli space of isomorphism classes $\modspace{n}$, and the $MPH$ map is equivariant with respect to the corresponding actions on $\filt^n$ and $\modspace{n}$. We next prove continuity of the actions, and conclude with results relating interleaving distance to the action on $\modspace{n}$, giving conditions under which modules close to a fixed module can be obtained from it by the action.

\subsection{Order endomorphisms and isomorphisms of \texorpdfstring{$I^n$}{the n-cube}}

\begin{defn}[order endofunctors]
    Let $\en (I^n, \leq)$ be the space of endofunctors of $(I^n, \leq)$, that is, maps $\phi: I^n \to I^n$ such that for all $x,y \in I^n$,
    \begin{equation*}
        x \leq y \implies \phi(x) \leq \phi(y),
    \end{equation*}
    endowed with the sup-norm distance,
    \begin{equation*}
        d_\infty(\phi,\psi) = \sup_{x \in I^n} \|\phi(x) - \psi(x)\|_\infty.
    \end{equation*}
\end{defn}

We will consider this action only for a subset of endofunctors of $I^n$ with more algebraic structure: the group of order isomorphisms, as well as its closure in $\en(I^n, \leq)$.

\begin{defn}[order isomorphisms]
    Let $\aut (I^n, \leq)$ denote the group of automorphisms of $(I^n, \leq)$. These correspond to maps $\phi: I^n \to I^n$ which are order-preserving, bijective, and with an order-preserving inverse. Equivalently, maps $\phi: I^n \to I^n$ that are bijective and such that for all $x,y \in I^n$,
    \begin{equation*}
        x \leq y \iff \phi(x) \leq \phi(y).
    \end{equation*}
\end{defn}

\begin{rem}
    For a poset category like $(I^n, \leq)$, any endofunctor that yields an equivalence of categories is in fact an automorphism (invertible on the nose), so one could  define $\aut (I^n, \leq)$ equivalently as the set of endofunctors of $I^n$ that yield equivalences.
\end{rem}

For $n=1$, the total order in the interval rigidly determines the interaction of its order-preserving morphisms with the real line topology. Any surjective map $\phi: I \to I$ that preserves order is continuous. Hence, all maps in $\aut (I, \leq)$ are homeomorphisms. Conversely, any homeomorphism of $I$ to itself that fixes endpoints (or, equivalently, preserves orientation) is an order isomorphism. Hence, we can think of $\aut (I, \leq)$ as the space of orientation preserving homeomorphisms of $I$. Here are two examples that show how the relation between order and topology is not so straightforward for $n>1$.

\begin{ex}
    The map $\phi: I^2 \to I^2$ defined by
    \begin{equation*}
    \phi(x,y) =
    \begin{cases}
        (x,y^2) & \text{if } x = 0, \\
        (x,y) & \text{otherwise,}
    \end{cases}
    \end{equation*}
    is a surjective and order-preserving map, but it is not continuous.
\end{ex}

\begin{ex}
\label{ex:end-homeo-not-iso}
    The map $\phi: I^2 \to I^2$ defined by $\phi(x,y) = (x, y^{\frac{1}{x+0.1}})$ is an order-preserving homeomorphism but it is not an order isomorphism. 
    Let $r(x) = 1/(x+0.1)$. Note that for $y \in I$, $y^{r(x)}$ increases when $r(x)$ decreases, and $r(x)$ decreases when $x$ increases, so for $(x,y) \leq (x',y')$
    \begin{equation*}
        \phi(x,y) = (x, y^{\frac{1}{x+0.1}}) \leq (x', y^{\frac{1}{x'+0.1}}) \leq (x', y'^{\frac{1}{x'+0.1}}) = \phi(x',y').
    \end{equation*}
    The map $\phi$ is continuous in $I^2$ and it is bijective with continuous inverse $\phi^{-1}(x,y) = (x, y^{x+0.1})$.
    However, $\phi$ creates new relations between non-comparable elements, for example $a = (0, 0.8)$ and $b = (1,0.1)$, $a \not\leq b$ and $b \not\leq a$, but $\phi(a) \leq \phi(b)$, so it is not an order isomorphism, the inverse map is not order-preserving.
\end{ex}

However, order isomorphisms of $I^n$ do have a rigid structure.

\begin{prop}
\label{prop:order-iso}
    Let $\phi \in \aut (I^n, \leq)$. There exists a permutation $\sigma: \{1,2,\dots,n\} \to \{1,2,\dots,n\}$ and $n$ order-preserving bijections $\phi_j: I \to I$ such that for all $x \in I^n$, $x = (x_1, x_2, \dots, x_n)$,
    \begin{equation*}
        \phi (x) = \left( \phi_1(x_{\sigma(1)}), \phi_2(x_{\sigma(2)}), \dots, \phi_n(x_{\sigma(n)}) \right).
    \end{equation*}
    Moreover, every order-preserving bijection $\phi_j: I \to I$ is a homeomorphism, so $\phi$ is a homeomorphism.
\end{prop}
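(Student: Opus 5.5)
The plan is to characterise $\phi$ through order-theoretic invariants of $(I^n,\leq)$ that any order isomorphism must preserve, namely joins and meets and the distinguished subsets built from them. Since $(I^n,\leq)$ is a lattice with coordinatewise $\max$ and $\min$, and an order isomorphism preserves all existing suprema and infima, $\phi$ commutes with $\vee$ and $\wedge$. It also fixes the bottom $\vec 0$ and the top $\vec 1$.

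\textbf{Identifying the axes.} Consider the ``axis'' elements: those $x\neq\vec 0$ that cannot be written as $y\vee z$ with $y,z<x$ both strictly below $x$ and incomparable. Order-theoretically, these are the join-irreducible-type elements. In $I^n$ they are exactly the points with a single nonzero coordinate. Indeed, if $x$ has two nonzero coordinates $i\neq k$, then $x=(x-x_ie_i)\vee(x-x_ke_k)$ with both pieces strictly below $x$. Conversely, if $x=x_ie_i$, then any $y,z\leq x$ lie on the same axis, hence are comparable, so $y\vee z\in\{y,z\}$.

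The set of nonzero axis points splits into $n$ classes under the relation of comparability, one class $A_i=\{te_i: t\in(0,1]\}$ for each axis. Points on different axes are incomparable, while points on the same axis are comparable. Since $\phi$ preserves both axis points and comparability, it permutes the classes. This defines a permutation, and we write it as $\sigma$ so that $\phi(A_{\sigma(j)})=A_j$. Restricting $\phi$ to $A_{\sigma(j)}\cup\{\vec 0\}$ then yields an order-preserving bijection $\phi_j:I\to I$ with $\phi(te_{\sigma(j)})=\phi_j(t)e_j$. Surjectivity of $\phi_j$ follows by applying the same reasoning to $\phi^{-1}$.

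\textbf{Recovering $\phi$.} Every point decomposes as $x=\bigvee_i x_ie_i$. Since $\phi$ preserves finite joins,
\[
\phi(x)=\bigvee_i \phi(x_ie_i)=\bigvee_j \phi_j(x_{\sigma(j)})e_j,
\]
which is precisely the formula in the statement. Preservation of joins has to be checked here: $\phi(y\vee z)$ is an upper bound of $\phi(y)$ and $\phi(z)$. If $w$ is any upper bound of both, then $\phi^{-1}(w)\geq y,z$, so $\phi^{-1}(w)\geq y\vee z$ and hence $w\geq\phi(y\vee z)$. The same argument with the inequalities reversed handles meets.

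\textbf{Topology.} An order-preserving bijection $\phi_j$ of $I$ is strictly increasing and surjective, so it has no jumps and is therefore continuous. Its inverse is also strictly increasing and surjective, hence continuous as well, so $\phi_j$ is a homeomorphism with $\phi_j(0)=0$ and $\phi_j(1)=1$. As a product of homeomorphisms composed with a coordinate permutation, $\phi$ is a homeomorphism of $I^n$.

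The main obstacle is the identification of the axes as an order-invariant subset, which requires the join-irreducibility argument above. Every other step is formal once join preservation is established.
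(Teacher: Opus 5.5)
Your proof is correct and follows essentially the same route as the paper. Both arguments rest on join preservation and on identifying the axes as the nonzero join-irreducibles; the induced permutation of axes then gives the $\phi_j$, and $\phi$ is reconstructed from $x=\bigvee_i x_ie_i$. The differences are minor: you separate the axes by comparability classes where the paper uses the corners $e_j$ as maximal join-irreducibles, and you write out the checks (join preservation, the characterisation of join-irreducibles) that the paper leaves as ``one can check''.
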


\begin{proof}
For $x,y \in I^n$ define the join $x \vee y$ as the unique minimal element of the set $\{ z \in I^n \mid x \leq z, y \leq z \}$. More explicitly,
\begin{equation*}
    x \vee y = \left( \max(x_1, y_1), \dots, \max(x_n,y_n)\right).
\end{equation*}

One can check that an order isomorphism $\phi$ preserves joins, that is, for all $x,y \in I^n$, $\phi (x \vee y) = \phi (x) \vee \phi (y)$. Let $e_j$ denote the point which has a 1 in the $j$-th coordinate and 0 for the rest. Then, any point $x \in I^n$, $x = (x_1, \dots, x_n)$ can be written as $x = \vee_{j=1}^n x_j e_j$ and
\begin{equation*}
    \phi(x) = \vee_{j=1}^n\phi(x_j e_j).
\end{equation*}

Thus, $\phi$ is determined by its restriction to the axes of the cube. Points in the axes are irreducible with respect to the join operation, meaning they cannot be written as a join of two strictly smaller elements. The corners $e_j$ are the maximal join-irreducibles. One can check that an order isomorphism also preserves join-irreducibles and maximal join-irreducibles. Since $\phi$ is a bijection, necessarily it maps each axis to another axis bijectively, according to a permutation $\sigma$. Let $\sigma(j)$ be such that the $\sigma(j)$-th axis is mapped by $\phi$ onto the $j$-th axis. Define $\phi_j: I \to I$ as the restriction of $\phi$ to the $\sigma(j)$-th axis, projected onto the $j$-th axis. Then
\begin{equation*}
    \phi(x) = \vee_{j=1}^n\phi_j(x_{\sigma(j)}) e_{j} = \left( \phi_1(x_{\sigma(1)}), \phi_2(x_{\sigma(2)}), \dots, \phi_n(x_{\sigma(n)}) \right),
\end{equation*}

which is the desired form.\footnote{We learnt this argument from an answer given by Prof. Keith Kearnes to \href{https://math.stackexchange.com/questions/2041788/order-isomorphisms-of-0-1n}{a question in math.stackexchange.com.}} Since every $\phi_j \in \aut(I, \leq)$, they are homeomorphisms, and so is $\phi$.

\end{proof}

\begin{cor}[group structure and topology of the action group]
\label{cor:structure-action-group}
The group of order isomorphisms of $I^n$ can be written as a semidirect product
\begin{equation*}
    \aut (I^n, \leq) \cong \aut (I, \leq)^n \rtimes \Sigma_n,
\end{equation*}
where $\Sigma_n$ denotes the group of permutations of $n$ elements. As a topological space, $\aut (I^n, \leq) \subset {\en (I^n, \leq)}$ is formed of $n!$ path-connected components isomorphic to $\aut (I, \leq)^n$, each one corresponding to a permutation of the axes of the $n$-cube.
\end{cor}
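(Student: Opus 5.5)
The plan is to read off both claims from \autoref{prop:order-iso}. First I will define a map $\Phi: \aut(I,\leq)^n \rtimes \Sigma_n \to \aut(I^n,\leq)$ by sending $((\phi_1,\dots,\phi_n),\sigma)$ to $x \mapsto (\phi_1(x_{\sigma(1)}),\dots,\phi_n(x_{\sigma(n)}))$. Such a map is bijective, and both it and its inverse (which has the same form) are order-preserving, so $\Phi$ lands in $\aut(I^n,\leq)$. By \autoref{prop:order-iso}, $\Phi$ is surjective. For injectivity, I note that $\phi$ determines $\sigma$ through which axis is sent to which, and then determines each $\phi_j$ by restriction to that axis. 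Next I will compute the composition $\phi\circ\psi$ of two maps of this form. The $j$-th coordinate is $\phi_j(\psi_{\sigma(j)}(x_{\tau(\sigma(j))}))$, so the permutation parts compose, and $\Sigma_n$ acts on $\aut(I,\leq)^n$ by permuting indices. This fixes the semidirect product multiplication. The subgroup $\aut(I,\leq)^n$ (the case $\sigma=\id$) is normal, being the kernel of the homomorphism $\phi\mapsto\sigma$, and $\Sigma_n$ embeds as the coordinate permutations. This gives the algebraic isomorphism.

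For the topology, I will show that the map $\phi\mapsto\sigma$ is locally constant for the sup metric. If $\sigma\neq\sigma'$, pick $j$ with $\sigma(j)\neq\sigma'(j)$ and evaluate at the corner $e_{\sigma(j)}$. Then $\phi(e_{\sigma(j)})$ has $j$-th coordinate $\phi_j(1)=1$, while $\phi'(e_{\sigma(j)})$ has $j$-th coordinate $\phi'_j(0)=0$, because automorphisms of $I$ fix the endpoints. Hence $d_\infty(\phi,\phi')\geq 1$, so the $n!$ cosets are open and closed, and pairwise at distance $1$.

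Within one coset, composing with the fixed coordinate permutation is an isometry for $d_\infty$. So each coset is isometric to $\aut(I,\leq)^n$ with the metric $\max_j d_\infty(\phi_j,\phi'_j)$, which is the product topology. It remains to show that $\aut(I,\leq)$ is path-connected. The straight-line homotopy $t\phi+(1-t)\id$ stays inside the strictly increasing endpoint-fixing bijections, and it is continuous in $t$ for the sup norm. Therefore each coset is path-connected, and these cosets are exactly the path components.

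The main subtlety I expect is the convention in the semidirect product (left versus right action of $\Sigma_n$ via $\sigma$ or $\sigma^{-1}$). I will resolve it by writing out the composition formula above explicitly, rather than by treating it as a conceptual point.
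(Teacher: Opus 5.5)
Your proposal is correct and follows essentially the paper's route: the unique decomposition from \autoref{prop:order-iso}, normality of $\aut(I,\leq)^n$, evaluation at the corners $e_j$ to separate the $n!$ permutation classes, and the straight-line homotopy for path-connectedness. Your bound $d_\infty\geq 1$ between classes and your isometry argument make explicit what the paper's evaluation-map step leaves implicit. One small convention point: by your own composition formula the permutation part of $\phi\circ\psi$ is $\tau\circ\sigma$, so the homomorphism is $\phi\mapsto\sigma^{-1}$ rather than $\phi\mapsto\sigma$. The kernel, and hence normality, is unaffected.
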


\begin{proof}
We regard $\aut (I, \leq)^n$ as a subgroup of $\aut (I^n, \leq)$ by the inclusion $(\phi_1, \dots, \phi_n) \mapsto \phi_1 \times \dots \times \phi_n$. Let $h: \Sigma_n \to \aut (I^n, \leq)$ be the inclusion of the permutation group as permutations of the axes. For $\sigma \in \Sigma_n$, $h_\sigma(x_1, \dots, x_n) = (x_{\sigma(1)}, \dots, x_{\sigma(n)})$.

By \autoref{prop:order-iso}, every element $\phi \in \aut (I^n, \leq)$ uniquely decomposes as $(\phi_1 \times \dots \times \phi_n) \circ h_\sigma$ for some $\sigma \in \Sigma_n$ and $\phi_j \in \aut(I, \leq)$. It remains to check that the subgroup $\aut (I, \leq)^n \subset \aut (I^n, \leq)$ is normal. It suffices to check that $\Sigma_n$ normalises $\aut (I, \leq)^n$. Let $\phi = \phi_1\times \dots \times \phi_n \in \aut (I, \leq)^n$, $\sigma \in \Sigma_n$, and $x \in I^n$. Then
\begin{align*}
    h_\sigma \circ \phi \circ h_\sigma^{-1} (x) & = h_\sigma \circ \phi \circ h_{\sigma^{-1}} (x) = h_\sigma (\phi_1(x_{\sigma^{-1}(1)}), \dots, \phi_n(x_{\sigma^{-1}(n)}) ) = (\phi_{\sigma(1)}(x_{1}), \dots, \phi_{\sigma(n)}(x_n)) = \\ 
    &= (\phi_{\sigma(1)} \times \dots \times \phi_{\sigma(n)})(x),
\end{align*}
so $h_\sigma \circ \phi \circ h_\sigma^{-1} \in \aut (I, \leq)^n$.

Every map $\phi \circ h_\sigma$ is connected to $\id \circ~h_\sigma$ via the straight-line homotopy, so each copy of $\aut(I, \leq)^n$ is path-connected. Consider now the continuous map that evaluates each morphism in $\aut(I^n, \leq)$ at the points $e_1, \dots, e_n$. For any morphism $\phi \circ h_\sigma$ this evaluation is $(e_{\sigma(1)}, \dots, e_{\sigma(n)})$, so the image of this map is a discrete set of $n!$ points. By continuity, the evaluation map is constant on paths. It follows that $\aut(I^n, \leq)$ has exactly $n!$ path-connected components, each homeomorphic to $\aut(I,\leq)^n$ and corresponding to a permutation of the axes.

\end{proof}

In most contexts, each of the parameters by which we filter a simplicial complex carries some distinct meaning (e.g. scale and density). Hence, it is reasonable to consider only those order isomorphisms which preserve the parameters' labels.

\begin{defn}
    We denote by $\aut_{0}(I^n, \leq)$ the connected component of $\aut (I^n, \leq)$ containing the identity. These correspond to order isomorphisms which fix the corners of the $n$-cube.
\end{defn}

We will be interested in the closure of this connected component, which has a simple monoid structure.

\begin{lem}
\label{lem:closure-aut}
    Let $\epi(I, \leq)$ denote the surjective maps of $\en (I, \leq)$. Then
    \begin{equation*}
        \overline{\aut_{0}(I^n, \leq)} \cong \epi(I, \leq)^n = \overline{\aut(I, \leq)}^n.
    \end{equation*}
\end{lem}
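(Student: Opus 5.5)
We need to prove that the closure of $\aut_0(I^n,\leq)$ in $\en(I^n,\leq)$, with the sup metric, equals $\{\phi_1\times\dots\times\phi_n : \phi_j\in\epi(I,\leq)\}$, and that $\epi(I,\leq)=\overline{\aut(I,\leq)}$. The plan is to prove the one-parameter statement first and then pass to products. By \autoref{prop:order-iso} and \autoref{cor:structure-action-group}, $\aut_0(I^n,\leq)$ is exactly the set of maps $\phi_1\times\dots\times\phi_n$ with $\phi_j\in\aut(I,\leq)$. For product maps the sup-norm distance satisfies
\begin{equation*}
d_\infty(\phi_1\times\dots\times\phi_n,\psi_1\times\dots\times\psi_n)=\max_j d_\infty(\phi_j,\psi_j),
\end{equation*}
since $\|\cdot\|_\infty$ on $I^n$ is a coordinatewise maximum and the coordinates vary independently.

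\textbf{Step 1: $n=1$.} We first show that $\epi(I,\leq)$ is closed in $\en(I,\leq)$ and contains $\aut(I,\leq)$. Let $\phi_k\to\phi$ uniformly with each $\phi_k$ order-preserving and surjective. Then $\phi$ is order-preserving and $\phi(0)=\lim\phi_k(0)=0$, $\phi(1)=1$. Surjectivity needs a separate argument. Given $y\in I$, pick $x_k$ with $\phi_k(x_k)=y$ and pass to a subsequence with $x_k\to x$. Then $\phi(x_k)\to y$, so both one-sided limits of $\phi$ at $x$ bracket $y$. Alternatively, since a monotone map of $I$ is surjective if and only if it is continuous with $\phi(0)=0$ and $\phi(1)=1$, it suffices to show $\phi$ has no jumps. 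This is the cleaner route: a jump of size $\eta$ at $x$ would force every $\phi_k$ with $d_\infty(\phi_k,\phi)<\eta/3$ to omit a value in the gap, contradicting surjectivity of $\phi_k$.

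Conversely, we show every $\phi\in\epi(I,\leq)$ is a uniform limit of homeomorphisms. Take $\phi_k=(1-\tfrac1k)\phi+\tfrac1k\,\id$. It is continuous, strictly increasing, and fixes $0$ and $1$, so $\phi_k\in\aut(I,\leq)$, and $d_\infty(\phi_k,\phi)\le 1/k$. Together these give $\overline{\aut(I,\leq)}=\epi(I,\leq)$.

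\textbf{Step 2: the product inclusion $\supseteq$.} Given $\phi_j\in\epi(I,\leq)$, approximate each factor by $\phi_{j,k}$ from Step 1. By the displayed distance identity, the product maps converge to $\phi_1\times\dots\times\phi_n$, and each approximant lies in $\aut_0(I^n,\leq)$.

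\textbf{Step 3: the product inclusion $\subseteq$.} Let $\Phi^{(k)}=\phi^{(k)}_1\times\dots\times\phi^{(k)}_n\to\Phi$ uniformly. Evaluating at $x$ and projecting to coordinate $j$ shows that $\Phi_j(x)$ is a limit of $\phi^{(k)}_j(x_j)$, so $\Phi_j(x)$ depends only on $x_j$. Hence $\Phi=\phi_1\times\dots\times\phi_n$, where $\phi_j(t)=\lim_k\phi^{(k)}_j(t)$. The limit is uniform in $t$ because restricting to the $j$-th axis does not increase the sup distance. By Step 1, each $\phi_j\in\epi(I,\leq)$.

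The identification $\cong$ is the map $(\phi_j)_j\mapsto\prod_j\phi_j$. It is a bijection onto the closure and, by the distance identity, an isometry with the max metric on $\epi(I,\leq)^n$. It is also a monoid isomorphism, because composition acts factorwise on product maps.

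The main obstacle is closedness of $\epi(I,\leq)$ in Step 1, namely that uniform limits of surjective monotone maps stay surjective. The jump argument handles this. Everything else is bookkeeping once \autoref{prop:order-iso} reduces $\aut_0$ to product maps.
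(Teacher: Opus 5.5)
Your proposal is correct and follows essentially the same route as the paper. Both arguments reduce to product maps via \autoref{prop:order-iso}, identify the axis-wise restrictions of the limit as uniform limits of order isomorphisms of $I$, and approximate any $\phi\in\epi(I,\leq)^n$ coordinatewise by $\tfrac1k\,\id+(1-\tfrac1k)\phi$. The one difference is cosmetic: the paper gets surjectivity of the limit directly, since a uniform limit of homeomorphisms is continuous and fixes $0$ and $1$, so your jump argument for closedness of $\epi(I,\leq)$ is valid but more than is needed.
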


\begin{rem}
    Here the closure of $\aut_{0}(I^n, \leq)$ is taken in $\en(I^n,\leq)$, where maps are not required to be continuous. We find, however, that the closure only contains continuous maps (surjective order-preserving maps of the interval are continuous).
\end{rem}

\begin{proof}
    Note that the second equality is a consequence of the first equality with $n=1$. Hence, it suffices to prove the first equality. Recall $\aut_{0}(I^n, \leq) \cong \aut (I, \leq)^n$. Note, however, that the closure in $\en(I^n, \leq)$ need not, a priori, be a product. Let $\pi_i: I^n \to I$ and $\inc_i: I \to I^n$ be the $i$-th axis projection and inclusion respectively. We regard $\epi (I, \leq)^n$ as a subspace of $\en (I^n, \leq)$ by the inclusion $(\phi_1, \dots, \phi_n) \mapsto \phi_1 \times \dots \times \phi_n$.
    
    Consider a sequence $\phi^k  \in \aut_{0}(I^n, \leq)$ which converges to $\phi \in \en(I^n,\leq)$ in the sup-norm.
    We can write $\phi^k = \phi_1^k \times \dots \times \phi_n^k$ uniquely for some $\phi_i^k \in \aut(I,\leq)$.
    A priori, we cannot assume $\phi$ is a product, or even continuous. For any $\epsilon>0$ there exists an $N$ such that for all $k\geq N$, $d_\infty(\phi^k, \phi) < \epsilon$. This implies that for every axis $i = 1, \dots, n$, $|\pi_i \circ ( \phi^k - \phi)(x)| < \epsilon$ for all $x \in I^n$. In particular, we can restrict $x$ to the $i$-th axis. We have $\phi^k_i = \pi_i \circ \phi^k \circ \inc_i$ and let $\tilde{\phi} = \phi_1 \times \dots \times \phi_n$, where $\phi_i: = \pi_i \circ \phi \circ \inc_i$. Then for all $x_i \in I$,
    \begin{equation*}
        |\phi^k_i(x_i) - \phi_i (x_i)| < \epsilon,
    \end{equation*}
    This implies that each sequence $\phi^k_i$ uniformly converges to $\phi_i$, so $\phi_i$ is order-preserving, continuous and surjective, and $\tilde{\phi} \in \epi(I,\leq)^n$. Moreover, since this is satisfied for all $i$, we also find that $d_\infty(\phi^k, \tilde{\phi}) < \epsilon$, so $\phi^k$ converges to $\tilde{\phi}$ in the sup-norm. By uniqueness of the limit, $\phi = \tilde{\phi}$, which proves $\overline{\aut_0 (I^n, \leq)} \subseteq \epi(I, \leq)^n$.

    Let us now consider $\phi = \phi_1 \times \dots \times \phi_n \in \epi(I,\leq)^n$. We define a sequence of functions $\phi^k = \phi_1^k \times \dots \times \phi_n^k$ as follows. For each $i = 1,\dots,n$, and $x_i \in I$,
    \begin{equation*}
        \phi^k_i(x_i) = \frac1k x_i + \left(1- \frac1k \right)\phi_i(x_i).
    \end{equation*}

    Each $\phi^k_i$ fixes endpoints and is continuous, so it is surjective, and it is strictly increasing, so injective and an order-isomorphism, $\phi^k \in \aut (I, \leq)^n$. For any $\epsilon > 0$, let $k > 1/\epsilon$,
    \begin{equation*}
        | \phi^k_i(x_i) - \phi_i(x_i) | = \left| \frac1k (x_i - \phi_i(x_i)) \right| \leq \frac1k < \epsilon,
    \end{equation*}
    for all $x_i \in I$, so $d_\infty(\phi^k, \phi) < \epsilon$ and the sequence converges to $\phi$ in the sup-norm. We find $\epi(I, \leq)^n \subseteq \overline{\aut_0 (I^n, \leq)}$, which finishes the proof.
\end{proof}

\subsection{Actions on \texorpdfstring{$\filt^n$}{n-filters} and \texorpdfstring{$\vect^{\mathbb{R}^n}$}{n-parameter persistence modules} and equivariance of the \texorpdfstring{$MPH$}{MPH} map}

A simplicial complex $K$ can be seen as a poset with order given by inclusions $(K, \subseteq)$. Then, any $n$-filter $f \in \filt^n$ is simply a functor $f: (K, \subseteq) \to (I^n, \leq)$. The set of endofunctors of $(I^n, \leq)$ acts on the set of filters naturally by postcomposition. Let $\phi \in \en (I^n, \leq)$ and $f \in \filt^n$, then
\begin{equation*}
    \phi.f = \phi \circ f.
\end{equation*}

Post-composition descends to a group action of $\aut_0(I^n, \leq)$ and a monoid action of $\overline{\aut_0(I^n, \leq)}$. In the rest of this section we define their actions on the space of $n$-parameter persistence modules $\vect^{\mathbb{R}^n}$, and check that they descend to well-defined actions in $\modspace{n}$, which are determined by what happens on minimal grids. Finally, we prove the equivariance of the $MPH_p$ map with respect to these actions.

First, note that since the boundary of $I$ is fixed by elements in $\epi (I, \leq)$, they extend by the identity to surjective endomorphisms of $\mathbb{R}$. We can use this procedure component-wise to extend the elements of $\overline{\aut_0 (I^n, \leq)}$ to maps of $\mathbb{R}^n$ which are continuous and non-decreasing in every coordinate. In the case of $\aut_0 (I^n, \leq)$ the extended maps are also homeomorphisms. We will tacitly extend our maps without changing notation when the context requires it.

\begin{defn}
    Let $\phi \in \overline{\aut_0 (I^n, \leq)}$, $\phi = \phi_1 \times \dots \times \phi_n$ with $\phi_j \in \epi(I,\leq)$. Note that for every $t \in \mathbb{R}^n$, $\phi^{-1}(t)$ is a product of $n$ closed, bounded intervals. This set has a unique maximal value, so we define the map $\max \phi^{-1}: \mathbb{R}^n \to \mathbb{R}^n$ as the assignment $t \mapsto (\max \phi_1^{-1} (t_1), \dots, \max \phi_n^{-1} (t_n))$.
\end{defn}

\begin{rem}[Remark 1.6]
    The map $\max \phi^{-1}$ defines an endofunctor of $(\mathbb{R}^n, \leq)$.
\end{rem}

\begin{prop}[action on persistence modules]
\label{prop:defn-action-modules}
    The monoid $\overline{\aut_0 (I^n, \leq)}$ acts on $\vect^{\mathbb{R}^n}$ as follows. Let $\phi \in \overline{\aut_0 (I^n, \leq)}$ and $M\in \vect^{\mathbb{R}^n}$, then
    \begin{equation*}
        \phi.M = M \circ \max \phi^{-1},
    \end{equation*}
    which restricts to a group action of $\aut_0 (I^n, \leq)$. The action descends to a well-defined action on $\modspace{n}$, and hence it acts diagonally on $\modspace{n}^{\dim K +1}$. Let $[\bm{M}] = [(M^0, \dots, M^{\dim K})]\in \modspace{n}^{\dim K +1}$, then
    \begin{equation*}
        \phi.[\bm{M}] = (\phi.[M^0], \dots, \phi.[M^{\dim K}]) = ([\phi.M^0], \dots, [\phi.M^{\dim K}]) = [\phi.\bm{M}]
    \end{equation*}
\end{prop}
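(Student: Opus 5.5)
The statement has three parts: $\phi.M = M\circ\max\phi^{-1}$ is a functor, the assignment is a monoid action, and it descends to $\modspace{n}$. I will check them in that order.

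\textbf{Well-definedness on objects.} For $\phi\in\overline{\aut_0(I^n,\leq)}$, the map $\max\phi^{-1}\colon(\mathbb{R}^n,\leq)\to(\mathbb{R}^n,\leq)$ is an endofunctor (Remark 1.6). Its coordinates are $t_j\mapsto\max\phi_j^{-1}(t_j)$, and each is non-decreasing because $\phi_j$ is. So $\phi.M=M\circ\max\phi^{-1}$ is a composite of functors, hence a functor $(\mathbb{R}^n,\leq)\to\vect$. It is pointwise finite-dimensional, so $\phi.M\in\vect^{\mathbb{R}^n}$. On morphisms, a natural transformation $\eta\colon M\Rightarrow N$ goes to the whiskering $\eta\max\phi^{-1}$.

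\textbf{Action axioms.} The identity axiom is immediate: $\max\id^{-1}=\id$.

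For compatibility, $(\psi\phi).M=\psi.(\phi.M)$ amounts to $M\circ\max(\psi\phi)^{-1}=M\circ\max\phi^{-1}\circ\max\psi^{-1}$. It therefore suffices to prove $\max(\psi\circ\phi)^{-1}=\max\phi^{-1}\circ\max\psi^{-1}$, one coordinate at a time, for surjective monotone maps $\phi,\psi$ of $\mathbb{R}$ extended by the identity. I will prove it as follows.
\begin{itemize}
\item Let $s=\max\psi^{-1}(t)$ and $u=\max\phi^{-1}(s)$. Then $\psi\phi(u)=\psi(s)=t$.
\item Suppose $\psi\phi(u')=t$. Then $\phi(u')\in\psi^{-1}(t)$, so $\phi(u')\leq s$.
\item If $u'>u$, monotonicity gives $\phi(u')\geq\phi(u)=s$, so $\phi(u')=s$. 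This contradicts the maximality of $u$ in $\phi^{-1}(s)$. Hence $u'\leq u$.
\end{itemize}
For $\phi\in\aut_0$ we have $\max\phi^{-1}=\phi^{-1}$, so the action restricts to a group action.

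\textbf{Descent to $\modspace{n}$.} If $M\cong N$ via $\eta$, then $\eta\max\phi^{-1}$ is a natural isomorphism $\phi.M\cong\phi.N$. So the action is well defined on isomorphism classes.

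The remaining point, and the one I expect to be the main obstacle, is that $\phi.M$ is again essentially finite with a grid in $I^n$. The plan is to show that if $G$ is a grid for $M$ with $G\subset I^n$, then $\phi(G)=\phi_1(G_1)\times\dots\times\phi_n(G_n)$ is a grid for $\phi.M$. This uses the characterisation in \autoref{lem:ess-fin-characterisation}.

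\emph{Condition (ii).} Suppose $r\not\geq\min\phi(G)$, say $r_j<\phi_j(\min G_j)$. Then $\max\phi_j^{-1}(r_j)<\min G_j$. This holds because if some $u\geq\min G_j$ had $\phi_j(u)=r_j$, monotonicity would give $r_j\geq\phi_j(\min G_j)$. Consequently $(\phi.M)_r=M_{\max\phi^{-1}(r)}=0$.

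\emph{Condition (i).} Take $\min\phi(G)\leq r\leq s$ with $\lfloor r\rfloor_{\phi(G)}=\lfloor s\rfloor_{\phi(G)}$. I need $\lfloor\max\phi^{-1}r\rfloor_G=\lfloor\max\phi^{-1}s\rfloor_G$; then $M_{\max\phi^{-1}r,\max\phi^{-1}s}$ is an isomorphism by the same characterisation applied to $M$. Work in one coordinate. Let $g\in G_j$ satisfy $g\leq\max\phi_j^{-1}(s_j)$. Then $\phi_j(g)\leq s_j$, so $\phi_j(g)\leq\lfloor s_j\rfloor=\lfloor r_j\rfloor\leq r_j$. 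Since $\phi_j(\max\phi_j^{-1}(r_j))=r_j\geq\phi_j(g)$, we need $g\leq\max\phi_j^{-1}(r_j)$. This follows from maximality of $\max\phi_j^{-1}(r_j)$, which is where the choice of \emph{maximum} preimage matters:
\begin{itemize}
\item if $\phi_j(g)=r_j$, then $g\in\phi_j^{-1}(r_j)$, so $g\leq\max\phi_j^{-1}(r_j)$;
\item if $\phi_j(g)<r_j$, then $g<\max\phi_j^{-1}(r_j)$ by monotonicity.
\end{itemize}
Thus every $g\in G_j$ below $\max\phi_j^{-1}(s_j)$ also lies below $\max\phi_j^{-1}(r_j)$, and the two floors agree.

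Finally, $\phi_j$ maps $I$ onto $I$, so $\phi(G)\subset I^n$. Hence $\phi.[M]\in\modspace{n}$, and the diagonal action on $\modspace{n}^{\dim K+1}$ is immediate componentwise.
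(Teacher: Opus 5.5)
Your proof is correct, and for the action axioms and the descent to isomorphism classes it matches the paper's argument. You verify $\max(\psi\circ\phi)^{-1}=\max\phi^{-1}\circ\max\psi^{-1}$ coordinatewise by showing that $u=\max\phi^{-1}(\max\psi^{-1}(t))$ attains the maximum, where the paper instead proves the two inequalities directly in $\mathbb{R}^n$; both give the same identity.

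The one genuine addition is your check that $\phi.M$ is again essentially finite with grid $\phi(G)\subset I^n$. The paper's proof of this proposition leaves that implicit and only establishes it afterwards, in \autoref{prop:grid-under-action}, via the identity $\lfloor t\rfloor_{\phi(G)}=\phi(\lfloor\max\phi^{-1}(t)\rfloor_G)$. Your direct comparison of the grid points lying below $\max\phi^{-1}(r)$ and below $\max\phi^{-1}(s)$ is a valid alternative. It needs only one small remark you left unstated: $r\geq\min\phi(G)=\phi(\min G)$ implies $\max\phi^{-1}(r)\geq\min G$. This follows from monotonicity of $\max\phi^{-1}$, and it is what lets you apply the characterisation in \autoref{lem:ess-fin-characterisation} to $M$.
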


\begin{proof}
    It is clear that the identity acts trivially. Now let $\phi, \psi \in \overline{\aut_0 (I^n, \leq)}$ and $M \in \vect^{\mathbb{R}^n}$. To prove $(\phi \circ\psi).M = \phi. (\psi.M)$ it suffices to show that
    \begin{equation*}
        \max (\phi \circ \psi)^{-1} = \max \psi^{-1} \circ \max \phi^{-1}
    \end{equation*}
    Let $t \in \mathbb{R}^n$. Since $\max \phi^{-1} (t) \in \phi^{-1} (t)$, we get $\psi^{-1} (\max \phi^{-1} (t)) \subset \psi^{-1}(\phi^{-1} (t))$, and
    \begin{equation*}
        \max \psi^{-1} (\max \phi^{-1} (t)) \leq \max (\phi \circ \psi)^{-1} (t)
    \end{equation*}
    Let $x \in (\phi \circ \psi)^{-1} (t)$, we have $\psi (x) \in \phi^{-1} (t)$, so $\psi (x) \leq \max \phi^{-1} (t)$. Since $\max \psi^{-1}$ is order-preserving, we get
    \begin{equation*}
        x \leq \max \psi^{-1} (\psi (x)) \leq \max \psi^{-1} (\max \phi^{-1} (t))
    \end{equation*}
    Since the inequality holds for every $x \in (\phi \circ \psi)^{-1} (t)$, we find
    \begin{equation*}
        \max (\phi \circ \psi)^{-1} (t) \leq \max \psi^{-1} (\max \phi^{-1} (t)),
    \end{equation*}
    which yields the equality.
    
    Finally, we check that the action is well-defined for isomorphism classes of persistence modules. Let $M, N \in \vect^{\mathbb{R}^n}$ be isomorphic, with $\eta: M \Rightarrow N$ a natural isomorphism. Then for any $\phi \in \overline{\aut_0 (I^n, \leq)}$, we define $\eta \circ \max \phi^{-1}: \phi.M \Rightarrow \phi.N$ at each $t \in \mathbb{R}^n$ by the map $\eta_{\max \phi^{-1}(t)}$, which yields a natural isomorphism between $\phi.M$ and $\phi.N$.
\end{proof}

As a direct generalisation of Lemma 1.5 in \cite{LeygonieTillmann2022}, we find that the multiparameter persistent homology map is equivariant under the actions we have defined.

\begin{thm}[equivariance]
\label{thm:equivariance}
The $MPH$ map is equivariant under the action of $\overline{\aut_0 (I^n, \leq)}$ on $n$-filters and persistence modules, that is, for all $f \in \filt^n$ and $\phi \in \overline{\aut_0 (I^n, \leq)}$.
\begin{equation*}
    MPH(\phi.f) = \phi.MPH(f).
\end{equation*}
\end{thm}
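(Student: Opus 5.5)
We will show something slightly stronger than the statement: for each degree $p$ the filtrations themselves agree on the nose, that is, $K(\phi\circ f) = K(f)\circ \max\phi^{-1}$ as functors $(\mathbb{R}^n,\leq)\to\scpx$. Post-composing with $H_p$ then gives $H_p\circ K(\phi.f) = \phi.(H_p\circ K(f))$ as persistence modules. Taking isomorphism classes, and using that the action descends to $\modspace{n}$ and acts diagonally on tuples (\autoref{prop:defn-action-modules}), yields $MPH(\phi.f)=\phi.MPH(f)$ for all $p$ simultaneously.

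The identity of filtrations reduces to a pointwise claim. For every $t\in\mathbb{R}^n$ and every $x\in I^n$,
\begin{equation*}
    \phi(x)\leq t \iff x\leq \max\phi^{-1}(t),
\end{equation*}
where $\phi$ is extended to $\mathbb{R}^n$ by the identity outside $I$, coordinatewise. With this claim, $\sigma\in K(\phi.f)_t$ iff $\phi(f(\sigma))\leq t$ iff $f(\sigma)\leq\max\phi^{-1}(t)$ iff $\sigma\in K(f)_{\max\phi^{-1}(t)}$. The structure maps on both sides are inclusions of subcomplexes of $K$, so the two functors agree on morphisms as well.

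Since $\phi=\phi_1\times\dots\times\phi_n$ with each $\phi_j\in\epi(I,\leq)$ (\autoref{lem:closure-aut}), and both the product order and $\max\phi^{-1}$ are coordinatewise, the claim reduces to one variable. Let $\psi\in\epi(I,\leq)$ be extended by the identity to a continuous, non-decreasing surjection $\mathbb{R}\to\mathbb{R}$. We need, for $s,u\in\mathbb{R}$,
\begin{equation*}
    \psi(u)\leq s \iff u\leq m(s), \qquad m(s):=\max\psi^{-1}(s).
\end{equation*}
The maximum exists because $\psi^{-1}(s)$ is a nonempty (by surjectivity) closed bounded interval.

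For the backward direction, $u\leq m(s)$ gives $\psi(u)\leq\psi(m(s))=s$ by monotonicity. For the forward direction, suppose $\psi(u)\leq s$ but $u>m(s)$. Monotonicity gives $\psi(u)\geq s$, so $\psi(u)=s$. Then $u\in\psi^{-1}(s)$, contradicting the maximality of $m(s)$.

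The only delicate point is the boundary behaviour. The grade $t$ may lie outside $I^n$ while $f(\sigma)\in I^n$, so the extension of $\phi_j$ by the identity must be used consistently, exactly as the paper does tacitly. We also need surjectivity, so that $\psi^{-1}(s)\neq\varnothing$, and the fact that the fibres are closed. Both follow because $\psi$ is continuous and monotone on $\mathbb{R}$. Beyond this check the argument is formal.
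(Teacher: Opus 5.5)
Your proof is correct and follows the paper's argument: both show that $K(f)_{\max\phi^{-1}(t)} = K(\phi\circ f)_t$ coordinatewise, so the filtrations agree on the nose, and then apply $H_p$ and pass to isomorphism classes. The one addition is that you justify the one-variable equivalence $\psi(u)\leq s \iff u\leq \max\psi^{-1}(s)$, including the boundary and extension issues, which the paper uses without comment.
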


\begin{proof}
We prove equivariance at the level of persistence modules. Let $t \in \mathbb{R}^n$ and let $p \geq 0$ denote a fixed homology degree. By \autoref{lem:closure-aut}, $\phi = \phi_1 \times \dots \times \phi_n$ for $\phi_j \in \epi(I,\leq)$. We have
\begin{equation*}
    (\phi.(H_p \circ K(f)))_t = (H_p \circ K(f)) \circ \max \phi^{-1}(t) = H_p ( K(f)_{\max \phi^{-1}(t)}), 
\end{equation*}

where
\begin{align*}
    K(f)_{\max \phi^{-1}(t)} &=
    \{\sigma \in K \mid f(\sigma) \leq \max \phi^{-1}(t)\} = \\
    &=\{\sigma \in K \mid f_j(\sigma) \leq \max \phi_j^{-1}(t_j) \text{ for all } j = 1,\dots, n\} = \\
    &=\{\sigma \in K \mid \phi_j \circ f_j(\sigma) \leq t_j \text{ for all } j = 1,\dots, n\} \\
    &=\{\sigma \in K \mid \phi \circ f(\sigma) \leq t\} = \\
    &= K(\phi \circ f)_t.
\end{align*}

Hence, we find $(\phi.MPH_p(f)) = [H_p(K(\phi \circ f))] = MPH_p(\phi.f)$, and $MPH(\phi.f) = \phi.MPH(f)$.
\end{proof}

\begin{rem}
    We can actually define an action of the whole $\en(\mathbb{R}^n, \leq)$ on $\vect^{\mathbb{R}^n}$ by left Kan extensions (up to isomorphism): for $\phi \in \en(I^n, \leq)$ and $M \in \vect^{\mathbb{R}^n}$, $\phi.M := \operatorname{Lan}_\phi M$. Concretely, for $x \in \mathbb{R}^n$,
    \begin{equation*}
        (\phi.M)_x = \underset{\phi(y) \leq x}{\operatorname{colim}} M_y.
    \end{equation*}
    For the extension of $\phi \in \overline{\aut_0(I^n,\leq)}$, this definition coincides with \autoref{prop:defn-action-modules}.
    The equivariance of the action, however, fails in general. We illustrate this with an example.

    Let $\phi$ be the order-preserving homeomorphism of $\mathbb{R}^n$ obtained as the following extension of the map defined in \autoref{ex:end-homeo-not-iso}. For $(x,y) \in \mathbb{R}^2$,
    \begin{equation*}
        \phi(x,y) =
        \begin{cases}
            (x,y) & \text{if } y\leq 0 \text{ or } y\geq 1, \\
            (x,y^{10}) & \text{if } y \in (0,1) \text{ and } x \leq 0, \\
            (x,y^{1/(x+0.1)}) & \text{if } x, y \in (0,1), \\
            (x,y^{1/1.1}) & \text{if } y \in (0,1) \text{ and } x \geq 1.
        \end{cases}
    \end{equation*}

    Let $a = (0, 0.8)$ and $b = (1,0.1)$, $c=(1,1)$, and let $o$ be the origin. Note that $a,b$ are non-comparable, but $\phi(a) \leq \phi(b)$. Let $f$ be a filter on the triangle as sketched in \autoref{fig:counterexample-equivariance}.

    \begin{figure}[h!]
        \centering
        \includegraphics[width=0.7\linewidth]{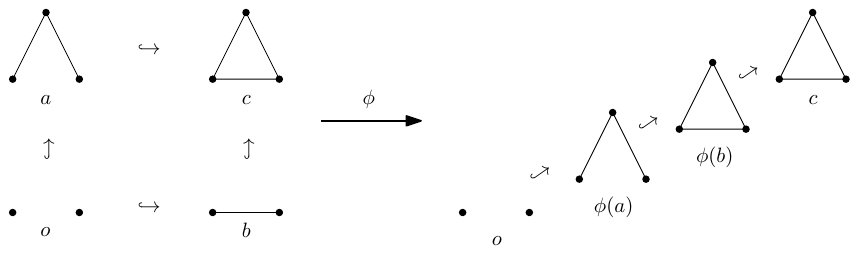}
        \caption{Example of a 2-filter on the triangle and $\phi \in \en(I^n,\leq)$ for which the equivariance of $MPH$ fails.}
        \label{fig:counterexample-equivariance}
    \end{figure}

    The module $MPH_1(f)$ is free with a single generator at grade $c$, and one can verify that acting by $\phi$ does not change the isomorphism type of the module. Indeed, for $x \not\geq c$, any $y \in \mathbb{R}^n$ satisfying $\phi(y) \leq x$ must also satisfy $y \not\geq c$. The vector space $(\phi.MPH_1(f))_x$ then is the colimit of a diagram with only 0 vector spaces, hence it is 0. For $x \geq c$, $x$ is the maximum of the set $\{y \mid \phi(y) \leq x\}$. Hence, $(\phi.MPH_1(f))_x = \Bbbk$, with identities as structure maps.
    The module $MPH_1(\phi.f)$, however, is free with a single generator at grade $\phi(b)$. Hence $MPH_1(\phi.f) \neq \phi.MPH_1(f)$.

    Note that this is a consequence of the failure of degree-wise homology not sending homotopy colimits to colimits of vector spaces. This issue would not occur at the level of the derived ($\infty$-)category of chain complexes. We thank Lukas Waas for pointing this out.
\end{rem}

\subsection{Continuity of the actions}

The continuity of the maps in $\overline{\aut_0 (I^n, \leq)} \cong \epi(I,\leq)^n$ ensures the continuity of the action of this monoid on $\filt^n$.
We now prove the continuity of the monoid action on persistence modules. 

\begin{thm}[continuity of the action]
\label{thm:continuity-action-modules}
    The map induced by the action,
    \begin{equation*}
    \begin{array}{rcl}
        \overline{\aut_0 (I^n, \leq)} \times \vect^{\mathbb{R}^n} &  \to & \vect^{\mathbb{R}^n}\\
        (\phi, M) & \mapsto & \phi.M,
    \end{array}
    \end{equation*}
    \begin{enumerate}[label=(\roman*)]
        \item is Lipschitz with respect to the first component: Fix $M \in \vect^{\mathbb{R}^n}$. For all $\phi, \psi \in \overline{\aut_0 (I^n, \leq)}$,
        \begin{equation*}
            d_I(\phi.M, \psi.M) \leq d_\infty(\phi, \psi).
        \end{equation*}
        \item is uniformly continuous with respect to the second component: Fix $\phi \in \overline{\aut_0 (I^n, \leq)}$ and let $\epsilon >0$. There exists $\eta > 0$ such that for all $x \in \mathbb{R}^n$,
        \begin{equation}
        \label{eq:ineq}
            \phi(x + \vec\eta) \leq \phi(x) + \vec\epsilon,
        \end{equation}
        and for all $M,N \in \vect^{\mathbb{R}^n}$ such that $d_I(M,N) < \eta$,
        \begin{equation*}
            d_I(\phi.M, \phi.N) \leq \epsilon.
        \end{equation*}
        \item and hence is continuous, and the action of $\overline{\aut_0 (I^n, \leq)}$ on $\vect^{\mathbb{R}^n}$ is continuous.
    \end{enumerate}
\end{thm}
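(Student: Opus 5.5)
Part (i) comes from an explicit interleaving between $\phi.M = M\circ \max\phi^{-1}$ and $\psi.M = M\circ\max\psi^{-1}$. Set $\delta = d_\infty(\phi,\psi)$; both maps are extended by the identity outside $I$, so $|\phi_j(s)-\psi_j(s)|\le\delta$ for all $s\in\mathbb{R}$. The key inequality is coordinatewise:
\begin{equation*}
    \max\phi_j^{-1}(t_j) \leq \max\psi_j^{-1}(t_j+\delta) \quad \text{for all } t_j\in\mathbb{R}.
\end{equation*}
To prove it, let $s=\max\phi_j^{-1}(t_j)$. Then $\psi_j(s)\le\phi_j(s)+\delta = t_j+\delta$. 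Since $\psi_j$ is non-decreasing and surjective, every preimage of $t_j+\delta$ lies at or beyond any point mapping below it, so $s\le \max\psi_j^{-1}(t_j+\delta)$. Hence $\max\phi^{-1}(t)\le\max\psi^{-1}(t+\vec\delta)$, and symmetrically with $\phi$ and $\psi$ swapped. Define $\gamma_t = M_{\max\phi^{-1}(t),\,\max\psi^{-1}(t+\vec\delta)}$, and define $\kappa$ the same way with the roles exchanged. These are structure maps of $M$, so naturality and the interleaving identities follow from functoriality of $M$: composites of structure maps are structure maps, and $\max\phi^{-1}(t)\le\max\phi^{-1}(t+2\vec\delta)$ by monotonicity. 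Thus $d_I(\phi.M,\psi.M)\le\delta$.

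For part (ii), first obtain $\eta$. Each $\phi_j$, extended by the identity, is a continuous non-decreasing map that is constant-shift (identity) outside $I$, hence uniformly continuous on $\mathbb{R}$. Take $\eta>0$ so that $|s-s'|\le\eta$ implies $|\phi_j(s)-\phi_j(s')|\le\epsilon$ for all $j$. Monotonicity then gives \eqref{eq:ineq}. Next, take an $\eta'$-interleaving $\gamma:M\to N[\eta']$, $\kappa:N\to M[\eta']$ with $\eta'<\eta$; it exists because $d_I(M,N)<\eta$. The interleaving for $\phi.M,\phi.N$ needs, for each $t$, the comparison
\begin{equation*}
    \max\phi^{-1}(t)+\vec\eta' \leq \max\phi^{-1}(t+\vec\epsilon).
\end{equation*}
This follows from \eqref{eq:ineq}: with $x=\max\phi^{-1}(t)$ we get $\phi(x+\vec\eta')\le\phi(x+\vec\eta)\le t+\vec\epsilon$, and then the same maximality argument as in (i) applies coordinatewise. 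Now define
\begin{equation*}
    \tilde\gamma_t = N_{\max\phi^{-1}(t)+\vec\eta',\,\max\phi^{-1}(t+\vec\epsilon)}\circ\gamma_{\max\phi^{-1}(t)},
\end{equation*}
and define $\tilde\kappa$ analogously. Naturality of $\tilde\gamma$ follows from naturality of $\gamma$ together with functoriality of $N$.

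The main obstacle is checking the interleaving identities $\tilde\kappa[\epsilon]\circ\tilde\gamma = S^{\phi.M,2\vec\epsilon}$. Expanding the composite gives $N$-structure map, then $\kappa$, then $M$-structure map, interleaved with $\gamma$. I will use naturality of $\kappa$ to slide the $N$-structure map past $\kappa$, which turns it into an $M$-structure map. The composite then becomes $M$-structure map $\circ\,\kappa_{x+\vec\eta'}\circ\gamma_x$ with $x=\max\phi^{-1}(t)$. The interleaving identity for $(\gamma,\kappa)$ says $\kappa_{x+\vec\eta'}\circ\gamma_x = M_{x,x+2\vec\eta'}$. So the whole composite is a single structure map of $M$ from $\max\phi^{-1}(t)$ to $\max\phi^{-1}(t+2\vec\epsilon)$, which is exactly $S^{\phi.M,2\vec\epsilon}_t$. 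This gives $d_I(\phi.M,\phi.N)\le\epsilon$.

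Part (iii) follows by the triangle inequality:
\begin{equation*}
    d_I(\phi.M,\psi.N)\le d_I(\phi.M,\phi.N)+d_I(\phi.N,\psi.N).
\end{equation*}
The first term is controlled by (ii), and the second by $d_\infty(\phi,\psi)$ using (i). Together these give joint continuity of the action map.
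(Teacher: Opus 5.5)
Your proposal is correct and follows the paper's proof essentially step for step: the same structure-map interleaving in (i) via $\max\phi^{-1}(t)\le\max\psi^{-1}(t+\vec\delta)$, the same maps $\tilde\gamma,\tilde\kappa$ built from a uniform-continuity choice of $\eta$ in (ii), and the same triangle-inequality argument for (iii). The differences are cosmetic. You work with an $\eta'$-interleaving for some $\eta'<\eta$, which is slightly more careful than the paper's direct appeal to an $\eta$-interleaving. You also slide the structure map past $\kappa$ rather than $\gamma$ when verifying the interleaving identity.
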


\begin{proof}

{
We start by proving (i). Let $\delta = d_\infty(\phi, \psi)$. It is enough to show that there is a $\delta$-interleaving between $\phi.M, \psi.M$. We are thus seeking to define a pair of natural transformations
\begin{equation*}
    \gamma: \phi.M \Rightarrow (\psi.M)[\delta], \quad \kappa: \psi.M \Rightarrow (\phi.M)[\delta]
\end{equation*}
such that
\begin{equation*}
    \kappa[\delta] \circ \gamma = S^{\phi.M, 2\vec\delta}, \quad
    \gamma[\delta] \circ \kappa = S^{\psi.M, 2\vec\delta}.
\end{equation*}

We start by defining $\gamma$. For each $t \in \mathbb{R}^n$, there is an obvious choice for
\begin{equation*}
    \gamma_t: M_{\max \phi^{-1}(t)} \to M_{\max \psi^{-1}(t + \vec\delta)},
\end{equation*}
namely the internal map of $M$, if it exists. To show it exists, we need to show 
\begin{equation*}
    \max \phi^{-1}(t) \leq \max \psi^{-1}(t + \vec\delta).
\end{equation*}

First, recall that since $d_\infty(\phi, \psi) \leq \delta$, for every $x\in \mathbb{R}^n$,
\begin{equation*}
    \psi(x) \leq \phi(x) + \vec\delta,
\end{equation*}
Setting $x = \max \phi^{-1}(t)$, we find 
\begin{equation*}
    \psi(\max \phi^{-1} (t)) \leq \phi(\max\phi^{-1} (t)) + \vec\delta = t + \vec\delta.
\end{equation*}
As $\psi$ is order preserving, so is $\max \psi ^{-1}$. Hence,  applying $\max \psi^{-1}$ on each side of the above inequality, we get
\begin{equation*}
    \max \phi^{-1}(t) \leq \max \psi^{-1} (\psi(\max \phi^{-1} (t))) \leq \max \psi^{-1}(t + \vec\delta),
\end{equation*}
where the first inequality is by definition of $\max\psi^{-1}$. Hence, we can define
\begin{equation*}
    \gamma_t = M_{\max \phi^{-1}(t), \max \psi^{-1}(t + \vec\delta)}.
\end{equation*}

Similarly, we can define
\begin{equation*}
    \kappa_t: M_{\max \psi^{-1}(t)} \to M_{\max \phi^{-1}(t + \vec\delta)}
\end{equation*}
to be $\kappa_t = M_{\max \psi^{-1}(t), \max \phi^{-1}(t + \vec\delta)}$. The relations between natural transformations are automatically satisfied, since all the maps coincide with internal maps of $M$. Hence, $d_I(\phi.M, \psi.M) \leq \delta$.
}

{
To prove (ii), recall that each component $\phi_j$ is continuous in $I$ and the identity outside of $I$, and thus uniformly continuous everywhere. Hence, we can pick $\eta > 0$ such that condition \eqref{eq:ineq} is satisfied for all $x \in \mathbb{R}^n$.

Assume now that $d_I(M,N) < \eta$. Then there exists an interleaving between $M$ and $N$ given by natural transformations
\begin{equation*}
    \gamma: M \Rightarrow N^{\vec\eta}, \quad \kappa: N \Rightarrow M^{\vec\eta}.
\end{equation*}

We will use these natural transformations to define an $\epsilon$-interleaving between $\phi.M$ and $\phi.N$:
\begin{equation*}
    \tilde\gamma: \phi.M \Rightarrow (\phi.N)[\epsilon], \quad \tilde\kappa: \phi.N \Rightarrow (\phi.M)[\epsilon].
\end{equation*}

Let $t \in \mathbb{R}^n$. We will define $\tilde\gamma_t$ using the following diagram
\begin{center}
\begin{tikzcd}
M_{\max \phi^{-1}(t)} \arrow{rdd}[swap]{\gamma_{\max \phi^{-1}(t)}} \arrow{rr}{\tilde\gamma_t} &               & N_{\max \phi^{-1}(t+\vec\epsilon)} \\
                          &               &   \\
                          & N_{\max \phi^{-1}(t)+ \vec\eta} \arrow[ruu] &  
\end{tikzcd}
\end{center}

We can choose the upwards map to be the internal map of $N$. To see it exists, we consider the inequality in \eqref{eq:ineq} for $x = \max \phi^{-1}(t)$ with $t \in \mathbb{R}^n$. Applying $\max \phi^{-1}$ to both sides of the above inequality, we find
\begin{equation*}
    \max \phi^{-1}(t) + \vec\eta \leq \max  \phi^{-1}(\phi(\max  \phi^{-1}(t) + \vec\eta)) \leq  \max \phi^{-1} (t + \vec\epsilon).
\end{equation*}

Hence, we can define
\begin{equation*}
    \tilde\gamma_t = N_{\max \phi^{-1}(t) + \vec\eta, \max \phi^{-1} (t + \vec\epsilon)} \circ \gamma_{\max \phi^{-1}(t)},
\end{equation*}

and with an analogous reasoning we can also define
\begin{equation*}
    \tilde\kappa_t = M_{\max \phi^{-1}(t) + \vec\eta, \max \phi^{-1} (t + \vec\epsilon)} \circ \kappa_{\max \phi^{-1}(t)}.
\end{equation*}

These define natural transformations and they satisfy the commutativity condition with the internal maps:
\begin{align*}
    (\tilde\kappa[\epsilon] \circ \tilde\gamma)_t &=
    (M_{\max \phi^{-1}(t+ \vec\epsilon) + \vec\eta, \max \phi^{-1} (t + 2\vec\epsilon)} \circ \kappa_{\max \phi^{-1}(t + \vec\epsilon)}) \circ
    (N_{\max \phi^{-1}(t) + \vec\eta, \max \phi^{-1} (t + \vec\epsilon)} \circ \gamma_{\max \phi^{-1}(t)}) = \\
    &= M_{\max \phi^{-1}(t+ \vec\epsilon) + \vec\eta, \max \phi^{-1} (t + 2\vec\epsilon)} \circ ( \kappa_{\max \phi^{-1}(t + \vec\epsilon)} \circ
    \gamma_{\max \phi^{-1} (t + \vec\epsilon) - \vec\eta}) \circ
    M_{\max \phi^{-1}(t), \max \phi^{-1} (t + \vec\epsilon) - \vec\eta} = \\
    &= M_{\max \phi^{-1}(t+ \vec\epsilon) + \vec\eta, \max \phi^{-1} (t + 2\vec\epsilon)} \circ
    M_{\max \phi^{-1} (t + \vec\epsilon) - \vec\eta, \max \phi^{-1} (t + \vec\epsilon) + \vec\eta} \circ
    M_{\max \phi^{-1}(t), \max \phi^{-1} (t + \vec\epsilon) - \vec\eta} = \\
    &= (S^{\phi.M, 2\epsilon})_t,
\end{align*}

where for the first equality we have used that $\gamma$ is a natural transformation, and for the second we have used that $\gamma$ and $\kappa$ form an interleaving. Similarly $(\tilde\gamma[\epsilon] \circ \tilde\kappa)_t = (S^{\phi.N, 2\epsilon})_t$. We conclude that $\phi.M$ and $\phi.N$ are $\epsilon$-interleaved, and hence $d_I(\phi.M, \phi.N) \leq \epsilon$.

}

{
Finally, we use (i) and (ii) to prove (iii), the continuity of the map induced by the action. Let $\phi \in \overline{\aut_0 (I^n, \leq)}$, $M \in \vect^{\mathbb{R}^n}$ and $\epsilon > 0$. As above, we can choose $\eta >0$ such that for all $x \in \mathbb{R}^n$,
\begin{equation*}
    \phi(x + \vec\eta) \leq \phi(x) + \frac{\vec\epsilon}{2},
\end{equation*}
Now, let $\delta < \min(\eta, \epsilon/2)$. By (i) and (ii), for all $\psi \in \overline{\aut_0 (I^n, \leq)}$ and $N \in \vect^{\mathbb{R}^n}$ with 
\begin{equation*}
    d_\infty(\phi, \psi) < \delta, \quad 
    d_I(M,N) < \delta,
\end{equation*}

we have
\begin{equation*}
    d_I(\phi.M, \psi.N) \leq d_I(\phi.M, \phi.N) + d_I(\phi.N, \psi.N) \leq \frac{\epsilon}{2} + d_\infty(\phi, \psi) < \epsilon.
\end{equation*}
}
\end{proof}

\begin{rem}
\label{rem:continuity-iso-classes}
    The continuity of the action on $\vect^{\mathbb{R}^n}$ ensures the continuity of the action on the moduli space $\modspace{n}$, as well as in $\modspace{n}^{\dim K +1}$.
\end{rem}

\subsection{The action on grids}

The definition of the action for the one-parameter case can be described in terms of the barcode (as in \cite{LeygonieTillmann2022}). Given a one-parameter persistence module $M$ with barcode $B$, and $\phi \in \overline{\aut (I, \leq)}$, $\phi.M$ is the module whose barcode is
\begin{equation*}
    \{[\phi(b), \phi(d)) \mid [b,d) \in B, \phi(b) \neq \phi(d) \}.
\end{equation*}
Analogously, in the $n$-parameter setting the action corresponds to stretching or expanding a grid of the module, allowing for collapses but no crossings.

\begin{prop}
\label{prop:grid-under-action}
    Let $M$ be an essentially finite persistence module and $G$ a grid for $M$. For any $\phi \in \overline{\aut_0 (I^n, \leq)}$, $\phi.M$ is essentially finite with grid $\phi(G)$. Moreover, if $G$ is a minimal grid for $M$ and $\phi \in \aut_0(I^n,\leq)$, then $\phi(G)$ is a minimal grid for $\phi.M$.
\end{prop}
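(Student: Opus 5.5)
The plan is to verify the two conditions of \autoref{lem:ess-fin-characterisation} for $\phi.M$ with respect to the grid $\phi(G)$, and then deduce minimality by acting with the inverse. Write $\phi=\phi_1\times\dots\times\phi_n$ with $\phi_j\in\epi(I,\leq)$, extended by the identity outside $I$. Then $\phi(G)=\phi_1(G_1)\times\dots\times\phi_n(G_n)$ is again a finite grid, possibly with points collapsed. We may assume $G\neq\varnothing$, since otherwise $M=0$ and $\phi.M=0$.

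The key observation is a coordinatewise Galois-type equivalence. For $g\in G_j$ and $t_j\in\mathbb{R}$,
\begin{equation*}
    g\leq \max\phi_j^{-1}(t_j) \iff \phi_j(g)\leq t_j.
\end{equation*}
For the forward direction we apply the monotone $\phi_j$ and use $\phi_j(\max\phi_j^{-1}(t_j))=t_j$. For the converse, $\phi_j(g)\leq t_j$ gives $g\leq \max\phi_j^{-1}(\phi_j(g))\leq\max\phi_j^{-1}(t_j)$, since $\max\phi_j^{-1}$ is order-preserving.

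This equivalence yields two consequences. First, $t\geq\min\phi(G)$ if and only if $\max\phi^{-1}(t)\geq\min G$. Second, for $t\geq\min\phi(G)$, the set $\{g\in G_j \mid g\leq \max\phi_j^{-1}(t_j)\}$ equals $\{g\in G_j\mid \phi_j(g)\leq t_j\}$. The latter depends only on $(\lfloor t\rfloor_{\phi(G)})_j$, because it is the set of $g$ with $\phi_j(g)\leq(\lfloor t\rfloor_{\phi(G)})_j$. Hence $\lfloor t\rfloor_{\phi(G)}=\lfloor s\rfloor_{\phi(G)}$ implies $\lfloor\max\phi^{-1}(t)\rfloor_G=\lfloor\max\phi^{-1}(s)\rfloor_G$.

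Now both conditions follow from the fact that $M$ satisfies them for $G$. Condition (ii) holds because, if $t\not\geq\min\phi(G)$, then $(\phi.M)_t=M_{\max\phi^{-1}(t)}=0$. Condition (i) holds because, if $\min\phi(G)\leq t\leq s$ have the same $\phi(G)$-floor, the structure map $(\phi.M)_{t,s}=M_{\max\phi^{-1}(t),\max\phi^{-1}(s)}$ joins two grades with the same $G$-floor, so it is an isomorphism. This shows that $\phi(G)$ is a grid for $\phi.M$.

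For minimality, suppose $\phi\in\aut_0(I^n,\leq)$ and $G$ is the minimal grid of $M$. Then $\max\phi^{-1}=\phi^{-1}$ and $\phi^{-1}\in\aut_0(I^n,\leq)$, and by \autoref{prop:defn-action-modules} we have $\phi^{-1}.(\phi.M)=M$. By \autoref{thm:min-grid}, the minimal grid $H$ of $\phi.M$ satisfies $H\subseteq\phi(G)$. By the first part applied to $\phi^{-1}$, the set $\phi^{-1}(H)$ is a grid for $M$, and since $\phi^{-1}$ is bijective, $\phi^{-1}(H)\subseteq G$. Minimality of $G$ then forces $\phi^{-1}(H)=G$, so $H=\phi(G)$.

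The main obstacle is the floor-compatibility step in the non-invertible case. Collapses make $\phi_j$ non-injective on $G_j$, so one must not argue with floors as points; the argument has to go through the sets of grid points below a grade, via the equivalence displayed above.
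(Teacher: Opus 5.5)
Your proof is correct and follows essentially the same route as the paper. Both arguments verify the two conditions of \autoref{lem:ess-fin-characterisation} using the adjunction $\phi_j(g)\leq t_j \iff g\leq\max\phi_j^{-1}(t_j)$, and both get minimality by applying the first part to $\phi^{-1}$. The differences are only in packaging: the paper proves the pointwise identity $\lfloor t\rfloor_{\phi(G)}=\phi(\lfloor\max\phi^{-1}(t)\rfloor_G)$, which remains valid despite collapses, whereas you compare the sets of grid points lying below a grade. You also make explicit the check that both grades lie above $\min G$, which the paper leaves implicit.
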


\begin{proof}
    We use the characterisation of essentially finite from \autoref{lem:ess-fin-characterisation}. To check (ii), note that for any $t \in \mathbb{R}^n$, if $\max \phi^{-1}(t) \geq \min G$, then $t = \phi (\max \phi^{-1}(t)) \geq \phi(\min G) = \min \phi(G)$, due to $\phi$ being order-preserving. By the contrapositive, for any $t \not\geq \min \phi(G)$, $\max \phi^{-1}(t) \not\geq \min G$, and hence $(\phi.M)_t = M_{\max \phi^{-1}(t)} = 0$.

    To check (i), it suffices to check that for all $t \in \mathbb{R}^n$ such that $t \geq \min \phi(G)$, the map 
    \begin{equation*}
        (\phi.M)_{\lfloor t \rfloor_{\phi(G)}, t} = M_{\max \phi^{-1} (\lfloor t \rfloor_{\phi(G)}), \max \phi^{-1} (t)}
    \end{equation*}
    is an isomorphism. Since $M$ is essentially finite with grid $G$, it suffices to check that
    \begin{equation}
    \label{eq:structure-map-phi-grid}
        \lfloor \max \phi^{-1} (\lfloor t \rfloor_{\phi(G)}) \rfloor_{G} = \lfloor \max \phi^{-1} (t) \rfloor_{G}.
    \end{equation}
    It is clear that $\lfloor \max \phi^{-1} (\lfloor t \rfloor_{\phi(G)}) \rfloor_{G} \leq \lfloor \max \phi^{-1} (t) \rfloor_{G}$. To show the opposite inequality, we first show that
    \begin{equation}
    \label{eq:floor-phi}
        \lfloor t \rfloor_{\phi(G)} = \phi (\lfloor \max \phi^{-1} (t) \rfloor_{G}).
    \end{equation}
    Note that the right hand side is an element of $\phi(G)$. Since $\lfloor \max \phi^{-1} (t) \rfloor_{G} \leq \max \phi^{-1} (t)$, we have $\phi (\lfloor \max \phi^{-1} (t) \rfloor_{G}) \leq \phi (\max \phi^{-1} (t)) = t$. To check it is the maximal grid element below $t$, assume there exists some $g \in G$ such that
    \begin{equation*}
        \phi (\lfloor \max \phi^{-1} (t) \rfloor_{G}) \leq \phi(g) \leq t.
    \end{equation*}
    Using the second inequality, we have that in the original grid, $g \leq \max \phi^{-1}(\phi(g)) \leq \max \phi^{-1} (t)$. By the definition of the floor function, this implies $g \leq \lfloor \max \phi^{-1} (t) \rfloor_{G}$, and hence $\phi(g) \leq \phi(\lfloor \max \phi^{-1} (t) \rfloor_{G})$, which yields the equality in \eqref{eq:floor-phi}.

    Finally, we use \eqref{eq:floor-phi} to check \eqref{eq:structure-map-phi-grid},
    \begin{equation*}
        \max \phi^{-1} (\lfloor t \rfloor_{\phi(G)}) = \max \phi^{-1} (\phi (\lfloor \max \phi^{-1} (t) \rfloor_{G})) \geq \lfloor \max \phi^{-1} (t) \rfloor_{G}.
    \end{equation*}
    Since $\lfloor \max \phi^{-1} (t) \rfloor_{G} \in G$, by definition of the floor function,
    \begin{equation*}
        \lfloor \max \phi^{-1} (\lfloor t \rfloor_{\phi(G)}) \rfloor_{G} \geq \lfloor \max \phi^{-1} (t) \rfloor_{G},
    \end{equation*}
    which yields \eqref{eq:structure-map-phi-grid} and completes the proof.

    We prove the statement about minimality by the contrapositive. Let $G$ be a grid for a module $M$ and let $\phi \in \aut_0(I^n,\leq)$. Assume $\phi(G)$ is not a minimal grid for $\phi.M$. Then we can take a strictly smaller grid $G'$ for $\phi.M$. Since $\phi^{-1} \in \aut_0(I^n,\leq)$, by the previous proof, $\phi^{-1}(G')$ is a grid for $M$ and it contains strictly less points than $G$, so $G$ is not minimal.
\end{proof}

A consequence of this result is that it suffices to know how a map acts on a grid of a module to determine the isomorphism class of its action on the module.

\begin{prop}
\label{prop:same-grid-action-iso}
    Let $\phi, \psi \in \overline{\aut_0 (I^n, \leq)}$ and let $M$ be an essentially finite persistence module with grid $G \subseteq I^n$. If $\phi|_G = \psi|_G$, then $\phi.M \cong \psi.M$.
\end{prop}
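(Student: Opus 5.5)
The plan is to show that $\phi.M$ and $\psi.M$ are both isomorphic to the extension along the same grid of the same restricted module. By \autoref{prop:grid-under-action}, both $\phi.M$ and $\psi.M$ are essentially finite with grid $\phi(G) = \psi(G) =: G'$. So
\begin{equation*}
\phi.M \cong \widehat{(\phi.M)|_{G'}}^{G'}, \qquad \psi.M \cong \widehat{(\psi.M)|_{G'}}^{G'}.
\end{equation*}
Since extension along $G'$ is a functor, it suffices to build a natural isomorphism $(\phi.M)|_{G'} \cong (\psi.M)|_{G'}$ of $G'$-indexed modules.

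For $t \in G'$ we have $(\phi.M)_t = M_{\max\phi^{-1}(t)}$ and $(\psi.M)_t = M_{\max\psi^{-1}(t)}$. The key observation, already contained in the proof of \autoref{prop:grid-under-action} via \eqref{eq:floor-phi}, is that
\begin{equation*}
\lfloor \max\phi^{-1}(t) \rfloor_G = g_t := \max\{g \in G \mid \phi(g) \leq t\}.
\end{equation*}
Because $\phi|_G = \psi|_G$, the right-hand side is the same set for $\psi$, so $\lfloor \max\phi^{-1}(t)\rfloor_G = \lfloor \max\psi^{-1}(t)\rfloor_G = g_t$. To check this identity, note that $g \leq \max\phi^{-1}(t)$ if and only if $\phi(g)\leq t$. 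This holds coordinatewise, because $\phi_j$ is non-decreasing and $\max\phi_j^{-1}(t_j)$ is the largest preimage. Here $t \geq \min G'$ is automatic for $t\in G'$, so the floor exists.

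With this, define $\eta_t := M_{g_t,\max\psi^{-1}(t)} \circ (M_{g_t,\max\phi^{-1}(t)})^{-1}$. Both structure maps are isomorphisms by \autoref{lem:ess-fin-characterisation}(i), since their endpoints have the same $G$-floor. Naturality for $t\leq t'$ in $G'$ reduces to a commutative diagram of structure maps of $M$. After inverting the isomorphisms, every path is a structure map of $M$ from $g_t$ to $\max\psi^{-1}(t')$, using $g_t \leq g_{t'}$ and functoriality of $M$.

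The main obstacle I expect is bookkeeping rather than substance. One point is being careful that $\max\phi^{-1}$ is taken on the extension of $\phi$ to $\mathbb{R}^n$. The other is handling grades outside $I^n$ or below $\min G'$, which is why I pass through restrictions to $G'$ instead of comparing $\phi.M$ and $\psi.M$ pointwise on all of $\mathbb{R}^n$. The degenerate case $M=0$, with empty grid, is trivial.
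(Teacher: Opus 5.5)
Your proof is correct and follows the same route as the paper. You restrict to the common grid $\phi(G)=\psi(G)$, show $\lfloor \max\phi^{-1}(t)\rfloor_G=\lfloor \max\psi^{-1}(t)\rfloor_G$, and define $\eta_t$ as the same composite of one structure map of $M$ with the inverse of another; your identity $\lfloor \max\phi^{-1}(t)\rfloor_G=\max\{g\in G\mid \phi(g)\le t\}$, which visibly depends only on $\phi|_G$, is just a slightly cleaner way to get the floor equality than the paper's argument via $\phi^{-1}(\phi(g))\cap G=\psi^{-1}(\phi(g))\cap G$.
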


\begin{rem}
    We can consider the space of grids contained in $I^n$ with the Hausdorff distance. The maps in $\overline{\aut_0 (I^n, \leq)}$ act on this space simply by taking the image grid. By \autoref{prop:grid-under-action}, the map taking each point in $\modspace{n}$ to its minimal grids is equivariant with respect to these actions. The above result states that the action on grids determines the action on persistence modules.
\end{rem}

\begin{proof} 
    By \autoref{prop:grid-under-action}, $\phi(G) = \psi(G)$ is a grid both for $\phi.M$ and $\psi.M$. Thus it suffices to show there is a natural isomorphism between the restrictions $\eta: (\phi.M)|_{\phi(G)} \Rightarrow (\psi.M)|_{\phi(G)}$.

    The key observation to make here is that the condition $\phi|_G = \psi|_G$ implies that for every $g \in G$, 
    \begin{equation}
    \label{eq:phi-psi-coincide-on-grid}
        \phi^{-1}(\phi(g)) \cap G = \psi^{-1}(\phi(g)) \cap G.
    \end{equation}
    We have $g \in \phi^{-1}(\phi(g))$, so $g \in \psi^{-1}(\phi(g))$. For any $g' \in G$ with
    \begin{equation*}
        g \leq g' \leq \max \phi^{-1}(\phi(g)),
    \end{equation*}
    since $\phi$ is order preserving, $\phi(g') = \phi(g)$, and so $g' \in \phi^{-1}(\phi(g))$, and by \eqref{eq:phi-psi-coincide-on-grid}, $g' \in \psi^{-1}(\phi(g))$. Hence,
    \begin{equation*}
        g' \leq \max \psi^{-1}(\phi(g)).
    \end{equation*}
    Conversely, for any $g' \in G$ with
    \begin{equation*}
        g \leq g' \leq \max \psi^{-1}(\phi(g)),
    \end{equation*}
    applying $\psi$ and recalling $\psi(g) = \phi(g)$, we again find $g' \in \psi^{-1}(\phi(g))$, so by \eqref{eq:phi-psi-coincide-on-grid}, $g' \in \phi^{-1}(\phi(g))$ and 
    \begin{equation*}
        g' \leq \max \phi^{-1}(\phi(g)).
    \end{equation*}
    
    Hence, we find that $\lfloor \max \phi^{-1}(\phi(g)) \rfloor_G = \lfloor \max \psi^{-1}(\phi(g)) \rfloor_G$. This allows us to define $\eta_g$ as the composition of two isomorphisms:
    \begin{equation*}
        \eta_g =
        M_{\lfloor \max \phi^{-1}(\phi(g)) \rfloor_G, \max \psi^{-1}(\phi(g))}
        \circ
        M_{\lfloor \max \phi^{-1}(\phi(g)) \rfloor_G, \max \phi^{-1}(\phi(g))}^{-1}.
    \end{equation*}

    Since each $\eta_g$ is a composition of structure maps of $M$, the commutativity of these maps with the structural maps of $\phi.M$ and $\psi.M$ is directly satisfied, and we find,
    \begin{equation*}
        \phi.M \cong \widehat{(\phi.M)|_{\phi(G)}} \cong \widehat{(\psi.M)|_{\phi(G)}} \cong \psi.M.
    \end{equation*}
\end{proof}

\subsection{Interleaving distance and the action on \texorpdfstring{$\modspace{n}$}{modules}}

To finish this section, we introduce three results that relate the interleaving distance between modules to the action we defined, which will be useful later.

We first note that \autoref{klem:nearby-grids}, as well as both \autoref{prop:grid-under-action} and \autoref{prop:same-grid-action-iso} hold for tuples of essentially finite persistence modules and their grids as defined in \autoref{defn:grid_tuple}.

In what follows, we will use a refinement of the granularity that accounts for the boundary effects:
\begin{defn}
    Let $M$ be an essentially finite persistence module with minimal grid $G \subset I^n$. Define an extended grid $\overline{G}$ by adding the boundary points 0 and 1 to every coordinate set: $\overline{G}_j = G_j \cup \{0,1\}$ for all $j$. The granularity of this grid is also an isomorphism invariant of $M$, which we will denote by $\bar\ell_{M} := \ell_{\overline{G}}$, and it is bounded by $\ell_M$. For a tuple of persistence modules $\bm{M}$ with grid in $I^n$, $\bar\ell_{\bm{M}}$ is defined analogously.
\end{defn}

The first lemma uses a symmetrised version of \autoref{klem:nearby-grids} to show that if two modules are closer than half the (boundary-corrected) granularity of both, then they are actually in the same orbit.

\begin{lem}
\label{lem:closer-than-granularity}
    Let $[\bm{M}], [\bm{N}] \in \modspace{n}^{d+1}$ for some $d\geq 0$. If $d_I(\bm{M},\bm{N}) \leq \epsilon < \min\{\bar\ell_{\bm{M}},\bar\ell_{\bm{N}}\}/2$, then there exists $\psi \in \aut_0(I^n,\leq)$ such that $\psi.\bm{N} \cong \bm{M}$ with $d_\infty(\psi, \id) \leq \epsilon$.
\end{lem}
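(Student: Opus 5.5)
The plan is to apply the Key Lemma in both directions to match the minimal grids point by point, build $\psi$ coordinatewise as a piecewise linear homeomorphism sending one grid onto the other, and then use part (ii) of the Key Lemma to identify $\psi.\bm{N}$ with $\bm{M}$. By \autoref{thm:closure-thm} there is an $\epsilon$-interleaving between $\bm{M}$ and $\bm{N}$, componentwise. Let $G,H$ be the minimal grids of $\bm{M},\bm{N}$. Since $\epsilon<\bar\ell_{\bm{M}}/2\le \ell_{\bm{M}}/2$, \autoref{klem:nearby-grids}(i), applied to tuples, shows that every $x\in G_j$ has some $y\in H_j$ with $|x-y|\le\epsilon$. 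Symmetrically, using $\epsilon<\ell_{\bm{N}}/2$, every $y\in H_j$ has some $x\in G_j$ within $\epsilon$.

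Because points of $G_j$ are more than $2\epsilon$ apart, and likewise for $H_j$, these matchings are unique and mutually inverse. This gives an order-preserving bijection $\mu_j:H_j\to G_j$ with $|\mu_j(y)-y|\le\epsilon$. Order preservation holds because if $y<y'$ but $\mu_j(y)>\mu_j(y')$, then $|y-y'|\le 2\epsilon$, contradicting $\ell_{\bm{N}}>2\epsilon$. The boundary-corrected granularity enters here: points of $G_j$ are more than $2\epsilon$ from $0$ and $1$, and so are points of $H_j$. Hence extending $\mu_j$ by $0\mapsto0$ and $1\mapsto1$ still gives an order-preserving injection with displacement at most $\epsilon$. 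Define $\psi_j$ as the piecewise linear interpolation of this extended map; it lies in $\aut(I,\le)$.

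For the displacement bound, note that on each interval between consecutive nodes the displacement $\psi_j(t)-t$ is affine. Its absolute value is therefore maximised at the endpoints, where it is at most $\epsilon$. Setting $\psi=\psi_1\times\dots\times\psi_n\in\aut_0(I^n,\le)$ gives $d_\infty(\psi,\id)\le\epsilon$ and $\psi(H)=G$. By \autoref{prop:grid-under-action}, $\psi.\bm{N}$ has minimal grid $G$.

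It remains to show $\psi.\bm{N}\cong\bm{M}$, and I expect this to be the main obstacle. The interleaving only tells us that $\bm{M}$ and $\bm{N}$ are close on a common grid; we need an honest isomorphism after the reparametrisation. The idea is to compare $\bm{M}$ with $\bm{N}$ at suitable grades via \autoref{klem:nearby-grids}(ii). For each $g\in G$, pick $r$ with $r-\vec\epsilon\ge \mu^{-1}(g)=\psi^{-1}(g)$ and $r+\vec\epsilon$ still below the next $H$-points, so that $\lfloor r-\vec\epsilon\rfloor_H=\lfloor r+\vec\epsilon\rfloor_H=\psi^{-1}(g)$. Such an $r$ exists because the gaps in $\overline{H}$ exceed $2\epsilon$; concretely, take $r_j=\max(g_j,\psi_j^{-1}(g_j))+\epsilon$, hoping it also satisfies $\lfloor r\rfloor_G=g$. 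This needs care: $r_j$ exceeds $g_j$ by at most $2\epsilon$, which is less than the $G$-gap, and it stays below the next $H$-point, which lies more than $2\epsilon$ above $\psi_j^{-1}(g_j)$ and possibly beyond. Then $N_r\cong M_r$ naturally by (ii), and $M_r\cong M_g$ since $\lfloor r\rfloor_G=g$. Also $N_r\cong N_{\psi^{-1}(g)}=(\psi.N)_g$, using $\max\psi^{-1}=\psi^{-1}$ for homeomorphisms.

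These identifications are built from structure maps and $\gamma$, so they are natural in $g$ along the grid $G$, though the choice of $r$ must be made monotone in $g$. Monotonicity holds because $r$ is defined coordinatewise by monotone formulas. This gives an isomorphism $(\psi.\bm{N})|_G\cong \bm{M}|_G$, and extension along $G$ yields $\psi.\bm{N}\cong\bm{M}$. The delicate point I would check carefully is that the chosen $r$ satisfies both floor conditions simultaneously, including when $g_j=\min G_j$ (handled by the vanishing below $\min$) and near the top boundary (handled by $\bar\ell$).
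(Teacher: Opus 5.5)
Your overall route matches the paper's: Key Lemma (i) in both directions to match $H_j$ with $G_j$, a coordinatewise piecewise-linear $\psi$, then Key Lemma (ii) to build a natural isomorphism on $G$. The first part is fine up to a small imprecision. $G_j$ may contain $0$ or $1$, so only its \emph{interior} points are more than $2\epsilon$ from the boundary. When $0\in H_j$, the matching itself forces $\mu_j(0)=0$, and this is what makes your extension consistent.

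The genuine problem is the concrete grade $r$ in the last step. Write $h=\psi^{-1}(g)$ and let $h_j^+$ be the next point of $H_j$, if any. To apply Key Lemma (ii) at $r$ and also have $N_r\cong N_h$, you need $\lfloor r-\vec\epsilon\rfloor_H=\lfloor r+\vec\epsilon\rfloor_H=h$. In coordinates this means $h_j+\epsilon\le r_j<h_j^+-\epsilon$, but you only verify $r_j<h_j^+$. With $r_j=\max(g_j,h_j)+\epsilon$ and $g_j>h_j$, the value $r_j+\epsilon=g_j+2\epsilon$ can be as large as $h_j+3\epsilon$. Meanwhile $h_j^+$ is only guaranteed to exceed $h_j+2\epsilon$.

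Here is a counterexample. Take $n=1$, $\epsilon=0.1$, $M$ the interval module on $[0.4,0.61)$ and $N$ the one on $[0.3,0.51)$. Then $d_I(M,N)=0.1$ and $\bar\ell_M=\bar\ell_N=0.21$, so the hypotheses hold. At $g=0.4$ your recipe gives $r=0.5$, and $\lfloor r+\epsilon\rfloor_H=0.51\neq 0.3$. Indeed $\gamma_{0.5}\colon M_{0.5}=\Bbbk\to N_{0.6}=0$ is not an isomorphism, so the identification $M_g\cong M_r\cong N_r$ you rely on does not exist.

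The repair is to take $r=h+\vec\epsilon$, as the paper does. Then $r-\vec\epsilon=h$ and $r+\vec\epsilon=h+2\vec\epsilon$ both have $H$-floor $h$, since consecutive points of $H_j$ are more than $2\epsilon$ apart. Also $g_j\le h_j+\epsilon\le g_j+2\epsilon$ gives $\lfloor r\rfloor_G=g$, since consecutive points of $G_j$ are more than $2\epsilon$ apart. This $r$ is monotone in $g$. The resulting map $\lambda_g=(N_{h,h+2\vec\epsilon})^{-1}\circ\gamma_{h+\vec\epsilon}\circ M_{g,h+\vec\epsilon}$ is exactly the paper's natural isomorphism $M|_G\cong(\psi.N)|_G$, applied componentwise to the tuples.
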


\begin{rem}
    In the $n=1$ case, the conditions would imply that all bars in both barcodes are longer than $2\epsilon$, so all bars in $M$ are $\epsilon$-matched to a bar in $N$ and vice versa. The result then states that there exists an order isomorphism $\psi:I \to I$, $\epsilon$-close to $\id$, that realises the matching of endpoints.
\end{rem}

\begin{proof}
Let $G,H$ be the minimal grids for $\bm{M} = (M^0, \dots, M^d)$ and $\bm{N} = (N^0, \dots, N^d)$ respectively. Recall that the minimal grid for $\bm{M}$ contains the minimal grid of each $M^i$. Since $d_I(\bm{M},\bm{N}) \leq \epsilon$, then
\begin{equation*}
    d_I(M^i, N^i) \leq \epsilon < \min\{\bar\ell_{\bm{M}},\bar\ell_{\bm{N}}\}/2 \leq
    \min\{\bar\ell_{M^i},\bar\ell_{N^i}\}/2 \leq
    \min\{\ell_{M^i},\ell_{N^i}\}/2
\end{equation*}
for all $i$. By \autoref{klem:nearby-grids}(i), for all $j$ and all $x \in G_j$ there exists $y \in H_j$ with $|x-y| \leq \epsilon$. Since $\epsilon < \bar\ell_{\bm{M}}/2$, this implies $|G_j \cap (0,1)| \leq |H_j \cap (0,1)|$. The symmetric statement yields $|H_j \cap (0,1)| \leq |G_j \cap (0,1)|$, so we find that $|G_j \cap (0,1)| = |H_j \cap (0,1)|$ and $G \cap (0,1)$ and $H \cap (0,1)$ are at distance at most $\epsilon$ in the Hausdorff distance. Moreover, $0 \in G_j$ if and only if $0 \in H_j$, and the same holds for $1$.

Thus, we can build $\psi_j: I \to I$ as follows. We set $\psi_j(0) = 0$, $\psi_j(1) = 1$, and we send the elements of $H_j \cap (0,1)$ to the elements of $G_j \cap (0,1)$ in order, and extend linearly to $I$. The product map $\psi = \psi_1 \times \dots \times \psi_n$ belongs to $\aut_0(I^n,\leq)$ and satisfies $d_\infty(\psi, \id) \leq \epsilon$.

Since $\psi(H) = G$ is a grid for $\psi.N^i$ for all $i$, it suffices to prove that $\psi.N^i|_G \cong M^i_G$. Let $\gamma^i: M^i \Rightarrow (N^i)[\epsilon]$ denote the first part of the interleaving. We use the following three observations to build a natural isomorphism $\lambda^i: M^i \Rightarrow \psi.N^i$. Let $g \in G$, $h \in H$ with $\psi(h) = g$.
\begin{enumerate}[label=(\arabic*)]
\item Since $\epsilon < \ell_{\bm{N}}/2$, $\lfloor h\rfloor_H = \lfloor h + 2\vec\epsilon\rfloor_H$, so the structure map $N^i_{h, h + 2\vec\epsilon}$ is an isomorphism. 
\item The above point also ensures that $h + \vec\epsilon$ satisfies the condition in \autoref{klem:nearby-grids}(ii), so $\gamma^i_{h + \vec\epsilon}: M^i_{h + \vec\epsilon} \to N^i_{h + 2\vec\epsilon}$ is an isomorphism.
\item Finally, by construction $\|g-h\|_\infty \leq \epsilon$, which implies $g \leq h + \vec\epsilon \leq g + 2\vec\epsilon$, and since $\epsilon< \ell_{\bm{M}}/2$, $\lfloor h + \vec\epsilon \rfloor_G = g$ and the structure map $M^i_{g, h + \vec\epsilon}$ is an isomorphism.
\end{enumerate}

Hence, we can define $\lambda^i_g: M^i_g \to (\psi.N^i)_g = N^i_h$ as the composition
\begin{equation*}
    \lambda_g^i = (N^i_{h,h+2\epsilon})^{-1} \circ \gamma^i_{h + \vec\epsilon} \circ M^i_{g, h + \vec\epsilon},
\end{equation*}
which is an isomorphism. Checking that the commutativity relations are satisfied is direct since the map is a composition of structure maps and natural transformations. Hence, $\psi.N^i \cong M^i$ and $\psi.\bm{N} \cong \bm{M}$.
\end{proof}

The following lemma deals with a slightly different situation, in which we have no control over the granularity of the second module $N$, but we have some control over the number of points in each coordinate set of its minimal grid. In this case, the fixed module $M$ is in the closure of the orbit of the second module $N$. Intuitively, $M$ can be reached from $N$ if we allow some collapses of its minimal grid.

\begin{lem}
\label{lem:squeeze-grid}
    Let $[\bm{M}] \in \modspace{n}^{d+1}$ for some $d\geq 0$, and let $[\bm{N}] \in \modspace{n}^{d+1}$. If
    \begin{equation*}
        d_I(\bm{M},\bm{N}) \leq \epsilon < \frac{\bar\ell_{\bm{M}}}{2\max_j|H_j|},
    \end{equation*}
    then there exists a map $\psi \in \overline{\aut_0(I^n, \leq)}$ such that
    \begin{equation*}
        \psi.\bm{N} \cong \bm{M},
    \end{equation*}
    with $d_\infty(\psi, \id) \leq (2\max_j|H_j|-1)\epsilon$.
\end{lem}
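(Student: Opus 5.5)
Here $G$ and $H$ denote the minimal grids of $\bm{M}$ and $\bm{N}$. The plan is to build $\psi=\psi_1\times\dots\times\psi_n\in\epi(I,\leq)^n$ coordinate by coordinate, collapsing clusters of points of $H_j$ onto single points of $G_j$. We then check $\psi.\bm{N}\cong\bm{M}$ on the grid $\psi(H)$ by the same three-observation argument as in \autoref{lem:closer-than-granularity}.

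\textbf{Step 1: matching.} First apply \autoref{klem:nearby-grids}(i) to each pair $(M^i,N^i)$; this is allowed since $\epsilon<\bar\ell_{\bm M}/2\le \ell_{M^i}/2$. It gives, for every $x\in G_j$, some $y\in H_j$ with $|x-y|\le\epsilon$. Since $\epsilon<\bar\ell_{\bm M}/2$, these $\epsilon$-balls are disjoint and stay away from $0$ and $1$ unless $x\in\{0,1\}$.

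\textbf{Step 2: clustering.} Next partition $H_j$ into maximal chains in which consecutive points are at distance $\le 2\epsilon$. A chain has at most $|H_j|$ points, so its diameter is at most $2(|H_j|-1)\epsilon$. This is smaller than $\bar\ell_{\bm M}-2\epsilon$ by the hypothesis $2|H_j|\epsilon<\bar\ell_{\bm M}$. So each cluster, thickened by $\epsilon$, meets at most one point of $\overline G_j$.

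\textbf{Step 3: defining $\psi_j$.} Send every point of a cluster containing a point $\epsilon$-close to some $x\in G_j$ to $x$. Send every point of a cluster near $0$ or $1$, with no $G_j$ point near it, to that boundary value. For the remaining clusters, the claim is that they carry no grid information of $\bm N$ beyond what is already seen. Here I would use \autoref{klem:nearby-grids}(ii): across gaps of $H$ longer than $2\epsilon$, $\bm N$ agrees with $\bm M$. A cluster with no $\overline G_j$ point within $\epsilon$ is therefore squeezed onto a neighbouring point of $\overline G_j$ at which $\bm N$ is isomorphic to $\bm M$ on both sides; in practice, collapse it onto the image of an adjacent cluster, or onto $0$ or $1$. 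Then extend $\psi_j$ piecewise linearly, fixing $0$ and $1$. It is monotone because clusters are ordered and targets are ordered. Each point moves by at most the cluster diameter plus $\epsilon$, giving the bound $(2|H_j|-1)\epsilon$ on $H_j$. Piecewise linearity propagates this bound to all of $I$.

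\textbf{Step 4: the isomorphism.} By \autoref{prop:grid-under-action}, $\psi(H)$ is a grid for $\psi.\bm N$. For $g\in G$, choose $h=\max\psi^{-1}(g)\cap H$, the top point of the collapsed cluster in each coordinate. Then $(\psi.N^i)_g\cong N^i_{h}$. Build $\lambda_g$ as in \autoref{lem:closer-than-granularity}, namely
\[
(N^i_{h,h'})^{-1}\circ\gamma^i_{h'-\vec\epsilon}\circ M^i_{g,h'-\vec\epsilon},
\]
where $h'$ lies just beyond the cluster, in the gap of length $>2\epsilon$ that follows it. There $\lfloor h'-\vec\epsilon\rfloor_H=\lfloor h'+\vec\epsilon\rfloor_H=h$, so \autoref{klem:nearby-grids}(ii) applies. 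Also $h'-\vec\epsilon$ stays within $\bar\ell_{\bm M}$ of $g$, so $M^i_{g,h'-\vec\epsilon}$ is an isomorphism. Finally, check that $\psi(H)\setminus G$ adds only grid points across which $\psi.\bm N$ has isomorphic structure maps, so that via \autoref{lem:non-minimal-grid} everything reduces to comparison on $G$.

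\textbf{Main obstacle.} I expect Step 3/4 to be the hardest: handling clusters of $H$ far from $G$, and controlling where $h'$ can be placed when clusters of different coordinates interact, since the gap condition must hold in all coordinates simultaneously. If the direct argument fails, my fallback is induction on $|H|$. If $\ell_{\bm N}>2\epsilon$, apply \autoref{lem:closer-than-granularity} directly. Otherwise, collapse the two closest points of some $H_j$ by a $\phi\in\epi$ moving points by at most $2\epsilon$. By \autoref{thm:continuity-action-modules}(i) this gives $d_I(\phi.\bm N,\bm N)\le 2\epsilon$, so $d_I(\phi.\bm N,\bm M)$ stays within a factor that the hypothesis $\epsilon<\bar\ell_{\bm M}/(2\max_j|H_j|)$ seems designed to absorb. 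Composing the collapses yields the stated bound on $d_\infty(\psi,\id)$.
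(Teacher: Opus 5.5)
Your overall design matches the paper: chains in $H_j$ with steps at most $2\epsilon$, each cluster sent to the unique point of $\overline G_j$ within $\epsilon$ of it, and $\lambda$ built from a structure map of $M^i$, a component of $\gamma^i$ from \autoref{klem:nearby-grids}(ii), and an inverted structure map of $N^i$. However, Step 3 has a genuine gap.

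A cluster of $H_j$ with no point of $\overline G_j$ within $\epsilon$ can lie anywhere in $(0,1)$. Collapsing it onto the image of an adjacent cluster, or onto $0$ or $1$, is therefore not a move of size ``cluster diameter plus $\epsilon$''. With that choice the bound $d_\infty(\psi,\id)\le(2\max_j|H_j|-1)\epsilon$ fails. Concretely, take $n=1$ and $\bm M=0$, so $\overline G_1=\{0,1\}$ and $\bar\ell_{\bm M}=1$. Let $\bm N$ be the interval module on $[0.5,0.6)$ and $\epsilon=0.15<1/4$. Then $H_1=\{0.5,0.6\}$ is a single far cluster, the only targets your rule allows are $0$ and $1$, and either choice moves some point by at least $0.5>3\epsilon$.

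The missing observation is that far clusters need not be squeezed anywhere. The paper sends such a cluster to an arbitrary point of its own span. Monotonicity survives because distinct clusters are more than $2\epsilon$ apart, and the displacement is at most the cluster diameter. The extra coordinate this adds to $\psi(H)$ is harmless: $\psi(H)\supseteq G$ by \autoref{klem:nearby-grids}(i), so $\psi(H)$ is a grid for $M^i$, and it is a grid for $\psi.N^i$ by \autoref{prop:grid-under-action}. One therefore compares restrictions to $\psi(H)$ directly, defining $\lambda$ at every $\psi(h)$ rather than only at $g\in G$.

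Set $m=\max\psi^{-1}(\psi(h))$, which coordinatewise is the top of the relevant clusters. Then
$\lambda^i_{\psi(h)}=(N^i_{m,m+2\vec\epsilon})^{-1}\circ\gamma^i_{m+\vec\epsilon}\circ M^i_{\psi(h),m+\vec\epsilon}$.
The first and second factors are isomorphisms because clusters are separated by gaps larger than $2\epsilon$. The last factor is an isomorphism because $(\psi(h)_j,\max\mathcal P_j^{h_j}+\epsilon]$ contains no point of $\overline G_j$. This holds for far clusters as well, since any such point would lie within $\epsilon$ of a point of the cluster. This also makes your unproven final reduction via \autoref{lem:non-minimal-grid} unnecessary.

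There are two smaller problems.

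\begin{enumerate}[label=(\arabic*)]
\item Your Step 4 indices are off. \autoref{klem:nearby-grids}(ii) for $\gamma^i_{h'-\vec\epsilon}$ needs $\lfloor h'-2\vec\epsilon\rfloor_H=\lfloor h'\rfloor_H$, and $M^i_{g,h'-\vec\epsilon}$ needs $h'-\vec\epsilon\ge g$. So you must take $h'=h+2\vec\epsilon$, not a point just beyond the cluster.
\item The inductive fallback does not close. Each collapse adds its displacement to the interleaving distance, while $\max_j|H_j|$ drops by at most one per step. So the hypothesis $\epsilon<\bar\ell_{\bm M}/(2\max_j|H_j|)$ is not inherited: already replacing $\epsilon$ by $2\epsilon$ and $m$ by $m-1$ breaks it once $m\ge 3$.
\end{enumerate}
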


\begin{rem}
    In the $n=1$ case, the conditions imply that an $\epsilon$-matching matches all bars in $M$ to bars in $N$, and collapses the rest of bars in $N$, which are at most $2\epsilon$ long. The result states that if we have some control over the number of small bars in $N$ relative to the granularity of $M$, we can find $\psi \in \epi(I, \leq)$ that realises the matchings of endpoints and the collapses.
\end{rem}

\begin{proof}
Let $G$ be the minimal grid of $\bm{M}$, and let $H\neq \varnothing$ be a grid for $\bm{N}$. 
Let $j = 1,\dots,n$. We start by partitioning $H_j$ into the equivalence classes of the following relation. For all $y, y' \in H_j$, we deem them equivalent if there exists a sequence $y_0 = y,y_1, \dots, y_k = y' \in H_j$ such that $|y_{i+1}-y_i| \leq 2\epsilon$ for all $i = 0, \dots, k-1$. We denote the equivalence class of $y \in H_j$ as $\mathcal{P}_j^{y}$.

By the choice of $\epsilon$, each equivalence class is $\epsilon$-close to at most one point of $\overline{G}_j$. We define $\psi_j: I \to I$ by fixing its value on the boundary points and on $H_j$, and then extending linearly on $I$. We fix $\psi_j(0) = 0$ and $\psi_j(1) = 1$. For each equivalence class $\mathcal{P}_j^{y}$ in $H_j$, if it is $\epsilon$-close to a point of $\overline{G}_j$, we define $\psi_j$ to be that point on all elements of $\mathcal{P}_j^{y}$. Otherwise, $\psi_j$ on that class is defined to be an arbitrary point of $\mathcal{P}_j^{y}$. We get $\psi_j \in \epi(I, \leq)$.

Since the span of an equivalence class is at most $2\epsilon(|H_j|-1)$, the map $\psi_j$ differs from the identity at most $(2|H_j|-1)\epsilon$. So $\psi = \psi_1 \times \dots \times \psi_n$ belongs to $\overline{\aut_0(I^n, \leq)}$ and it satisfies the requirement $d_\infty(\psi, \id) \leq (2\max_j|H_j|-1)\epsilon$.

We will prove that $\psi.N^i \cong M^i$ for all $i$. Note that our choice of $\epsilon$ in particular ensures 
\begin{equation*}
    d_I(M^i,N^i) \leq \epsilon < \ell_{\overline{G}}/2 \leq \ell_{M^i}/2,
\end{equation*}
so the hypotheses of \autoref{klem:nearby-grids} are satisfied for $M^i$, $N^i$. Hence, by (i), for all $j$ and all $x \in G_j$ belonging to the minimal grid of $M^i$, there exists some $y \in H_j$ belonging to the minimal grid of $N^i$ with $|x-y| \leq \epsilon$. The equivalence class of $y$ is thus sent by $\psi_j$ to $x$. Therefore, $G \subseteq \psi(H)$ (since the boundary points are also clearly in the image), which implies $\psi(H)$ is a grid for $M^i$ for all $i$. We know by \autoref{prop:grid-under-action} that $\psi(H)$ is a grid for $\psi.N^i$. Hence, it suffices to prove $\psi.N^i |_{\psi(H)} \cong M^i|_{\psi(H)}$.

We will build a natural isomorphism $\lambda^i: M^i|_{\psi(H)} \Rightarrow \psi.N^i |_{\psi(H)}$. Let $\psi(h) \in \psi(H)$ with $h = (h_1, \dots, h_n)$. To define $\lambda^i_{\psi(h)}$, we first check that certain maps are isomorphisms, which fit in the diagram below.

\begin{enumerate}[label=(\arabic*)]
    \item By construction of $\psi$,
    \begin{equation*}
        \max\psi^{-1}(\psi(h)) = \max\mathcal{P}_1^{h_1} \times \dots \times \max\mathcal{P}_n^{h_n}.
    \end{equation*}
    By definition, two distinct classes in $H_j$ are separated by an interval of length strictly greater than $2\epsilon$. For all $j$, then,
    \begin{equation*}
        (\max \mathcal{P}_j^{h_j}, \max \mathcal{P}_j^{h_j} + 2\epsilon] \cap H_j = \varnothing,
    \end{equation*}
    so $\lfloor \max \psi^{-1} (\psi(h)) \rfloor_H = \lfloor \max \psi^{-1} (\psi(h)) + 2\vec\epsilon \rfloor_H$, and thus the structure map
    \begin{equation*}
        N^i_{\max \psi^{-1} (\psi(h)), \max \psi^{-1} (\psi(h)) + 2\vec\epsilon}
    \end{equation*}
    is an isomorphism.

    \item Moreover, the previous point shows that $\max \psi^{-1} (\psi(h)) + \vec\epsilon$ satisfies the conditions of \autoref{klem:nearby-grids}(ii), so denoting $\gamma^i: M \Rightarrow N[\epsilon]$ the first part of the $\epsilon$-interleaving between $M^i$ and $N^i$, then
    \begin{equation*}
        \gamma^i_{\max \psi^{-1} (\psi(h)) + \vec\epsilon}: M^i_{\max \psi^{-1} (\psi(h)) + \vec\epsilon} \to N^i_{\max \psi^{-1} (\psi(h)) + 2\vec\epsilon}
    \end{equation*}
    is an isomorphism.

    \item By definition, for every coordinate $\psi(h)_j = \psi_j(h_j)$, there exists $y \in \mathcal{P}_j^{h_j}$ with $|\psi(h)_j-y| \leq \epsilon$. Then 
    \begin{equation*}
        \min \mathcal{P}_j^{h_j} - \epsilon \leq y-\epsilon\leq \psi(h)_j \leq y + \epsilon \leq \max \mathcal{P}_j^{h_j} + \epsilon.
    \end{equation*}
    The set $\mathcal{P}_j^{h_j}$ is either $\epsilon$-close to a single point in $\overline{G}_j$, in which case this point is $\psi_j(h_j)$, or it is not. In both cases,
    \begin{equation*}
        (\psi(h)_j, \max \mathcal{P}_j^{h_j} + \epsilon] \cap \overline{G}_j = \varnothing,
    \end{equation*}
    so $\lfloor \max \psi^{-1} (\psi(h)) + \vec\epsilon \rfloor_{\overline{G}} = \lfloor \psi(h) \rfloor_{\overline{G}}$ and the map 
    \begin{equation*}
        M^i_{\psi(h),\max \psi^{-1} (\psi(h)) + \vec\epsilon}
    \end{equation*}
    is an isomorphism.
\end{enumerate}

We define 
\begin{equation*}
    \lambda^i_{\psi(h)}: M^i_{\psi(h)} \to (\psi.N^i)_{\psi(h)},
\end{equation*}
as the composition
\begin{equation*}
    \lambda^i_{\psi(h)} = (N^i_{\max \psi^{-1} (\psi(h)), \max \psi^{-1} (\psi(h)) + 2\vec\epsilon})^{-1} \circ \gamma^i_{\max \psi^{-1} (\psi(h)) + \vec\epsilon} \circ M^i_{\psi(h),\max \psi^{-1} (\psi(h)) + \vec\epsilon}
\end{equation*}

which makes the following diagram commutative:
$$
\begin{tikzcd}
	{(\psi.N^i)_{\psi(h)} = N^i_{\max \psi^{-1}(\psi(h))}} && 
    N^i_{\max \psi^{-1}(\psi(h))+2\vec\epsilon} \\
	{M^i_{\psi(h)}} &
    {M^i_{\max \psi^{-1}(\psi(h))+\vec\epsilon}}
    \arrow["\lambda^i_{\psi(h)}"', from=2-1, to=1-1]
	\arrow["\cong^{(1)}", from=1-1, to=1-3]
	\arrow["\cong^{(3)}", from=2-1, to=2-2]
	\arrow["^{(2)}{\gamma^i_{\max\psi^{-1}(\psi(h))+\vec\epsilon}}"', from=2-2, to=1-3]
\end{tikzcd}
$$

Since $\lambda^i$ is built as a composition of structure maps and interleaving maps, the commutativity with the structure maps is ensured, and we find $\psi.N^i |_{\psi(H)} \cong M^i|_{\psi(H)}$ which implies $\psi.N^i \cong M^i$ and $\psi.\bm{N} \cong \bm{M}$. 
\end{proof}

The last result in this section considers a more general scenario than the previous two, where we have no control over the granularity or the number of points in the grid of one of the modules. Although this proposition is not used in the rest of the paper, we include it here for completeness. This result is more naturally stated for essentially finite persistence modules with grids not necessarily contained in $I^n$.

First, note that the action on persistence modules readily extends to an action of ${\epi(\mathbb{R}, \leq)^n}$, where ${\epi(\mathbb{R}, \leq)}$ is the space of surjective order-preserving maps of $\mathbb{R}$ with the sup-norm. 
In this context, it still holds that the image of the grid will be a grid for the module under the action (first part of \autoref{prop:grid-under-action}), and that two maps whose restrictions to the grid are the same will yield isomorphic modules under the action (\autoref{prop:same-grid-action-iso}).

\begin{prop}
\label{prop:NcontainsM}
    Let $M, N \in \vect^{\mathbb{R}^n}$ be essentially finite modules such that $d_I(M,N) \leq \epsilon < \ell_M/2$. Let $\gamma: M \Rightarrow N[\epsilon], \kappa: N \Rightarrow M[\epsilon]$ be an $\epsilon$-interleaving. Then
    \begin{enumerate}[label=(\roman*)]
        \item there exists $\psi \in \epi(\mathbb{R}, \leq)^n$ with $d_\infty(\psi, \id) \leq \epsilon$ such that
        \begin{equation*}
            \psi.(\im\gamma)[-\epsilon] \cong M,
        \end{equation*}
        \item $d_I(N/(\im\gamma)[-\epsilon], 0) \leq \epsilon $.
    \end{enumerate}
\end{prop}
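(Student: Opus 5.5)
The plan is to set $Q := \im\gamma \subseteq N[\epsilon]$, a submodule, and work with $Q[-\epsilon] \subseteq N$. Since $M$ is finitely presented, $Q$ is a quotient of an essentially finite module. It is also a submodule of the essentially finite module $N[\epsilon]$, so over the Noetherian regraded ring it is finitely presented and hence essentially finite.

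\textbf{Part (i).} I would first show that $\gamma$ induces an isomorphism of $M$ onto $Q$ away from a thin neighbourhood, and then build $\psi$ from grid data.

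First, $\gamma_r$ is injective at every $r$ with $\lfloor r\rfloor_G=\lfloor r+2\vec\epsilon\rfloor_G$. This is because $\kappa_{r+\vec\epsilon}\circ\gamma_r=M_{r,r+2\vec\epsilon}$ is then an isomorphism, exactly as in step (1) of \autoref{klem:nearby-grids}.

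Next, for each $j$ I define $\psi_j:\mathbb{R}\to\mathbb{R}$ as a surjective order-preserving map that collapses each window $[x-\epsilon,x]$, for $x\in G_j$, to the point $x$. Since $\epsilon<\ell_M/2$, these windows are disjoint, so such a $\psi_j$ exists. Concretely, take $\psi_j(t)=x$ on $[x-\epsilon,x]$, the identity outside the $2\epsilon$-neighbourhoods of $G_j$, and interpolate linearly in between, keeping $|\psi_j-\id|\le\epsilon$.

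The candidate isomorphism $\psi.(Q[-\epsilon])\cong M$ is then given at $t$ by
\[
M_t\cong M_{\max\psi^{-1}(t)}\xrightarrow{\gamma}Q_{\max\psi^{-1}(t)+\vec\epsilon}.
\]
Here the first map is the structure map, which is an isomorphism because $\max\psi^{-1}(t)$ has the same $G$-floor as $t$ by construction. The second map is surjective by definition of $Q$, and injective because $\max\psi^{-1}(t)$ lies in the region where $\lfloor r\rfloor_G=\lfloor r+2\vec\epsilon\rfloor_G$ fails only at points at distance less than $2\epsilon$ below a grid hyperplane. The collapse must be chosen so that $\max\psi^{-1}(t)$ avoids exactly that bad region.

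\textbf{Main obstacle.} The delicate point is tuning the collapse windows. Injectivity of $\gamma_r$ needs $r$ to be at least $2\epsilon$ below the next grid value or on the far side of one. A window of length $\epsilon$ may therefore be insufficient, and one may need to use the shift by $\epsilon$ in $Q[-\epsilon]$ to gain the extra $\epsilon$. I would first try collapsing $[x-\epsilon,x+\epsilon]$ onto $x$ together with the shift. If that fails, I would use the Key Lemma diagram (3)--(4) to get injectivity directly from the grid of $Q$. Naturality of the resulting isomorphism is automatic, since all maps are structure maps or components of $\gamma$.

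\textbf{Part (ii).} Let $C := N/Q[-\epsilon]$. I would exhibit the $\epsilon$-interleaving of $C$ with $0$, which amounts to showing that $C_{r,r+2\vec\epsilon}=0$ for every $r$. Take $v\in N_r$. Then
\[
N_{r,r+2\vec\epsilon}(v)=\gamma_{r+\vec\epsilon}\bigl(\kappa_r(v)\bigr)\in (\im\gamma)_{r+\vec\epsilon}=Q[-\epsilon]_{r+2\vec\epsilon},
\]
so the image of $v$ in $C_{r+2\vec\epsilon}$ vanishes. Hence the zero maps form an $\epsilon$-interleaving between $C$ and $0$. This part is routine.
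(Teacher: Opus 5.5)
Part (ii) is correct and is exactly the paper's argument. Part (i), however, has a genuine gap. The $\psi$ you actually construct, which collapses $[x-\epsilon,x]$ onto $x$, does not work, and your candidate map lands in the wrong vector space. By definition
$(\psi.(Q[-\epsilon]))_t=Q[-\epsilon]_{\max\psi^{-1}(t)}=\im\gamma_{\max\psi^{-1}(t)-\vec\epsilon}$.
Your composite $M_t\to M_{\max\psi^{-1}(t)}\xrightarrow{\gamma}N_{\max\psi^{-1}(t)+\vec\epsilon}$ instead has image $\im\gamma_{\max\psi^{-1}(t)}=Q[-\epsilon]_{\max\psi^{-1}(t)+\vec\epsilon}$, which is off by $\vec\epsilon$.

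With your window, $\max\psi_j^{-1}(x)=x$ for $x\in G_j$. So at a grid point the space you must match is $\im\gamma$ taken $\epsilon$ below the grid point, which cannot see what $M$ does at $x$. Here is a concrete failure. Take $n=1$, let $M=N$ be the interval module on $[a,b)$ with $b-a>4\epsilon$, and let $\gamma=\kappa$ be the shift maps. Then $Q[-\epsilon]$ is the interval module on $[a+\epsilon,b)$. Your $\psi$ fixes both $a+\epsilon$ and $b$, so $\psi.(Q[-\epsilon])$ is still the interval on $[a+\epsilon,b)$, and $\psi.(Q[-\epsilon])\not\cong M$. The real obstacle is this $\epsilon$ offset, not only the injectivity of $\gamma$ that you single out.

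The repair is the option you mention only tentatively, and it is precisely the paper's construction. For each $x\in G_j$, let $\psi_j$ collapse $[x-\epsilon,x+\epsilon]$ onto $x$; these windows are disjoint since $2\epsilon<\ell_M$. Let $\psi_j$ fix everything at distance at least $2\epsilon$ from $G_j$, and interpolate linearly in between. This still gives $d_\infty(\psi,\id)\le\epsilon$. Now $\max\psi^{-1}(g)=g+\vec\epsilon$ for $g\in G$, and for all $t$
\[
\lfloor t\rfloor_G=\lfloor \max\psi^{-1}(t)-\vec\epsilon\rfloor_G=\lfloor \max\psi^{-1}(t)+\vec\epsilon\rfloor_G .
\]
Set $s=\max\psi^{-1}(t)-\vec\epsilon$.
\begin{itemize}
\item The first equality makes $M_t\cong M_s$. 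Note that $s\le t$, so this isomorphism is $M_{s,t}^{-1}$, not a structure map out of $M_t$ as you wrote.
\item The second equality makes $\kappa_{s+\vec\epsilon}\circ\gamma_s=M_{s,s+2\vec\epsilon}$ an isomorphism. Hence $\gamma_s$ maps $M_s$ bijectively onto $\im\gamma_s=(\psi.(Q[-\epsilon]))_t$.
\item Naturality then follows from the naturality of $\gamma$.
\end{itemize}
This verification is the heart of (i), and it is absent from your proposal. Your other fallback, the Key Lemma diagram, would not help either. Nothing controls the grid of $N$ or of $Q$, so Key Lemma (ii) provides no isomorphisms in this setting.
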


\begin{rem}
    Intuitively, part (i) states that $N$ contains an approximate copy of $M$, that is, a submodule that, after perturbing it slightly by the action, becomes isomorphic to $M$. Part (ii) then states that this submodule is basically all of $N$: if we quotient $N$ by this approximate copy of $M$, the result is close to 0.
\end{rem}

\begin{proof}
To begin with, we obtain an $\epsilon$-interleaving between $M$ and $(\im\gamma)[-\epsilon]$, seen as a submodule of $N$. The natural transformation $\gamma$ factors through the image, via $\tilde{\gamma}: M \Rightarrow \im \gamma$, and the transformation $\kappa$ can be restricted to the image submodule: $\tilde{\kappa} = \kappa|_{(\im\gamma)[-\epsilon]}: (\im\gamma)[-\epsilon] \Rightarrow M[\epsilon]$.

Let $G$ be the minimal grid of $M$. For each $j = 1, \dots, n$, let us build $\psi_j \in \epi(\mathbb{R}, \leq)$ as follows. The map $\psi_j$ fixes every $t \geq \max G_j + 2\epsilon$ and $t \leq \min G_j - 2\epsilon$. For each $x \in G_j$, $\psi_j$ collapses the interval $[x-\epsilon, x + \epsilon]$ to $x$. On the remaining intervals, we define $\psi_j$ by linear interpolation.

Then, for any $y \in G_j$, $\psi_j^{-1} (y) = [y-\epsilon, y + \epsilon]$. For any $y \notin G_j$, $y$ is contained in an interval $[x,x']$, with $x, x'$ two distinct, consecutive points in $G_j\cup \{\pm \infty\}$. Then $\psi_j^{-1}(y)$ is a single point contained in $(x + \epsilon, x'-\epsilon)$.

Let $\psi = \psi_1 \times\dots\times \psi_n$. By construction, the following equalities hold for any $r\in \mathbb{R}^n$,
\begin{equation}
\label{eq:prop4_25_relations}
    \lfloor r \rfloor_G
    = \lfloor \max \psi^{-1}(r) - \vec\epsilon \rfloor_G
    = \lfloor \max \psi^{-1}(r) + \vec\epsilon \rfloor_G.
\end{equation}

We will first show that $G$ is a grid for $\psi.(\im\gamma)[-\epsilon]$, using the characterisation from \autoref{lem:ess-fin-characterisation}. Let $g \in G$ and $r\in \mathbb{R}$, with $r\geq \min G$ and $\lfloor r \rfloor_G = g$. Note that $\max \psi^{-1}(g) = g + \vec\epsilon$. We will show that the structure map 
\begin{equation*}
    (\psi.(\im\gamma)[-\epsilon])_{g,r}: \im\gamma_{g} \to \im\gamma_{\max \psi^{-1}(r) -\vec\epsilon}
\end{equation*}
is an isomorphism. Since this map is the restriction of the structure map of $N$ to $\im \gamma [-\epsilon]$, we can fit it into a diagram using the restricted $\epsilon$-interleaving.

$$
\begin{tikzcd} [column sep=0.1cm, cells={nodes={text width=2cm, align=center}}]
	&& {M_{g + 2\vec\epsilon}}
    &&& {M_{\max \psi^{-1}(r) + \vec\epsilon}} \\
	& {\im\gamma_g}
    &&& {\im\gamma_{\max \psi^{-1}(r)- \vec\epsilon}} \\
	{M_{g}}
    &&& {M_{\max \psi^{-1}(r) - \vec\epsilon}}
	\arrow["\cong"{pos = 0.5}, from=1-3, to=1-6]
	\arrow["\tilde{\kappa}_{g + \vec\epsilon}"'{pos=0.5}, from=2-2, to=1-3]
	\arrow["(\psi.(\im\gamma){[-\epsilon]})_{g,r}", from=2-2, to=2-5]
	\arrow["\tilde{\kappa}_{\max \psi^{-1}(r)}"{pos=0.5}, from=2-5, to=1-6]
	\arrow["\cong"{pos = 0.5}, curve={height=-30pt}, from=3-1, to=1-3]
	\arrow["\tilde{\gamma}_g"'{pos=0.5}, hook, two heads, from=3-1, to=2-2]
	\arrow["\cong"{pos = 0.5}, from=3-1, to=3-4]
	\arrow["\cong"'{pos = 0.5},curve={height=50pt}, from=3-4, to=1-6]
	\arrow["\tilde{\gamma}_{\max \psi^{-1}(r) - \vec\epsilon}"{pos=0.5}, hook, two heads, from=3-4, to=2-5]
\end{tikzcd}
$$

The lower and upper maps, as well as both composition of the diagonal maps are all isomorphisms, since $\lfloor r \rfloor_G = g$ and the relations \eqref{eq:prop4_25_relations}. As a consequence, both $\tilde{\gamma}_g$ and $\tilde{\gamma}_{\max \psi^{-1}(r) -\vec\epsilon}$ are injective, and they are both surjective since they are restrictions onto the image, hence they are isomorphisms. By commutativity of the diagram, $(\psi.(\im\gamma)[-\epsilon])_{g,r}$ is also an isomorphism.

Let $r\not\geq \min G$. For some $j$, $r_j < \min G_j$. Then, by construction $\max \psi_j^{-1}(r_j) < \min G_j$, and thus $\max \psi^{-1} (r) - \vec\epsilon \not\geq \min G $. Then $(\psi.(\im\gamma)[-\epsilon])_r = \gamma_{\max \psi^{-1} (r) -\vec\epsilon }(M_{\max \psi^{-1} (r) -\vec\epsilon}) = 0$.

Thus, $G$ is a grid for $\psi.(\im\gamma)[-\epsilon]$, and hence proving (i) reduces to showing that the restriction to $G$ is isomorphic to $M|_G$. But for each $g\in G$, $\max\psi^{-1}(g) - \vec\epsilon = g$, so $(\psi.(\im\gamma)[-\epsilon])|_G = \im\gamma|_G$. We only need to consider the restriction $\tilde{\gamma}|_{G}: M|_G \Rightarrow \im\gamma|_G$, which is an isomorphism as we observed from the diagram above. This finishes the proof of (i).

Part (ii) is satisfied more generally by any interleaving, regardless if the distance is bounded by half the granularity. Checking that a module is at a distance $\epsilon$ from the 0 module is equivalent to checking that for any $r \in \mathbb{R}^n$, the structure map between grades $r$ and $r + 2\vec\epsilon$ is 0. In our case, this follows directly from the commutativity condition of the interleaving. For any $r \in \mathbb{R}^n$, $N_{r, r + 2\vec\epsilon} = \gamma_{r + \vec\epsilon} \circ \kappa_r$, hence $\im N_{r, r + 2\vec\epsilon} \subseteq \im \gamma_{r + \vec\epsilon} = (\im \gamma)[-\epsilon]_{r + 2\vec\epsilon}$, so taking the quotient by $(\im \gamma)[-\epsilon]$, the structure map becomes 0, which proves $d_I(N/(\im \gamma)[-\epsilon], 0) \leq \epsilon$.

\end{proof}

We can find a similar result for persistence modules with grids contained in $I^n$ and the action of $\overline{\aut_0(I^n,\leq)}$. In this case, however, we cannot ensure the grid of $(\im\gamma)[-\epsilon]$ lies in $I^n$, so the statement is slightly modified.

\begin{prop}
\label{prop:NcontainsMrestricted}
    Let $[M], [N] \in \modspace{n}$ with $d_I(M,N) \leq \epsilon < \ell_M/2$, and let $\gamma: M \Rightarrow N[\epsilon], \kappa: N \Rightarrow M[\epsilon]$ be an $\epsilon$-interleaving. Then there exists $\psi \in \overline{\aut_0(I^n,\leq)}$ with $d_\infty(\psi, \id) \leq 2\epsilon$ such that
    \begin{equation*}
        \psi.\im\gamma \cong M.
    \end{equation*}
\end{prop}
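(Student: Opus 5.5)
The plan is to mimic the construction in the proof of \autoref{prop:NcontainsM}, but to choose $\psi$ so that no shift by $\epsilon$ is needed and $\psi$ fixes $0$ and $1$. Throughout, $\im\gamma$ is the module $r\mapsto \gamma_r(M_r)\subseteq N_{r+\vec\epsilon}$, with structure maps restricted from $N[\epsilon]$, and $\tilde\gamma: M\Rightarrow \im\gamma$ is the corestriction. The key observation comes from the interleaving identity $\kappa_{r+\vec\epsilon}\circ\gamma_r = M_{r,r+2\vec\epsilon}$. Whenever $M_{r,r+2\vec\epsilon}$ is an isomorphism, $\gamma_r$ is injective, and then $\tilde\gamma_r: M_r\to(\im\gamma)_r$ is an isomorphism. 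For $r\geq\min G$ this holds whenever, in every coordinate $j$, the interval $(r_j,r_j+2\epsilon]$ contains no point of $G_j$. For $r\not\geq\min G$ we have $M_r=0$, hence also $(\im\gamma)_r=0$.

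\textbf{Construction of $\psi$.} I will choose $\psi=\psi_1\times\dots\times\psi_n$ so that $\max\psi^{-1}$ maps every grade into this ``good'' region without changing its $G$-floor. For each $j$ and each $x'\in G_j$ with $x'>\min G_j$, let $\psi_j$ collapse $[x'-2\epsilon,x']$ to $x'$. Since $2\epsilon<\ell_M$, we have $x'-2\epsilon>x\geq 0$, where $x$ is the predecessor of $x'$ in $G_j$. Let $\psi_j$ be the identity on $[0,\min G_j]$ and on $[\max G_j,1]$. On each gap $[x,x'-2\epsilon]$ between consecutive grid points, let $\psi_j$ be the affine map onto $[x,x']$.
\begin{itemize}
\item Then $\psi_j$ is continuous, order-preserving and surjective, and it fixes $0$ and $1$, so $\psi\in\overline{\aut_0(I^n,\leq)}$ by \autoref{lem:closure-aut}.
\item We have $0\le\psi_j(t)-t\le 2\epsilon$, so $d_\infty(\psi,\id)\le 2\epsilon$.
\item $\max\psi_j^{-1}(x')=x'$ for every $x'\in G_j$.
\item For $r\geq\min G$, the point $s=\max\psi^{-1}(r)$ satisfies $s\le r$ and $\lfloor s\rfloor_G=\lfloor r\rfloor_G$. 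Moreover, each $s_j$ lies either in $[x,x'-2\epsilon)$ for consecutive $x<x'$ in $G_j$, or in $[\max G_j,1]$.
\end{itemize}
For coordinates below $\min G_j$ both modules vanish, so no collapsing is needed there. This is exactly why the boundary condition at $0$ causes no trouble.

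\textbf{Natural isomorphism.} Define $\lambda_r: M_r\to(\psi.\im\gamma)_r=(\im\gamma)_s$ by $\lambda_r=\tilde\gamma_s\circ(M_{s,r})^{-1}$ for $r\geq\min G$, and $\lambda_r=0$ otherwise. Here $M_{s,r}$ is an isomorphism because $s$ and $r$ have the same $G$-floor (\autoref{lem:ess-fin-characterisation}). By the choice of $s$, the interval $(s_j,s_j+2\epsilon]$ avoids $G_j$ in every coordinate, so $\tilde\gamma_s$ is an isomorphism by the key observation.

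Naturality should follow because $\lambda$ is built only from structure maps of $M$, components of $\gamma$ (which is natural), and restrictions of structure maps of $N$. For $r\le r'$ with $s=\max\psi^{-1}(r)\le s'=\max\psi^{-1}(r')$, I will verify that the square with $(\im\gamma)_{s,s'}$ commutes by naturality of $\tilde\gamma$ and functoriality of $M$. For $r\not\geq\min G$ we have $s\not\geq\min G$, since $\psi_j$ is the identity below $\min G_j$, so both sides vanish. This gives $\psi.\im\gamma\cong M$.

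The step I expect to need the most care is the coordinatewise bookkeeping showing that $s=\max\psi^{-1}(r)$ lands in the region where $\gamma_s$ is injective. This includes the edge cases: $r_j$ exactly at a grid point (where $s_j=r_j$ and the next grid point is more than $2\epsilon$ away), $r_j$ beyond $\max G_j$ (no later grid points, so injectivity is automatic), and mixed situations where some coordinates lie below $\min G_j$.
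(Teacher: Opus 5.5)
Your proposal is correct and is essentially the paper's proof. You use the same $\psi$, which collapses $[x-2\epsilon,x]$ onto each non-minimal grid point $x$, and the same key identities $\lfloor r\rfloor_G=\lfloor\max\psi^{-1}(r)\rfloor_G=\lfloor\max\psi^{-1}(r)+2\vec\epsilon\rfloor_G$, which force $\gamma$ to be injective at $\max\psi^{-1}(r)$. The only difference is cosmetic: you write the natural isomorphism $\lambda_r=\tilde\gamma_s\circ M_{s,r}^{-1}$ directly at every grade, whereas the paper, following \autoref{prop:NcontainsM}, first shows that $G$ is a grid for $\psi.\im\gamma$ and then compares restrictions to $G$.
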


\begin{proof}
    Let $G$ be the minimal grid of $M$.
    For each $j$, let us build $\psi_j \in \overline{\aut_0(I,\leq)}$ as follows. Send $\min G_j$ to itself, and for any $x\in G_j$, $x\neq \min G_j$, collapse the interval $[x-2\epsilon, x]$ to $x$, and interpolate linearly. Let $\psi = \psi_1 \times\dots\times \psi_n$. Then by construction, the following equalities hold for any $r\in \mathbb{R}^n$,
\begin{equation}
    \lfloor r \rfloor_G
    = \lfloor \max \psi^{-1}(r) \rfloor_G
    = \lfloor \max \psi^{-1}(r) + 2\vec\epsilon \rfloor_G.
\end{equation}
The same strategy in the proof of \autoref{prop:NcontainsM} works.
\end{proof}

\newpage

\section{Stratifications on the space of \texorpdfstring{$n$}{n}-filters and \texorpdfstring{$n$}{n}-parameter persistence modules}
\label{sec:strat}

In this section we use the actions introduced in \autoref{sec:actions} to endow the space of $n$-filters and the image of the multiparameter persistent homology map with natural stratifications with strata given by ${\aut_0(I^n,\leq)}$-orbits. Furthermore, using its equivariance, we show that $MPH$ is a strongly stratified map. This enables us to identify the image $\modspace{n}^K$ as the quotient of $\filt^n$ by $MPH$.

We start by introducing a central definition for this section.

\begin{defn}[stratification]
    A stratification of a topological space $X$ is a (possibly infinite) nested sequence $\varnothing = X_{-1} \subseteq X_0 \subseteq X_1 \subseteq \cdots$ of closed subspaces $X_i \subseteq X$, $i \in \mathbb{N}$, such that $\cup_i X_i = X$ and the sets $X_i \setminus X_{i-1}$ are topological manifolds of dimension $i$. The path-connected components of $X_i \setminus X_{i-1}$ are called $i$-strata, or strata of dimension $i$. A stratified map between two stratified spaces $X,Y$ is a continuous map $f:X \to Y$ that preserves the stratification, that is, such that $f(X_i) \subset Y_i$ for all $i$. A stratified map is strongly stratified if it maps any stratum of $X$ surjectively to a stratum of $Y$.
\end{defn}

In the stratifications we will consider, strata will consist of products of open simplices of possibly different dimensions. For $i\geq 1$, we identify an $i$-dimensional open simplex with a subset of $\mathbb{R}^i$ as follows:
\begin{equation*}
    \mathring{\Delta}^{i} = \{(x_1, \dots, x_{i}) \mid 0 < x_1 < \cdots < x_{i} < 1\} \subset \mathbb{R}^{i},
\end{equation*}
and we denote its closure, the closed simplex, as $\Delta^{i}$. For $i=0$, $\mathring{\Delta}^{0} = \Delta^0 = \{*\}$. We consider products of simplices with the $\|\cdot\|_\infty$-metric.

\subsection{Stratification of the space of \texorpdfstring{$n$}{n}-filters}

The goal of this subsection is to build a stratification of the space of $n$-filters from the orbits of the action (\autoref{prop:filt-strat}), which can be characterised in the following way.

\begin{lem}[Characterisation of $\aut_0 (I^n, \leq)$-orbits]
\label{lem:char-orbits-aut-filt}
Two $n$-filters $f,g$ belong to the same $\aut_0 (I^n, \leq)$-orbit if and only if for all $j = 1, \dots, n$
\begin{enumerate}[label = (\roman*)]
    \item $f_j$ and $g_j$ induce the same preorder on the simplices, that is, for all $\tau, \sigma \in K$, 
    \begin{equation*}
        f_j(\tau) \leq f_j(\sigma) \iff g_j(\tau) \leq g_j(\sigma),
    \end{equation*}
    \item $f_j^{-1} (0) = g_j^{-1}(0)$ and $f_j^{-1} (1) = g_j^{-1}(1)$.
\end{enumerate}
\end{lem}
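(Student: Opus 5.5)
The plan is to prove both directions directly, working coordinate by coordinate. This is possible because $\aut_0(I^n,\leq)\cong\aut(I,\leq)^n$ by \autoref{cor:structure-action-group}, and the action on filters is post-composition. So $g=\phi.f$ with $\phi=\phi_1\times\dots\times\phi_n$ holds if and only if $g_j=\phi_j\circ f_j$ for every $j$. The statement therefore reduces to the one-parameter claim: two maps $f_j,g_j:K\to I$ differ by an order isomorphism of $I$ if and only if (i) and (ii) hold for that $j$. Monotonicity with respect to inclusion plays no role here; it is automatically preserved by post-composition.

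For the forward direction, suppose $g_j=\phi_j\circ f_j$ with $\phi_j\in\aut(I,\leq)$. Since $\phi_j$ is an order isomorphism, $f_j(\tau)\leq f_j(\sigma)\iff\phi_j f_j(\tau)\leq\phi_j f_j(\sigma)$, which gives (i). Every order isomorphism of $I$ fixes the minimum $0$ and the maximum $1$, and it is bijective, so $\phi_j^{-1}(0)=\{0\}$ and $\phi_j^{-1}(1)=\{1\}$. This gives $g_j^{-1}(0)=f_j^{-1}(0)$ and likewise for $1$, which is (ii).

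For the converse, fix $j$ and let $a_1<\dots<a_k$ be the distinct values of $f_j$ and $b_1<\dots<b_m$ those of $g_j$. Condition (i) says that $f_j(\tau)\mapsto g_j(\tau)$ is a well-defined map $\im f_j\to\im g_j$: if $f_j(\tau)=f_j(\sigma)$, then both inequalities hold, so $g_j(\tau)=g_j(\sigma)$. The same argument with the roles swapped shows the map is injective, it is surjective by construction, and it is strictly order-preserving by (i). Hence $k=m$ and $a_i\mapsto b_i$.

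Condition (ii) says that $0$ is a value of $f_j$ exactly when it is a value of $g_j$, and that in that case it is attained on the same simplices; it then must be $a_1=b_1=0$. The same holds for $1$ with $a_k=b_k=1$. So the assignment $0\mapsto0$, $1\mapsto1$, $a_i\mapsto b_i$ is a strictly increasing bijection between the finite sets $\{0,1\}\cup\im f_j$ and $\{0,1\}\cup\im g_j$. This is the step where boundary values need care: one has to check that no interior value $a_i\in(0,1)$ is sent to $0$ or $1$, and that is exactly what (ii) rules out. Define $\phi_j$ as the piecewise linear interpolation of this finite assignment. It is a strictly increasing continuous bijection of $I$ fixing the endpoints, so $\phi_j\in\aut(I,\leq)$, and $g_j=\phi_j\circ f_j$ holds by construction. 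Taking $\phi=\phi_1\times\dots\times\phi_n\in\aut_0(I^n,\leq)$ gives $g=\phi.f$.

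The only step I expect to need attention is the well-definedness and bijectivity of the value correspondence together with this endpoint compatibility; the rest is formal.
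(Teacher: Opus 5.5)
Your proposal is correct and follows essentially the same route as the paper: the forward direction is immediate, and for the converse you build each $\phi_j$ as the piecewise-linear interpolation of $0\mapsto 0$, $1\mapsto 1$, $f_j(\tau)\mapsto g_j(\tau)$, then take $\phi=\phi_1\times\dots\times\phi_n$. The difference is that you spell out the checks the paper leaves as ``clearly'': that this assignment is well defined, injective and strictly increasing by (i), and that it is compatible with the endpoints by (ii).
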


\begin{proof}
    Let $g = \phi.f$ for some $\phi \in \aut_0 (I^n, \leq) \cong \aut(I, \leq)^n$, then clearly (i) and (ii) are satisfied for all $j$. Assume that $f$ and $g$ satisfy (i) and (ii). Let us build maps $\phi_j: I \to I$ by fixing $\phi_j(0) = 0$, $\phi_j(1) = 1$, $\phi_j(f_j(\tau)) = g_j(\tau)$ for every $\tau\in K$, and interpolating linearly. Clearly $\phi_j \in \aut(I, \leq)$ for all $j$, and $g = \phi.f$, so $g \in \aut_0 (I^n, \leq).f$.
\end{proof}

\begin{rem}
    The $\aut_0 (I^n, \leq)$-orbits we defined can be seen as a compact version of the cells defined in \cite{Scoccola2024}.
\end{rem}

Following the same strategy, we can prove a similar characterisation of the orbit of the closure:

\begin{lem}[Characterisation of $\overline{\aut_0 (I^n, \leq)}$-orbits]
\label{lem:char-orbits-epi-filt}
    Let $f, g$ be $n$-filters. Then, $g \in \overline{\aut_0 (I^n, \leq)}.f$ if and only if for all $j = 1, \dots, n$
    \begin{enumerate}[label = (\roman*)]
    \item for all $\tau, \sigma \in K$, $f_j(\tau) \leq f_j(\sigma) \implies g_j(\tau) \leq g_j(\sigma)$,
    \item $f_j^{-1} (0) \subseteq g_j^{-1}(0)$ and $f_j^{-1} (1) \subseteq g_j^{-1}(1)$.
\end{enumerate}
\end{lem}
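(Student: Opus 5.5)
The plan is to mirror the proof of \autoref{lem:char-orbits-aut-filt}, replacing order isomorphisms of $I$ by surjective order-preserving maps, using $\overline{\aut_0(I^n,\leq)} \cong \epi(I,\leq)^n$ from \autoref{lem:closure-aut}. Both directions are then checked one coordinate $j$ at a time.

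For the forward direction, suppose $g = \phi.f$ with $\phi = \phi_1 \times \dots \times \phi_n$ and $\phi_j \in \epi(I,\leq)$, so that $g_j = \phi_j \circ f_j$. Condition (i) holds because $\phi_j$ is order-preserving. For condition (ii), we first note that a surjective order-preserving map of $I$ fixes the endpoints. Indeed, $0$ lies in the image, say $0 = \phi_j(a)$, and then $0 \leq \phi_j(0) \leq \phi_j(a) = 0$; the argument for $1$ is symmetric. Consequently, if $f_j(\tau) = 0$ then $g_j(\tau) = \phi_j(0) = 0$, and similarly at $1$.

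For the converse, assume (i) and (ii) hold, and fix $j$. We define $\phi_j$ on the finite set $S_j = \{0,1\} \cup f_j(K)$ by $\phi_j(0) = 0$, $\phi_j(1) = 1$, and $\phi_j(f_j(\tau)) = g_j(\tau)$. We then extend $\phi_j$ to all of $I$ by linear interpolation between consecutive points of $S_j$. Three things need checking, and this step is the only real content of the proof.

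\begin{enumerate}[label=(\alph*)]
\item \emph{Well-definedness.} If $f_j(\tau) = f_j(\sigma)$, then applying (i) in both directions gives $g_j(\tau) = g_j(\sigma)$. If $f_j(\tau) = 0$, then (ii) gives $g_j(\tau) = 0$, which agrees with the prescribed value $\phi_j(0) = 0$. The case $f_j(\tau) = 1$ is identical.
\item \emph{Monotonicity on $S_j$.} Comparisons between two values of the form $f_j(\tau)$ follow from (i). Comparisons with the endpoints are automatic, since $0 \leq g_j(\tau) \leq 1$.
\item \emph{Surjectivity.} The piecewise linear extension of a monotone function on $S_j$ is order-preserving and continuous. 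Since it also fixes $0$ and $1$, it is surjective, so $\phi_j \in \epi(I,\leq)$.
\end{enumerate}

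Setting $\phi = \phi_1 \times \dots \times \phi_n$, we get $\phi.f = g$ coordinatewise, so $g \in \overline{\aut_0(I^n,\leq)}.f$. Note that, unlike in \autoref{lem:char-orbits-aut-filt}, $\phi_j$ may collapse intervals, which is exactly why only one implication is required in (i) and only inclusions in (ii).
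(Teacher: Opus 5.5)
Your proof is correct and follows the paper's approach. The paper just says the result follows by the same strategy as \autoref{lem:char-orbits-aut-filt}: use $\overline{\aut_0(I^n,\leq)} \cong \epi(I,\leq)^n$ from \autoref{lem:closure-aut}, fix $0$ and $1$, set $\phi_j(f_j(\tau)) = g_j(\tau)$, and interpolate linearly, and you have supplied the well-definedness, monotonicity and endpoint-fixing details that the paper leaves implicit.
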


From this characterisation, we can prove that the closure of the orbit coincides with the orbit of the closure.

\begin{prop}
\label{prop:closure-orbit-filt}
    For every $f\in \filt^n$, we have
    \begin{equation*}
        \overline{\aut_0(I^n, \leq).f} = \overline{\aut_0(I^n, \leq)}.f.
    \end{equation*}
\end{prop}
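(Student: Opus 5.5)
We prove the two inclusions separately. For $\overline{\aut_0(I^n,\leq)}.f \subseteq \overline{\aut_0(I^n,\leq).f}$, let $\phi \in \overline{\aut_0(I^n,\leq)}$. By \autoref{lem:closure-aut} there is a sequence $\phi^k \in \aut_0(I^n,\leq)$ with $d_\infty(\phi^k,\phi)\to 0$; for instance, take $\phi^k_i = \frac1k \id + (1-\frac1k)\phi_i$ as in that proof. Then
\begin{equation*}
    d_\infty(\phi^k.f,\phi.f) = \max_{\tau\in K}\|\phi^k(f(\tau))-\phi(f(\tau))\|_\infty \leq d_\infty(\phi^k,\phi) \to 0,
\end{equation*}
so $\phi.f$ is a limit of points of the orbit $\aut_0(I^n,\leq).f$.

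For the reverse inclusion, we use the characterisations \autoref{lem:char-orbits-aut-filt} and \autoref{lem:char-orbits-epi-filt}. Let $g \in \overline{\aut_0(I^n,\leq).f}$, so $g = \lim_k \phi^k.f$ in $\filt^n$ with $\phi^k \in \aut_0(I^n,\leq)$. It suffices to check conditions (i) and (ii) of \autoref{lem:char-orbits-epi-filt} for $g$.

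\begin{enumerate}[label=(\roman*)]
\item Suppose $f_j(\tau)\leq f_j(\sigma)$. By \autoref{lem:char-orbits-aut-filt}(i), each $\phi^k.f$ induces the same preorder as $f$, so $\phi^k_j(f_j(\tau))\leq \phi^k_j(f_j(\sigma))$ for every $k$. Non-strict inequalities persist under pointwise limits, so $g_j(\tau)\leq g_j(\sigma)$.
\item If $f_j(\tau)=0$, then $(\phi^k.f)_j(\tau)=0$ for all $k$, hence $g_j(\tau)=0$. The same argument applies to the value $1$.
\end{enumerate}

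Hence $g\in\overline{\aut_0(I^n,\leq)}.f$.

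All the substantive work is carried by \autoref{lem:char-orbits-epi-filt}, which we may assume. If one wanted to avoid relying on it, the key construction would be the following: given $g$ satisfying (i) and (ii), define $\phi_j$ by $\phi_j(0)=0$, $\phi_j(1)=1$ and $\phi_j(f_j(\tau))=g_j(\tau)$, and interpolate linearly. Condition (i) makes this well defined and monotone, and condition (ii) makes it consistent at the endpoints, so $\phi_j\in\epi(I,\leq)$. This is the only delicate step, and it is already settled by the lemma.

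The one remaining point is that we are identifying the topology on $\filt^n$ with convergence in $d_\infty$, which is immediate from its definition as a subspace of $I^{n|K|}$.
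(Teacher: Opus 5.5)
Your proof is correct and is essentially the paper's argument. For one inclusion you approximate $\phi\in\overline{\aut_0(I^n,\leq)}$ by automorphisms and use $d_\infty(\phi^k.f,\phi.f)\leq d_\infty(\phi^k,\phi)$. For the other you pass the non-strict inequalities and the boundary values $0,1$ to the limit and invoke \autoref{lem:char-orbits-epi-filt}.

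The only difference is organisational. The paper proves the case $n=1$ first, then builds the general case from three ingredients: the product decomposition of the orbit, the fact that the closure of a product is the product of closures, and \autoref{lem:closure-aut}. You instead run the same limit argument coordinatewise for all $n$ at once. This is legitimate because \autoref{lem:char-orbits-epi-filt} is already stated coordinate by coordinate.
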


\begin{proof}
We first prove the claim for the case $n=1$. Let $f: K \to I$ be a filter function and consider $g \in \overline{\aut(I, \leq).f}$. Then there exists a sequence $\phi^k \in \aut(I, \leq)$ such that $\phi^k.f$ converges to $g$ in the sup-norm. Note that this does not imply the convergence of the sequence $\phi^k$.
We know, however, that for every $\tau$, $\phi^k.f(\tau) \to g(\tau)$. Since each $\phi^k$ is order preserving,
\begin{equation*}
    f(\tau) \leq f(\sigma) \implies \phi^k.f(\tau) \leq \phi^k.f(\sigma).
\end{equation*}
By taking limits at both sides we find $g(\tau) \leq g(\sigma)$, so $g$ satisfies condition (i) of \autoref{lem:char-orbits-epi-filt}. If $f(\tau) = 0$ for some $\tau \in K$, then $\phi^k.f(\tau) = 0$ so in the limit $g(\tau) = 0$, and analogously if $f(\tau) = 1$, so condition (ii) of \autoref{lem:char-orbits-epi-filt} is also satisfied and $g \in \overline{\aut(I, \leq)}.f$.

Conversely, if $g\in \overline{\aut(I, \leq)}.f$, then there exists a sequence $\phi^k \in \aut(I, \leq)$ converging to $\phi \in \overline{\aut(I, \leq)}$, with $g = \phi.f$. Then the sequence $\phi^k.f \in \aut(I, \leq).f$ converges in sup-norm to $g$, so $g \in \overline{\aut(I, \leq).f}$.

We now consider an $n$-filter $f$ for $n>1$. Recall that we see $\filt^n$ embedded in $I^{n|K|}$, and
\begin{equation*}
    \aut_0(I^n, \leq).f \cong \aut(I, \leq).f_1 \times \cdots \times \aut(I, \leq).f_n
    \subseteq
    I^{|K|} \times \cdots \times I^{|K|},
\end{equation*}
so by the properties of product topology, the case $n=1$ and \autoref{lem:closure-aut},
\begin{equation}
\label{eq:closure-product}
    \overline{\aut_0(I^n, \leq).f}
    = \prod_{j=1}^n \overline{\aut(I, \leq).f_j}
    = \prod_{j=1}^n \overline{\aut(I, \leq)}.f_j
    = \epi(I,\leq)^n.f
    = \overline{\aut_0(I^n, \leq)}.f.
\end{equation}
\end{proof}

From \autoref{lem:char-orbits-aut-filt}, we see that inside an orbit, a filter is uniquely determined by the values of each $f_j$ that are not 0 or 1, listed in increasing order. From this identification we can build a homeomorphism between the orbit of $f$ and a product of open simplices, which will extend to the boundary. We denote by $\im f_j$ the set of distinct values taken by $f_j$, listed in increasing order.
\begin{defn}
    Given an $n$-filter $f$, we introduce the following notation,
    \begin{equation*}
        \kappa_j(f) := | \im f_j \cap (0,1) |,
    \end{equation*}
    and we define the orbit-dimension of $f$ as
    \begin{equation*}
        \odim(f) := \sum_{j=1}^n \kappa_j(f).
    \end{equation*}
\end{defn}

\begin{prop}
\label{prop:homeo-orbits-filt}
    The $\aut_0 (I^n, \leq)$-orbit of an $n$-filter $f$ is a topological manifold of dimension $\odim(f)$. Specifically, there is a homeomorphism $\aut_0(I^n, \leq).f \cong \mathring\Delta^{\kappa_1(f)} \times \dots \times \mathring\Delta^{\kappa_n(f)}$, which moreover extends to a homeomorphism of the closures: $\overline{\aut_0(I^n, \leq).f} \cong \Delta^{\kappa_1(f)} \times \dots \times \Delta^{\kappa_n(f)}$.
\end{prop}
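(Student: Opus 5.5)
The plan is to write down an explicit coordinate map and check that it is a homeomorphism onto the product of open simplices, and that it extends continuously and bijectively to the closures. Since $\aut_0(I^n,\leq).f \cong \aut(I,\leq).f_1\times\dots\times\aut(I,\leq).f_n$ inside $I^{|K|}\times\dots\times I^{|K|}$, and the analogous product decomposition holds for closures by \eqref{eq:closure-product}, it suffices to treat $n=1$ and take products. So fix a $1$-filter $f$ and set $k=\kappa_1(f)$. Let $0<v_1<\dots<v_k<1$ be the values of $\im f\cap(0,1)$. For each $i$ choose a simplex $\tau_i$ with $f(\tau_i)=v_i$. Define
\begin{equation*}
\Theta:\overline{\aut(I,\leq).f}\to \Delta^k,\qquad g\mapsto (g(\tau_1),\dots,g(\tau_k)).
\end{equation*}
This is the restriction of a coordinate projection $I^{|K|}\to I^k$, hence continuous (indeed $1$-Lipschitz for the sup-norms).

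First I check that $\Theta$ lands where it should. By \autoref{prop:closure-orbit-filt}, every $g$ in the closure is $\phi.f$ with $\phi\in\epi(I,\leq)$. Then $0\leq g(\tau_1)\leq\dots\leq g(\tau_k)\leq 1$, so $\Theta(g)\in\Delta^k$. If $g$ lies in the orbit itself, \autoref{lem:char-orbits-aut-filt} gives strict inequalities and values in $(0,1)$, so $\Theta(g)\in\mathring\Delta^k$.

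Next I establish bijectivity. For injectivity on the closure, note that any $g=\phi.f$ satisfies $g(\sigma)=\phi(f(\sigma))$, and $f(\sigma)$ is either $0$, $1$, or some $v_i$. Hence $g(\sigma)$ is $0$, $1$, or $g(\tau_i)$, so $g$ is determined by $\Theta(g)$. For surjectivity, given $(x_1,\dots,x_k)\in\Delta^k$, I let $\phi$ be the piecewise linear map with $\phi(0)=0$, $\phi(1)=1$ and $\phi(v_i)=x_i$. This map is in $\epi(I,\leq)$, and $\phi.f$ has $\Theta(\phi.f)=x$. When $x\in\mathring\Delta^k$ the map $\phi$ is strictly increasing, so it lies in $\aut(I,\leq)$; thus the open simplex is hit exactly by the orbit. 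Together this gives that $\Theta$ is a bijection $\overline{\aut(I,\leq).f}\to\Delta^k$ restricting to a bijection between the orbit and $\mathring\Delta^k$.

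Finally, I need continuity of the inverse. This step looks like the main point but is easy. The closure is a closed subset of the compact space $I^{|K|}$, hence compact, and $\Delta^k$ is Hausdorff, so the continuous bijection $\Theta$ is a homeomorphism. Its restriction is then a homeomorphism of the orbit onto $\mathring\Delta^k$. Alternatively, the inverse is explicitly $1$-Lipschitz, since $g(\sigma)$ is one of the coordinates or a constant. Taking products over $j$ yields the homeomorphisms $\aut_0(I^n,\leq).f\cong\prod_j\mathring\Delta^{\kappa_j(f)}$ and $\overline{\aut_0(I^n,\leq).f}\cong\prod_j\Delta^{\kappa_j(f)}$. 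Since $\prod_j\mathring\Delta^{\kappa_j(f)}$ is an open subset of $\mathbb{R}^{\odim(f)}$, the orbit is a topological manifold of dimension $\odim(f)$. The only care needed is the degenerate case $\kappa_j(f)=0$, where the factor is a point, consistent with the convention $\mathring\Delta^0=\Delta^0=\{*\}$.
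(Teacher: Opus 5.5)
Your proposal is correct and follows essentially the same approach as the paper. Your map $\Theta$ (recording the values at chosen simplices representing each interior level) is the inverse of the paper's coordinate chart $\mu$ and its extension $\overline{\mu}$; bijectivity comes from piecewise-linear reparametrisations together with the fact that the closure of the orbit is the orbit of the closure, and the homeomorphism on the closures comes from a continuous bijection between a compact space and a Hausdorff one. The only differences are cosmetic: you reduce to $n=1$ first and define the map on the closure before restricting to the orbit, whereas the paper defines $\mu$ on the orbit and then extends it to $\overline{\mu}$.
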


\begin{proof}
    Assume that $\kappa_j(f) \neq 0$ for all $j = 1,\dots,n$.
    We define the following affine morphism (i.e. can be extended to an affine map between the ambient spaces):
    \begin{equation*}
    \begin{array}{rcl}
    \zeta: \aut_0 (I^n, \leq).f &\rightarrow &
    \mathring\Delta^{\kappa_1(f)} \times \dots \times \mathring\Delta^{\kappa_n(f)} \\
    g & \mapsto & (\im g_1 \cap (0,1), \dots, \im g_n \cap (0,1)),
    \end{array}
    \end{equation*}

    Let $\mathcal{O}_{j}: \{\sigma \in K \mid f_j(\sigma) \neq 0,1\} \to \{ 1, \dots, \kappa_j(f)\}$ be the surjective function corresponding to the pre-order induced by $f_j$ on simplices that take values different from 0 and 1. Then, we can define the inverse of $\zeta$,
    \begin{equation}
    \label{eq:filter-orbit-coord-chart}
        \mu: \mathring\Delta^{\kappa_1(f)} \times \dots \times \mathring\Delta^{\kappa_n(f)} \rightarrow \aut_0 (I^n, \leq).f,
    \end{equation}
    
    as follows. Consider $x = (x^1, \dots, x^n) $, with each $x^j \in \mathring\Delta^{\kappa_j(f)}$, then for $\sigma \in K$, 
    \begin{equation}
    \label{eq:homeo-simplex-orbit}
        \mu(x)_j(\sigma) =
        \begin{cases}
            0 & \text{if } f_j(\sigma) = 0, \\
            1 & \text{if } f_j(\sigma) = 1, \\
            x^j_{\mathcal{O}_j(\sigma)} & \text{otherwise.}
        \end{cases}
    \end{equation}

    If $\kappa_j(f) = 0$ for some $j$, the image of $f_j$ is contained in the boundary, so for every $g \in \aut_0(I^n,\leq).f$, $g_j = f_j$. We modify the components of the morphisms $\zeta$ and $\mu$ for which $\kappa_j(f) = 0$. For $\zeta$, instead of recording the values $\im g_j \cap (0,1)$, we simply map the filter to the unique element of $ \mathring{\Delta}^0$. For $\mu$, if $x^j \in \Delta^0$, then $\mu(x)_j = f_j$.

    Since both maps are continuous, and they are clearly inverses of each other, we find that $\zeta$ is the desired homeomorphism.

    Using that the closure of the orbit is the orbit of the closure (\autoref{prop:closure-orbit-filt}), we can extend the map $\mu$ to the boundary, $\overline\mu: \Delta^{\kappa_1(f)} \times \dots \times \Delta^{\kappa_n(f)} \to \overline{\aut_0(I^n, \leq).f} = \overline{\aut_0(I^n, \leq)}.f$ with the same definition as in \eqref{eq:homeo-simplex-orbit}. Using the characterisation from \autoref{lem:char-orbits-epi-filt}, it is easy to check that this is indeed well-defined, continuous and bijective, and since both are compact metric spaces, it is a homeomorphism.

    Alternatively, note that $\overline{\aut_0(I^n, \leq)}$ acts on $\Delta^{\kappa_1(f)} \times \dots \times \Delta^{\kappa_n(f)}$ block-diagonally, and $\mu$ is equivariant with respect to this action restricted to $\aut_0(I^n, \leq)$. We can extend $\mu$ to an equivariant map from the closed product of simplices, which will coincide with the previous definition. Any point in the closure $ \Delta^{\kappa_1(f)} \times \dots \times \Delta^{\kappa_n(f)}$ can be expressed as $\phi.x$ for some map $\phi \in \overline{\aut_0(I^n, \leq)}$ and interior point $x\in \mathring\Delta^{\kappa_1(f)} \times \dots \times \mathring\Delta^{\kappa_n(f)}$. Then, let $\overline{\mu}(\phi.x) = \phi.\overline{\mu}(x)$. This is easily shown to be a well-defined affine homeomorphism, and it coincides with the previous definition.
\end{proof}

Finally, we analyse how the closures of orbits interact, and use this to show that orbits form a stratification of $\filt^n$.

\begin{prop} 
\label{prop:facts-odim-closure}
    Let $f, g$ be $n$-filters. Then 
    \begin{enumerate}[label=(\roman*)]
        \item if $g \in \overline{\aut_0(I^n, \leq).f}$, then $\odim(g) \leq \odim(f)$,
        \item if $g \in \overline{\aut_0(I^n, \leq).f} \setminus \aut_0(I^n, \leq).f$, then $\odim(g) < \odim(f)$.
    \end{enumerate}
\end{prop}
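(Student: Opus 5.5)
The plan is to reduce both claims to a coordinatewise counting argument, using \autoref{prop:closure-orbit-filt} and \autoref{lem:closure-aut}. If $g \in \overline{\aut_0(I^n,\leq).f}$, then $g = \phi.f$ for some $\phi = \phi_1\times\dots\times\phi_n$ with $\phi_j \in \epi(I,\leq)$. Hence $g_j = \phi_j\circ f_j$ for every $j$, and
\begin{equation*}
    \im g_j = \phi_j(\im f_j).
\end{equation*}
Since $\phi_j$ fixes $0$ and $1$, the values of $f_j$ at $0$ or $1$ are sent to $0$ or $1$. So every value of $g_j$ in $(0,1)$ is the image of some value of $f_j$ in $(0,1)$, that is,
\begin{equation*}
    \im g_j\cap(0,1) \subseteq \phi_j\big(\im f_j\cap(0,1)\big).
\end{equation*}
This gives $\kappa_j(g)\le\kappa_j(f)$ for each $j$. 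Summing over $j$ proves (i).

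For (ii), I argue by contrapositive: assume $\odim(g)=\odim(f)$ and show $g$ lies in the orbit $\aut_0(I^n,\leq).f$. By (i) applied coordinatewise, equality of the sums forces $\kappa_j(g)=\kappa_j(f)$ for every $j$.

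Equality of these finite cardinalities means $\phi_j$ restricted to $\im f_j\cap(0,1)$ is injective, with image exactly $\im g_j\cap(0,1)$. In particular no interior value of $f_j$ is sent to $0$ or $1$. Combined with $\phi_j(0)=0$ and $\phi_j(1)=1$, this gives $f_j^{-1}(0)=g_j^{-1}(0)$ and $f_j^{-1}(1)=g_j^{-1}(1)$. This is condition (ii) of \autoref{lem:char-orbits-aut-filt}.

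For condition (i) of that lemma, one direction is already condition (i) of \autoref{lem:char-orbits-epi-filt}. For the converse, suppose $f_j(\tau)>f_j(\sigma)$; I need $g_j(\tau)>g_j(\sigma)$. There are two cases.
\begin{enumerate}[label=(\alph*)]
    \item Both values lie in $(0,1)$. Injectivity of $\phi_j$ on interior values together with monotonicity gives strict inequality.
    \item At least one value lies in $\{0,1\}$. Say $f_j(\sigma)=0$ and $f_j(\tau)\ne0$. Then $\tau\notin g_j^{-1}(0)$, so $g_j(\tau)>0=g_j(\sigma)$. The case $f_j(\tau)=1$ is symmetric.
\end{enumerate}
Hence $f_j$ and $g_j$ induce the same preorder, and \autoref{lem:char-orbits-aut-filt} places $g$ in the orbit of $f$.

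The main obstacle is the boundary bookkeeping in case (b). One must note that equal counts of interior values rule out collapses onto the endpoints $0,1$, and not only collapses among interior values. The rest is routine.
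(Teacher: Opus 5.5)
Your proof is correct, but it takes a different route from the paper. The paper argues geometrically. It first shows $\aut_0(I^n,\leq).g \subseteq \overline{\aut_0(I^n,\leq).f}$, since $\psi\circ\phi \in \epi(I,\leq)^n$ for $\psi\in\aut_0(I^n,\leq)$. It then compares dimensions through the homeomorphisms of \autoref{prop:homeo-orbits-filt}. For (ii), it notes that the two orbits are disjoint, so the orbit of $g$ lies in the boundary of the closed product of simplices, which is a finite union of proper faces of strictly smaller dimension.

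You instead count interior values coordinatewise, which gives the finer statement $\kappa_j(g)\le\kappa_j(f)$ for every $j$. For (ii), once all counts agree, you check the two conditions of \autoref{lem:char-orbits-aut-filt} directly. You also handle the boundary bookkeeping explicitly: no interior value of $f_j$ can collapse onto $0$ or $1$.

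Your version is elementary and self-contained. It avoids the appeal to topological dimension theory that the paper leaves unstated: that a manifold homeomorphic to a product of open simplices of total dimension $\odim(g)$ cannot sit inside a polytope of dimension $\odim(f)$, or inside a finite union of lower-dimensional faces, unless the dimensions compare as claimed. The paper's version, in exchange, shows directly that a lower-dimensional orbit lies in a proper face of $\overline{\aut_0(I^n,\leq).f}$. This fits the stratified picture developed in that section.
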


\begin{proof}
By \autoref{prop:closure-orbit-filt} and \autoref{lem:closure-aut}
$g = \phi.f$ with $\phi \in \epi(I, \leq)^n$. 
\begin{enumerate}[label = (\roman*)]
    \item Consider $\psi \in \aut_0(I^n, \leq) \cong \aut(I, \leq)^n$, then $\psi \circ \phi \in \epi(I,\leq)^n$, so $\psi.g = (\psi \circ \phi).f \in \overline{\aut_0(I^n, \leq).f}$, hence $\aut_0(I^n, \leq).g \subseteq \overline{\aut_0(I^n, \leq).f}$.
    By this inclusion, and the homeomorphisms in \autoref{prop:homeo-orbits-filt}, $\odim(g) \leq \odim(f)$.
    \item If $g \in \overline{\aut_0(I^n, \leq).f} \setminus \aut_0(I^n, \leq).f$, then $\aut_0(I^n, \leq).g \cap \aut_0(I^n, \leq).f = \varnothing$ so by the inclusion in (i), the orbit of $g$ is part of the boundary of $\overline{\aut_0(I^n, \leq).f}$. The boundary of a product of closed simplices is a finite union of proper faces, each of dimension strictly smaller than the product, therefore $\odim(g) < \odim(f)$.
\end{enumerate}
\end{proof}

\begin{prop}
\label{prop:filt-strat}
    For each $i \geq 0$, let $F_i$ be the union of the $\aut_0(I^n, \leq)$-orbits of dimension at most $i$. This defines a stratification of $\filt^n$. The $i$-strata are given by the orbits of dimension $i$.
\end{prop}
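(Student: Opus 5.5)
To prove that the sets $F_i$ form a stratification, I will check the four requirements of the definition in turn: the sets are nested, they exhaust $\filt^n$, each $F_i$ is closed, and each $F_i\setminus F_{i-1}$ is a topological manifold of dimension $i$ whose path components are exactly the $i$-dimensional orbits.

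Nestedness is immediate from the definition. Exhaustion holds because every filter lies in its own orbit. Moreover $\odim(f)\le n|K|$, since each $\kappa_j(f)\le |K|$, so the filtration stabilises at $F_{n|K|}=\filt^n$.

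\emph{Closedness.} Let $g$ be a limit of filters $f^k\in F_i$. Each $f^k$ has only finitely many possible combinatorial types: by \autoref{lem:char-orbits-aut-filt}, an orbit is determined by the $n$ preorders on $K$ together with the sets $f_j^{-1}(0)$ and $f_j^{-1}(1)$. Hence there are only finitely many orbits, and passing to a subsequence I may assume all $f^k$ lie in one orbit $\aut_0(I^n,\leq).f$ with $\odim(f)\le i$. Then $g\in\overline{\aut_0(I^n,\leq).f}$, and \autoref{prop:facts-odim-closure}(i) gives $\odim(g)\le\odim(f)\le i$. So $F_i$ is closed, and in fact it is a finite union of orbit closures.

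\emph{Manifold structure.} The set $F_i\setminus F_{i-1}$ is the disjoint union of the finitely many orbits of dimension exactly $i$. By \autoref{prop:homeo-orbits-filt}, each such orbit is homeomorphic to a product of open simplices of total dimension $i$, hence is a path-connected $i$-manifold. It remains to show the orbits are open in $F_i\setminus F_{i-1}$, so that this disjoint union is a topological disjoint union.

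This openness is the main obstacle. Take two distinct orbits $O,O'$ of dimension $i$. If a point of $O$ were a limit of points of $O'$, it would lie in $\overline{O'}\setminus O'$. By \autoref{prop:facts-odim-closure}(ii) it would then have $\odim<i$, a contradiction. Since there are finitely many orbits, each $O$ is the complement in $F_i\setminus F_{i-1}$ of the finite union of the closed sets $\overline{O'}\cap(F_i\setminus F_{i-1})=O'$, and is therefore open. Consequently $F_i\setminus F_{i-1}$ is an $i$-manifold whose path components are precisely the $i$-dimensional orbits, which is the claim about $i$-strata.
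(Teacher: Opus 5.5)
Your proof is correct and follows essentially the same route as the paper: finiteness of orbits via \autoref{lem:char-orbits-aut-filt}, closedness of $F_i$ via \autoref{prop:facts-odim-closure}(i), and the manifold structure and identification of the $i$-strata via \autoref{prop:homeo-orbits-filt} and \autoref{prop:facts-odim-closure}(ii). The differences are cosmetic. You use a sequential argument for closedness, and you show the orbits are open as complements of relatively closed orbits rather than via small balls. Your finite topological disjoint-union argument also covers the Hausdorff and second-countability conditions, which the paper checks separately by viewing $\filt^n$ as a subspace of $\mathbb{R}^{n|K|}$.
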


\begin{proof}

Since $K$ is finite, by \autoref{lem:char-orbits-aut-filt} there are only finitely many distinct $\aut_0(I^n, \leq)$-orbits. The maximum dimension of the orbits is $n|K|$, which is reached when all $f_j$ are injective and $\im f_j \subset (0,1)$.
Hence, we have a finite sequence of subspaces
\begin{equation*}
    \varnothing = F_{-1} \subset F_0 \subset F_1 \subset \cdots \subset F_{n|K|} = \filt^n.
\end{equation*}

We first prove that each $F_i$ is closed. Let $g \in \overline{F_i}$. Then $g \in \overline{\aut_0(I^n, \leq).f}$ for some $n$-filter $f$ with $\odim(f) \leq i$. Then by \autoref{prop:facts-odim-closure}(i), $\odim(g) \leq \odim(f) \leq i$ and $g \in F_i$.

Note that $\filt^n$ is a subspace of $I^{n|K|} \subset \mathbb{R}^{n|K|}$, hence each $F_i \setminus F_{i-1}$ is Hausdorff and second-countable. The space $F_i \setminus F_{i-1}$ is formed by finitely many disjoint $i$-dimensional orbits: $\mathcal{F}_1, \dots, \mathcal{F}_{N_i}$, each homeomorphic to a product of open simplices whose dimensions sum up to $i$. \autoref{prop:facts-odim-closure}(ii) implies that for $i \neq j$, $\mathcal{F}_i \cap \overline{\mathcal{F}_j} = \varnothing$. Since there are finitely many orbits, for each $g\in \mathcal{F}_j$ we can find an open ball that only intersects $\mathcal{F}_j$. By \autoref{prop:homeo-orbits-filt}, this ball will be homeomorphic to an open ball in $\mathbb{R}^i$, so $F_i \setminus F_{i-1}$ is locally Euclidean, which proves it is an $i$-dimensional manifold.

Finally, we note that each orbit $\mathcal{F}_j$ is path-connected, and \autoref{prop:facts-odim-closure}(ii) ensures that no path that starts in the orbit $\mathcal{F}_j$ can reach $\mathcal{F}_k$ for $k \neq j$, so the $i$-dimensional orbits form the path-connected components of $F_i \setminus F_{i-1}$, or $i$-strata.
\end{proof}

\subsection{Stratification of the space of \texorpdfstring{$n$}{n}-parameter persistence modules}

We now embark on proving a similar stratification for $\modspace{n}^K \subseteq \modspace{n}^{\dim K + 1}$, the image of the $MPH$ map (\autoref{prop:mod-strat}).

\begin{defn}
    Let $G = G_1 \times \dots \times G_n$ be a minimal grid for $\bm{M}$. We introduce the following notation:
    \begin{equation*}
        \kappa_j(\bm{M}) := |G_j \cap (0,1)|
    \end{equation*}
    and we define the orbit-dimension of $\bm{M}$ as
    \begin{equation*}
        \odim(\bm{M}) := \sum_{j=1}^n \kappa_j(\bm{M}).
    \end{equation*}
    These quantities are invariant under isomorphism and can be assigned to the isomorphism class $[\bm{M}]$.
\end{defn}

\begin{rem}
    Note that by the convention established in \autoref{rem:0modulegrid}, these quantities are also determined for the 0 module $\kappa_j(0) = 0$, $\odim(0) = 0$.
\end{rem}

The following result shows that each orbit is homeomorphic to a product of open simplices.

\begin{prop}
\label{prop:homeo-orbits-modules}
The $\aut_0 (I^n, \leq)$-orbit of $[\bm{M}] \in \modspace{n}^{\dim K +1}$ is a topological manifold of dimension $\odim(\bm{M})$. Specifically, there is a homeomorphism $\aut_0 (I^n, \leq).[\bm{M}] \cong \mathring\Delta^{\kappa_1(\bm{M})} \times \cdots \times \mathring\Delta^{\kappa_n(\bm{M})}$.
\end{prop}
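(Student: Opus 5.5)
We will mirror the argument of \autoref{prop:homeo-orbits-filt}, replacing the interior filter values by the interior points of the minimal grid. Let $G$ be the minimal grid of $\bm{M}$ and first assume $\kappa_j(\bm{M})\neq 0$ for all $j$. We define
\begin{equation*}
    \zeta: \aut_0(I^n,\leq).[\bm{M}] \to \mathring\Delta^{\kappa_1(\bm{M})}\times\cdots\times\mathring\Delta^{\kappa_n(\bm{M})}, \qquad [\bm{N}] \mapsto (H_1\cap(0,1),\dots,H_n\cap(0,1)),
\end{equation*}
where $H$ is the minimal grid of $\bm{N}$ and each $H_j\cap(0,1)$ is listed in increasing order. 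By \autoref{prop:grid-under-action}, applied to tuples, $H=\phi(G)$ whenever $\bm{N}\cong\phi.\bm{M}$ with $\phi\in\aut_0(I^n,\leq)$. Since each $\phi_j$ is an increasing bijection fixing $0$ and $1$, it follows that $|H_j\cap(0,1)|=\kappa_j(\bm{M})$, so $\zeta$ is well defined. If $\kappa_j(\bm{M})=0$, the $j$-th factor is the point $\mathring\Delta^0$, exactly as in the filter case.

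For the inverse $\mu$, given $x=(x^1,\dots,x^n)$ we choose $\phi^x_j\in\aut(I,\leq)$ fixing $0,1$ and sending the ordered points of $G_j\cap(0,1)$ to the coordinates of $x^j$, interpolating linearly. We then set $\mu(x)=\phi^x.[\bm{M}]$. By \autoref{prop:same-grid-action-iso}, this class depends only on $\phi^x|_G$, so $\mu$ is well defined. The maps $\zeta\circ\mu=\id$ (again by \autoref{prop:grid-under-action}) and $\mu\circ\zeta=\id$: if $\bm N\cong\phi.\bm M$, then $\phi$ and $\phi^{\zeta(\bm N)}$ agree on $G$, and we apply \autoref{prop:same-grid-action-iso} once more. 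Continuity of $\mu$ follows from \autoref{thm:continuity-action-modules}(i). Indeed, $d_\infty(\phi^x,\phi^y)\leq\|x-y\|_\infty$ for the piecewise linear interpolants, so $d_I(\mu(x),\mu(y))\leq\|x-y\|_\infty$.

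The main obstacle is continuity of $\zeta$, that is, showing that interleaving-close modules in the orbit have close minimal grids. For this we use \autoref{lem:closer-than-granularity}. Fix $[\bm{N}]$ in the orbit and let $[\bm{N}']$ be another point of the orbit with $d_I(\bm{N},\bm{N}')\leq\epsilon$. We need $\epsilon<\min\{\bar\ell_{\bm N},\bar\ell_{\bm N'}\}/2$, but $\bar\ell_{\bm N'}$ is not a priori controlled. We handle this with \autoref{klem:nearby-grids}(i), applied with $\bm N$ as the fixed module. For $\epsilon<\bar\ell_{\bm N}/2$, every interior point of $H_j$ is $\epsilon$-close to some point of $H'_j$, and these points are pairwise distinct. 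The boundary points $0,1$ need separate care. Since $|H'_j\cap(0,1)|=|H_j\cap(0,1)|$ within an orbit, each interior point of $H'_j$ is matched to exactly one interior point of $H_j$; we use that $0\in H_j$ iff $0\in H'_j$, which is also orbit-invariant. This gives $\|\zeta(\bm N)-\zeta(\bm N')\|_\infty\leq\epsilon$, so $\zeta$ is continuous, and in fact $1$-Lipschitz locally.

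Alternatively, $\bar\ell_{\bm N'}$ is bounded below near $\bm N$, and then \autoref{lem:closer-than-granularity} gives $\psi$ with $d_\infty(\psi,\id)\leq\epsilon$ realising the matching directly. Either way, $\zeta$ and $\mu$ are mutually inverse continuous maps, so the orbit is homeomorphic to the product of open simplices. It is therefore a manifold of dimension $\sum_j\kappa_j(\bm M)=\odim(\bm M)$.
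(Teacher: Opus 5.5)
Your proposal is correct and is essentially the paper's proof. Your $\zeta$ and $\mu$ are the paper's $\chi$ and $\nu$, with the same piecewise-linear lift. Bijectivity comes from \autoref{prop:grid-under-action} and \autoref{prop:same-grid-action-iso}, continuity of $\mu$ from \autoref{thm:continuity-action-modules}(i), and continuity of $\zeta$ from \autoref{klem:nearby-grids}(i) together with the fact that $|H_j|$ is constant along the orbit.

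Working with $\bar\ell_{\bm N}$ and checking explicitly that matched points are interior, distinct and order-preserving only makes the paper's one-line cardinality argument more precise. The alternative via \autoref{lem:closer-than-granularity} is unnecessary, since its lower bound on $\bar\ell_{\bm N'}$ would itself come from that same matching.
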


\begin{proof}
    Let $G$ be the minimal grid of $\bm{M}$, and assume for now that $\kappa_j(\bm{M}) \neq 0$ for all $j = 1,\dots,n$. Let us define a map
    \begin{equation*}
        \chi: \aut_0 (I^n, \leq).[\bm{M}] \to \mathring\Delta^{\kappa_1(\bm{M})} \times \cdots \times \mathring\Delta^{\kappa_n(\bm{M})}
    \end{equation*}
    as follows. Consider $[\bm{N}] \in \aut_0 (I^n, \leq).[\bm{M}]$, and let $H$ denote the minimal grid of $\bm{N}$. Then
    \begin{equation*}
        \chi: [\bm{N}] \mapsto (H_1 \cap (0,1), \dots, H_n\cap (0,1)).
    \end{equation*}
    The map does not depend on the choice of representative, and more specifically, if $\bm{N} = \phi.\bm{M}$ for $\phi = \phi_1 \times\dots\times \phi_n \in \aut_0 (I^n, \leq)$, then by \autoref{prop:grid-under-action}, $H = \phi(G)$ and
    \begin{equation*}
        |H_j \cap (0,1)| = |\phi_j(G_j) \cap (0,1)| = |G_j \cap (0,1)| = \kappa_j(\bm{M}),
    \end{equation*}
    so $\chi$ is well defined.
    
    Next, we prove that $\chi$ is a bijection. We build a map
    \begin{equation}
    \label{eq:module-orbit-coord-chart}
        \nu: \mathring\Delta^{\kappa_1(\bm{M})} \times \cdots \times \mathring\Delta^{\kappa_n(\bm{M})} \to \aut_0 (I^n, \leq).[\bm{M}].
    \end{equation}

    First, we build a lift, $\Phi: \mathring\Delta^{\kappa_1(\bm{M})} \times \cdots \times \mathring\Delta^{\kappa_n(\bm{M})} \to \aut_0 (I^n, \leq)$.
    Let $x = (x^1, \dots, x^n)$ with each $x^j \in \mathring\Delta^{\kappa_j(\bm{M})}$.
    Then $\Phi(x) = \Phi(x)_1 \times\dots\times \Phi(x)_n$ where each $\Phi(x)_j$ is defined as follows: $\Phi(x)_j$ sends 0 to 0, 1 to 1, the elements of $G_j \cap (0,1)$ to the coordinates of $x^j$ in order, and interpolates linearly between these points. We then define
    \begin{equation*}
        \nu: x \mapsto [\Phi(x).\bm{M}].
    \end{equation*}

    Now $\Phi(x)(G)$ is the minimal grid for $[\Phi(x).\bm{M}]$, but $\Phi(x)_j(G_j) \cap (0,1) = \Phi(x)_j(G_j\cap (0,1)) = x^j$, so $\chi \circ \nu$ is identity on $\mathring\Delta^{\kappa_1(\bm M)} \times \cdots \times \mathring\Delta^{\kappa_n(\bm M)}$.

    Conversely, let $\bm{N} = \phi.\bm{M}$, with $\phi = \phi_1\times\dots\times \phi_n$. $\Phi_j(\chi([\bm{N}]))$ sends 0 to 0, 1 to 1 and the elements of $G_j \cap (0,1)$ to the elements of $\phi_j(G_j) \cap(0,1)$ in order. Hence, $\Phi(\chi([\bm{N}]))|_G = \phi|_G$ and by \autoref{prop:same-grid-action-iso} applied component-wise, $\Phi(\chi([\bm{N}])).\bm{M} \cong \phi.\bm{M} = \bm{N}$ so $\nu \circ \chi$ is the identity on $\aut_0 (I^n, \leq).[\bm{M}]$.

    To prove the continuity of $\chi$, fix $\epsilon>0$ and let $[\bm{N}] \in \aut_0 (I^n, \leq).[\bm{M}]$ with minimal grid $H$. Recall
    \begin{equation*}
        \ell_{\bm{N}} = \min_{\substack{h,h'\in H\\h\neq h'}} \|h-h'\|_\infty
    \end{equation*}
    if $|H| \geq 2$, and set $\ell_{\bm{N}} = \infty$ otherwise. Fix $\delta > 0$ such that $\delta < \min\{\epsilon, \ell_{\bm{N}}/2\}$. Then, by \autoref{klem:nearby-grids}, for all $[\bm{N}'] \in \aut_0 (I^n, \leq).[\bm{M}]$ such that $d_I(\bm{N},\bm{N}') \leq \delta$, the minimal grid of $\bm{N}'$, which we denote by $H'$, contains a subgrid $H'' \subseteq H'$ that is $\epsilon$-close to $H$ and satisfies $|H_j| \leq |H_j''| \leq |H_j'|$ in each coordinate $j$. Since $\bm{N}, \bm{N}'$ are in the same orbit, we know $|H_j'| = |H_j|$ for all $j$, and hence $H'' = H'$, and
    \begin{equation*}
        d_\infty(\chi([\bm{N}]), \chi([\bm{N}'])) \leq \delta < \epsilon.
    \end{equation*}
    
    Finally, the continuity of $\nu$ follows directly from the continuity of the action. If $x, y \in \mathring\Delta^{\kappa_1(\bm{M})} \times \cdots \times \mathring\Delta^{\kappa_n(\bm{M})}$ satisfy $\|x-y\|_\infty \leq \epsilon$, then clearly $d_\infty(\Phi(x), \Phi(y)) \leq \epsilon$ and by \autoref{thm:continuity-action-modules}(i), $d_I(\Phi(x).\bm{M},\Phi(y).\bm{M}) = d_I(\nu(x),\nu(y)) \leq \epsilon$.

    If $\kappa_j(\bm{M}) = 0$ for some $j$, the coordinate set $G_j$ is contained in the boundary, so for every $\bm{N} \in {\aut_0(I^n,\leq).\bm{M}}$ with minimal grid $H$, $H_j = G_j$. We modify the components of the morphisms $\chi$ and $\nu$ for which $\kappa_j(\bm{M}) = 0$. For $\chi$, instead of recording the values $H_j \cap (0,1)$, we simply map the persistence module to the unique element of $ \mathring{\Delta}^0$. For $\nu$, we only need to modify $\Phi$: if $x^j \in \Delta^0$, then $\Phi(x)_j = \id$. The modified maps are still continuous and inverses of each other.
\end{proof}

\begin{rem}
    The map $\chi$, which records the positions of the points in the minimal grid of a module, cannot be extended continuously to $\modspace{n}$. For any given module, consider the direct sum of this module and an interval module with support in $I^n$, arbitrarily close to the 0 module in the interleaving distance. However close to the original module, the resulting minimal grid can be at a fixed distance from the original.
\end{rem}

Next, we show that the closure of the orbit corresponds to the orbit of the closure.

\begin{prop}
\label{prop:closure-orbit-modules}
    Let $[\bm{N}] \in \modspace{n}^{\dim K + 1}$. Then
    \begin{equation*}
        \overline{\aut_0 (I^n, \leq).[\bm{N}]} = \overline{\aut_0 (I^n, \leq)}.[\bm{N}]
    \end{equation*}
\end{prop}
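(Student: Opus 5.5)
We prove the two inclusions separately, following the pattern of \autoref{prop:closure-orbit-filt}. For the inclusion $\overline{\aut_0(I^n,\leq)}.[\bm{N}] \subseteq \overline{\aut_0(I^n,\leq).[\bm{N}]}$, take $\phi \in \overline{\aut_0(I^n,\leq)}$. By \autoref{lem:closure-aut} there is a sequence $\phi^k \in \aut_0(I^n,\leq)$ with $d_\infty(\phi^k,\phi) \to 0$; explicitly one can take $\phi^k_i = \frac1k \id + (1-\frac1k)\phi_i$. \autoref{thm:continuity-action-modules}(i), applied componentwise to the tuple, gives
\begin{equation*}
d_I(\phi^k.\bm{N},\phi.\bm{N}) \leq d_\infty(\phi^k,\phi) \to 0 .
\end{equation*}
Hence $[\phi.\bm{N}]$ is a limit of points in the orbit.

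For the reverse inclusion, let $[\bm{M}] \in \overline{\aut_0(I^n,\leq).[\bm{N}]}$, and choose $\phi^k \in \aut_0(I^n,\leq)$ with $d_I(\phi^k.\bm{N},\bm{M}) \to 0$. Every $\phi^k.\bm{N}$ has minimal grid $H^k = \phi^k(H)$ by \autoref{prop:grid-under-action}, where $H$ is the minimal grid of $\bm{N}$. In particular $|H^k_j| = |H_j|$ for all $k$ and $j$, which is a bound independent of $k$.

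Now choose $k$ large enough that
\begin{equation*}
\epsilon := d_I(\phi^k.\bm{N},\bm{M}) < \frac{\bar\ell_{\bm{M}}}{2\max_j |H_j|} .
\end{equation*}
\autoref{lem:squeeze-grid}, applied with $\phi^k.\bm{N}$ in the role of $\bm{N}$, produces $\psi \in \overline{\aut_0(I^n,\leq)}$ with $\psi.(\phi^k.\bm{N}) \cong \bm{M}$. Then $\psi\circ\phi^k \in \overline{\aut_0(I^n,\leq)}$, since by \autoref{lem:closure-aut} this monoid is $\epi(I,\leq)^n$, which is closed under composition. By the action property in \autoref{prop:defn-action-modules}, $[\bm{M}] = (\psi\circ\phi^k).[\bm{N}]$.

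Two degenerate cases need separate handling. If $\bm{M}$ is the zero tuple, or more generally if its grid is small, then $\bar\ell_{\bm{M}}$ is still positive: the extended grid $\overline{G}$ contains $0$ and $1$, so $\bar\ell_{\bm{M}} \leq 1$ is finite and positive. Thus the hypothesis of \autoref{lem:squeeze-grid} is never vacuous. If $H = \varnothing$, then $\bm{N} = 0$, its orbit is $\{0\}$, and the statement is trivial. One must also check that the proof of \autoref{lem:squeeze-grid} allows $\bm{N}$ to be any module admitting grid $H$, not necessarily with $H$ minimal; as stated, it is phrased for a grid $H$ of $\bm{N}$, so this holds.

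The main obstacle is exactly this uniform bound. \autoref{lem:squeeze-grid} requires control of $\max_j |H^k_j|$, and that quantity must not grow along the sequence. This is why the argument passes through the orbit, where grid cardinalities are constant by \autoref{prop:grid-under-action}, rather than through an arbitrary convergent sequence in $\modspace{n}^{\dim K+1}$. Once that is observed, the rest reduces to the lemma together with the composition of closure elements.
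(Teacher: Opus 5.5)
Your proof is correct and follows essentially the same route as the paper. The easy inclusion uses the approximating sequence from \autoref{lem:closure-aut} together with Lipschitz continuity of the action in the first variable. The hard inclusion applies \autoref{lem:squeeze-grid} to a single $\phi^{k}.\bm{N}$, which works because grid cardinalities are constant along the $\aut_0(I^n,\leq)$-orbit.

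Your explicit remarks do not change the argument. You note that $\psi\circ\phi^k\in\epi(I,\leq)^n$, so the composite still lies in the closed monoid, and you treat the zero-module case separately. The paper leaves the first implicit and sidesteps the second by choosing an arbitrary non-empty grid $H$ for $\bm{N}$.
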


\begin{proof}
Let $[\bm{M}] \in \overline{\aut_0 (I^n, \leq).[\bm{N}]} \subseteq \modspace{n}^{\dim K + 1}$. There exists a sequence of maps $(\phi^k)_{k \in \mathbb{N}} \subseteq {\aut_0 (I^n, \leq)}$ such that $\phi^k.\bm{N}$ converges to $\bm{M}$ component-wise in the interleaving distance.

Let $G$ be the minimal grid for $\bm{M}$. Recall that we define an extended grid $\overline{G}$ by adding the boundary points 0 and 1 to every coordinate set: $\overline{G}_j = G_j \cup \{0,1\}$ for all $j$, and set $\bar\ell_{\bm{M}} = \ell_{\overline{G}}$. Let $H \neq \varnothing$ be a grid for $\bm{N}$.

Note that by \autoref{prop:grid-under-action}, the sizes of the grid remain unchanged in the sequence: for all $k \in \mathbb{N}$, $\phi^k(H)$ is a grid for $\phi^k.\bm{N}$, and for all $j = 1,\dots, n$, $|\phi^k(H)_j|=|H_j|$. Fix a strictly positive
\begin{equation*}
    \epsilon < \frac{\bar\ell_{\bm{M}}}{2\max_j|H_j|}.
\end{equation*}

Since the sequence converges, there exists $k_0 \in \mathbb{N}$ such that for all $k \geq k_0$, $d_I(\phi^k.\bm{N}, \bm{M}) \leq \epsilon$. Therefore, we can apply \autoref{lem:squeeze-grid}, which gives us a map $\psi \in  \overline{\aut_0(I^n, \leq)}$ such that
\begin{equation*}
    \psi.\phi^{k_0}.\bm{N} \cong \bm{M},
\end{equation*}
so $[\bm{M}] \in \overline{\aut_0 (I^n, \leq)}.[\bm{N}]$.

To prove the other inclusion, consider $\phi.[\bm{N}] \in \overline{\aut_0 (I^n, \leq)}.[\bm{N}]$. There exists a sequence of maps $(\phi^k)_{k \in \mathbb{N}} \subset \aut_0 (I^n, \leq)$ that converges to $\phi$ uniformly. 
By the continuity of the action (\autoref{thm:continuity-action-modules}, \autoref{rem:continuity-iso-classes}), the sequence $([\phi^k.\bm{N}])_{k \in \mathbb{N}} \subset \aut_0 (I^n, \leq).[\bm{N}]$ converges to $[\phi.\bm{N}]$ component-wise in the interleaving distance, so
\begin{equation*}
    [\phi.\bm{N}] \in \overline{\aut_0 (I^n, \leq).[\bm{N}]}, 
\end{equation*}
which finishes the proof.
\end{proof}

\begin{rem}
\label{rem:nu-extension}
    Recall the homeomorphism $\nu$ introduced in the proof of \autoref{prop:homeo-orbits-modules}. Note that $\overline{\aut_0(I^n, \leq)}$ acts on $\Delta^{\kappa_1(\bm{M})} \times \cdots \times \Delta^{\kappa_n(\bm{M})}$ block-diagonally. Since $\nu$ is equivariant with respect to this action restricted to $\aut_0(I^n, \leq)$, we can extend it to an equivariant map from the closed product of simplices:
\begin{equation*}
    \overline{\nu}: \Delta^{\kappa_1(\bm{M})} \times \cdots \times \Delta^{\kappa_n(\bm{M})} \to \overline{\aut_0 (I^n, \leq).[\bm{M}]} = \overline{\aut_0 (I^n, \leq)}.[\bm{M}].
\end{equation*}

Any point in the closure $ \Delta^{\kappa_1(\bm{M})} \times \dots \times \Delta^{\kappa_n(\bm{M})}$ can be expressed as $\phi.x$ for some map $\phi \in \overline{\aut_0(I^n, \leq)}$ and interior point $x\in \mathring\Delta^{\kappa_1(\bm{M})} \times \dots \times \mathring\Delta^{\kappa_n(\bm{M})}$. Then, let
\begin{equation*}
    \overline{\nu}(\phi.x) = \phi.\overline{\nu}(x).
\end{equation*}
To check the well-definedness of $\overline{\nu}$, consider $\phi, \phi' \in \overline{\aut_0(I^n, \leq)}$ with $\phi.x = \phi'.x'$. Recall that $\nu$ is defined through a lift $\Phi: \mathring\Delta^{\kappa_1(\bm{M})} \times \cdots \times \mathring\Delta^{\kappa_n(\bm{M})} \to \aut_0 (I^n, \leq)$. Let $G$ be the minimal grid for $\bm{M}$. Then $\Phi(x)_j$ fixes 0 and 1, and sends elements of $G_j\cap (0,1)$ to the coordinates of $x^{j}$ in order. Hence, $\phi \circ \Phi(x)|_G = \phi' \circ \Phi(x')|_G$, and by \autoref{prop:same-grid-action-iso},
\begin{equation*}
    \nu(\phi.x) = [\phi.(\Phi(x).\bm{M})] = [\phi'.(\Phi(x').\bm{M})] = \nu(\phi'.x').
\end{equation*}

The map $\nu$ is a homeomorphism, and its extension $\overline{\nu}$ is continuous and surjective, but not necessarily injective on the boundary.
\end{rem}

Finally, we define a stratification of $\modspace{n}^K$ based on the orbits of the action of $\aut_0(I^n, \leq)$ on $\modspace{n}^{\dim K +1}$.
\begin{prop}
\label{prop:mod-strat}
    For each $i \geq 0$, let $X_i$ be the union of the $\aut_0(I^n, \leq)$-orbits of dimension at most $i$ in $\modspace{n}^K$. This defines a stratification of $\modspace{n}^K$. The $i$-strata are given by the orbits of dimension $i$.
\end{prop}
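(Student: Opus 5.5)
I will mirror the proof of \autoref{prop:filt-strat}, replacing the filter-side inputs by their module-side analogues: \autoref{prop:homeo-orbits-modules} for the local structure of orbits, and \autoref{prop:closure-orbit-modules} for the closures. The first step is to show that there are only finitely many $\aut_0(I^n,\leq)$-orbits in $\modspace{n}^K$, so that the filtration $X_0\subseteq X_1\subseteq\cdots$ is finite and exhausts the space. For this I will use equivariance (\autoref{thm:equivariance}). Since $\modspace{n}^K = MPH(\filt^n)$ and $MPH(\phi.f)=\phi.MPH(f)$, every filter orbit maps onto a single module orbit. There are finitely many filter orbits (\autoref{lem:char-orbits-aut-filt}), so there are finitely many module orbits in the image. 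Moreover, each orbit of $\modspace{n}^K$ lies entirely in $\modspace{n}^K$. The dimension is bounded by $n|K|$, because the minimal grid of $MPH(f)$ is contained in any grid containing $\im f$, so $\kappa_j(MPH(f))\leq \kappa_j(f)$.

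Next comes the module analogue of \autoref{prop:facts-odim-closure}. If $[\bm N]\in\overline{\aut_0.[\bm M]}$, then by \autoref{prop:closure-orbit-modules} we have $[\bm N]=\phi.[\bm M]$ for some $\phi\in\overline{\aut_0(I^n,\leq)}$. By \autoref{prop:grid-under-action}, $\phi(G)$ is a grid for $\bm N$, so the minimal grid $H$ of $\bm N$ satisfies $H\subseteq\phi(G)$. Since $\phi_j$ fixes $0,1$ and is monotone, $|H_j\cap(0,1)|\leq|\phi_j(G_j)\cap(0,1)|\leq|G_j\cap(0,1)|$, which gives $\odim(\bm N)\leq\odim(\bm M)$. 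For strict inequality when $[\bm N]$ lies outside the orbit, I will argue by contrapositive. Suppose all $\kappa_j$ are equal. Then $\phi_j$ is injective on $G_j\cap(0,1)$ with image in $(0,1)$, and it also cannot send a point of $G_j$ to a point of $\{0,1\}$ or identify two grid points. If it did, $|H_j\cap(0,1)|$ would drop, or $H_j$ would be a proper subset of $\phi_j(G_j)$; the latter also forces a drop unless the lost point is a boundary point. This boundary case needs care. Handling it, I replace $\phi$ by an order isomorphism $\psi\in\aut_0$ agreeing with $\phi$ on $G$ (linear interpolation), and \autoref{prop:same-grid-action-iso} gives $[\bm N]=\psi.[\bm M]$, which lies in the orbit. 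The delicate point is checking that $\phi|_G$ is injective and preserves membership of $0,1$ whenever the counts agree. I expect to use minimality of $H$ together with the fact that $G$ itself is minimal: a collapse of $G$ onto a boundary point that does not reduce the interior count must collapse two grid points, and hence reduces $|H_j|$ via the equality-of-orbits argument.

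With these in hand, the stratification axioms follow as in \autoref{prop:filt-strat}. For closedness of $X_i$, a point in $\overline{X_i}$ lies in the closure of one of finitely many orbits of dimension $\leq i$, hence has $\odim\leq i$ by the previous step; it belongs to $\modspace{n}^K$ because $\phi.MPH(f)=MPH(\phi.f)$. For the manifold property, $X_i\setminus X_{i-1}$ is a finite disjoint union of $i$-dimensional orbits, each homeomorphic to a product of open simplices of total dimension $i$ by \autoref{prop:homeo-orbits-modules}. Strict inequality shows no orbit of dimension $i$ meets the closure of another, so each point has a neighbourhood meeting only its own orbit, which is therefore open in $X_i\setminus X_{i-1}$ and locally Euclidean. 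Hausdorffness and second countability come from $\modspace{n}^{\dim K+1}$ being a metric space, together with the subspace being a finite union of separable orbits. Path components are the orbits, as in the filter case.

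I expect the main obstacle to be the strict-inequality step: showing that a boundary point of an orbit closure has strictly smaller orbit-dimension. This depends on the interplay between collapsing maps in $\epi(I,\leq)^n$ and the minimal grid.
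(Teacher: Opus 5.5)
Your proof is correct, but for the two substantive axioms (closedness of $X_i$ and local Euclideanity of $X_i\setminus X_{i-1}$) it takes a different route from the paper. You first prove a module version of \autoref{prop:facts-odim-closure}: $\odim$ does not increase on orbit closures and drops strictly on the frontier. This uses \autoref{prop:closure-orbit-modules}, \autoref{prop:grid-under-action} and \autoref{prop:same-grid-action-iso}. You then copy the proof of \autoref{prop:filt-strat}, where finiteness of the set of orbits in $\modspace{n}^K$ gives both closedness and the separation of distinct $i$-dimensional orbits.

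The paper instead argues locally with \autoref{klem:nearby-grids}. Part (i) shows no $\kappa_j$ can drop on the ball of radius $\bar\ell_{\bm M}/2$, so the complement of $X_i$ is open. On the ball of radius $\bar\ell_{\bm M}/4$, any module with the same $\odim$ satisfies $\bar\ell_{\bm N}>2\epsilon$, and \autoref{lem:closer-than-granularity} then places it in the orbit of $[\bm M]$.

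Your route states the frontier condition explicitly and reuses the orbit-closure description, though it still depends on the Key Lemma indirectly through \autoref{lem:squeeze-grid}. The paper's route gives explicit neighbourhoods and uses no finiteness for these two axioms. That is why, as the remark after the proposition notes, it carries over to all of $\modspace{n}^{\dim K+1}$. Your argument would need an extra local-finiteness input there, since a closure of infinitely many orbits is not the union of their closures.

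The ``delicate boundary case'' you flag is not an obstacle. Since $\phi_j$ fixes $0$ and $1$, you have $|H_j\cap(0,1)|\le|\phi_j(G_j)\cap(0,1)|=|\phi_j(G_j\cap(0,1))\cap(0,1)|\le|G_j\cap(0,1)|$. Equality throughout forces $\phi_j$ to be injective on $G_j\cap(0,1)$ with values in $(0,1)$. Hence $\phi_j|_{G_j}$ is injective and preserves the interior/boundary split, which is all the linear interpolation $\psi$ needs; no minimality argument for $H$ is required.

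The scenario you describe, an interior grid point collapsing onto $0$ or $1$ without lowering the interior count, cannot occur, because such a collapse always lowers $|\phi_j(G_j)\cap(0,1)|$. Likewise $H_j\subsetneq\phi_j(G_j)$ never actually happens. Once $[\bm N]=\psi.[\bm M]$ with $\psi\in\aut_0(I^n,\leq)$, \autoref{prop:grid-under-action} gives $H=\psi(G)=\phi(G)$.
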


\begin{proof}
First, note that by the equivariance of $MPH$ (\autoref{thm:equivariance}), the image $\modspace{n}^K$ can be seen as a union of $\aut_0(I^n, \leq)$-orbits. Let $f \in \filt^n$, then the orbit of $MPH(f)$ is also in the image:
\begin{align}
\label{eq:strongly-stratified-map}
    \aut_0(I^n, \leq).MPH(f) &= \{\phi.MPH(f) = MPH(\phi.f) \mid \phi \in \aut_0(I^n, \leq)\} = \nonumber \\
    &= MPH(\aut_0(I^n, \leq).f)
\end{align}

The minimal grid containing $\im f$ is a grid for $MPH(f)$ which is not necessarily minimal. Therefore, $\odim(MPH(f)) \leq \odim(f)$. The maximum dimension of the orbits in $\modspace{n}^K$ is thus $n|K|$, and we get a sequence
\begin{equation*}
    \varnothing = X_{-1} \subseteq X_0 \subseteq \dots \subseteq X_{n|K|} = \modspace{n}^K.
\end{equation*}

To prove that each $X_i$ is closed, we show its complement is open. Let $[\bm{M}] \in \modspace{n}^K \setminus X_i$, equivalently $\odim(\bm M) > i$. By \autoref{klem:nearby-grids}(i), for any $[\bm{N}] \in \modspace{n}^K$ with $d_I(\bm{M},\bm{N}) \leq \epsilon < \bar\ell_{\bm{M}}/2$, the minimal grid for $\bm{N}$ contains at least as many interior points in each coordinate set as the minimal grid for $\bm{M}$, hence $\odim(\bm{N}) > i$. Hence $B([\bm{M}], \bar\ell_{\bm{M}}/2) \subseteq \modspace{n}^K \setminus X_i$, and the complement is open.

To see that $X_i \setminus X_{i-1}$ is a topological manifold, we first check that $X_i \setminus X_{i-1}$ is locally Euclidean. Let $[\bm{M}] \in X_i \setminus X_{i-1}$, equivalently $\odim(\bm{M}) = i$.
Let $\epsilon < \bar\ell_{\bm{M}}/4$ and let $[\bm{N}] \in X_i \setminus X_{i-1}$ be such that $d_I(\bm{M},\bm{N}) \leq \epsilon$, with minimal grid $H$. 
For every $j$ and every $x\in G_j\cap(0,1)$,
\autoref{klem:nearby-grids}(i) provides a point $y\in H_j$ such that $|x-y|\leq\epsilon$. By the choice of $\epsilon$, this ensures $\kappa_j(\bm{N})\geq\kappa_j(\bm{M})$. Since the sum is equal $\odim(\bm{N}) = \odim(\bm{M})$, then $\kappa_j(\bm{N})= \kappa_j(\bm{M})$ for each $j$.

We next show that $\epsilon < \bar\ell_{\bm N}/2$. If $y\neq y'$ are interior points of $H_j$, let $x,x'$ be their
corresponding points in $G_j$. Then
\begin{equation*}
    |y-y'|
    \geq |x-x'|-|x-y|-|y'-x'|
    \geq |x-x'|-2\epsilon
    \geq \ell_{\overline G}-2\epsilon.
\end{equation*}

If one of $y'$ is a boundary point instead, then
\begin{equation*}
    |y-y'|
    \geq |x-y'|-|x-y|
    \geq \ell_{\overline G}-\epsilon.
\end{equation*}

Consequently,
\begin{equation*}
    \bar\ell_{\bm{N}} = \ell_{\overline H}
    \geq\ell_{\overline G}-2\epsilon
    >\frac{\ell_{\overline G}}2 = \frac{\bar\ell_{\bm{M}}}{2},
\end{equation*}
and hence $\epsilon < \bar\ell_{\bm N}/2$ and $\epsilon< \min\{\bar\ell_{\bm{M}},\bar\ell_{\bm{N}}\}/2$. By \autoref{lem:closer-than-granularity}, $[\bm{N}] \in \aut_0(I^n, \leq).[\bm{M}]$, so
\begin{equation*}
    B([\bm{M}], \bar\ell_{\bm{M}}/4) \cap (X_i \setminus X_{i-1}) = B([\bm{M}], \bar\ell_{\bm{M}}/4) \cap \aut_0(I^n, \leq).[\bm{M}].
\end{equation*}
By \autoref{prop:homeo-orbits-modules}, the orbit is homeomorphic to a product of open simplices. The right-hand side is open in the orbit, hence homeomorphic to an open subset of $\mathbb{R}^i$. This shows $X_i \setminus X_{i-1}$ is locally Euclidean.

By the above reasoning, the $i$-dimensional orbits are open in $X_i \setminus X_{i-1}$ and disjoint, and each is path-connected. Therefore, they are the path-connected components of their union $X_i \setminus X_{i-1}$, or the $i$-strata.

Since $\modspace{n}^{\dim K + 1}$ is a metric space, it is also Hausdorff, and so is $\modspace{n}^K$. As we have seen, each orbit in $\modspace{n}^K$ is the image of an orbit in $\filt^n$. Hence, $X_i \setminus X_{i-1}$ contains only finitely many $i$-dimensional orbits. Each $i$-dimensional orbit is second countable, and there are only finitely many, so $X_i \setminus X_{i-1}$ is second countable too, and thus it is a topological manifold.
\end{proof}

\begin{rem}
We can consider this construction for the whole space $\modspace{n}^{\dim K + 1}$ instead of only for the image $\modspace{n}^K$. In this case, the nested sequence of $X_i$ is infinite, since modules in $\modspace{n}^{\dim K + 1}$ can have infinitely fine minimal grids. One can easily check that the proof above still holds, except for the last step, proving second countability of the $X_i \setminus X_{i-1}$. Each of the orbits forming $X_i \setminus X_{i-1}$ are homeomorphic to a product of open simplices, so they are second countable, but in general, we have no control over the number of distinct orbits.

For an arbitrary base field $\Bbbk$, there might be continuous families of isomorphism classes of modules with a certain fixed grid size, which means continuous families of orbits in $X_i \setminus X_{i-1}$, which would then not necessarily second countable.
In the case $n = 1$, the space of finite barcodes is second countable \cite[Theorem 4]{Bubenik2018}.
More generally, it was recently proved in \cite[Proposition 4.27]{Bauer2026} that the closure of the space of finitely presentable $n$-parameter persistence modules is separable if and only if the base field $\Bbbk$ is countable or $n=1$. Since separability and second-countability are equivalent for extended pseudometric spaces \cite[Lemma 17]{Bubenik2018}, we get then that $\modspace{n}$ is second-countable if and only if $\Bbbk$ is countable or $n=1$.
\end{rem}

\subsection{\texorpdfstring{$MPH$}{MPH} as a stratified map and the moduli space \texorpdfstring{$\modspace{n}^K$}{M} as a quotient space}

\begin{thm}
\label{thm:MPH-stratified-map}
    The multiparameter persistent homology map is a strongly stratified map, that is, the image of an $i$-stratum in $\filt^n$ is a $j$-stratum in $\modspace{n}^{\dim K + 1}$, with $j \leq i$.
\end{thm}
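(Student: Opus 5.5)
The plan is to combine the orbit descriptions of the strata on both sides with the equivariance of $MPH$. By \autoref{prop:filt-strat}, an $i$-stratum of $\filt^n$ is exactly one $\aut_0(I^n,\leq)$-orbit $\aut_0(I^n,\leq).f$ with $\odim(f)=i$. By \autoref{prop:mod-strat}, a $j$-stratum of $\modspace{n}^K$ is exactly one $\aut_0(I^n,\leq)$-orbit of dimension $j$. So it suffices to show that $MPH$ maps each orbit surjectively onto an orbit, and to compare dimensions. Continuity of $MPH$ is already given by \autoref{thm:stability}.

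First, by \autoref{thm:equivariance} restricted to $\aut_0(I^n,\leq)$, we have the identity \eqref{eq:strongly-stratified-map}:
\begin{equation*}
MPH(\aut_0(I^n,\leq).f)=\aut_0(I^n,\leq).MPH(f).
\end{equation*}
Thus the image of the stratum containing $f$ is precisely the orbit of $MPH(f)$, and this orbit is a stratum of $\modspace{n}^K$. The map is therefore surjective onto that stratum by construction.

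Second, I compare dimensions. The grid $G^f$ with coordinate sets $G^f_j=\im f_j$ is a grid for every $H_p\circ K(f)$, and hence for the tuple $MPH(f)$. By \autoref{thm:min-grid}, applied componentwise through \autoref{defn:grid_tuple}, the minimal grid $G$ of $MPH(f)$ satisfies $G_j\subseteq \im f_j$ for each $j$. Hence $\kappa_j(MPH(f))=|G_j\cap(0,1)|\leq|\im f_j\cap(0,1)|=\kappa_j(f)$, and summing over $j$ gives $j:=\odim(MPH(f))\leq\odim(f)=i$.

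Finally, I check that $MPH$ is stratified in the filtration sense, meaning $MPH(F_i)\subseteq X_i$. This follows from the orbitwise inequality, since $F_i$ is a union of orbits of dimension at most $i$, each of which is sent into an orbit of dimension at most $i$. There is no substantial obstacle. The only point needing care is the well-definedness of the minimal grid of a tuple and its containment in $\im f$, including the degenerate case where some $MPH_p(f)=0$, whose grid is the empty grid $\varnothing^n$ by \autoref{rem:0modulegrid}. That case also contributes nothing to the count, so the inequality still holds.
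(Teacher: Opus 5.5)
Your proposal is correct and follows essentially the same route as the paper: the strata on both sides are $\aut_0(I^n,\leq)$-orbits, and equivariance gives $MPH(\aut_0(I^n,\leq).f)=\aut_0(I^n,\leq).MPH(f)$, as in \eqref{eq:strongly-stratified-map}. You derive $\odim(MPH(f))\leq\odim(f)$ from the minimal grid lying inside $\im f_1\times\dots\times\im f_n$, which is exactly the observation the paper makes in the proof of \autoref{prop:mod-strat}; your write-up just makes explicit what the paper's one-line proof cites.
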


\begin{proof}
    This is a direct consequence of the fact that both domain and codomain are stratified by the orbits of their respective $\aut_0(I^n,\leq)$ actions (\autoref{prop:filt-strat}, \autoref{prop:mod-strat}) and the equivariance of $MPH$ (\autoref{thm:equivariance}), which ensures the image of an orbit in $\filt^n$ is an orbit in $\modspace{n}^K$ (see \eqref{eq:strongly-stratified-map}).
\end{proof}

\begin{prop}
    The preimages of $MPH$ over elements of the same stratum are pairwise homeomorphic.
\end{prop}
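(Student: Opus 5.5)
The plan is to use the group action directly. Two elements of the same stratum lie in a single $\aut_0(I^n,\leq)$-orbit, by \autoref{prop:mod-strat}. So take $[\bm{M}]$ and $[\bm{M}']$ in one stratum and choose $\phi \in \aut_0(I^n,\leq)$ with $[\bm{M}'] = \phi.[\bm{M}]$. The homeomorphism between the fibers will be $\phi$ acting on $\filt^n$, namely $f \mapsto \phi \circ f$.

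First, this map sends $MPH^{-1}([\bm{M}])$ into $MPH^{-1}([\bm{M}'])$. Indeed, if $MPH(f) = [\bm{M}]$, then \autoref{thm:equivariance} gives $MPH(\phi.f) = \phi.MPH(f) = \phi.[\bm{M}] = [\bm{M}']$.

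Second, it is bijective between the fibers. Since $\aut_0(I^n,\leq)$ is a group, $\phi^{-1}$ also lies in it. Equivariance applied to $\phi^{-1}$ shows that $f \mapsto \phi^{-1}\circ f$ sends $MPH^{-1}([\bm{M}'])$ into $MPH^{-1}([\bm{M}])$. The two maps compose to the identity in both orders, because the action on $\filt^n$ is an action: $\phi^{-1}.(\phi.f) = f$, and similarly the other way.

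Third, both maps are continuous for the sup-norm topology on $\filt^n$. By \autoref{prop:order-iso} and \autoref{lem:closure-aut}, $\phi = \phi_1\times\dots\times\phi_n$ with each $\phi_j$ a homeomorphism of the compact interval $I$. Each $\phi_j$ is therefore uniformly continuous, so for every $\epsilon>0$ there is a $\delta>0$ such that $d_\infty(f,g)<\delta$ implies $d_\infty(\phi.f,\phi.g)<\epsilon$. This is just continuity of post-composition, coordinate-wise on the finite set $K$. The same argument applies to $\phi^{-1}$. Restricting these continuous maps to the fibers, with the subspace topology, gives mutually inverse continuous maps, hence a homeomorphism.

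There is no serious obstacle. The only point needing care is that strata of $\modspace{n}^K$ are exactly single orbits rather than unions of orbits. This is part of the conclusion of \autoref{prop:mod-strat}, where the $i$-strata are identified with the individual $i$-dimensional orbits. It guarantees that the connecting element $\phi$ exists.
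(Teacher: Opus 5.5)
Your proposal is correct and follows the paper's proof. Both take $\phi\in\aut_0(I^n,\leq)$ relating two modules in the same orbit-stratum, and use equivariance (\autoref{thm:equivariance}) to show that post-composition with $\phi$ carries $MPH^{-1}([\bm{M}])$ homeomorphically onto $MPH^{-1}([\bm{M}'])$, with inverse given by $\phi^{-1}$; you simply spell out the bijectivity and continuity checks that the paper leaves implicit.
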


\begin{proof}
    Let $[\bm{M}], [\bm{N}] \in \modspace{n}^K$ with $\bm{N} = \phi.\bm{M}$ for some $\phi \in \aut_0(I^n, \leq)$, then from the equivariance (\autoref{thm:equivariance}), $MPH^{-1}([\bm{N}]) = MPH^{-1}([\phi.\bm{M}]) = \phi.MPH^{-1}([\bm{M}])$, so post-composition by the map $\phi$ induces a homeomorphism between $MPH^{-1}([\bm{M}])$ and $MPH^{-1}([\bm{N}])$.
\end{proof}

\begin{prop}
    The quotient topology on $\modspace{n}^K := MPH(\filt^n)$ induced by the $MPH$ map coincides with the topology induced by the interleaving distance
    \begin{equation*}
        (\filt^n / \sim ) \xrightarrow{\cong} \modspace{n}^K,
    \end{equation*}
    where $f\sim g$ if and only if $MPH(f) = MPH(g)$.
\end{prop}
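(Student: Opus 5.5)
The approach is to show that the continuous bijection from the quotient is a homeomorphism. Write $\overline{MPH}: \filt^n/\sim \to \modspace{n}^K$ for the induced map. It is well defined and bijective by construction, and continuous by the universal property of the quotient, since $MPH$ is Lipschitz (\autoref{thm:stability}). It therefore suffices to show that $\overline{MPH}$ is a closed map. Equivalently, $MPH: \filt^n \to \modspace{n}^K$ should be a closed map, because a continuous surjection that is closed is a quotient map.

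The natural route uses compactness. The space $\filt^n$ is a closed subset of $I^{n|K|}$: the monotonicity conditions $f(\tau)\leq f(\sigma)$ for $\tau\subset\sigma$ are closed conditions. Hence $\filt^n$ is compact. The target $\modspace{n}^K$ is a subspace of the extended metric space $\modspace{n}^{\dim K+1}$, whose metric is a genuine metric on isomorphism classes of essentially finite modules by \autoref{thm:closure-thm}. In particular it is Hausdorff. One subtlety is that the metric is extended, but the topology is still metrisable on each finite-distance component, and Hausdorffness is all we need.

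The standard argument then applies. A closed subset $C\subseteq \filt^n$ is compact, so $MPH(C)$ is compact in the Hausdorff space $\modspace{n}^K$ and hence closed. So $MPH$ is a closed continuous surjection onto its image, hence a quotient map. The induced continuous bijection $\overline{MPH}$ from the compact space $\filt^n/\sim$ (a quotient of a compact space) to the Hausdorff space $\modspace{n}^K$ is then a homeomorphism.

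The only step needing care is checking that the codomain topology is Hausdorff. This rests on distinct isomorphism classes of essentially finite modules having positive interleaving distance, which is exactly the $\epsilon=0$ case of \autoref{thm:closure-thm}. The product sup metric on tuples preserves this. Nothing about the stratifications is needed, though one could remark that the homeomorphism is compatible with them by \autoref{thm:MPH-stratified-map}.
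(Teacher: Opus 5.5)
Your proposal is correct and follows the paper's argument: the induced map is a continuous bijection by \autoref{thm:stability} and the universal property of the quotient, and it is a homeomorphism because $\filt^n/\sim$ is compact while $\modspace{n}^K$ is Hausdorff. The detour through closedness of $MPH$ just re-proves the compact-to-Hausdorff lemma. Tracing Hausdorffness to the $\epsilon=0$ case of \autoref{thm:closure-thm} makes explicit what the paper uses implicitly.
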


\begin{proof}
    By \autoref{thm:stability}, $MPH$ is a continuous map, so by the universal property of the quotient, it induces a continuous bijection between $\filt^n/\sim$ and $\modspace{n}^K$.

    To show that the inverse is continuous, note that $\filt^n$ is compact, and thus so is $\filt^n/\sim$, and $\modspace{n}^{\dim K+1}$ is a metric space so it is Hausdorff, and so is the image $\modspace{n}^K$. It is a standard result that a continuous bijective map from a compact space to a Hausdorff space is a homeomorphism. This suffices to finish the proof.
\end{proof}

\begin{lem}
\label{lem:grid-cw-injective-filter}
    Let $f \in\filt^n$ be injective in all coordinates, and let $G$ be the minimal grid of $MPH(f)$. Then $\im f \subset G$ and $\odim(MPH(f)) = \odim(f)$. If, moreover, $\im f \subset (0,1)^n$, then $MPH(f) \in \modspace{n}^K$ lies in a top dimensional stratum $\odim(MPH(f)) = n|K|$.
\end{lem}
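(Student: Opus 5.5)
The plan is to show directly that every coordinate value of $f$ is forced into the minimal grid $G$ of the tuple $MPH(f)=[(H_0\circ K(f),\dots,H_{\dim K}\circ K(f))]$. The reverse inclusion is free. The grid $\operatorname{im} f_1\times\dots\times\operatorname{im} f_n$ is a grid for $MPH(f)$, as observed when $MPH$ was defined, so by minimality (\autoref{thm:min-grid}, \autoref{defn:grid_tuple}) we have $G_j\subseteq \operatorname{im} f_j$ for each $j$. Combined with the inclusion proved below, this gives $G_j=\operatorname{im} f_j$. Hence $\operatorname{im} f\subset G$ and $\kappa_j(MPH(f))=\kappa_j(f)$ for all $j$, so $\odim(MPH(f))=\odim(f)$.

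For the key inclusion, fix $j$ and a simplex $\sigma$, and set $x=f_j(\sigma)$. Suppose for contradiction that $x\notin G_j$. Choose $\delta>0$ so that $[x-\delta,x]\cap G_j=\varnothing$. Let $t\in\mathbb{R}^n$ have $t_j=x$ and $t_k=1$ for all $k\neq j$. Since $f$ takes values in $I^n$ and $f_j$ is injective, the complexes $K(f)_t$ and $K(f)_{t-\delta e_j}$ differ by exactly the single simplex $\sigma$. Indeed, a simplex $\tau$ lies in $K(f)_t$ but not in $K(f)_{t-\delta e_j}$ exactly when $f_j(\tau)\in(x-\delta,x]$. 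By the choice of $\delta$ (shrunk further if needed so that $(x-\delta,x)$ contains no value of $f_j$), the only such simplex is $\sigma$. Adding one simplex of dimension $d$ changes the Euler characteristic by $(-1)^d$. So for some degree $p$ the inclusion-induced map $H_p(K(f)_{t-\delta e_j})\to H_p(K(f)_t)$ is not an isomorphism. Alternatively, one can use the standard fact that adding a single simplex either creates a class in degree $d$ or kills one in degree $d-1$.

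On the other hand, since no point of $G_j$ lies in $[x-\delta,x]$, we have $t-\delta e_j\geq \min G$ if and only if $t\geq\min G$. In the first case $\lfloor t-\delta e_j\rfloor_G=\lfloor t\rfloor_G$, so the structure map is an isomorphism by \autoref{lem:ess-fin-characterisation}(i). In the second case both spaces are $0$ by (ii). This applies to every degree, because $G$ is a grid for every component module. The two conclusions contradict each other, so $x\in G_j$.

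For the last claim, if additionally $\operatorname{im} f\subset(0,1)^n$, then injectivity gives $\kappa_j(f)=|K|$ for each $j$. Hence $\odim(MPH(f))=\odim(f)=n|K|$. By the proof of \autoref{prop:mod-strat}, $n|K|$ is the maximal orbit dimension in $\modspace{n}^K$, so $MPH(f)$ lies in a top-dimensional stratum.

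The only delicate step is the one-simplex-difference argument. This needs care in choosing $t$ so that all other coordinates impose no constraint, which is why we take $t_k=1$, and in confirming that the grid argument covers the region below $\min G$. Everything else is bookkeeping.
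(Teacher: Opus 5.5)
Your proof is correct and follows the paper's argument: a single simplex entering at $x=f_j(\sigma)$ changes the Euler characteristic, so some structure map in the $e_j$ direction is not an isomorphism, which forces $x\in G_j$ by \autoref{lem:ess-fin-characterisation}; the reverse inclusion comes from $\operatorname{im} f_1\times\dots\times\operatorname{im} f_n$ being a grid. The differences are cosmetic. You evaluate at $t$ with $t_k=1$ for $k\neq j$ rather than at $f(\sigma)$ itself, and you spell out the floor and below-$\min G$ check that the paper leaves implicit.
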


\begin{proof}
    Take a point in the image $u = f(\sigma) \in \im f$. Since $f$ is coordinate-wise injective, for any $j = 1, \dots, n$ and any $\epsilon >0$ small enough, $K(f)_{u} = K(f)_{u-\epsilon e_j} \cup \{\sigma\}$.

    Since a single simplex is added, the Euler characteristic of the filtration necessarily changes when going from grade $u-\epsilon e_j$ to $u$. This forces a change in dimension in at least one homology degree. More specifically, either $MPH_{\dim \sigma}(f)_{u-\epsilon e_j, u}$ is not surjective or $MPH_{\dim \sigma-1}(f)_{u-\epsilon e_j, u}$ is not injective.
    
    By the characterisation from \autoref{lem:ess-fin-characterisation}, $u_j$ is contained in the $j$-th coordinate set of the minimal grid $G$, and we conclude that $\im f_j \subseteq G_j$ for all $j$. Since a grid containing $\im f$ is a grid for $MPH(f)$, necessarily $G_j \subseteq \im f_j$, so we find $G_j = \im f_j$, and $\odim(MPH(f)) = \odim(f)$. If $\im f \subset (0,1)^n$, in particular $\odim(f) = n|K|$.
\end{proof}

\begin{rem}
    The result $\odim(MPH(f)) = \odim(f)$ remains true if each $f_j$ is only injective when restricted to $f_j^{-1}((0,1))$.
\end{rem}

\begin{prop}
    The space $\modspace{n}^K$ is the closure of its top dimensional strata.
\end{prop}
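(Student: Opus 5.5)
The plan is to show that every point of $\modspace{n}^K$ is a limit of points lying in top-dimensional strata, i.e.\ strata of dimension $n|K|$. Since $\modspace{n}^K = MPH(\filt^n)$, take an arbitrary $[\bm{M}] = MPH(f)$ with $f \in \filt^n$. We will approximate $f$ in the sup-norm by filters $f^k$ that are injective in every coordinate and take values in $(0,1)^n$. By \autoref{lem:grid-cw-injective-filter}, each $MPH(f^k)$ lies in a stratum of dimension $n|K|$. By the Stability Theorem (\autoref{thm:stability}), $d_I(MPH(f^k), MPH(f)) \leq d_\infty(f^k, f) \to 0$. Hence $[\bm{M}]$ lies in the closure of the union of top-dimensional strata. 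The reverse inclusion is trivial, because the union of these strata lies in $\modspace{n}^K$ and $\modspace{n}^K$ is closed in itself. (One could also note that $\modspace{n}^K$ is compact as a continuous image of the compact space $\filt^n$, but this is not needed.)

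The key step is constructing $f^k$, and it is where the monotonicity constraint must be respected. Choose a linear extension of the face order on $K$; for instance, order simplices by dimension and break ties arbitrarily. This gives an injective map $\rho: K \to \{1,\dots,|K|\}$ with $\rho(\tau) < \rho(\sigma)$ whenever $\tau \subsetneq \sigma$. For $\delta > 0$ small, define coordinatewise
\begin{equation*}
    f^\delta_j(\sigma) = (1-2\delta) f_j(\sigma) + \delta + \delta^2 \rho(\sigma).
\end{equation*}
The affine part maps $I$ into $[\delta, 1-\delta]$, and the perturbation $\delta^2\rho(\sigma) \leq \delta^2|K|$ is less than $\delta$ for small $\delta$. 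Therefore all values lie in $(0,1)$.

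Next we check that $f^\delta$ is an $n$-filter. If $\tau \subsetneq \sigma$, then $f_j(\tau) \leq f_j(\sigma)$ and $\rho(\tau) < \rho(\sigma)$, so $f^\delta_j(\tau) < f^\delta_j(\sigma)$.

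The main obstacle is injectivity of each coordinate on non-comparable simplices with equal or close values. Two distinct simplices could have $(1-2\delta)(f_j(\sigma)-f_j(\tau)) = \delta^2(\rho(\tau)-\rho(\sigma))$. For fixed $\sigma, \tau$ with $f_j(\sigma) \neq f_j(\tau)$, this equation holds for at most finitely many $\delta$, being a nontrivial polynomial equation in $\delta$. If $f_j(\sigma) = f_j(\tau)$, then injectivity of $\rho$ separates the two values. Since $K$ is finite, all but finitely many $\delta$ in $(0, 1/|K|)$ give a filter injective in each coordinate.

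Finally, $d_\infty(f^\delta, f) \leq 2\delta + \delta^2|K| \to 0$. Taking $\delta = \delta_k \to 0$, avoiding the finite bad sets, completes the argument.
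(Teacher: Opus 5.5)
Your proof is correct, but it takes a different route from the paper. The paper writes an arbitrary filter $g$ as $g=\phi\circ f$, where $f$ is injective in every coordinate with $\im f\subset(0,1)^n$ and $\phi\in\overline{\aut_0(I^n,\leq)}$ collapses values. Equivariance (\autoref{thm:equivariance}) then gives $MPH(g)=\phi.MPH(f)$, and \autoref{prop:closure-orbit-modules} (ultimately the Lipschitz continuity of the action in the group variable) places this point in the closure of the top-dimensional orbit of $MPH(f)$. You bypass the action entirely: you perturb the filter itself, let the Stability Theorem (\autoref{thm:stability}) carry the convergence, and use \autoref{lem:grid-cw-injective-filter} to locate the approximants in top strata.

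Your version is more elementary. It also makes explicit a step the paper leaves as ``collapsing values'': the tie-breaking must respect the face order, which your linear extension $\rho$ guarantees. In fact, for small $\delta$ your $f^\delta_j$ induces the lexicographic refinement of the preorder of $f_j$ by $\rho$, so $f^\delta$ could serve as the paper's $f$.

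The paper's route gives a slightly sharper structural statement: $[\bm M]$ is reached from one specific top stratum by a degeneration $\phi$. You can recover this as well. For all sufficiently small $\delta$ the filters $f^\delta$ lie in a single filter orbit, since they share preorders and no value is $0$ or $1$, so the $MPH(f^\delta)$ lie in a single module stratum.

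Relatedly, your ``finitely many bad $\delta$'' argument is correct but unnecessary. Let $m_j$ be the smallest positive gap between values of $f_j$ (the condition is vacuous if $f_j$ is constant). Once $\delta^2|K|<(1-2\delta)m_j$, distinct values of $f_j$ keep their strict order and ties are broken by $\rho$, so every small $\delta$ works.
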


\begin{proof}
    Let $MPH(g) \in \modspace{n}^K$. Then there exists $f \in \filt^n$ injective in all coordinates, with $\im f \subset (0,1)^n$ and $\phi \in \overline{\aut_0(I^n,\leq)}$ such that $g = \phi \circ f$, by collapsing values of $f_j$ to each other or to the boundary points.
    Then $MPH(g) = MPH(\phi.f) = \phi.MPH(f)$. By the above argument, $MPH(f)$ belongs to a top-dimensional stratum of $\modspace{n}^K$, and by \autoref{prop:closure-orbit-modules}, $MPH(g)$ belongs to its closure.
\end{proof}

\newpage

\section{The polyhedral structure of \texorpdfstring{$MPH^{-1}([\bm{M}])$}{the fiber of MPH}}
\label{sec:poly}

In this section we describe the fiber of $MPH$ over a point $[\bm{M}] \in \modspace{n}^K$ in geometric terms. Using the fact that $MPH$ is a strongly stratified map, as established in \autoref{sec:strat}, we find that $MPH^{-1}([\bm{M}])$ is a polyhedral complex, with each polyhedron given by the intersection of the fiber with the closure of a filter stratum. Moreover, over each module stratum of $\modspace{n}^K$, the restriction of $MPH$ is a trivial fiber bundle, and the polyhedral structure of the fiber is constant along the stratum.

We then turn to the dimension of the fiber. After introducing Betti tables for essentially finite modules, we use recent results on homological Morse numbers to obtain an upper bound on $\dim MPH^{-1}([\bm M])$ that depends only on the Betti tables of $\bm M$. In the one-parameter case, this recovers the bound obtained in \cite{LeygonieTillmann2022}.

\subsection{Polyhedral structure on the fiber}

We recall the definitions of polyhedron and polyhedral complex, and we characterise $MPH$ restricted over a single module stratum as a trivial fiber bundle with fiber a polyhedral complex.

\begin{defn}
    A polyhedron $\mathcal P$ is either the empty set or a bounded finite
    intersection of closed half-spaces in a Euclidean space.
    The dimension of a non-empty polyhedron is the dimension of its affine hull,
    and we set $\dim(\varnothing)=-1$.
    A face of a polyhedron is either the empty set or the intersection of the polyhedron with a supporting hyperplane, that is, a hyperplane that contains at least one boundary point of $\mathcal{P}$, and such that $\mathcal{P}$ is entirely contained in one of the two closed half-spaces bounded by it.
    We denote the relative interior and relative boundary of $\mathcal{P}$ by $\mathring{\mathcal{P}}$ and $\partial \mathcal{P}$, respectively.
\end{defn}

\begin{defn}
    A polyhedral complex is a finite collection of polyhedra $\Pi$ in a Euclidean space $\mathbb{R}^n$, such that
    \begin{enumerate}[label=(\roman*)]
        \item if $F$ is a face of $\mathcal{P} \in \Pi$, then $F \in \Pi$
        \item for any $\mathcal{P}, \mathcal{P'} \in \Pi$, the intersection $\mathcal{P}\cap\mathcal{P'}$ is a face of both $\mathcal{P}$ and $\mathcal{P'}$.
    \end{enumerate}
    The dimension of a polyhedral complex is the maximum dimension of its constituent polyhedra.

    A polyhedral map is a map that sends a polyhedron of the domain to a polyhedron of the codomain surjectively, and whose restriction to each polyhedron is affine.
\end{defn}

\begin{thm}
\label{thm:topology-of-fiber}
    Let $\mathcal{T} \subseteq \modspace{n}^K$ be a stratum in the image of $MPH$, and let $[\bm{M}] \in \mathcal{T}$. For each stratum $\mathcal{S} \subseteq \filt^n$, let
    \begin{equation*}
        \preimov{M}{S} = \overline{MPH^{-1} ([\bm{M}]) \cap \mathcal{S}} \subseteq \mathbb{R}^{n|K|}
    \end{equation*}
    be the closure of the restriction to the stratum $\mathcal{S}$ of the fiber of $MPH$ over $[\bm{M}]$. Then:
    \begin{enumerate}[label=(\alph*)]
        \item Each non-empty $\preimov{M}{S}$ is a polyhedron in $\mathbb{R}^{n|K|}$ of dimension $\dim(\mathcal{S}) - \dim (\mathcal{T})$, affinely isomorphic to the product of $\dim(\mathcal{T})+n$ closed simplices of various dimensions. Moreover, the relative interior of $\preimov{M}{S}$ is $\preim{M}{S}$;

        \item The fiber $MPH^{-1}([\bm{M}])$ is the support of the polyhedral complex
        \begin{equation*}
            \left\{ \preimov{M}{S} \mid \mathcal{S} \subseteq \filt^n \text{ a filter stratum}\right\};
        \end{equation*}

        \item Over each stratum of $\modspace{n}^K$, $MPH$ can be seen as a trivial fiber bundle, with a polyhedral complex as the fiber. That is, there is a homeomorphism $\Psi$ which makes the following diagram commute,
        \begin{equation}
        \label{eq:trivial-bundle-diagram}
        \begin{tikzcd}
        	{\mathcal{T} \times MPH^{-1}([\bm{M}])} && {MPH^{-1}(\mathcal{T})} \\
        	& {\mathcal{T}}
        	\arrow["\Psi", from=1-1, to=1-3]
        	\arrow["{\pi_1}"', from=1-1, to=2-2]
        	\arrow["MPH", from=1-3, to=2-2]
        \end{tikzcd}
        \end{equation}
        where $\pi_1$ is the projection onto the first factor. Additionally, for any other module $[\bm{N}] \in \mathcal{T}$ and filter stratum $\mathcal{S}$, the restricted map $\Psi([\bm{N}], -)|_{\mathcal{S}}$ is an affine isomorphism between $\preim{M}{S}$ and $\preim{N}{S}$, so in particular, the polyhedral structure of the fiber is the same for all modules in $\mathcal{T}$.
    \end{enumerate}
\end{thm}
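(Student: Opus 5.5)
The plan is to work inside a single filter stratum $\mathcal{S} = \aut_0(I^n,\leq).f$ using the coordinate chart $\mu$ of \autoref{prop:homeo-orbits-filt}, which identifies $\overline{\mathcal{S}}$ affinely with $\Delta^{\kappa_1(f)}\times\dots\times\Delta^{\kappa_n(f)}$. By \autoref{thm:MPH-stratified-map}, $MPH(\mathcal{S})$ is a single module stratum, so it suffices to treat the case $MPH(\mathcal{S}) = \mathcal{T}$; otherwise $\preim{M}{S}$ is empty. In that case $MPH(f)$ and $\bm{M}$ lie in the same orbit. The key observation is that on $\mathcal{S}$, in the coordinates $\mu$ and $\nu$ (the latter from \autoref{prop:homeo-orbits-modules}, based at $MPH(f)$), $MPH$ is a coordinate projection. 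Let $G$ be the minimal grid of $MPH(f)$. Since $\im f$ gives a grid, $G_j\cap(0,1)\subseteq \im f_j\cap(0,1)$. For $g=\phi.f$ we have $MPH(g)=\phi.MPH(f)$, whose minimal grid is $\phi(G)$ by \autoref{prop:grid-under-action}. Hence $\chi(MPH(g))$ is the subvector of $\zeta(g)$ recording exactly those coordinates indexed by $G_j\cap(0,1)$. So $\chi\circ MPH\circ\mu$ is a linear projection $\pi:\prod_j\mathring\Delta^{\kappa_j(f)}\to\prod_j\mathring\Delta^{\kappa_j(\bm M)}$.

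For (a), the fiber over $\chi([\bm M])=(x^1,\dots,x^n)$ is then the set of tuples in $\prod\mathring\Delta^{\kappa_j(f)}$ whose selected coordinates are fixed. In coordinate $j$, the $\kappa_j(\bm M)$ fixed values cut $(0,1)$ into $\kappa_j(\bm M)+1$ open intervals. The free coordinates lying between consecutive fixed ones must form an increasing sequence in the corresponding interval, that is, an open simplex. Thus $\preim{M}{S}$ is affinely a product of $\sum_j(\kappa_j(\bm M)+1)=\dim\mathcal{T}+n$ open simplices, of total dimension $\dim\mathcal{S}-\dim\mathcal{T}$. Its closure in $\mathbb{R}^{n|K|}$ is the product of the closed simplices, which is a polyhedron with this relative interior. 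The remaining point is that closure points still lie in the fiber. I would handle this with continuity of $MPH$ (\autoref{thm:stability}): the fiber is closed, so $\overline{\preim{M}{S}}\subseteq MPH^{-1}([\bm M])$.

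For (b), I would show that each face of $\preimov{M}{S}$ equals $\preimov{M}{S'}$ for some stratum $\mathcal{S}'\subseteq\overline{\mathcal{S}}$. A face is obtained by collapsing some free coordinates onto neighbours or onto fixed values/boundary. By \autoref{lem:char-orbits-epi-filt} this yields filters in a stratum $\mathcal{S}'=\phi.\mathcal{S}$, and its intersection with the fiber is the open face. Conversely, the strata partition $\filt^n$, so the fiber is the disjoint union of the relative interiors $\preim{M}{S}$. I expect the main obstacle here: showing that two closed cells meet in a common face. The plan is to observe that $\preimov{M}{S}\cap\preimov{M}{S'}$ is a union of relative interiors $\preim{M}{S''}$ over strata $\mathcal{S}''$ in both closures. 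In the product-of-simplices coordinates, such a set is determined by a common coarsening of the preorders of $f$ and $f'$. I would argue that it is a single face: the intersection of closed products of simplices, both embedded affinely in $\mathbb{R}^{n|K|}$ as sets cut out by order and equality relations among filter values, is cut out by the union of those relations, hence is a face of each.

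For (c), I would define $\Psi([\bm N],g)=\Phi(\chi[\bm N])\circ\Phi(\chi[\bm M])^{-1}\circ g$. Here $\Phi$ is the piecewise-linear lift from the proof of \autoref{prop:homeo-orbits-modules}, viewed as based at $\bm M$ (or more simply the PL map of $I$ sending $\chi[\bm M]$ to $\chi[\bm N]$ coordinatewise). This map is continuous in both variables by continuity of $\chi$ and of the PL construction. By equivariance (\autoref{thm:equivariance}) it sends the fiber over $[\bm M]$ to the fiber over $[\bm N]$, and its inverse is given by the reverse PL map. On each stratum it preserves orbits, and in $\zeta$-coordinates it acts by a piecewise-linear map that is affine on each interval between consecutive fixed grid values. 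Since each free coordinate of a point in $\preim{M}{S}$ lies in a fixed interval, the restriction of $\Psi([\bm N],-)$ is affine on $\preim{M}{S}$, giving the claimed affine isomorphism and commutativity of \eqref{eq:trivial-bundle-diagram}.
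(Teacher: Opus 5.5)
Your proposal is correct. For the key observation, for (a) and for (c) it follows the paper almost verbatim:
\begin{itemize}
\item the same coordinate projection, read through the orbit charts (you use $\zeta,\chi$ where the paper uses $\mu,\nu$);
\item the same product-of-simplices description of $\preim{M}{S}$, with closure points kept in the fiber because the fiber is closed;
\item the same piecewise-linear map $\psi_{[\bm N]}$, which is affine on $\preim{M}{S}$ because every free value stays between two fixed consecutive grid values.
\end{itemize}

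The genuine difference is in (b). The paper first proves the intrinsic identity $\preimov{M}{S}=MPH^{-1}([\bm M])\cap\overline{\mathcal S}$. It writes a point of the right-hand side as $\phi.f$ with $\phi\in\overline{\aut_0(I^n,\leq)}$ and gets $\phi|_G=\id$ from a cardinality argument on grids. It then runs the path $t\mapsto((1-t)\id+t\phi).f$, which stays inside both $\mathcal S$ and the fiber for $t<1$. From this it deduces that cells of lower strata lie in the relative boundary, and it imports the combinatorics of the one-parameter case from Leygonie--Tillmann. You instead describe each closed cell in $\mathbb{R}^{n|K|}$ by explicit linear constraints: the weak preorders of the $f_j$, the boundary equalities, and the pinned grid values. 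That makes (b) self-contained and avoids the path argument. The price is two steps that you state but do not justify.

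First, identifying the relative interior of a face with $\preim{M}{S'}$ requires that every filter of $\mathcal S'$ lying in the fiber keeps the pinned values. This holds because $\phi\in\aut_0(I^n,\leq)$ with $\phi.[\bm M]=[\bm M]$ forces $\phi(G)=G$ by \autoref{prop:grid-under-action}, hence $\phi|_G=\id$. Also, $\phi.\mathcal S$ for a collapsing $\phi$ is generally not a single stratum. What you actually need is only that the relative interior of a face lies in one stratum $\mathcal S'\subseteq\overline{\mathcal S}$.

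Second, ``cut out by the union of the relations, hence a face of each'' fails for general polytopes. Here it holds for three reasons:
\begin{itemize}
\item Each $f_j$ induces a total preorder. So every order relation of the other cell is either already valid on this cell, or it reverses a valid inequality and forces it tight.
\item Boundary equalities are tightness of $0\le g_j(\tau)$ or $g_j(\tau)\le 1$.
\item Both cells pin values to the same set $G_j\cap(0,1)$. So a pinning constraint on a free coordinate either sends it to an endpoint of its block or makes the intersection empty.
\end{itemize}
Since finite intersections of faces are faces, the intersection of two cells is a face of each.
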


\begin{proof}
To prove this result, we need the key observation that, restricted to a filter stratum, the $MPH$ map can be seen as a projection that records the values of a filter which belong to the minimal grid. Let $f \in MPH^{-1}([\bm{M}])\cap \mathcal{S}$, and let
\begin{equation*}
    \mu:  \mathring\Delta^{\kappa_1(f)} \times \dots \times \mathring\Delta^{\kappa_n(f)} \rightarrow \mathcal{S},
    \quad
    \nu: \mathring\Delta^{\kappa_1(\bm{M})} \times \cdots \times \mathring\Delta^{\kappa_n(\bm{M})} \to \mathcal{T}
\end{equation*}
be the coordinate charts of the filter orbit and its orbit image, as defined in \eqref{eq:filter-orbit-coord-chart} and \eqref{eq:module-orbit-coord-chart} respectively. Let $x = \mu^{-1}(f)$, with $x = (x^1, \dots, x^n)$. For $\kappa_j(f) \neq 0$, $x^j = (x^j_1, \dots, x^j_{\kappa_j(f)})\in \mathring\Delta^{\kappa_j(f)}$, the interior values that $f_j$ takes, in order. If $\kappa_j(f) = 0$, $x^j \in \mathring\Delta^0 = \{*\}$.

The minimal grid of $[\bm{M}] = MPH(f)$, $G$, is contained in the minimal grid containing $\im f$. For $\kappa_j(\bm{M}) \neq 0$, we have
\begin{equation*}
    G_j \cap (0,1) = (x^j_{i_{j,1}}, \dots, x^j_{i_{j,\kappa_j(\bm{M})}}),
\end{equation*}
for some ordered list of indices $i_{j,1}, \dots, i_{j,\kappa_j(\bm{M})} \in \{1, \dots, \kappa_j(f)\}$.
We define a product of projections that projects onto the coordinates of these indices:
\begin{equation*}
    \pi^{\mathcal{S}}_{\mathcal{T}}:
    \mathring\Delta^{\kappa_1(f)} \times \dots \times \mathring\Delta^{\kappa_n(f)} 
    \to
    \mathring\Delta^{\kappa_1(\bm{M})} \times \cdots \times \mathring\Delta^{\kappa_n(\bm{M})}.
\end{equation*}
Let $y = (y^1, \dots, y^n)\in \mathring\Delta^{\kappa_1(f)} \times \dots \times \mathring\Delta^{\kappa_n(f)}$.
If $\kappa_j(\bm{M}) = 0$, $\pi^{\mathcal{S}}_{\mathcal{T}}(y)^j$ is the single point of $\mathring\Delta^{\kappa_j(\bm{M})}$. Otherwise
\begin{equation*}
    \pi^{\mathcal{S}}_{\mathcal{T}}(y)^j = (y^j_{i_{j,1}}, \dots, y^j_{i_{j,\kappa_j(\bm{M})}}).
\end{equation*}

In particular, $\pi^{\mathcal{S}}_{\mathcal{T}}(x)^j = G_j \cap (0,1)$ for all $j$ such that $\kappa_j(\bm{M}) \neq 0$. The projection $\pi^{\mathcal{S}}_{\mathcal{T}}$, as well as the homeomorphisms $\mu$, $\nu$, are equivariant with respect to the $\aut_0(I^n, \leq)$ actions, which ensures that the following diagram commutes,
\begin{equation}
\label{eq:diagram-fiber}
\begin{tikzcd}
	{\mathring\Delta^{\kappa_1(f)} \times \dots \times \mathring\Delta^{\kappa_n(f)}} & {\mathcal{S}} \\
	{\mathring\Delta^{\kappa_1(\bm{M})} \times \cdots \times \mathring\Delta^{\kappa_n(\bm{M})}} & {\mathcal{T}}
	\arrow["\mu", from=1-1, to=1-2]
	\arrow["{\pi^{\mathcal{S}}_{\mathcal{T}}}"', from=1-1, to=2-1]
	\arrow["MPH|_{\mathcal{S}}", from=1-2, to=2-2]
	\arrow["\nu", from=2-1, to=2-2]
\end{tikzcd}
\end{equation}
Indeed, for any $y \in \mathring\Delta^{\kappa_1(f)} \times \dots \times \mathring\Delta^{\kappa_n(f)}$, there exists $\phi \in \aut_0(I^n, \leq)$ with $y = \phi.x$. Then,
\begin{equation*}
    MPH \circ \mu(y) = \phi.(MPH(f)) = \phi.\nu(\pi^{\mathcal{S}}_{\mathcal{T}}(x)) = \nu \circ \pi^{\mathcal{S}}_{\mathcal{T}}(y).
\end{equation*}

Let us prove assertion (a). Since $\mu^{-1}$ is an affine isomorphism, it restricts to an affine isomorphism of the fiber. This, combined with the commutativity of \eqref{eq:diagram-fiber}, yields
\begin{equation}
\label{eq:aff-homeo-preim}
    \preim{M}{S} \cong^{\text{aff}} \mu^{-1}(\preim{M}{S}) = (\pi^{\mathcal{S}}_{\mathcal{T}})^{-1} (\nu^{-1}([\bm{M}])).
\end{equation}
On the other hand, since we have that $\nu^{-1}([\bm{M}]) = \pi^{\mathcal{S}}_{\mathcal{T}}(x)$, so we can describe the preimage of the projection as a product:
\begin{equation*}
    (\pi^{\mathcal{S}}_{\mathcal{T}})^{-1}(\pi^{\mathcal{S}}_{\mathcal{T}}(x)) =
    \prod_{j=1}^n P_j,
\end{equation*}
where $P_j = \mathring\Delta^{\kappa_j(f)}$ for each $j$ with $\kappa_j(\bm{M}) = 0$, and otherwise

\begin{equation*}
\begin{aligned}
P_j
&=
\{z \in \mathring\Delta^{\kappa_j(f)} \mid z_{i_{j,k}} = x^j_{i_{j,k}} \quad \forall k = 1,\dots,\kappa_j(\bm{M})\}
\\
&=
\left\{
z\in(0,1)^{\kappa_j(f)}
\;\middle|\;
\begin{aligned}
&(0<z_1<\cdots<z_{i_{j,1}-1}
  <z_{i_{j,1}}=x^j_{i_{j,1}})
\\[-1mm]
&\cap
(x^j_{i_{j,1}}<z_{i_{j,1}+1}<\cdots
  <z_{i_{j,2}-1}<z_{i_{j,2}}=x^j_{i_{j,2}})
\\[-1mm]
&\cap\cdots\cap
\\[-1mm]
&
(x^j_{i_{j,\kappa_j(\bm M)}}
 <z_{i_{j,\kappa_j(\bm M)}+1}<\cdots
 <z_{\kappa_j(f)}<1)
\end{aligned}
\right\}
\\
&\cong^{\mathrm{aff}}
\mathring\Delta^{i_{j,1}-1}
\times
\left(
\prod_{k=1}^{\kappa_j(\bm M)-1}  \mathring\Delta^{i_{j,k+1}-i_{j,k}-1}
\right)
\times
\mathring\Delta^{
\kappa_j(f)-i_{j,\kappa_j(\bm M)}
}.
\end{aligned}
\end{equation*}

For each $j$ in the product, we get a product of $\kappa_j(\bm{M}) + 1$ simplices, with total dimension $\kappa_j(f)-\kappa_j(\bm{M})$. Therefore, $\preim{M}{S}$ is affinely isomorphic to a product of $\dim(\mathcal{T})+n$ open simplices of total dimension $\odim(f) - \odim(\bm{M}) = \dim(\mathcal{S}) - \dim (\mathcal{T})$.
By \autoref{prop:homeo-orbits-filt}, we can extend $\mu$ to an affine homeomorphism between the closures $\overline{\mu}: \Delta^{\kappa_1(f)} \times \dots \times \Delta^{\kappa_n(f)} \rightarrow \overline{\mathcal{S}}$, hence the affine homeomorphism in \eqref{eq:aff-homeo-preim} extends to an affine homeomorphism of the closures.
Setting $J_0 = \{j\in \{1,\dots,n\} \mid \kappa_j(\bm M) = 0\}$ and $J_+ = \{1,\dots,n\} \setminus J_0$,
\begin{equation}
\label{eq:aff-homeo-preimov}
    \preimov{M}{S}
    \cong^{\text{aff}}
    \prod_{j \in J_0} \Delta^{\kappa_j(f)} 
    \times
    \prod_{j\in J_+}
    \left(
    \Delta^{i_{j,1}-1}
    \times
    \prod_{k=1}^{\kappa_j(\bm M)-1}  \Delta^{i_{j,k+1}-i_{j,k}-1}
    \times
    \Delta^{
    \kappa_j(f)-i_{j,\kappa_j(\bm M)}
    }
    \right),
\end{equation}
which finishes the proof of assertion (a).

Before proceeding to assertion (b), we will show that if $\preim{M}{S} \neq \varnothing$ for some filter stratum $\mathcal{S}$, then
\begin{equation*}
    \preimov{M}{S} = MPH^{-1} ([\bm{M}]) \cap \overline{\mathcal{S}}
\end{equation*}

The inclusion $\subseteq$ is direct. Now consider $g \in MPH^{-1} ([\bm{M}]) \cap \overline{\mathcal{S}}$, and let $f \in \preim{M}{S}$. By the structure of $\overline{\mathcal{S}}$ (\autoref{prop:closure-orbit-filt}), we know there exists $\phi \in \overline{\aut_0(I^n, \leq)}$ with $g = \phi.f$. Since $[\bm{M}] = MPH(g) = \phi.MPH(f) = \phi.[\bm{M}]$, $\phi(G)$ is a grid for $\bm{M}$, so $G \subseteq \phi(G)$. Since $|G| \geq |\phi(G)|$, necessarily $\phi(G) = G$ and $\phi|_G = \id|_G$.

Now consider the continuous path $t \mapsto f_t = ((1-t)\id + t\phi).f$ on $\overline{\mathcal{S}}$, for $t \in [0,1]$. Note that $f_0 = f$ and $f_1 = g$. Since the restriction to the minimal grid satisfies $((1-t)\id + t\phi)|_G = \id|_G$, the image by $MPH$ is fixed: $MPH(f_t) = ((1-t)\id + t\phi).[\bm{M}] = [\bm{M}]$ and $f_t \in MPH^{-1}([\bm{M}])$ for all $t \in [0,1]$. Moreover, for $t \in [0,1)$, each coordinate map of $(1-t)\id + t\phi$ fixes endpoints and is strictly increasing, so it belongs to the group $\aut_0(I^n, \leq)$. Thus, $f_t \in \mathcal{S}$ for all $t \in [0,1)$ and $f_1 = g \in \preimov{M}{S}$.

As a consequence, for any boundary stratum $\mathcal{S'} \subseteq \partial \overline{\mathcal{S}}$, we find that
\begin{equation*}
    \preimov{M}{S'} \subseteq MPH^{-1} ([\bm{M}]) \cap \overline{\mathcal{S}} =\preimov{M}{S}.
\end{equation*}

Since $\mathcal{S'} \cap \mathcal{S} = \varnothing$, necessarily $\preimov{M}{S'} \subseteq \partial \preimov{M}{S}$. From this observation, together with the characterisation in \eqref{eq:aff-homeo-preimov}, assertion (b) follows by the same argument as in the single-parameter case $n=1$. For the details, see the proof of Theorem 2.2(b) in \cite{LeygonieTillmann2022}.

To prove assertion (c), let $[\bm{N}] \in \mathcal{T}$ with minimal grid $H$. Let $\psi_j \in \aut(I, \leq)$ be the unique linear interpolation that sends the elements of $G_j \cap (0,1)$ to the elements of $H_j \cap (0,1)$ in order, and let $\psi_{[\bm{N}]} = \psi_1 \times\dots\times \psi_n$. Consider the map
\begin{equation*}
\begin{array}{rrcl}
    \Psi: & \mathcal{T} \times MPH^{-1}([\bm{M}])& \to &MPH^{-1}(\mathcal{T}) \\
     & ([\bm{N}], f) & \mapsto & \psi_{[\bm{N}]}.f
\end{array}
\end{equation*}

First, we check that it is well-defined, 
\begin{equation}
\label{eq:well-defn-trivial-bundle}
    MPH(\Psi([\bm{N}], f)) = \psi_{[\bm{N}]}.MPH(f) = \psi_{[\bm{N}]}.[\bm{M}] = [\bm{N}] \in \mathcal{T}.
\end{equation}
It is also clear from this computation that the diagram \eqref{eq:trivial-bundle-diagram} commutes.

Since the action on $n$-filters is continuous, to show continuity of $\Psi$ it suffices to show continuity of the assignment $[\bm{N}] \mapsto \psi_{[\bm{N}]}$ in $\mathcal{T}$.
We can factor this assignment through the homeomorphism $\nu^{-1}: \mathcal{T} \to \mathring\Delta^{\kappa_1(\bm{M})} \times \cdots \times \mathring\Delta^{\kappa_n(\bm{M})}$, first assigning a module $[\bm{N}]$ to the interior points of its minimal grid $(H_1\cap(0,1),\dots,H_n\cap(0,1))$, and then assigning each grid to the product of linear interpolations $\psi_{[\bm{N}]}$. Both maps are continuous, and thus so is $\Psi$.

Finally, we can build a continuous inverse by
\begin{equation*}
\begin{array}{rcl}
    MPH^{-1}(\mathcal{T}) & \to & \mathcal{T} \times MPH^{-1}([\bm{M}]) \\
    g & \mapsto & (MPH(g), \psi_{MPH(g)}^{-1}.g).
\end{array}
\end{equation*}

To prove the last part of (c), fix $[\bm{N}] \in \mathcal{T}$ and a filter stratum $\mathcal{S}$ intersecting the fiber of $[\bm{M}]$ nontrivially. From the computation in \eqref{eq:well-defn-trivial-bundle}, we can restrict the codomain of the restriction: $\Psi([\bm{N}], -)|_{\mathcal{S}} : \preim{M}{S} \to \preim{N}{S}$, and this will be a homeomorphism. We will check that it is in fact an affine isomorphism.

Let $f \in \preim{M}{S}$, and fix $j = 1,\dots,n$. 
If $\kappa_j(\bm{M}) = 0$, $\psi_j = \id$. Otherwise, let $\nu^{-1}([\bm{M}])_j = (x_1,\dots,x_{\kappa_j(\bm{M})})$, and fix $x_0 = 0$, $x_{\kappa_j(\bm{M}) + 1} =1$. 
For any $\sigma \in K$, there exist $x_i \leq f_j(\sigma) \leq x_{i+1}$.
Due to the equivariance of $MPH$, for any other filter $f' = \phi.f \in \preim{M}{S}$, we have that necessarily $\phi|_G = \id$, and thus $x_i \leq f_j'(\sigma) \leq x_{i+1}$. 
Since $\psi_j$ is affine in $[x_i,x_{i+1}]$, we get that $\Psi([\bm{N}], -)|_{\mathcal{S}}$ is affine in the coordinate corresponding to $\sigma$. This finishes the proof.
\end{proof}

\subsection{Bound on the dimension of the fiber}
\label{subsec:bound}

In what follows, we deduce a bound for the dimension of the fiber of $MPH$ over a given $[\bm{M}] \in \modspace{n}^K$. By \autoref{thm:topology-of-fiber}, $MPH^{-1}([\bm{M}])$ is a polyhedral complex composed of a polyhedron
\begin{equation*}
    \overline{MPH^{-1} ([\bm{M}]) \cap \mathcal{S}}
\end{equation*}
for each filter stratum $\mathcal S$ with non-empty intersection with the fiber. Let $f \in \mathcal{S}$, then 
\begin{equation}
    \dim \overline{MPH^{-1} ([\bm{M}]) \cap \mathcal{S}} = \odim(f) - \odim(\bm{M}).
\end{equation}

Our strategy will consist of finding a bound on the dimension of the polyhedra that does not depend on the filter $f$. We can find a first bound that depends only on the size of $G$, the minimal grid of $[\bm{M}]$. Since $G \subset \im f_1 \times\dots\times \im f_n$, and $|\im f_j| \leq |K|$, we find
\begin{equation*}
    \odim(f) - \odim(\bm{M})
        = \sum_{j=1}^n (|\im f_j \cap (0,1)|-|G_j \cap (0,1)|) \leq \sum_{j=1}^n (|\im f_j|-|G_j|) \leq n|K| - \sum_{j=1}^n |G_j|
\end{equation*}

Since this inequality does not depend on the choice of stratum $\mathcal{S}$, we get
\begin{equation}
\label{eq:first-bound-fiber}
    \dim MPH^{-1}([\bm{M}]) \leq n|K| - \sum_{j=1}^n |G_j|
\end{equation}

With this bound, we can readily see that the fiber over the image of a coordinate-wise injective filter is 0-dimensional.

\begin{lem}
\label{lem:first-bound-cw-inj}
    If $f\in \filt^n$ is injective in every coordinate and $[\bm{M}] = MPH(f)$, then $MPH^{-1}([\bm{M}])$ is a finite set.
\end{lem}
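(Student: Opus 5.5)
The plan is to combine \autoref{lem:grid-cw-injective-filter} with the first bound \eqref{eq:first-bound-fiber}, and then use the finiteness of the stratification. Let $G$ be the minimal grid of $[\bm{M}] = MPH(f)$. Since $f$ is injective in every coordinate, \autoref{lem:grid-cw-injective-filter} gives $\im f \subset G$. As the grid containing $\im f$ is a grid for $MPH(f)$, we get $G_j = \im f_j$ for every $j$. Injectivity then gives $|G_j| = |\im f_j| = |K|$ for each $j$, so $\sum_{j=1}^n |G_j| = n|K|$. Plugging this into \eqref{eq:first-bound-fiber} yields $\dim MPH^{-1}([\bm{M}]) \leq 0$.

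Next, I would convert this dimension statement into finiteness. By \autoref{thm:topology-of-fiber}(b), the fiber is the support of a polyhedral complex. That complex has one polyhedron $\overline{MPH^{-1}([\bm{M}]) \cap \mathcal{S}}$ for each filter stratum $\mathcal{S}$ meeting the fiber. By \autoref{prop:filt-strat}, or directly by \autoref{lem:char-orbits-aut-filt}, there are only finitely many filter strata, since $K$ is finite. Each non-empty polyhedron has dimension $\dim\mathcal{S} - \dim\mathcal{T} \leq 0$ by part (a) and the bound above, so it is a single point. Hence the fiber is a finite union of points, i.e.\ a finite set. The fiber is non-empty because it contains $f$.

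The only delicate point is making sure the bound \eqref{eq:first-bound-fiber} really applies to every stratum meeting the fiber, and not just to the stratum of $f$. This holds because the derivation of \eqref{eq:first-bound-fiber} used only $G \subseteq \im g_1 \times \dots \times \im g_n$ and $|\im g_j| \leq |K|$, for an arbitrary $g$ in the fiber. Both facts are valid for any $g$ with $MPH(g) = [\bm{M}]$, since a grid containing $\im g$ is a grid for $MPH(g)$ and therefore contains the minimal grid $G$. So every polyhedron in the complex has dimension at most $n|K| - n|K| = 0$, and no further work is needed beyond this bookkeeping.
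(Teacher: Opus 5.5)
Your proposal is correct and follows essentially the same route as the paper. Both arguments use \autoref{lem:grid-cw-injective-filter} to get $|G_j| = |K|$, which makes the bound \eqref{eq:first-bound-fiber} vanish, so the fiber is a $0$-dimensional polyhedral complex and hence finite. Your extra bookkeeping (finitely many filter strata, and the bound holding for every stratum meeting the fiber) only makes explicit what the paper leaves implicit.
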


\begin{proof}
    Let $G$ be the minimal grid of $\bm{M}$. By \autoref{lem:grid-cw-injective-filter}, $\im f_j \subset G_j$, and since $f$ is coordinate-wise injective, $|G_j| = |K|$. The bound in \eqref{eq:first-bound-fiber} reduces to 0. The fiber $MPH^{-1}([\bm{M}])$ is a 0-dimensional polyhedral complex, thus a finite number of points.
\end{proof}

We can obtain a second bound by refining the information carried by the minimal grid. In the one-parameter setting, this refinement is given by the multiplicities of endpoints of bars \cite[Proposition 2.5]{LeygonieTillmann2022}. In the multiparameter setting, the corresponding role is played by Betti tables (equivalently, multigraded Betti numbers), introduced in \autoref{subsec:betti}.

In the single parameter case, the appearance of a non-zero entry in the Betti tables at a certain grade (i.e. the start or ending of a bar in the barcode) necessarily implies the birth of a simplex at that grade. For multiple parameters, however, this relation is not as straightforward, as has been recently studied in \cite{GuidolinLandi2023}. Their results translate directly to essentially finite modules.

\begin{defn}[homological Morse numbers, adapted from \cite{GuidolinLandi2023}]
\label{defn:hom-Morse-num}
    Let $f \in \filt^n$ and take
    \begin{equation*}
        0<\epsilon<
        \min_{\substack{1\leq j\leq n\\|\operatorname{im}f_j|\geq2}} \min_{\substack{x,y\in\operatorname{im}f_j\\x\neq y}}|x-y|,
    \end{equation*}
    if $|\im f_j| \geq 2$ for some $j$, and any $\epsilon > 0$ otherwise.
    Recall that the sublevel set filtration at value $u \in \mathbb{R}^n$ is denoted by $K(f)_u$. The homological Morse numbers $\mu_q(u)$ of degree $q$ at $u \in \mathbb{R}^n$ are defined as:
    \begin{equation*}
        \mu_q(u) := \dim H_q \left(K(f)_u, \bigcup_{j = 1}^n K(f)_{u - \epsilon e_j} \right).
    \end{equation*}
\end{defn}

The homological Morse number $\mu_q(u)$ can be seen as the ``natural" lower bound to the number of critical simplices of dimension $q$ entering the filtration at grade $u$, for every choice of compatible discrete gradient vector field \cite[Prop. 1]{Landi2021}. In particular, this gives a weaker bound that will be of interest.
\begin{equation}
\label{eq:hom-morse-numbers-cells-born}
    \mu_q(u) \leq |\{\sigma \in K \mid \dim \sigma = q, f(\sigma) = u\}|.
\end{equation}

We reproduce here a result from \cite{GuidolinLandi2023}, where the authors give a bound to the homological Morse numbers in terms of the Betti tables of the associated multiparameter persistence modules.

\begin{thm}[\cite{GuidolinLandi2023}, Theorems 7.3 and 7.5]
\label{thm:guidolin-bound}
    Let $f \in \filt^n$ and let $\xi_i^q: \mathbb{R}^n \to \mathbb{N}$ be the $i$-th Betti table of $MPH_q(f)$, for each homological degree $q \geq 0$. Then the homological Morse numbers are bounded above and below as follows:
    \begin{equation*}
        \xi_0^q(u) + \xi_1^{q-1}(u) - \sum_{p = 1}^{n-1} \xi_{p+1}^{q-p}(u) \leq \mu_q(u) \leq \sum_{p = 0}^{n} \xi_{p}^{q-p}(u).
    \end{equation*}
\end{thm}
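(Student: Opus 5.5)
We will prove both inequalities with the same device: the local long exact sequence of the pair appearing in the definition of $\mu_q(u)$, together with the Koszul complex description of the Betti tables (\autoref{lem:koszul-cpx}). Fix $u$, put $X = K(f)_u$ and let $X_j = K(f)_{u-\epsilon e_j}$. By the choice of $\epsilon$, each $X_j$ is a subcomplex of $X$, and for every $\alpha \subseteq \{1,\dots,n\}$ the intersection satisfies $\bigcap_{j\in\alpha} X_j = K(f)_{u-\epsilon e_\alpha}$. This holds because sublevel sets of a filter intersect coordinatewise. Writing $A=\bigcup_j X_j$, the first step is to compute $H_*(A)$ by the Mayer--Vietoris spectral sequence of the cover $\{X_j\}$. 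Its $E^1$ page is
\begin{equation*}
E^1_{p,q}=\bigoplus_{|\alpha|=p+1} H_q(K(f)_{u-\epsilon e_\alpha}),
\end{equation*}
and its $d^1$ is the alternating sum of structure maps. Adjoining $X=K(f)_u$ as the $\alpha=\varnothing$ term gives, degree by degree in $q$, exactly the Koszul complex $\mathbb{K}_*(MPH_q(f))(u)$ shifted by one. Equivalently, one forms the augmented cubical diagram $\alpha\mapsto K(f)_{u-\epsilon e_\alpha}$, whose total homotopy cofiber is $X/A$. Here we use that the union of subcomplexes equals their homotopy colimit.

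The second step uses the spectral sequence of this cubical diagram filtered by $|\alpha|$. It computes $H_*(X,A)$ and has
\begin{equation*}
E^1_{p,q}=\bigoplus_{|\alpha|=p}H_q(K(f)_{u-\epsilon e_\alpha}),
\end{equation*}
converging to $H_{p+q}(X,A)$. Its $E^2$ page is $E^2_{p,q}=H_p(\mathbb{K}_*(MPH_q(f))(u))$. Grade $u$ may not lie in the minimal grid, but the Koszul formula still holds on any grid containing $\im f$, and $\epsilon$ is below that grid's granularity. Hence $\dim E^2_{p,q}=\xi^q_p(u)$. The upper bound follows at once, since
\begin{equation*}
\mu_q(u)=\dim H_q(X,A)\le \sum_{p}\dim E^\infty_{p,q-p}\le\sum_{p=0}^n \xi^{q-p}_p(u).
\end{equation*}

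The third step, the lower bound, is where I expect the real difficulty. Total degree $q$ receives the contributions $E^\infty_{0,q}$ and $E^\infty_{1,q-1}$, which are the quotients of $E^2_{0,q}$ and $E^2_{1,q-1}$ by images and kernels of the higher differentials $d^r$, $r\ge2$. A differential $d^r\colon E^r_{p,q}\to E^r_{p-r,q+r-1}$ hits $E_{0,q}$ from $E_{r,q-r+1}$ and hits $E_{1,q-1}$ from $E_{r+1,q-r+1}$. Differentials leaving $E_{0,q}$ or $E_{1,q-1}$ land in negative $p$, except $d^r$ from $E_{1,q-1}$, which lands in $p=1-r<0$. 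So these two terms only lose rank through incoming differentials. Reindexing the sources as $p+1$ with $p\ge1$, both come from terms with $\xi^{q-p}_{p+1}(u)$. The key check is that each source $E_{p+1,q-p}$ can kill rank in at most one of the two targets, counted once. I would argue that its total outgoing rank across all pages is at most $\dim E^2_{p+1,q-p}$. Then
\begin{equation*}
\mu_q(u)\ \ge\ \dim E^\infty_{0,q}+\dim E^\infty_{1,q-1}\ \ge\ \xi^q_0(u)+\xi^{q-1}_1(u)-\sum_{p=1}^{n-1}\xi^{q-p}_{p+1}(u),
\end{equation*}
where the sum stops at $p=n-1$ because $\xi_{i>n}=0$. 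The hard part is making the identification of $E^2$ with Koszul homology rigorous: the spectral sequence of a cube of subcomplex inclusions must agree with the tensor-product Koszul construction. This includes handling grades $u$ outside the minimal grid and matching the signs in \autoref{defn:koszul-cpx}. I would first try the double complex of simplicial chains $C_*(K(f)_{u-\epsilon e_\alpha})$ over $\alpha$, filtered by $|\alpha|$. Its total complex is quasi-isomorphic to $C_*(X,A)$ by exactness of the Mayer--Vietoris sequence for a cover by subcomplexes.
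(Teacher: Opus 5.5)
The paper gives no proof of this statement. It imports it from \cite{GuidolinLandi2023} and applies it only at points of $\im f_1\times\cdots\times\im f_n$: at $g\in G$ in \autoref{prop:bound-dim-fiber}, and at $u=f(\sigma)$ for coordinate-wise injective $f$. Your argument is a correct, self-contained proof, and the step you flag as hard is routine with the double complex you suggest.

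\textbf{The identification of $E^2$.} Take $D_{p,q}=\bigoplus_{|\alpha|=p}C_q(K(f)_{u-\epsilon e_\alpha})$ and compute homology in the $\alpha$-direction first. The $q$-th row splits, over the $q$-simplices $\sigma$ of $X$, into the Boolean complexes $\bigoplus_{\alpha\subseteq J_\sigma}\Bbbk$, where $J_\sigma=\{j\mid\sigma\in X_j\}$. These are acyclic when $J_\sigma\neq\varnothing$ and reduce to $\Bbbk$ in degree $0$ when $\sigma\notin A$. So that spectral sequence collapses onto $H_*(X,A)$. Taking simplicial homology first instead gives $E^1$ equal to the Koszul complexes of the modules $MPH_q(f)$ at $u$. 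Any sign convention you choose differs from the one in \autoref{defn:koszul-cpx} by a sign depending only on the degree, which does not change homology.

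\textbf{The lower-bound count.} Your counting is correct, but one phrase is not. A source $E_{p+1,q-p}$ with $p\geq 2$ can lose rank to both targets: first to $E_{1,q-1}$ via $d^{p}$, then to $E_{0,q}$ via $d^{p+1}$. The count still works because of the bound you state next: the ranks of all differentials leaving a fixed position add up to at most its $E^2$-dimension.

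\textbf{A hypothesis to state.} The identification $\dim E^2_{p,q}=\xi^q_p(u)$ needs $u\in\im f_1\times\cdots\times\im f_n$. Only then is the $\epsilon$ of \autoref{defn:hom-Morse-num} below the granularity of a grid for $MPH_q(f)$ that contains $u$. On such a grid, Koszul homology computes Betti numbers: a minimal free resolution has all its grades in the minimal grid $G$, and restricting and regrading it along any grid for the module keeps it minimal. This is a small extension of \autoref{lem:koszul-cpx}, which is stated only on $G$.

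The restriction on $u$ comes from the statement, not from your method. Let $K$ be two isolated vertices with $f$-values $1/2$ and $3/5$, and take $n=1$, $\epsilon=9/100$ and $u=11/20$. Then $\mu_0(u)=1$ while every Betti table vanishes at $u$, so the upper bound fails.

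With this hypothesis added, your route proves exactly what the paper needs. Compared with the bare citation, it uses only \autoref{lem:koszul-cpx} and elementary spectral-sequence bookkeeping, and it makes the hidden hypothesis visible.
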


Intuitively, the lower bound shows that having non-trivial Betti tables at grade $u$ does not necessarily imply that many critical simplices appear at the filtration at $u$, unless the $i$-th Betti tables with $i \geq 2$ vanish at $u$.

\begin{defn}
    We denote
    \begin{equation*}
        b_q(u) := \max \left\{ 0, \xi_0^q(u) + \xi_1^{q-1}(u) - \sum_{p = 1}^{n-1} \xi_{p+1}^{q-p}(u) \right\}.
    \end{equation*}
    By \autoref{thm:guidolin-bound}, $b_q(u) \leq \mu_q(u)$.
\end{defn}

\begin{rem}
\label{rem:b_q-0-high-q}
    By the bound above, for all $q > \dim K$, $b_q(u) = 0$.
\end{rem}

Finally, we are ready to state our bound on the dimension of the fiber of $MPH$.

\begin{prop}
\label{prop:bound-dim-fiber}
    Let $[\bm{M}]\in \modspace{n}^K$ with minimal grid $G$, and let $\xi_i^q$ be the $i$-th Betti table of $M^q$. Let
    \begin{equation*}
        \mathcal{L}(\bm{M}) = \sum_{g \in G} \sum_{q=0}^{\dim K} b_q(g).
    \end{equation*}
    Then as a polyhedral complex,
    \begin{equation*}
        \dim MPH^{-1}([\bm{M}]) \leq \frac{n}{2} (|K|-\mathcal{L}(\bm{M})).
    \end{equation*}
\end{prop}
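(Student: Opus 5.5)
The plan is to bound the dimension of each polyhedron in the complex of \autoref{thm:topology-of-fiber} separately. Fix a filter stratum $\mathcal S$ meeting the fiber and pick $f\in MPH^{-1}([\bm M])\cap\mathcal S$. Then the polyhedron has dimension $\odim(f)-\odim(\bm M)$. Since $G$ is contained in the grid $\im f_1\times\dots\times\im f_n$, we have $G_j\subseteq \im f_j$ for every $j$. As in the derivation of \eqref{eq:first-bound-fiber}, this gives
\begin{equation*}
\odim(f)-\odim(\bm M)\leq \sum_{j=1}^n |\im f_j\setminus G_j|.
\end{equation*}
It therefore suffices to prove $|\im f_j\setminus G_j|\leq \tfrac12(|K|-\mathcal L(\bm M))$ for each $j$, with a bound independent of $f$.

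\textbf{Step 1 (simplices sitting on the grid).} Let $A=\{\sigma\in K\mid f(\sigma)\in G\}$. Apply \autoref{thm:guidolin-bound} to $f$ and combine it with \eqref{eq:hom-morse-numbers-cells-born}. For every $g\in G$ and every $q$ this gives $b_q(g)\leq\mu_q(g)\leq |\{\sigma\mid \dim\sigma=q,\ f(\sigma)=g\}|$. Summing over $q$ and over $g\in G$ yields $\mathcal L(\bm M)\leq |A|$, and hence $|K\setminus A|\leq |K|-\mathcal L(\bm M)$. By \autoref{rem:b_q-0-high-q}, restricting the sum to $q\leq\dim K$ loses nothing.

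\textbf{Step 2 (each off-grid value is taken at least twice).} Fix $j$ and $v\in \im f_j\setminus G_j$. Every simplex $\sigma$ with $f_j(\sigma)=v$ satisfies $f(\sigma)\notin G$, so $\sigma\in K\setminus A$. The claim is that at least two simplices have $f_j(\sigma)=v$. Suppose instead that exactly one simplex $\sigma$ does, and set $u=f(\sigma)$. For small $\epsilon>0$, the complexes $K(f)_u$ and $K(f)_{u-\epsilon e_j}$ differ only by simplices $\tau$ with $f(\tau)\leq u$ and $f_j(\tau)=v$, which means they differ only by $\sigma$. This is the Euler characteristic argument of \autoref{lem:grid-cw-injective-filter}: adding one simplex changes $\chi$, so some $MPH_q(f)_{u-\epsilon e_j,u}$ fails to be an isomorphism. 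On the other hand, since $v\notin G_j$, for $\epsilon$ small we have $\lfloor u-\epsilon e_j\rfloor_G=\lfloor u\rfloor_G$ (or both grades are $\not\geq\min G$). By \autoref{lem:ess-fin-characterisation}, this forces all those structure maps to be isomorphisms, a contradiction. Boundary values $v\in\{0,1\}$ need no separate treatment, because they only make the inequality weaker.

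\textbf{Step 3 (counting).} The sets $f_j^{-1}(v)$ for distinct $v\in\im f_j\setminus G_j$ are disjoint. Each has at least two elements and each lies in $K\setminus A$. Hence $2|\im f_j\setminus G_j|\leq |K\setminus A|\leq |K|-\mathcal L(\bm M)$. Summing over $j$ gives the bound for each polyhedron. Since the bound does not depend on $\mathcal S$, it also bounds the dimension of the whole complex. The main obstacle is Step 2: one must check carefully that only $\sigma$ enters when moving from $u-\epsilon e_j$ to $u$, which uses that the other coordinates stay $\leq u$ and that no other simplex has $j$-th value in $(v-\epsilon,v]$. One must also handle the floor comparison when $u\not\geq\min G$. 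Step 1 is a direct application of the cited Guidolin--Landi bounds.
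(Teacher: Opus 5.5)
Your proof is correct and takes essentially the same route as the paper's. You bound each polyhedron by $\sum_j|\im f_j\setminus G_j|$, show that every off-grid value of $f_j$ is attained by at least two simplices, and use the Guidolin--Landi inequalities to bound the number of on-grid simplices below by $\mathcal L(\bm M)$. The only difference is cosmetic: for the ``attained twice'' step you compare $K(f)_u$ with $K(f)_{u-\epsilon e_j}$ directly via the Euler characteristic and \autoref{lem:ess-fin-characterisation}, whereas the paper restricts to the line through $f(\sigma)$ in direction $e_j$ and applies the one-parameter barcode-endpoint argument to the subcomplex $\tilde K$, which amounts to the same thing.
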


\begin{proof}
    This proof follows the structure of the proof of the bound for $n=1$ in Proposition 2.5 of \cite{LeygonieTillmann2022}, incorporating results from \cite{GuidolinLandi2023} to get a generalisation.

    Recall from \autoref{thm:topology-of-fiber} that the fiber is a polyhedral complex with a polyhedron for every non-empty intersection with a filter stratum $\mathcal{S}$. Let $f \in \mathcal{S}$, then 
    \begin{equation}
    \label{eq:dim-fiber}
        \dim \overline{MPH^{-1} ([\bm{M}]) \cap \mathcal{S}} = \odim(f) - \odim(\bm{M})
        \leq \sum_{j=1}^n (|\im f_j|-|G_j|)
    \end{equation}

    We now use the restriction to one parameter to find a bound to the number of values in $\im f_j$ that do not belong to $G_j$.
    
    In the single-parameter setting, given a filter $g: K \to I$, every value $x \in \im g$ that is not an endpoint of a bar in any homological degree of $PH(g)$ must be attained by at least two distinct simplices $\sigma, \tau \in K$: $f(\sigma) = f(\tau) = x$. Indeed, if exactly one simplex appeared at value $x$, then passing through $x$ would change the Euler characteristic of the filtration.
    This forces a change in dimension in some homology degree, creating a birth or death in homology at $x$, and hence an endpoint in the barcode.

    We can exploit this property in the multiparameter setting. Let $\sigma\in K$ satisfy $f_j(\sigma) \notin G_j$, and for each $q = 0, \dots, \dim K$ pick a representative $[M^q] = MPH_q(f)$. Then there exists $\delta>0$ small enough such that the maps $M^q_{f(\sigma)-\delta e_j, f(\sigma)}$ are isomorphisms for all $q\geq 0$. Now consider the restriction of $M^q$ to the line $\{f(\sigma) +(t-f_j(\sigma))e_j \mid t\in \mathbb{R}\}$,
    \begin{equation*}
        (D_q)_t = M^q_{f(\sigma) +(t-f_j(\sigma))e_j}.
    \end{equation*}

    By setting $\tilde{K} = \{\tau\in K\mid f_k(\tau)\leq f_k(\sigma) \text{ for every }k\neq j\}$ and $g: \tilde{K} \to I$ as $g(\tau) = f_j(\tau)$, we see that $D = PH(g)$. Since $(D_q)_{f_j(\sigma)-\delta, f_j(\sigma)} = M^q_{f(\sigma)-\delta e_j, f(\sigma)}$ is an isomorphism, we know that $f_j(\sigma) \in \im g$ is not an endpoint of the barcode of $D_q$ for any $q\geq 0$. Hence, by the discussion above, there exists $\tau \neq \sigma \in \tilde{K} \subseteq K$ such that $f_j(\tau) = f_j(\sigma)$. Therefore, for every $x \in \im f_j \setminus G_j$,
    \begin{equation*}
        2 \leq |\{\sigma \in K \mid f_j(\sigma) = x\}|.
    \end{equation*}

    Summing over all values $x \in (\im f_j \setminus G_j)$, we get
    \begin{equation*}
        2(|\im f_j|-|G_j|) \leq 
        |\{\sigma \in K \mid f_j(\sigma) \notin G_j\}| \leq 
        |\{\sigma \in K \mid f(\sigma) \notin G\}|.
    \end{equation*}

    Summing over $j = 1,\dots,n$, and combining this inequality with \eqref{eq:dim-fiber}, we find
    \begin{equation}
    \label{eq:images-out}
        \odim(f) - \odim(\bm{M}) \leq \frac{n}{2}|\{\sigma \in K \mid f(\sigma) \notin G\}|.
    \end{equation}

    We have now an upper bound of \eqref{eq:dim-fiber} in terms of the number of simplices of $K$ whose image by $f$ does not fall on the grid $G$. In order to eliminate the dependency in $f$, we now consider the number of simplices of $K$ whose image by $f$ does fall on the grid $G$.

    By combining \autoref{thm:guidolin-bound} and the inequality in \eqref{eq:hom-morse-numbers-cells-born}, we get that for every $g\in G$ and every $q\geq 0$,
    \begin{equation*}
        b_q(g) \leq |\{\sigma \in K \mid \dim \sigma = q, f(\sigma)=g\}|
    \end{equation*}

    Summing over all values of $g\in G$ and all contributing homological degrees (see \autoref{rem:b_q-0-high-q}), we get
    \begin{equation}
    \label{eq:images-in}
        \mathcal{L}(\bm{M}) = \sum_{g \in G} \sum_{q=0}^{\dim K} b_q(g) \leq |\{\sigma \in K \mid f(\sigma) \in G\}|
    \end{equation}
    
    By combining \eqref{eq:images-out} and \eqref{eq:images-in}, we get the desired bound.
    \begin{equation*}
        \odim(f) - \odim(\bm{M}) \leq \frac{n}{2} (|K|-\mathcal{L}(\bm{M})).
    \end{equation*}
    Since this bound is independent of the filter stratum, it bounds the maximum dimension of the polyhedra that form the fiber, and hence $\dim MPH^{-1}([\bm{M}])$.
\end{proof}

We would like to check that for the case of coordinate-wise injective filters, this new bound matches the result in \autoref{lem:first-bound-cw-inj}, which we found with our simpler first approach using \eqref{eq:first-bound-fiber}.

\begin{lem}
    If $f\in \filt^n$ is injective in every coordinate, then $\mathcal{L}(MPH(f)) = |K|$ and the bound on the dimension of the fiber from \autoref{prop:bound-dim-fiber} is 0.
\end{lem}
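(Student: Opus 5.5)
Once $\mathcal{L}(\bm{M}) = |K|$ is established, the bound $\frac{n}{2}(|K|-\mathcal{L}(\bm{M}))$ is $0$, so the whole task is to prove $\mathcal{L}(MPH(f)) = |K|$. I would prove the two inequalities separately.

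\emph{Upper bound.} Let $G$ be the minimal grid of $\bm{M} = MPH(f)$. Inequality \eqref{eq:images-in} from the proof of \autoref{prop:bound-dim-fiber} gives $\mathcal{L}(\bm{M}) \leq |\{\sigma \mid f(\sigma)\in G\}| \leq |K|$. This holds for any filter, so only the reverse inequality needs work.

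\emph{Lower bound.} By \autoref{lem:grid-cw-injective-filter}, $\im f \subset G$ and $G_j = \im f_j$ for every $j$. Since $f$ is coordinatewise injective, each $u \in G$ is the value of at most one simplex, and exactly $|K|$ points $u \in G$ are values $u = f(\sigma)$. Because $b_q \geq 0$, it suffices to show that for each $\sigma$ with $\dim\sigma = q$ and $u = f(\sigma)$ we have $b_q(u) \geq 1$.

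The plan for this is to compute the Betti tables at $u$ through the Koszul complex of \autoref{lem:koszul-cpx}. For small $\epsilon$ and every $\alpha \neq \varnothing$, the complex $K(f)_{u-\epsilon e_\alpha}$ contains no simplex whose value equals $u$ in the coordinates of $\alpha$. Coordinatewise injectivity then gives $K(f)_{u-\epsilon e_\alpha} = K(f)_{u} \setminus \{\sigma\}$ when $\alpha \neq \varnothing$. It may be cleaner to work with these subcomplexes $L_\alpha$ directly: $L_\alpha = L$ for all nonempty $\alpha$, with $L = K(f)_u \setminus \{\sigma\}$, and $L_\varnothing = K(f)_u = L \cup \{\sigma\}$. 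Then the Koszul complex in homological degree $p$ is the tensor of $H_p(L)$ with the reduced (augmented) simplicial chain complex of the full simplex on $n$ vertices, together with one extra term at $\alpha = \varnothing$. Since that augmented complex is acyclic, the Koszul homology should reduce to the homology of the single map $H_p(L) \to H_p(L\cup\sigma)$. Adding one $q$-simplex either raises $\beta_q$ by one or lowers $\beta_{q-1}$ by one. In the first case $\xi_0^q(u) = 1$; in the second $\xi_1^{q-1}(u) = 1$. In both cases all higher $\xi_{p+1}^{q-p}(u)$ with $p \geq 1$ vanish, so $b_q(u) = 1$.

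\emph{Main obstacle.} The hard step is this Koszul computation, specifically the claim that the higher Betti numbers vanish at $u$. I would first verify that all structure maps between the $L_\alpha$ with $\alpha \neq \varnothing$ are identities. Then I would use the spectral-sequence-free argument that the Koszul complex splits as (the acyclic part) $\oplus$ (the mapping cone of $H_p(L) \to H_p(L\cup\sigma)$). The lower bound is then $\mathcal{L}(\bm{M}) \geq |K|$, which together with the upper bound yields equality and a zero bound.
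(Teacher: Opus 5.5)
Your proposal is correct. Its skeleton matches the paper's: the upper bound comes from \eqref{eq:images-in}, and the lower bound comes from showing $b_{\dim\sigma}(f(\sigma))\geq 1$ for each simplex, using the fact that $K(f)_{u-\epsilon e_\alpha}=K(f)_u\setminus\{\sigma\}$ for every $\alpha\neq\varnothing$.

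Where you differ is in how you obtain $\xi_0^q(u)+\xi_1^{q-1}(u)=1$. The paper uses the Koszul complex only in degrees $i\geq 2$, where the truncated simplex chain complex is exact, to get $\xi_{i\geq 2}(u)=0$. It then computes the homological Morse number $\mu_q(u)=\dim H_q(K(f)_u,K(f)_u\setminus\{\sigma\})$, which is $1$ exactly when $q=\dim\sigma$. Finally it observes that the Guidolin--Landi sandwich of \autoref{thm:guidolin-bound} collapses to $\mu_q(u)=\xi_0^q(u)+\xi_1^{q-1}(u)=b_q(u)$.

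You instead also treat Koszul degrees $0$ and $1$. Splitting off the acyclic part leaves the two-term complex $H_p(L)\xrightarrow{\iota_p}H_p(L\cup\sigma)$, so $\xi_0^p(u)=\dim\operatorname{coker}\iota_p$ and $\xi_1^p(u)=\dim\ker\iota_p$. The long exact sequence of the pair $(L\cup\sigma,L)$ then finishes the argument.

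Your route is more self-contained. The lower bound needs neither Morse numbers nor \autoref{thm:guidolin-bound}, only the definition of $b_q$. It also yields directly the explicit Betti numbers recorded in the remark after the lemma. The paper's route reuses machinery it already needs for \autoref{prop:bound-dim-fiber}, and so only has to check vanishing in Koszul degrees $\geq 2$.

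One small precision: $\xi_0$ and $\xi_1$ are dimensions of a cokernel and a kernel, not changes in Betti numbers. So phrase the simplex-addition step through that exact sequence, or note that $\iota_q$ is always injective and $\iota_{q-1}$ always surjective because $\sigma$ is a maximal face of $K(f)_u$.
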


\begin{proof}
    In this case, we can explicitly compute both $\mu_q(u)$ and the bound $b_q(u)$ for all $u \in \im f$. Let $u = f(\sigma)$. For $\epsilon>0$ small enough (chosen as in \autoref{defn:hom-Morse-num}), $K(f)_u = K(f)_{u - \epsilon e_j} \cup \{ \sigma \}$ for all $j$. Thus, we can directly compute
    \begin{equation*}
    \mu_q(u) =
        \begin{cases}
            1 & \text{if } q = \dim \sigma, \\
            0 & \text{otherwise.}
        \end{cases}
    \end{equation*}

    We now use the strategy presented in \autoref{defn:koszul-cpx}, \autoref{lem:koszul-cpx}, to show $\xi^q_{i \geq 2}(u) = 0$. Let  $\alpha \subset \{1, \dots, n\}$, and $e_\alpha = \sum_{j \in \alpha} e_j$. By the coordinate-wise injectivity of $f$, for any $\alpha, \beta \neq \varnothing$, $K(f)_{u -\epsilon e_\alpha} = K(f)_{u -\epsilon e_\beta}$. We denote this complex as $K(f)_{u^-}$. At the level of homology all spaces are equal, and structure maps are the identity. For $i \geq 1$, the Koszul complex of $M$ at $u$ is
    \begin{equation*}
        \mathbb{K}_i(M)(u)
        = \bigoplus_{\substack{\alpha\subset \{1, \dots, n\} \\ |\alpha| = i}} MPH(f)_{u - \epsilon e_\alpha}
        = \bigoplus_{\substack{\alpha\subset \{1, \dots, n\} \\ |\alpha| = i}} H(K(f)_{u -\epsilon e_\alpha})
        \cong \left( \bigoplus_{\substack{\alpha\subset \{1, \dots, n\} \\ |\alpha| = i}} \Bbbk \right) \otimes H(K(f)_{u^-}),
    \end{equation*}

    and for all $i \geq 2$, the map $d_i: \mathbb{K}_i(M)(u) \to \mathbb{K}_{i-1}(M)(u)$ acts on each generator $1_{\alpha} \otimes w$, with $\alpha = \{j_1< \dots < j_i\}$, and $w \in H(K(f)_{u^-})$, as
    \begin{equation*}
        d_i(1_{\alpha} \otimes w) = \left( \sum_{k = 1}^i (-1)^{i-k} e_{\alpha \setminus \{j_k\}} \right) \otimes w.
    \end{equation*}

    The chain complex from $i\geq 1$ decomposes as a tensor product $\mathbb{K}_{* \geq 1}(M)(u) \cong C_{* \geq 1} \otimes H(K(f)_{u^-})$, where $C_{* \geq 1}$ is isomorphic to the (truncated) chain complex resulting from the boundary maps of an $n-1$-simplex, which is exact except at the bottom of the truncation. Since any vector space is a flat module, the homology groups are all 0 for all $i\geq 2$, and hence $\xi^q_{i \geq 2}(u) = 0$.
    
    Then, the inequalities from \autoref{thm:guidolin-bound} become equalities, and we get
    \begin{equation*}
        \mu_q(u) = \xi_0^q(u) + \xi_1^{q-1}(u) = b_q(u).
    \end{equation*}

    By the previous computation, we find that the bound is $b_{q} (u) = 1$ if $q = \dim \sigma$, and 0 otherwise, and we get
    \begin{equation*}
        \mathcal{L}(\bm{M})  \geq \sum_{\sigma \in K} b_{\dim \sigma}(f(\sigma)) = |K|
    \end{equation*}

    But recall from \eqref{eq:images-in} that $\mathcal{L}(\bm{M}) \leq |K|$, so $\mathcal{L}(\bm{M}) = |K|$, and by \autoref{prop:bound-dim-fiber}, $\dim MPH^{-1}([\bm{M}]) = 0$. This provides an alternative way of showing that the fiber $MPH^{-1}([\bm{M}])$ consists of a finite number of points.
\end{proof}

\begin{rem}
    We learn from the proof above that for $f$ a coordinate-wise injective filter, either $\xi_0^{\dim \sigma}(f(\sigma)) = 1$ or $\xi_1^{\dim \sigma -1}(f(\sigma)) = 1$, and the rest of Betti tables for all homological degrees are 0.
\end{rem}

\begin{rem}
    The question of whether the bound in \autoref{prop:bound-dim-fiber} always improves the simpler bound obtained in \eqref{eq:first-bound-fiber} remains open. A sufficient condition would be that for any $[\bm{M}]\in \modspace{n}^K$ with minimal grid $G$, $|G_j| \leq \mathcal{L}(\bm{M})$ for each $j = 1,\dots,n$. This is satisfied both for $n=1$ and for coordinate-wise injective filters, and we pose the general case as a conjecture.
\end{rem}

\newpage
\section{A detailed example: 2-filters on \texorpdfstring{$\Delta^1$}{the 1-simplex}}
\label{sec:example-computation}

We include here the stratifications of $\filt^2$ and $\modspace{2}^K$ for a first example.

\begin{ex}[1-simplex with 2 parameters]
\label{ex:1-simplex}
Let $K$ be the 1-simplex, that is, an edge with its two endpoints. In this case, the only non-zero homological degree is $p = 0$, so we set $MPH = MPH_0: \filt^2 \to \modspace{2}$. Let $f:K \to I^2$ be a 2-filter which assigns values $a,b, \sigma \in I^2$ to the first vertex, second vertex and edge respectively. Note that by the filter condition $a,b \leq \sigma$.

The top-dimensional strata of $\filt^2$ are four six-dimensional strata, corresponding to the four admissible pairs of preorders on the two coordinates. We plot examples of a filter in each top-dimensional stratum in \autoref{fig:filters}.
\begin{figure}[h!]
    \centering
    \hfill
    \begin{subfigure}[b]{0.17\textwidth}
        \centering
        \includegraphics[width=\textwidth]{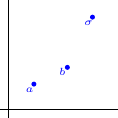}
        \caption[]
        {{\small $a_1 < b_1 < \sigma_1$, $a_2 < b_2 < \sigma_2$}}    
    \end{subfigure}
    \hfill
    \begin{subfigure}[b]{0.17\textwidth}  
        \centering 
        \includegraphics[width=\textwidth]{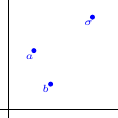}
        \caption[]
        {{\small $a_1 < b_1 < \sigma_1$, $b_2 < a_2 < \sigma_2$}}    
    \end{subfigure}
    \hfill
    \begin{subfigure}[b]{0.17\textwidth}   
        \centering 
        \includegraphics[width=\textwidth]{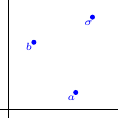}
        \caption[]
        {{\small $b_1 < a_1 < \sigma_1$, $a_2 < b_2 < \sigma_2$}}    
    \end{subfigure}
    \hfill
    \begin{subfigure}[b]{0.17\textwidth}   
        \centering 
        \includegraphics[width=\textwidth]{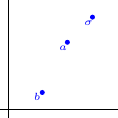}
        \caption[]
        {{\small $b_1 < a_1 < \sigma_1$, $b_2 < a_2 < \sigma_2$}}    
    \end{subfigure}
    \hfill
    \caption{Examples of filters in each of the four top-dimensional strata of $\filt^2$. We plot the values of $f$ at the two vertices, $a$, $b$, and at the edge $\sigma$.}
    \label{fig:filters}
\end{figure}

Each 6-stratum is homeomorphic to a product of two open 3-simplices,
and their closures share 5-faces of the form $\Delta^2 \times \Delta^3$ or $\Delta^3 \times \Delta^2$ corresponding to filters where $a_1 = b_1$ or $a_2 = b_2$. We can represent each filter in $\filt^2$ by a pair of points in 3-dimensional space in \autoref{fig:filter-space-example}. 

\begin{figure}[h!]
    \centering
    \includegraphics[width=0.7\linewidth]{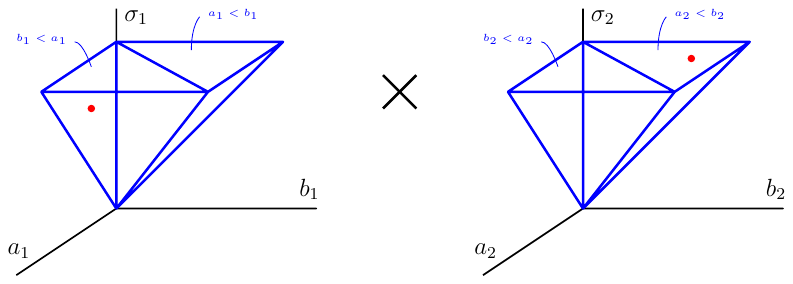}
    \caption{Representation of $\filt^2$ as a union of 4 products $\Delta^3 \times \Delta^3$ with face identifications. Here, the pair of red points uniquely identifies a filter.}
    \label{fig:filter-space-example}
\end{figure}

The map $MPH$ computes connected components and how they merge in the filtration. Since it is invariant under symmetries of $K$, in the image $\modspace{2}^K$ the 6-strata (a) and (d) in \autoref{fig:filters} are identified, and the same occurs with strata (b) and (c). These identifications are not easy to illustrate. The resulting space contains two top-dimensional strata of dimension six. There are further identifications in some of the 4-strata in which the edge appears at the same value as one of the vertices, $a=\sigma$ or $b=\sigma$. In this case, there is not a second connected component for any parameter value, which causes a collapse of a 4-dimensional stratum to a 2-dimensional stratum.

At the top of \autoref{fig:mod-space-example} we give an example of a persistence module in each of the two 6-strata. Below them there are the 5-strata that form their boundaries, and so on. We leave out the boundary strata that correspond to coordinates in the grid being 0 or 1. To represent each persistence module, we indicate the Betti numbers at each region of the parameter space $I^2$. The internal maps are not drawn: they are the linear maps induced on homology by the inclusions. We indicate with black points the values of $f$ at the vertices, and with a red point the value of $f$ at the edge. To indicate that two or three of these values coincide, we draw multiple points very close to each other. The grey arrows between strata indicate inclusions to the boundary of higher dimensional strata. The red arrows indicate the collapse of a 4-dimensional stratum to a 2-dimensional stratum, due to the persistence modules being isomorphic. 

In this example, every module in a stratum of dimension 3 or bigger has a minimal presentation with two generators, at grades $a$ and $b$, and a relation at grade $\sigma$. The bound from \autoref{prop:bound-dim-fiber} is thus 0, which is sharp. Modules in the 2-dimensional stratum can be presented with a single generator, so the bound on the dimension of the fiber is 2. By the collapses described above, we know the dimension of the fiber is exactly 2, so the bound is also sharp. We sketch an example of one such fiber in \autoref{fig:fiber-example}.

\begin{figure}[h!]
    \centering
    \includegraphics[width=\linewidth]{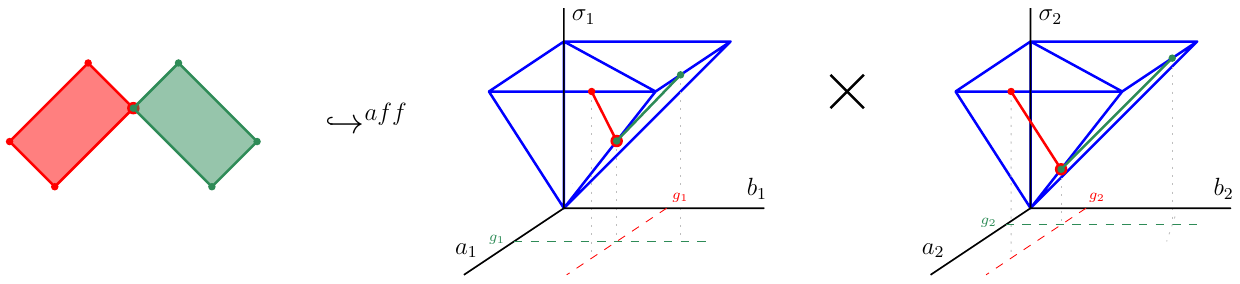}
    \caption{Representation of the fiber of a module with a single generator at grade $g$, which belongs to the 2-dimensional stratum in $\modspace{2}^K$. The fiber consists of two rectangles: the one corresponding to $g=b \leq a = \sigma$, in red, and $g=a \leq b = \sigma$, in green, joined by one of their corners.}
    \label{fig:fiber-example}
\end{figure}

\end{ex}

\begin{landscape}
\begin{figure}
    \centering
\includegraphics[width=0.8\linewidth]{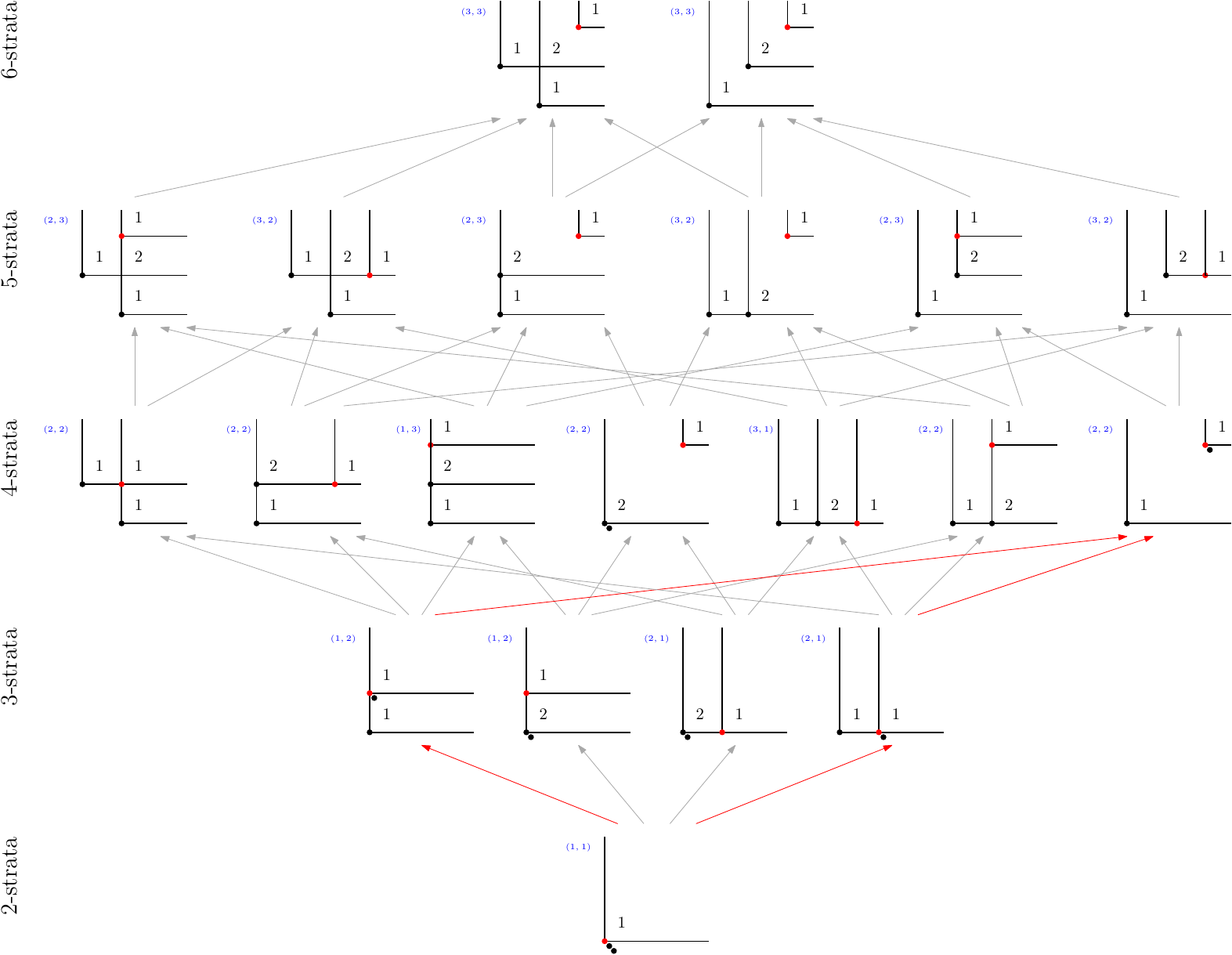}
    \caption{Representation of the strata in $\modspace{2}^K$ for \autoref{ex:1-simplex}.}
    \label{fig:mod-space-example}
\end{figure}
\end{landscape}

\printbibliography

\end{document}